\documentclass[10pt]{amsart}

\usepackage[latin1]{inputenc}
\usepackage{amssymb, amscd}

\usepackage{color}
\usepackage{graphics}



\newcommand\x{\xi}

\newcommand\matr[4]{\left( {\hfill #1\@@atop\hfill #3}{\hfill
#2\@@atop\hfill #4}\right)}
\newcommand\matl[4]{\left( { #1\@@atop #3}{ #2\@@atop\hfill #4}\right)}
\makeatother

\newcommand\SL{{\mathrm{SL}}}

\newcommand\SO{{\mathrm{SO}}}
\newcommand\so{{\mathfrak{so}}}

\newcommand\End{\operatorname{End}}
\newcommand\liek{{\mathfrak k}}

\newcommand\SU{{\mathrm{SU}}}
\newcommand\lieg{{\mathfrak g}}
\newcommand\U{{\mathrm{U}}}
\newcommand\GL{{\mathrm{GL}}}

\newcommand\tr{\operatorname{tr}}

\newcommand\lieh{{\mathfrak h}}

\newcommand\CC{{\mathbb C}}

\newcommand\RR{{\mathbb R}}
\newcommand\ZZ{{\mathbb Z}}
\newcommand\z{{\mathbb Z}}
\newcommand\NN{{\mathbb N}}

\newcommand\vz[1]{\mathchoice{\left\{ #1 \right\}}{\left\{ #1
\right\}}{\{ #1 \}}{\{ #1 \}}}


\newcommand\vzm[2]{\mathchoice{\left\{\, #1 : #2 \,\right\}}{\{\, #1
:\allowbreak #2 \,\}}{\{ #1 :\allowbreak #2 \}}{\{ #1 :\allowbreak #2
\}}}

\newcommand\lw[1]{\hbox{}_{#1}\!}

\theoremstyle{plain}
\newtheorem{thm}{Theorem}[section]
\newtheorem{lem}[thm]{Lemma}

\newtheorem{prop}[thm]{Proposition}
\newtheorem{cor}[thm]{Corollary}

\newtheorem*{propD3}{Proposition \ref{D3}}
\newtheorem*{propE3}{Proposition \ref{E3}}
\theoremstyle{definition}
\newtheorem{definition}[thm]{Definition}

\theoremstyle{remark}
\newtheorem{remark}[thm]{Remark}
\newtheorem*{remarks}{Remark}


\title[Spherical functions on the $3$-sphere]{SPHERICAL FUNCTIONS ASSOCIATED TO THE THREE DIMENSIONAL SPHERE}

\author{In\'es Pacharoni}
\author{Juan Tirao}
\author {Ignacio Zurri\'an}
\address{CIEM-FaMAF, Universidad Nacional de C\'ordoba, 5000 C\'ordoba, Argentina.}
\email{\newline pacharon@famaf.unc.edu.ar, tirao@famaf.unc.edu.ar, zurrian@famaf.unc.edu.ar}
\thanks{This paper was partially supported by CONICET, PIP 112-200801-01533, and SeCyT-UNC. }
\keywords{Matrix valued spherical functions - Matrix orthogonal polynomials - The matrix hypergeometric operator - Three dimensional sphere}
\subjclass[2010]{22E45 - 33C45 - 33C47}

\begin{document}
\begin{abstract}
In this paper, we determine all irreducible spherical functions
$\Phi$ of any $K $-type associated to the pair
$(G,K)=(\SO(4),\SO(3))$. This is accomplished by associating to
$\Phi$ a vector valued function $H=H(u)$ of a real variable $u$,
which is analytic at $u=0$ and whose components are solutions of two
coupled systems of ordinary differential equations. By an
appropriate conjugation involving Hahn polynomials we uncouple one
of the systems. Then this is taken to an uncoupled system of hypergeometric equations,
leading to a vector valued solution $P=P(u)$, whose entries are Gegenbauer's polynomials.
Afterward, we identify those simultaneous solutions and use the representation theory of $\SO(4)$ to characterize all irreducible spherical functions.
The functions $P=P(u)$ corresponding to the irreducible spherical functions of a fixed $K$-type $\pi_\ell$ are appropriately packaged into a sequence of matrix valued polynomials $(P_w)_{w\ge0}$ of size $(\ell+1)\times(\ell+1)$. Finally we prove that $\widetilde P_w={P_0}^{-1}P_w$ is a sequence of matrix orthogonal polynomials with respect to a weight matrix $W$. Moreover, we show  that $W$ admits a second order symmetric hypergeometric operator $\widetilde D$ and a first order symmetric differential operator $\widetilde E$.
\end{abstract}

\maketitle

\section{Introduction}\label{statements}
The theory of spherical functions dates back to the classical papers of \'E. Cartan and H. Weyl; they showed that spherical harmonics arise in a natural way from the study of functions on the $n$-dimensional sphere $S^n=\SO(n+1)/\SO(n)$. The first general results in this direction were obtained in 1950 by Gelfand, who considered zonal spherical functions of a Riemannian symmetric space $G/K$. In  this case we have a decomposition $G=KAK$. When the abelian subgroup $A$ is one dimensional, the restrictions of zonal spherical functions to $A$ can be identified with hypergeometric functions, providing a deep and fruitful connection between group representation theory and special functions. In particular when $G$ is compact this gives a one to one correspondence between all zonal spherical functions of the symmetric pair $(G,K)$ and a sequence of orthogonal polynomials.

In light of this remarkable background it is reasonable to look for an extension of the above results,
 by considering matrix valued spherical functions on $G$ of a general $K$-type. This was accomplished for the first time in the case of the
 complex projective plane $P_2(\CC)=\SU(3)/\U(2)$ in \cite{GPT1}. This seminal work gave rise to a series of papers including
 \cite{GPT, GPT03, GPT05, PT1, PT2, PR}, where one considers matrix valued spherical functions associated to a compact symmetric pair $(G,K)$, arriving at sequences of matrix valued orthogonal polynomials of one real variable satisfying an explicit three-term recursion relation, which are also eigenfunctions of a second order matrix differential operator (bispectral property).

The theory of matrix valued orthogonal polynomials without any consideration of differential equations  was started by M. G. Krein in \cite{K1} and \cite{K2}. After that the theory was revived by  A. Dur\'an in \cite{D}, who posed the problem of finding matrix weight functions $W$ with symmetric matrix second order  differential operators $D$. But the existence of such ``classical pairs''
$(W,D)$ was first established in \cite{G} and \cite{GPT03b} as a byproduct of what was obtained in \cite{GPT1}.
In fact, in \cite{GPT1} for any $K$-type $\pi$ a matrix weight function $W$ of size $m=\deg \pi$, a symmetric second order differential operator $D$ and a sequence of matrix polynomials $\{P_n\}_{n\ge0}$ was constructed from the spherical functions of the pair  $(\SU(3),\U(2))$.
Such a sequence has   the following properties:
$\deg P_n=n+m$, $\det P_0\not \equiv 0$,
$P_n$ and $P_{n'}$ are orthogonal with respect to $W$ for all $n\neq n'$,  $DP_n=P_n\Lambda_n$ and the sequence  $\{P_n\}_{n\ge0}$ satisfies a three-term recursion relation.   Yet, the sequence $\{P_n\}_{n\geq 0}$ does not fit directly into the existing theory of matrix valued orthogonal polynomials as given in \cite{D}. In \cite{G} and \cite{GPT03b} we established such a connection  defining the matrix valued function $Q_n$ by means of $Q_n=P_0^{-1}P_n$.
It is worth to point out that whenever we are under these hypothesis one can prove that $\{Q_n\}_{n\geq 0}$ is a sequence of matrix valued orthogonal polynomials with respect to $\widetilde W= P_{0}^*WP_{0}$, and that
$\widetilde D= P_{0}^{-1}DP_{0}$ is symmetric.
Related results can be also found in \cite{GPT03b, PT11, GT, RT, RT2}.

A different approach to find examples of classical matrix orthogonal polynomials can be found for example in \cite{DG}.

The irreducible spherical
functions associated to the complex projective space $P_n(\CC)=\SU(n+1)/\U(n)$ of a given $K$-type are encoded in a
sequence of matrix valued orthogonal polynomials, which are given in terms of the matrix hypergeometric function.
The semi infinite matrix  corresponding to the three-term recursion relation turns out to be stochastic. This unexpected result leads to the study of the random walk with this transition probability matrix, see \cite{GPT11}.

The present paper is an outgrowth of \cite{Z} and we are currently working on the extension of these  results to the $n$-dimensional sphere and the $n$-dimensional real projective space. The starting point is to describe the irreducible spherical functions as simultaneous eigenfunctions of two commutative differential operators, one of order two and the other of order one, and then  the irreducible spherical functions of the same $K$-type are  encoded in a sequence of matrix valued orthogonal polynomials.

More recently in \cite{KPR12a}  the authors studied the irreducible spherical functions of the pair $(G,K)=(\SU(2)\times\SU(2), \SU(2))$ ($\SU(2)$ embedded diagonally) as projections onto $K$-isotypic components of irreducible representations of $G$. This  approach is comparable with the construction of vector valued polynomials given in \cite{K85}. Also in \cite{KPR12b} the authors come back to the same subject but starting with the construction of the matrix orthogonal polynomials using a recursion relation and the orthogonality relations, and by ending with the differential operators.
\color{black}
The group $\SU(2)\times\SU(2)$ is the universal covering group
\color{black} of $\SO(4)$ and the image of $\SU(2)$ under the covering homomorphism is $\SO(3)$. Thus, the pairs $(\SU(2)\times\SU(2), \SU(2))$ and
$(\SO(4),\SO(3))$ are very closely related. The results of this paper and those 
in \cite{KPR12b} are in agreement,
\color{black}
see Remark \ref{comparacion} at the end of this paper for details.
However, the treatments are very different. 
%

\smallskip
\color{black}

Briefly the main results of this paper are the following. After some preliminary  developed along the first sections, in Section \ref{DE}
we are able to explicitly describe the irreducible spherical functions of the symmetric pair $(\SO(4),\SO(3))$ of a fixed $K$-type,
by a vector valued function $P=P(u)$, whose entries are certain Gegenbauer polynomials in a suitable variable $u$.
This is accomplished by uncoupling a system of second order linear differential equations using a constant matrix of Hahn polynomials, see Proposition \ref{Hahn}.

In Section \ref{depafi} it is established which are those vector valued polynomials $P=P(u)$ that correspond to irreducible spherical functions, and it is shown how to reconstruct the spherical functions out of them.

The aim of the last two sections is to build classical sequences of matrix valued orthogonal polynomials from our previous work.
In Section \ref{hyper} we define a sequence of polynomial matrices $P_w$, $w\ge0$, whose columns are the vector valued polynomials $P=P(u)$ corresponding to some specific irreducible spherical functions of the same $K$-type.
 In Section \ref{Matrix Orthogonal Polynomials} we consider the sequence $\widetilde P_w={P_0}^{-1}P_w$ and we prove that $(\widetilde P_w)_{w\ge0}$ is a sequence of matrix orthogonal polynomials with respect to a weight function $W$ explicitly given in (\ref{LDU}). Moreover, the matrix differential operators $\widetilde D$ and $\widetilde E$ given in Theorem \ref{hyp} satisfy $\widetilde D\widetilde P_w=\widetilde P_w\Lambda_w$ and $\widetilde E\widetilde P_w=\widetilde P_w M_w$, where the eigenvalues $\Lambda_w$ and $M_w$ are real diagonal matrices. Thus, $\widetilde D$ and $\widetilde E$ are symmetric with respect to $W$.

\medskip\color{black}
\noindent {\bf Acknowledgements.}  We would like to thank the referee for many useful comments
and suggestions that helped us to improve a first version of this paper. In particular she or he pointed out that  
 equation \color{black} \eqref{LDU} \color{black} gives an LDU-decomposition of the matrix weight $W$; provided the more elegant proof of Lemma \ref{referee}; asked for an explicit expression of the sequence defined in equation \eqref{ttr} in terms of known discrete orthogonal  polynomials, see Proposition \ref{ajwkRacah};
\color{black}
and pointed out that our essential function $\Psi$ defined in equation \eqref{psi} is, up to a diagonal matrix, equal to the transposed of the function $L$ in Section 2 of \cite{KPR12b}.
\color{black}

\section{Preliminaries}\label{sec:prelim}

\subsection{Spherical functions }

\

Let $G$ be a locally compact unimodular group and let
$K$ be a compact subgroup of $G$. Let $\hat K$ denote
the set of all
equivalence classes of complex finite dimensional
irreducible representations of $K$; for each
$\delta\in \hat K$, let
$\xi_\delta$ denote the character of $\delta$,
$d(\delta)$ the degree of $\delta$, i.e. the dimension
of any representation in
the class $\delta$, and
$\chi_\delta=d(\delta)\x_\delta$. We shall choose once
and for all the Haar measure $dk$ on
$K$ normalized by $\int_K dk=1$.

We shall denote by $V$ a finite dimensional vector
space over the field $\CC$ of complex numbers and by
$\End(V)$ the space
of all linear transformations of $V$ into $V$.
Whenever we refer to a topology on such a vector space
we shall be talking
about the unique Hausdorff linear topology on it.

\begin{definition} \label{def} A spherical function $\Phi$ on $G$ of type
$\delta\in \hat K$ is a continuous function on $G$ with values in
$\End(V)$ such that
\begin{enumerate} \item[i)] $\Phi(e)=I$ ($I$= identity
transformation).

\item[ii)] $\Phi(x)\Phi(y)=\int_K
\chi_{\delta}(k^{-1})\Phi(xky)\, dk$, for all $x,y\in
G$.
\end{enumerate}
\end{definition}

The reader can find a number of general results in \cite{T} and
\cite{GV}. For our purpose it is appropriate to recall the following facts.

\begin{prop}[Proposition 1.2 in \cite{T}]\label{propesf}
If $\Phi:G\longrightarrow \End(V)$ is a spherical function
of type $\delta$
then:
\begin{enumerate}
\item[i)] $\Phi(k_1gk_2)=\Phi(k_1)\Phi(g)\Phi(k_2)$, for all
$k_1,k_2\in K$, $g\in G$.
\item[ii)] $k\mapsto \Phi(k)$ is a representation of
$K$ such that any irreducible subrepresentation
belongs to $\delta$.
\end{enumerate}
\end{prop}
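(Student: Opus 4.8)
The plan is to derive everything from the single functional equation (ii) of Definition \ref{def} together with the normalization $\Phi(e)=I$, using only the invariance of the Haar measure and the fact that $\xi_\delta$ is a class function. First I would specialize the defining identity. Setting $y=e$ gives the reproducing rule $\Phi(x)=\int_K\chi_\delta(k^{-1})\Phi(xk)\,dk$; setting $x=e$ gives $\Phi(y)=\int_K\chi_\delta(k^{-1})\Phi(ky)\,dk$; and setting $x=y=e$ gives the crucial normalization $I=\int_K\chi_\delta(k^{-1})\Phi(k)\,dk$, which will drive the isotypic statement.

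The heart of the argument is to promote these to one-sided translation rules. Fix $u\in K$ and $x\in G$. On one hand the defining equation gives $\Phi(u)\Phi(x)=\int_K\chi_\delta(k^{-1})\Phi(ukx)\,dk$, and after the substitution $k\mapsto u^{-1}k$ (legitimate since $K$ is compact, hence unimodular) this becomes $\int_K\chi_\delta(k^{-1}u)\Phi(kx)\,dk$. On the other hand, starting from $\Phi(ux)=\int_K\chi_\delta(k^{-1})\Phi(k\,ux)\,dk$ and substituting $k\mapsto ku^{-1}$ gives $\Phi(ux)=\int_K\chi_\delta(uk^{-1})\Phi(kx)\,dk$. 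Since $\chi_\delta=d(\delta)\xi_\delta$ and $\xi_\delta(ab)=\xi_\delta(ba)$, the two integrands are identical, so $\Phi(ux)=\Phi(u)\Phi(x)$. The mirror computation, using the other reproducing rule, yields $\Phi(xv)=\Phi(x)\Phi(v)$ for $v\in K$. Combining these one-sided identities gives part (i), and specializing to $x=k_2\in K$ gives $\Phi(k_1k_2)=\Phi(k_1)\Phi(k_2)$; together with $\Phi(e)=I$, $\Phi(k)\Phi(k^{-1})=I$, and the continuity of $\Phi$, this shows that $k\mapsto\Phi(k)$ is a representation of $K$, the first half of (ii).

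For the isotypic statement I would decompose $V=\bigoplus_i W_i$ into $\Phi|_K$-irreducible subspaces, with $W_i$ of type $\delta_i$, and evaluate the normalization identity $I=\int_K\chi_\delta(k^{-1})\Phi(k)\,dk$ on each $W_i$. By the orthogonality relations for irreducible characters, $\int_K\chi_\delta(k^{-1})\Phi(k)|_{W_i}\,dk$ equals $I_{W_i}$ when $\delta_i=\delta$ and $0$ when $\delta_i\neq\delta$. Since the global operator is $I$, its restriction to each nonzero $W_i$ must be $I_{W_i}$, forcing $\delta_i=\delta$ for every $i$; hence every irreducible subrepresentation belongs to $\delta$.

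The step I expect to require the most care is not an estimate but the bookkeeping of the Haar-measure substitutions combined with the class-function identity $\xi_\delta(k^{-1}u)=\xi_\delta(uk^{-1})$: this is exactly what collapses the two a priori different integral representations of $\Phi(ux)$ and $\Phi(u)\Phi(x)$ into one and the same integral. Everything else is either a direct specialization of the functional equation or a standard appeal to Schur orthogonality, so I anticipate no serious obstacle beyond using the invariance of $dk$ on the correct side at each substitution.
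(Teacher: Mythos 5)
Your proof is correct. The paper itself gives no argument for this statement --- it is quoted verbatim as Proposition 1.2 of \cite{T} --- and your derivation is essentially the standard one from that reference: the Haar-measure translations $k\mapsto u^{-1}k$, $k\mapsto ku^{-1}$ combined with the class-function identity $\xi_\delta(k^{-1}u)=\xi_\delta(uk^{-1})$ yield part i), and the normalization $I=\int_K\chi_\delta(k^{-1})\Phi(k)\,dk$ together with Schur orthogonality (applied to the decomposition of $V$ into $\Phi|_K$-irreducibles, which exists since $K$ is compact and $V$ finite dimensional) forces every irreducible constituent to be of type $\delta$, giving part ii).
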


Concerning the definition, let us point out that the
spherical function $\Phi$ determines its type
univocally
(Proposition \ref{propesf}) and let us say that the
number of times that $\delta$ occurs in the
representation
$k\mapsto \Phi(k)$ is called the {\em height} of $\Phi$.

A spherical function $\Phi : G \longrightarrow \End(V )$ is called {\em irreducible} if $V$ has no
proper subspace invariant by $\Phi(g)$ for all $g\in G$.

\smallskip
If $G$ is a connected Lie group, it is
not difficult to prove that any spherical function $\Phi:G\longrightarrow
\End(V)$ is
differentiable ($C^\infty$), and moreover that it is analytic. Let $D(G)$
denote the algebra of all left invariant differential operators on $G$ and let $D(G)^K$ denote the
subalgebra of all operators  in $D(G)$ which are invariant under all right
translations by elements in $K$.

\smallskip
In the following proposition $(V,\pi)$ will be a finite dimensional
representation of $K$ such that any irreducible
subrepresentation belongs to the same class $\delta\in\hat K$.

\begin{prop}\label{defeq}
A function
$\Phi:G\longrightarrow \End(V)$ is a spherical function of type $\delta$ if
and only if
\begin{enumerate}
\item[i)] $\Phi$ is analytic.
\item[ii)] $\Phi(k_1gk_2)=\pi(k_1)\Phi(g)\pi(k_2)$, for all $k_1,k_2\in K$,
$g\in G$, and
$\Phi(e)=I$.
\item[iii)] $[D\Phi ](g)=\Phi(g)[D\Phi](e)$, for all $D\in D(G)^K$, $g\in G$.
\end{enumerate}
\end{prop}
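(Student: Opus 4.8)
The statement is a biconditional, so the plan is to prove the two implications separately; the forward implication follows quickly from what has been recalled, and the reverse implication is where the real work lies. For the direct implication, assume $\Phi$ is a spherical function of type $\delta$. Analyticity (i) is exactly the regularity already noted for spherical functions on a connected Lie group. Condition (ii) is immediate from Proposition \ref{propesf}: part (i) gives $\Phi(k_1gk_2)=\Phi(k_1)\Phi(g)\Phi(k_2)$, part (ii) says $\pi:=\Phi|_K$ is a representation of $K$ whose irreducible constituents all lie in $\delta$, and $\Phi(e)=I$ by Definition \ref{def}. For the eigenfunction property (iii) I would differentiate the functional equation in Definition \ref{def}: fixing $x$ and applying $D\in D(G)^K$ in the variable $y$ at $y=e$, the left-hand side produces $\Phi(x)[D\Phi](e)$, while on the right-hand side left-invariance of $D$ turns $D_y\Phi(xky)|_{y=e}$ into $(D\Phi)(xk)$. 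Right-$K$-invariance of $D$ combined with (ii) gives $(D\Phi)(xk)=(D\Phi)(x)\pi(k)$, so $(D\Phi)(x)$ can be pulled out of the integral, leaving the factor $\int_K\chi_\delta(k^{-1})\pi(k)\,dk$. This is the projection onto the $\delta$-isotypic component of $(V,\pi)$, which equals $I$ precisely because $V$ is $\delta$-isotypic by hypothesis; hence $(D\Phi)(x)=\Phi(x)[D\Phi](e)$, which is (iii).

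For the reverse implication, assume (i)--(iii). It suffices to recover the functional equation of Definition \ref{def}, since $\Phi(e)=I$ and continuity are already in hand and the type is then forced to be $\delta$ by Proposition \ref{propesf} applied to $\Phi|_K=\pi$. The plan is to fix $x$ and compare the two functions of $y$
\[
g_x(y)=\int_K\chi_\delta(k^{-1})\Phi(xky)\,dk, \qquad h_x(y)=\Phi(x)\Phi(y).
\]
Both are analytic and satisfy the right $K$-type identity $f(yk)=f(y)\pi(k)$. Using (iii) I would check that they solve the same eigenvalue equations: for $h_x$ one has $D_yh_x=\Phi(x)(D\Phi)=h_x[D\Phi](e)$, while for $g_x$, moving $D$ inside the integral by left-invariance and then applying (iii) to $(D\Phi)(xky)=\Phi(xky)[D\Phi](e)$ gives $D_yg_x=g_x[D\Phi](e)$; the eigenvalue operator $[D\Phi](e)$ is the same in both cases. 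Finally $g_x(e)=\int_K\chi_\delta(k^{-1})\Phi(xk)\,dk=\Phi(x)$ by the same isotypic-projection computation as above, and $h_x(e)=\Phi(x)$, so the two functions agree at $y=e$.

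The crux is then a uniqueness principle: an analytic $f:G\to\End(V)$ with $f(yk)=f(y)\pi(k)$ satisfying $Df=f[D\Phi](e)$ for every $D\in D(G)^K$ is determined by $f(e)$; granting this, $g_x=h_x$ and the functional equation follows. I expect this uniqueness to be the main obstacle. My plan to establish it is to reduce to showing that every derivative $(uf)(e)$, $u\in\mathfrak{U}(\lieg)=D(G)$, is determined by $f(e)$ together with the eigenvalue data; since $G$ is connected and $f$ is analytic, this pins $f$ down. Choosing an $\mathrm{Ad}(K)$-invariant complement $\liem$ with $\lieg=\liek\oplus\liem$ and using the Poincar\'e--Birkhoff--Witt factorization $\mathfrak{U}(\lieg)=\mathfrak{U}(\liem)\,\mathfrak{U}(\liek)$, the $\mathfrak{U}(\liek)$-factor acts on $f$ by right multiplication through the derived representation $\dot\pi$, because of the $K$-type identity, so everything reduces to the values $(mf)(e)$ with $m\in\mathfrak{U}(\liem)$. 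The delicate point, which is exactly the classical determination theorem for $K$-finite eigenfunctions of $D(G)^K$ as in \cite{T,GV}, is that these remaining derivatives are in turn forced by $f(e)$ and the eigenvalues $[D\Phi](e)$; this is the step I would lean on the cited references for rather than reprove from scratch.
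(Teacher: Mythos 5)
Your forward implication is sound, and it is essentially the argument behind Lemma 4.2 and Proposition 4.3 of \cite{T}, which is all the paper itself invokes for that direction: the differentiation of the integral equation in Definition \ref{def}, the identity $(D\Phi)(xk)=(D\Phi)(x)\pi(k)$ coming from right $K$-invariance of $D$, and the fact that $\int_K\chi_\delta(k^{-1})\pi(k)\,dk=I$ on a $\delta$-isotypic space are all correct. Likewise, the architecture of your converse---comparing $g_x(y)=\int_K\chi_\delta(k^{-1})\Phi(xky)\,dk$ and $h_x(y)=\Phi(x)\Phi(y)$ as analytic joint eigenfunctions with the same eigenvalue character agreeing at $y=e$---is the classical skeleton behind Proposition 4.6 of \cite{T}, which the paper cites without reproving.

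The gap is in the crux you isolate. The uniqueness principle as you state it---an analytic $f$ with $f(yk)=f(y)\pi(k)$ and $Df=f\,[D\Phi](e)$ for all $D\in D(G)^K$ is determined by $f(e)$---is false, and the references cannot supply it in that form, because the determination theorems they prove require $K$-structure on the \emph{left} as well. Concretely, take $(G,K)=(\SO(3),\SO(2))$, $\delta$ trivial, $V=\CC$. Here $D(G)^K\simeq D(G)^G\otimes D(K)^K$ is the polynomial algebra generated by the Casimir $\Omega$ and a generator $Z$ of $\liek$; since $Z$ kills right-$K$-invariant functions, your hypotheses on $f$ reduce to: $f$ is an analytic function on $S^2$ (pulled back to $G$) lying in the degree-$n$ eigenspace of the sphere Laplacian. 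That joint eigenspace is the $(2n+1)$-dimensional space of spherical harmonics, and any nonzero harmonic vanishing at the base point (these exist for $n\ge 1$) satisfies every one of your hypotheses, with the same eigenvalue character $[D\varphi](e)$ as the zonal spherical function $\varphi$ of degree $n$, yet has $f(e)=0$ without vanishing identically. This pinpoints where your PBW plan stalls: after splitting off the $\mathcal U(\liek)$-factor via the right transformation law, nothing in your hypotheses controls the $\mathcal U(\liem)$-derivatives at $e$, because $D(G)^K$ is far too small an algebra. What actually saves the argument is extra structure your two functions possess but your lemma omits: $h_x$ is left $K$-finite with left $K$-types confined to $\delta$ by condition ii), and so is $g_x$, since its left translates only replace the kernel $\chi_\delta(k^{-1})$ by $\chi_\delta(k_0k^{-1})$, and these span the finite-dimensional space of matrix entries of $\delta$; equivalently, the two-variable function $(x,y)\mapsto g_x(y)-h_x(y)$ is $K\times K$-equivariant in the outer variables (using that $\chi_\delta$ is a class function for the inner substitution). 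It is to such bi-equivariant, $K$-finite eigenfunctions that the determination result of \cite{T} (Remark 4.7, quoted in this paper as Proposition \ref{unicidad}) applies, and the averaging over $K$ that converts $\mathcal U(\liem)$-derivatives at $e$ into $D(G)^K$-derivatives uses this left structure irreplaceably---note also that Proposition \ref{unicidad} as stated concerns bi-equivariant functions, so it cannot be cited verbatim for $g_x,h_x$ in the variable $y$ alone. Your lemma must therefore be restated with the left $\delta$-type hypothesis, and $g_x$, $h_x$ must be checked against it; as written, the step fails.
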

\begin{proof}
If $\Phi:G\longrightarrow\End(V)$ is a spherical function of type $\delta$ then $\Phi$ satisfies iii) (see Lemma 4.2 in \cite{T}) and $\Phi$ is analytic (see Proposition 4.3 in \cite{T}). Conversely, if $\Phi$ satisfies i), ii) and iii), then $D\mapsto [D\Phi](e)$ is a representation of $D(G)^K$ and therefore $\Phi$ satisfies the integral equation ii) in Definition \ref{def}, see Proposition 4.6 in \cite{T}.
\end{proof}

Moreover, we have that the eigenvalues $[D\Phi](e)$, $D\in D(G)^K$, characterize the spherical functions $\Phi$ as stated in the following proposition.

\begin{prop}[Remark 4.7 in \cite{T}]\label{unicidad}
 Let $\Phi,\Psi:G\longrightarrow \End(V)$ be
two
spherical functions on a connected
Lie group $G$ of the same type $\delta\in K$. Then $\Phi=\Psi$ if and
only if
$(D\Phi)(e)=(D\Psi)(e)$ for all $D\in D(G)^K$.
\end{prop}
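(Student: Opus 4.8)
The plan is to establish the nontrivial implication; the direct one is immediate, since $\Phi=\Psi$ forces $(D\Phi)(e)=(D\Psi)(e)$ for every $D\in D(G)^K$. So assume $(D\Phi)(e)=(D\Psi)(e)$ for all $D\in D(G)^K$ and write $\Lambda(D)$ for this common value. Both $\Phi$ and $\Psi$ are analytic by Proposition \ref{defeq}, and $G$ is connected; hence each is determined by its full Taylor expansion at $e$, that is, by the family $\{(u\Phi)(e):u\in U(\lieg)\}$, where $U(\lieg)=D(G)$. Thus it suffices to show that the analytic function $F=\Phi-\Psi$ satisfies $(uF)(e)=0$ for every $u\in U(\lieg)$, for then $F\equiv 0$.

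First I would record the constraints inherited by $F$. Since $\Phi$ and $\Psi$ have the same type $\delta$ on the common space $V$, their restrictions to $K$ coincide with the representation $\pi$ whose irreducible constituents all lie in $\delta$ (Proposition \ref{propesf}), so $F(k_1gk_2)=\pi(k_1)F(g)\pi(k_2)$ and in particular $F(e)=I-I=0$. Moreover property iii) of Proposition \ref{defeq} gives $D\Phi=\Phi\,\Lambda(D)$ and $D\Psi=\Psi\,\Lambda(D)$ with the \emph{same} $\Lambda(D)$, whence
\[
 DF=F\,\Lambda(D),\qquad D\in D(G)^K .
\]

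Next comes the reduction using $K$-equivariance. Fix an $\mathrm{Ad}(K)$-invariant complement $\liem$ of $\liek$ in $\lieg$, which exists because $K$ is compact. Differentiating $F(g\exp tX)=F(g)\pi(\exp tX)$ at $t=0$ shows $(XF)(g)=F(g)\,d\pi(X)$ for $X\in\liek$, and hence $(vF)(g)=F(g)\,d\pi(v)$ for every $v\in U(\liek)$. Writing an arbitrary $u\in U(\lieg)$ in Poincar\'e--Birkhoff--Witt form $u=\sum_\alpha p_\alpha v_\alpha$ with $p_\alpha\in U(\liem)$ and $v_\alpha\in U(\liek)$, one obtains
\[
 (uF)(e)=\sum_\alpha (p_\alpha F)(e)\,d\pi(v_\alpha),
\]
so it is enough to prove $(pF)(e)=0$ for every $p\in U(\liem)$.

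The last step is the crux. Using the relations $DF=F\,\Lambda(D)$ one controls the derivatives of $F$ along $\liem$: for $p\in U(\liem)$ and $D\in D(G)^K$ one has $(pDF)(e)=(pF)(e)\,\Lambda(D)$, and the structure of the algebra $D(G)^K$ lets one rewrite top-order elements of $U(\liem)$, modulo elements of $U(\liem)$ of lower order and modulo $U(\liek)\,D(G)^K$, in terms of such products $pD$. Feeding this into an induction on $\deg p$ with base case $F(e)=0$ forces $(pF)(e)=0$ for all $p$, hence $F\equiv0$. This reduction to the invariant operators is the delicate point, and it is where analyticity is essential: the degree-one invariant directions do not detect $\liem$, so one must invoke the full eigenvalue system rather than a single low-order operator. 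Concretely, when $G/K$ is a rank-one symmetric space, as for $(\SO(4),\SO(3))$, the Cartan decomposition $G=KAK$ reduces $F$ to $F|_A$, the operators $D\in D(G)^K$ restrict to a system of ordinary differential equations on the one-dimensional group $A$ with a regular singular point at $e$, and analyticity together with $F(e)=0$ singles out the zero solution; this is the mechanism underlying Proposition 4.6 and Remark 4.7 in \cite{T}.
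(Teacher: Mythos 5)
You have the right skeleton for the soft part of the argument: the easy direction, the reduction via analyticity and connectedness to showing $(uF)(e)=0$ for all $u\in D(G)$, the relation $DF=F\,\Lambda(D)$ from Proposition \ref{defeq} iii), and the PBW reduction via right $K$-equivariance to derivatives along (the symmetrized image of) $S(\liem)$ are all correct. The genuine gap is the step you yourself call the crux: the assertion that top-order elements of $S(\liem)$ can be rewritten, modulo lower order and modulo $U(\liek)\,D(G)^K$, as products $p\,D$ with $\deg p$ smaller and $D\in D(G)^K$. This is not only unproved; as stated it is false. If $D\in D(G)^K$ then its top-order symbol is $\operatorname{Ad}(K)$-invariant, and its projection into $S(\liem)$ along the ideal generated by $\liek$ (a $K$-equivariant algebra map, since $\liem$ is an $\operatorname{Ad}(K)$-stable complement) lands in $S(\liem)^K$; left factors from $U(\liek)$ do not help, since their symbols lie in that ideal and project to zero. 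Hence your products reach, at top degree $n$, only the subspace $\sum_{m\ge 1}S^{n-m}(\liem)\cdot\big(S^{m}(\liem)\big)^{K}$ of $S^{n}(\liem)$. Already for $(\SO(3),\SO(2))$ with $\delta$ trivial this subspace is $(x^2+y^2)\,S^{n-2}(\RR^2)$, of codimension $2$ in $S^{n}(\RR^2)$ for every $n\ge 1$: the harmonic part is unreachable, so your induction cannot close. The missing ingredient is precisely the one you never exploit, the \emph{left} $K$-equivariance of $u\mapsto (uF)(e)$, i.e.\ the relations $\big((\operatorname{ad}_X p)F\big)(e)=\big[\,d\pi(X),(pF)(e)\,\big]$ for $X\in\liek$, which constrain the complement $\liek\cdot S^{n}(\liem)$ of the invariants; and for nontrivial $\delta$ one also needs the mixed elements of $D(G)^K$, of the form $\sum_i q_i b_i$ with $q_i$ built from $\liem$ and $b_i\in U(\liek)$ of positive degree. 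In this very paper the operator $E$, coming from $\Delta_2$, is exactly of that mixed kind, and it is indispensable: the eigenvalue $\lambda$ of the second-order operator alone does not separate the spherical functions of type $\pi_\ell$ — there are $\min\{n+1,\ell+1\}$ of them sharing the same $\lambda$ (see the proof surrounding Theorem \ref{main}).

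Your closing appeal to the rank-one picture does not repair this. The proposition is stated, and used, for an arbitrary connected Lie group $G$ with compact subgroup $K$, where there is no symmetric-pair structure, no $G=KAK$, and no radial ODE; and even in the rank-one case analyticity at the regular singular point is not by itself the selecting mechanism: the $\Delta$-eigenfunctions on $S^2$ with eigenvalue $-n(n+1)$, all of them analytic, with prescribed value at the north pole form an affine space of dimension $2n$, and it is $K$-equivariance that cuts this down to a point. Note finally that the paper offers no proof of this proposition at all — it quotes Remark 4.7 of \cite{T}, where uniqueness is obtained through the machinery surrounding the integral equation (Proposition 4.6 of \cite{T}) rather than through any rewriting theorem for $D(G)^K$ — so the decisive step of your argument remains an unfilled gap.
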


Let us observe that if $\Phi:G\longrightarrow \End(V)$ is a spherical
function, then $\Phi:D\mapsto
[D\Phi](e)$ maps $D(G)^K$ into $\End_K(V)$ ($\End_K(V)$ denotes the space of
all linear maps of $V$ into $V$ which commutes with $\pi(k)$ for all $k\in K$) defining a finite
dimensional representation of the associative algebra $D(G)^K$. Moreover, the spherical
function is irreducible if and only if the representation $\Phi: D(G)^K\longrightarrow \End_K (V)$
is irreducible. In fact, if $W<V$ is $\Phi(G)$-invariant, then clearly $W$ is invariant as a $(D(G)^K,K)$-module. Therefore, if $\Phi:D(G)^K\longrightarrow\End_K(V)$
is irreducible then the spherical function $\Phi$ is irreducible. Conversely, if $\Phi:D(G)^K\longrightarrow\End_K(V)$ is not irreducible, then there exists a proper $(D(G)^K,K)$-invariant subspace $W<V$. Let $P:V\longrightarrow W$ be any $K$-projection. Let us now consider the following functions: $P\Phi P$ and $\Phi P$. Both are analytic and $(P\Phi P)(k_1gk_2)=\pi(k_1)(P\Phi P)(g)\pi(k_2)$ for all $g\in G$ and $k_1,k_2\in K$. Moreover, if $D\in D(G)^K$ then $[D(P\Phi P)](e)=P[D(\Phi)](e)P=[D(\Phi)](e)P=[D(\Phi P)](e)$. Therefore, using Remark 4.7 in \cite{T} it follows that $P\Phi P=\Phi P$. This implies that $W$ is $\Phi(G)$-invariant. Hence, if $\Phi$ is an irreducible spherical function, then $\Phi:D(G)^K\longrightarrow\End_K(V)$ is an irreducible representation.

As a consequence of this we have:
\begin{prop}\label{height1} Let $G$ be a connected reductive linear Lie group. Then the following properties are equivalent:
\begin{enumerate}
\item[i)] $D(G)^K$ is commutative.
\item[ii)] Every irreducible spherical function of $(G,K)$ is of height
one. \end{enumerate}
\end{prop}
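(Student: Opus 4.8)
The plan is to deduce the proposition from the correspondence established just before the statement, namely that an irreducible spherical function $\Phi$ of type $\delta$ amounts to an irreducible representation $D\mapsto[D\Phi](e)$ of the associative algebra $D(G)^K$ on $V$. First I would make the height visible in this correspondence. If $\Phi$ has type $\delta$ and height $h$, then $V\cong V_\delta\otimes\CC^h$ as a $K$-module, with $\pi(K)$ acting on $V_\delta$ and $\CC^h$ playing the role of the multiplicity space; hence $\End_K(V)\cong M_h(\CC)$ operates on the second factor and $C:=\{[D\Phi](e):D\in D(G)^K\}$ is a unital subalgebra of $M_h(\CC)$. Under the established equivalence, $\Phi$ is irreducible exactly when $\CC^h$ is an irreducible $C$-module.

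With this dictionary, the implication (i)$\Rightarrow$(ii) is immediate. If $D(G)^K$ is commutative then $C$ is a commutative unital subalgebra of $M_h(\CC)$; since $\CC$ is algebraically closed, Schur's lemma applied to the irreducible $C$-module $\CC^h$ forces every element of $C$ to act as a scalar, so $C\subseteq\CC\,I$ and the irreducibility of $\CC^h$ gives $h=1$. Thus every irreducible spherical function has height one.

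For the converse I would argue by contraposition, and this is where the real work lies: assuming $D(G)^K$ is not commutative I must exhibit an irreducible spherical function of height at least two. Pick $D_1,D_2\in D(G)^K$ with $[D_1,D_2]\neq0$. Using $D(G)^K=U(\lieg)^K$ and the fact (here reductivity of $G$ is essential) that the unitary representations of $G$ detect nonzero elements of $U(\lieg)$ --- through the faithful action of $U(\lieg)$ on $C^\infty(G)$ together with the decomposition of the regular representation --- I would choose a unitary representation $(\tau,U)$ with $d\tau([D_1,D_2])\neq0$. Since $[D_1,D_2]\in U(\lieg)^K$ preserves each $K$-isotypic subspace $U(\delta)$, some $\delta$ satisfies $d\tau([D_1,D_2])|_{U(\delta)}\neq0$; compressing $\tau$ to $U(\delta)$ via $\Phi(g)=P_\delta\,\tau(g)\,P_\delta$ produces a spherical function of type $\delta$ whose algebra $C$ is noncommutative, and therefore of height at least two.

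The delicate point --- and the step I expect to be the main obstacle --- is that this $\Phi$ need not be irreducible, and I cannot conclude from ``$C$ noncommutative'' that some irreducible constituent has height $\ge2$ unless the module is semisimple: a priori $C$ could be block-triangular with scalar diagonal blocks, so that every constituent has height one while $C$ is noncommutative. To close this gap I would use that $\tau$ is unitary: the adjoint anti-involution of $U(\lieg)^K$ makes $C$ a $\ast$-closed, hence semisimple, subalgebra of $M_h(\CC)$, so $U(\delta)$ splits as a direct sum of $C$-submodules. These submodules are simultaneously $\pi(K)$-invariant, hence $\Phi(G)$-invariant, and restricting $\Phi$ to them yields genuine irreducible spherical functions exactly as in the paragraph preceding the statement; a noncommutative $C$ then forces at least one of them to have height $\ge2$, contradicting (ii). Arranging that the separating representation can be taken unitary and admissible (so that each $U(\delta)$ is finite dimensional and $\Phi$ is well defined) is the part requiring the most care; in a formal write-up I would rely for this on the realization of representations of $D(G)^K$ by spherical functions in \cite{T}.
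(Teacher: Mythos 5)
Your forward implication i) $\Rightarrow$ ii) is correct and coincides with the paper's: the eigenvalue algebra $C$, being commutative and acting irreducibly on the multiplicity space $\CC^h$ over an algebraically closed field, must consist of scalars, forcing $h=1$.

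For ii) $\Rightarrow$ i), however, your contrapositive route has a genuine gap at its foundation: the existence of the separating representation. You need it to be \emph{unitary} (so that $C$ is $\ast$-closed, hence semisimple) and \emph{admissible} (so that $U(\delta)$ is finite dimensional and the compression makes sense), but for a noncompact reductive linear group these two demands pull in opposite directions: finite-dimensional representations are in general not unitarizable, and irreducible unitary representations are infinite dimensional. So your appeal to ``the decomposition of the regular representation'' silently imports direct-integral/Plancherel theory together with Harish-Chandra's admissibility theorem, none of which you establish — and you yourself flag this step as the obstacle, deferring to \cite{T} without a precise statement to invoke. The paper is structured precisely so as to never need any of this. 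Its Lemma \ref{separation} detects a nonzero $D\in D(G)$ by a \emph{finite-dimensional} representation, namely a tensor power $U_s$ of the defining representation of the linear group (this is where linearity enters, via Harish-Chandra's theorem quoted from \cite{W}); reductivity then allows one to replace $U$ by an irreducible summand with $[DU](e)\ne 0$; and — the key fact your write-up does not use — the compression of an \emph{irreducible} representation to a $K$-isotypic component is automatically an irreducible spherical function, as established earlier in the section. With separation by irreducible spherical functions in hand, the paper argues directly rather than by contraposition: height one makes $[D\Phi](e)=\lambda I$ for every irreducible $\Phi$, so $D\mapsto [D\Phi](e)$ is a scalar multiplicative functional and $[(D_1D_2-D_2D_1)\Phi](e)=0$ for every irreducible $\Phi$; separation then gives $D_1D_2=D_2D_1$. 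Consequently the semisimplicity pitfall you correctly diagnose (block-triangular $C$ with scalar diagonal blocks) simply never arises, because the paper never compresses a reducible module; your $\ast$-algebra patch cures a disease the direct argument does not contract. If you wish to keep your contrapositive framing, the repair is to replace your unitary separating representation by the paper's finite-dimensional one: take $D=[D_1,D_2]\ne 0$, choose an irreducible finite-dimensional $U$ with $[DU](e)\ne0$ and a type $\delta$ on which the compression of $D$ is nonzero; the compressed spherical function is irreducible, and it cannot have height one, since height one would force $[D\Phi_\delta](e)=0$.
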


\begin{lem}\label{separation} Let $G$ be a linear Lie group. Given $D\ne0$, $D\in D(G)$, there exists a finite dimensional representation $U$ of $G$ such that $[DU](e)\ne0$.
\end{lem}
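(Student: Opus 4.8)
The plan is to rephrase the statement inside the universal enveloping algebra and then, for a given $D\ne 0$, to produce a single matrix coefficient of an explicit finite dimensional representation on which $D$ does not vanish at $e$. Recall that $D(G)$ is identified with the enveloping algebra $\mathcal U(\lieg_\CC)$ of the complexified Lie algebra, a vector $X\in\lieg$ acting on a representation $U:G\to\GL(V)$ by $(XU)(g)=U(g)\dot U(X)$, where $\dot U:\lieg\to\End(V)$ is the differential; extending $\dot U$ to an algebra map one checks $[DU](e)=\dot U(D)$. Hence the assertion is equivalent to: for every $0\ne D\in\mathcal U(\lieg_\CC)$ there is a finite dimensional $U$ with $\dot U(D)\ne0$, i.e.\ the matrix coefficients of finite dimensional representations separate the elements of $\mathcal U(\lieg_\CC)$. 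Since $G$ is linear I would fix a faithful representation $\rho:G\to\GL(N,\RR)$ (the defining one, realified if it was complex), so that $\dot\rho:\lieg\to\liegl(N,\RR)$ is injective.

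First I would make the action of $D$ at $e$ explicit. Choose a basis $X_1,\dots,X_r$ of $\lieg$ and use canonical coordinates of the second kind $\psi(t_1,\dots,t_r)=\exp(t_1X_1)\cdots\exp(t_rX_r)$, a local diffeomorphism near $0$. A standard computation gives, for the Poincar\'e--Birkhoff--Witt monomials, $(X_1^{\alpha_1}\cdots X_r^{\alpha_r}f)(e)=\partial_t^{\alpha}(f\circ\psi)(0)$. Writing $D=\sum_\alpha c_\alpha X_1^{\alpha_1}\cdots X_r^{\alpha_r}$ in the PBW basis (not all $c_\alpha=0$), this yields $(Df)(e)=P(\partial_t)(f\circ\psi)(0)$, where $P(\partial_t)=\sum_\alpha c_\alpha\partial_t^{\alpha}$ is a nonzero constant coefficient operator. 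So it suffices to exhibit a matrix coefficient $f$ with $P(\partial_t)(f\circ\psi)(0)\ne0$.

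The matrix coefficients of finite dimensional representations form an algebra (sums from direct sums, products from tensor products, constants from the trivial representation), and so do the germs $f\circ\psi$ at $0$. Here $\partial_{t_k}(\rho_{ij}\circ\psi)(0)=(\dot\rho(X_k))_{ij}$, so injectivity of $\dot\rho$ forces these differentials to span $(\RR^r)^*$, hence $\CC^r$. I would then pick complex linear combinations $L_1,\dots,L_r$ of the $\rho_{ij}$ so that, after subtracting their values at $e$, the functions $\ell_k:=L_k\circ\psi$ satisfy $\ell_k(t)=t_k+O(|t|^2)$. Set $m=\max\{|\alpha|:c_\alpha\ne0\}$, choose $\beta$ with $|\beta|=m$ and $c_\beta\ne0$, and put $f=\prod_k L_k^{\beta_k}$, a matrix coefficient of a representation $U$ built from $\rho$ and the trivial representation by direct sums and tensor products. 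Since $f\circ\psi=\prod_k\ell_k^{\beta_k}=t^\beta+(\text{terms of degree}\ge m+1)$, every $\partial_t^\alpha(f\circ\psi)(0)$ with $c_\alpha\ne0$ vanishes except $\alpha=\beta$, whence $[DU](e)=\dot U(D)$ contributes $(Df)(e)=P(\partial_t)(f\circ\psi)(0)=c_\beta\,\beta!\ne0$.

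The only genuine work is the passage from first order separation to separation in all orders: a priori the matrix coefficients might obey hidden differential relations at $e$. What defeats this is precisely the combination of the algebra structure (tensor powers of $\rho$) with faithfulness (the first order differentials span), which lets me realize, for each multi-index $\beta$, a coefficient whose Taylor expansion is $t^\beta$ modulo higher order and thereby detect the top order symbol of $P$. I expect the verification of the coordinate identity $(X^\alpha f)(e)=\partial_t^\alpha(f\circ\psi)(0)$ and this leading term bookkeeping to be the main point, the reduction to $\mathcal U(\lieg_\CC)$ being formal.
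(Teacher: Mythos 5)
Your proof is correct. The reduction $[DU](e)=\dot U(D)$, the coordinate identity $(X_1^{\alpha_1}\cdots X_r^{\alpha_r}f)(e)=\partial_t^{\alpha}(f\circ\psi)(0)$ in canonical coordinates of the second kind (whose ordering matches your PBW ordering, which is the point that makes the formula valid), and the leading-term computation $(Df)(e)=c_\beta\,\beta!\ne0$ all check out; the functions $L_k$ exist because injectivity of $\dot\rho$ forces the functionals $X\mapsto(\dot\rho(X))_{ij}$ to span $\lieg^*$, and $f=\prod_k L_k^{\beta_k}$ is a genuine matrix coefficient of a finite dimensional representation assembled from $\rho\oplus\mathbf{1}$ by direct sums and tensor products, so $(Df)(e)=\phi\bigl([DU](e)v\bigr)\ne0$ indeed yields $[DU](e)\ne0$. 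The contrast with the paper is that its proof is essentially a citation: it assumes $G\subset\SL(E)$, forms the tensor power representations $U_s$ on $\otimes^s E$, and invokes Harish-Chandra's theorem (Warner, \S 2.3.2) that $U_s(D)\ne0$ for some $s$, then translates back through $\mathcal U(\lieg)\simeq D(G)$. Your argument supplies the content of that quoted theorem from scratch: the representation you construct lives inside sums of tensor powers of the defining representation (plus the trivial one), so the underlying mechanism is the same, but you verify the separation directly by Taylor-coefficient bookkeeping at the identity. The paper's route buys brevity; yours buys a self-contained elementary proof that also makes explicit which tensor power suffices (as many factors as the order of $D$) and isolates faithfulness of $\rho$ as the only input.
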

\begin{proof} We may assume that $G$ is a Lie subgroup of $\SL(E)$ for certain real finite dimensional vector space $E$. The identity representation of $G$ extends in the usual way to a representation $U_s$ of $G$ on $E_s=\otimes^s E$.  Let $U_s$ also denote the corresponding representation of the universal enveloping algebra $\mathcal U(\lieg)$ of the Lie algebra $\lieg$ of $G$. Then as Harish-Chandra showed (see $\S$2.3.2 of \cite{W}) there exists $s\in \NN$ such that $U_s(D)\ne0$. Finally, by using the canonical isomorphism $\mathcal U(\lieg)\simeq D(G)$, we obtain $[DU_s](e)=U_s(D)\ne0$.
\end{proof}

\begin{proof}[Proof of Proposition \ref{height1}]
 i) $\Rightarrow$ ii). If $\Phi$ is an irreducible spherical function then  $\Phi:D(G)^K\longrightarrow \End_K(V)$ is an irreducible representation. Therefore, $\End_K(V)\simeq\CC$ which is equivalent to $\Phi$ being of height one.

ii) $\Rightarrow$ i). If $\Phi$ is a spherical function of height one and $D\in D(G)^K$, then $[D\Phi](e)=\lambda I$ with $\lambda \in \CC$. Hence, if $D_1, D_2 \in D(G)^K$ we have
$$[(D_1D_2)\Phi](e)=[D_1\Phi](e)[D_2\Phi](e)=[(D_2D_1)\Phi](e).$$

On the other hand, we have that the irreducible spherical functions of $(G,K)$ separate the elements of $D(G)^K$. In fact, if $D\ne0$, $D\in D(G)^K$, by Lemma \ref{separation} there exists a finite dimensional representation $U$ of $G$ such that $[DU](e)\ne0$. By hypothesis we may assume that $U$ is irreducible. Let $U=\oplus_{\delta\in \hat K}U_\delta$ be the decomposition of $U$ into $K$-isotypic components and let $P_\delta$ be the corresponding projections of $U$ onto $U_\delta$. Then, there exits $\delta\in \hat K$ such that $[D(P_\delta UP_\delta)](e)\ne0$. Thus, the corresponding spherical function $\Phi_\delta$ is irreducible and $[D\Phi_\delta](e)\ne0$. Therefore $D_1D_2=D_2D_1$.
\end{proof}

In this paper the pair $(G,K)$ is $(\SO(4),\SO(3))$.
Then, it is known that $D(G)^K$ is an abelian algebra; moreover, $D(G)^K$ is isomorphic to
$D(G)^G\otimes D(K)^K$ (See Theorem 10.1 in \cite{Kp2} or \cite{Co}), where $D(G)^G$ (resp. $D(K)^K$) denotes
the subalgebra of all
operators in $D(G)$ (resp. $D(K)$) which are invariant under all right
translations by elements in $G$ (resp.
$K$).

Now, in our case we have that $D(G)^G$ is a polynomial
algebra in two algebraically independent generators. This is because the Lie algebra of $G$ is $\mathfrak{so}(4)
\simeq \mathfrak{so}(3)\oplus\mathfrak{so}(3)$, hence, if $\Delta_1$ and $\Delta_2$ are the Casimirs of
the corresponding  $\mathfrak{so}(3)$, we have that  $\Delta_1$ and
$\Delta_2$ generate $D(G)^G$.

The first consequence of this is that all irreducible spherical
functions of our pair $(G,K)$
are of height one.
The second consequence is that to find all irreducible spherical functions of type
$\delta\in \hat K$ is
equivalent to take any irreducible representation $(V,\pi)$ of $K$ in the
class $\delta$ and to
determine all analytic functions
$\Phi:G\longrightarrow \End(V)$ such that
\begin{enumerate}
\item[i)] $\Phi(k_1gk_2)=\pi(k_1)\Phi(g)\pi(k_2)$, for
all $k_1,k_2\in K$, $g\in G$, and $\Phi(e)=I$.
\item[ii)] $[\Delta_1\Phi ](g)=\tilde\lambda\Phi(g)$, $[\Delta_2\Phi ](g)=\tilde\mu\Phi(g)$ for all
$g\in G$ and for some $\tilde\lambda,\tilde\mu\in \CC$.
\end{enumerate}

\noindent A particular choice of these
operators $\Delta_1$ and $\Delta_2$ is given in (\ref{deltas}).

\

Spherical functions of type $\delta$ (see Section 3 in \cite{T}) arise in a natural way upon
considering representations of $G$. If $g\mapsto U(g)$ is a
continuous representation of $G$, say on a finite dimensional vector
space $E$, then $$P_\delta=\int_K \chi_\delta(k^{-1})U(k)\, dk$$ is
a projection of $E$ onto $P_\delta E=E(\delta)$. If $P_\delta\ne0$ the function
$\Phi:G\longrightarrow \End(E(\delta))$ defined by
$$\Phi(g)a=P_\delta U(g)a,\quad g\in G,\; a\in E(\delta),$$
is a spherical function of type $\delta$. In fact, if $a\in E(\delta)$ we have
\begin{align*}
\Phi(x)\Phi(y)a&= P_\delta U(x)P_\delta U(y)a=\int_K \chi_\delta(k^{-1})
P_\delta U(x)U(k)U(y)a\, dk\\
&=\left(\int_K\chi_\delta(k^{-1})\Phi(xky)\, dk\right) a.
\end{align*}

If the representation $g\mapsto U(g)$ is irreducible then the associated
spherical function $\Phi$ is also irreducible. Conversely, any irreducible
spherical function on a compact group $G$ arises in this way from a finite dimensional irreducible representation of $G$.

\subsection{The groups $G$ and $K$}\label{GyK}
\

The three dimensional sphere $S^3$ can be realized as the homogeneous space $G/K$, where $G=\SO(4)$ and $K={\SO(3)}$,
where as usual we identify $\SO(3)$ as a subgroup of $\SO(4)$: for every $k$ in $K$, let $k=\left(\begin{smallmatrix} k &0\\ 0 & 1 \end{smallmatrix}\right) \in G $.

Also, we have a decomposition $G=KAK$, where $A$ is the Lie subgroup of $G$ of all elements of
the form
$$a(\theta)= \left(\begin{matrix} \cos \theta&0& 0&
\sin \theta\\ 0&1&0&0	\\ 0&0&1&0\\ -\sin \theta&0& 0&\cos
\theta\end{matrix}\right)\, ,	\qquad \theta\in \RR.$$

\smallskip
It is well known that there exists a double covering Lie homomorphism $\SO(4)\longrightarrow \SO(3)\times \SO(3)$, in particular $\mathfrak{so}(4)\simeq \mathfrak{so}(3)\oplus \mathfrak{so}(3)$. Explicitly it is obtained in the following way:
 Let  $q:\SO(4)\longrightarrow \GL\big(\Lambda^2(\RR^4)\big)$ be the Lie homomorphism defined by
 $$q(g)(e_i \wedge e_j)=g(e_i)\wedge g(e_j)\,,\qquad g\in \SO(4)\,,\qquad 1\leq i<j\leq4,$$
 where $\{e_j\}_{j =1}^4$ is the canonical basis of $\RR^4$.
Let $\dot{q}:\so(4)\longrightarrow \mathfrak{gl}\big(\Lambda^2(\RR^4)\big)$
denote the corresponding differential homomorphism.

We observe that $\Lambda^2(\RR^4)$ is reducible as  $G$-module. In fact we have the following decomposition into irreducible $G$-modules,
 $\Lambda^2(\RR^4)=V_1 \oplus V_2$, where
  $$V_1=\text{span}\{e_1 \wedge e_4+e_2 \wedge e_3 , e_1 \wedge e_3-e_2 \wedge e_4 , -e_1 \wedge e_2-e_3 \wedge e_4\},$$
$$V_2=\text{span}\{e_1 \wedge e_4-e_2 \wedge e_3 , e_1 \wedge e_3+e_2 \wedge e_4 , -e_1 \wedge e_2+e_3 \wedge e_4\}.$$
Let $P_{1}$ and $P_{2}$ be the canonical  projections onto the subspaces $V_1$ and $V_2$, respectively.
The functions defined by
$$a(g)=P_{1}\,q(g)_{\mid_{V_1}}, \qquad b(g)=P_{2}\,q(g)_{|{V_2}}, $$
are  Lie homomorphisms from $\mathrm{SO}(4)$ onto $\mathrm{SO}(V_1)\simeq \mathrm{SO}(3)$ and $\mathrm{SO}(V_2)\simeq \mathrm{SO}(3)$, respectively.
 Therefore, in an appropriate basis we have for each $g\in\SO(4)$ and for all $X\in\so(4)$
\begin{equation}\label{funcionq}
q(g)=\left(\begin{matrix}  a(g) &0\\ 0 & b(g) \end{matrix}\right), \qquad
\dot{q}(X)=\left(\begin{matrix} \dot a(X) & 0\\ 0 & \dot b(X) \end{matrix}\right).
\end{equation}

Hence, we can consider $q$ as a   homomorphism from $\mathrm{SO}(4)$ onto $\mathrm{SO}(3)\times \mathrm{SO}(3)$
with kernel $\{I,-I\}$.

\subsection{The Lie algebra structure}
\
\color{black}

The cartan involution of $G$ (respectively of $\mathfrak g$) is $\Theta (g)=I_{3,1}gI_{3,1}$ (respectively $\theta (X)=I_{3,1}XI_{3,1}$), where $$I_{3,1}=\left(\begin{smallmatrix}
1&0&0&0\\0&1&0&0\\0&0&1&0\\0&0&0&-1\\
\end{smallmatrix}\right).$$
The subgroup $K$ is the connected component of the fixed points of $\Theta$ in $G$. The corresponding Cartan decomposition of the Lie algebra of $G$ is $\mathfrak g = \mathfrak k \oplus \mathfrak p $, $\mathfrak k$ being the Lie algebra of $K$ and $\mathfrak p $ being the $(-1)$-eigenspace of $\theta$.
\color{black}

A basis of $\lieg=\mathfrak{so}(4)$ over $\RR$ is given by
\begin{align*}
 Y_1=&\left( \begin{smallmatrix} 0&1&0&0 \\-1&0&0&0 \\0&0&0&0 \\0&0&0&0 \end{smallmatrix}\right),&
Y_2=&\left( \begin{smallmatrix} 0&0&1&0\\0&0&0&0 \\-1&0&0&0 \\0&0&0&0 \end{smallmatrix}\right),&
Y_3=&\left( \begin{smallmatrix} 0&0&0&0 \\0&0&1&0  \\0&-1&0&0 \\0&0&0&0 \end{smallmatrix}\right), \\
 Y_4=&\left( \begin{smallmatrix} 0&0&0&1 \\0&0&0&0 \\0&0&0&0 \\-1&0&0&0 \end{smallmatrix}\right),&
Y_5=&\left( \begin{smallmatrix} 0&0&0&0 \\0&0&0&1 \\0&0&0&0 \\0&-1&0&0 \end{smallmatrix}\right),&
Y_6=&\left( \begin{smallmatrix} 0&0&0&0 \\0&0&0&0  \\0&0&0&1 \\0&0&-1&0 \end{smallmatrix}\right).
\end{align*}
\color{black}
Observe that $\{Y_1,Y_2,Y_3\}$ is a basis of $\mathfrak k$ and $\{Y_4,Y_5,Y_6\}$ is a basis of $\mathfrak p$.

\color{black}
Consider the following vectors
$$
Z_1=\frac{1}{2}\left(Y_3+Y_4\right), \quad
Z_2=\frac{1}{2}\left(Y_2-Y_5\right), \quad
Z_3=\frac{1}{2}\left(Y_1+Y_6\right), \quad   $$
$$
Z_4=\frac{1}{2}\left(Y_3-Y_4\right), \quad
Z_5=\frac{1}{2}\left(Y_2+Y_5\right), \quad
Z_6=\frac{1}{2}\left(Y_1-Y_6\right). \quad   $$
It can be proved that these vectors define a basis of $\so(4)$ adapted to the decomposition $\so(4)\simeq \so(3)\oplus\so(3)$, i.e.
$\{Z_4,Z_5,Z_6 \}$ is a basis of the first summand and $\{Z_1,Z_2,Z_3 \}$ is a basis of the second one.

\
\smallskip
The algebra $D(G)^G$ is generated by the algebraically independent elements
\begin{equation}\label{deltas}
 \Delta_1=-Z_4^2-Z_5^2-Z_6^2\, , \qquad \qquad
 \Delta_2= -Z_1^2-Z_2^2-Z_3^2,
\end{equation}
which are the Casimirs of the first and the second $\mathfrak{so}(3)$ respectively. The Casimir of $K$ will be denoted by $\Delta_K$, and it is given by $-Y_1^2-Y_2^2-Y_3^2$.

The complexification of $\liek$  is isomorphic to $\mathfrak{sl}(2,\CC)$. If we define
\begin{equation}\label{efh} e= \left( \begin{smallmatrix} 0&i&-1 \\-i&0&0 \\1&0&0 \end{smallmatrix}\right),  \qquad
  f= \left( \begin{smallmatrix} 0&i&1 \\-i&0&0 \\-1&0&0 \end{smallmatrix}\right) , \qquad
  h= \left( \begin{smallmatrix} 0&0&0 \\0&0&-2i \\0&2i&0 \end{smallmatrix}\right)
\end{equation}
then we have that $\{e,f,h\}$ is an ${s}$-triple in $\liek_\CC$, i.e. $$[e,f]=h,\qquad[h,e]=2e,\qquad[h,f]=-2f .$$

We take as a Cartan subalgebra $\mathfrak h_\CC$ of $\mathfrak{so}(4,\CC)$ the complexification of the maximal abelian subalgebra of $\so(4)$ of all matrices of the form
\begin{align*}
H=  \begin{pmatrix}
    0   &x_1  &0&0
\\ -x_1&0     & 0    &0
\\ 0    &0     &0     &x_2
\\ 0    &0     &-x_2 &0
  \end{pmatrix}.
\end{align*}

\noindent Let $\varepsilon_j\in \mathfrak h_\CC^*$  be given by $\varepsilon_j(H)=-i x_j$
for $j=1,2$. Then
$$\Delta(\mathfrak g_\CC,\mathfrak h_\CC)=\vzm{\pm(\varepsilon_1\pm \varepsilon_2)}{\varepsilon_1, \varepsilon_2\in \mathfrak h_\CC^*},$$
and we choose as positive roots those in the set $\Delta^+(\mathfrak g_\CC,\mathfrak h_\CC)=\{\varepsilon_1-\varepsilon_2,\varepsilon_1+\varepsilon_2\}.$

We define
\begin{equation*}
\begin{split}
X_{\varepsilon_1+\varepsilon_2}=  \begin{pmatrix}
   0 & 0  & 1 & -i
\\ 0 & 0  & -i & -1
\\ -1 & i  & 0 & 0
\\ i & 1  & 0 & 0
  \end{pmatrix},\,\,\,
X_{\varepsilon_1-\varepsilon_2}&=  \begin{pmatrix}
   0 & 0  & 1 & i
\\ 0 & 0  & -i & 1
\\ -1 & i  & 0 & 0
\\ -i & -1  & 0 & 0
  \end{pmatrix},\\
X_{-\varepsilon_1+\varepsilon_2}=  \begin{pmatrix}
   0 & 0  & 1 & -i
\\ 0 & 0  & i & 1
\\ -1 & -i  & 0 & 0
\\ i & -1  & 0 & 0
  \end{pmatrix},
X_{-\varepsilon_1-\varepsilon_2}&=  \begin{pmatrix}
   0 & 0  & 1 & i
\\ 0 & 0  & i & -1
\\ -1 & -i  & 0 & 0
\\ -i & 1  & 0 & 0
  \end{pmatrix}.
\end{split}
\end{equation*}

\noindent Then, for every $H$ in $\mathfrak{h}_\CC$ we get that
 $$ [H,X_{\pm(\varepsilon_1\pm\varepsilon_2)}]=\pm(\varepsilon_1\pm\varepsilon_2)(H)X_{\pm(\varepsilon_1\pm\varepsilon_2)}.$$
Hence, each $X_{\pm(\varepsilon_1\pm\varepsilon_2)}$ belongs to the root-space $\mathfrak g_{\pm(\varepsilon_1\pm\varepsilon_2)}$.

Then, in terms of the root structure of $\mathfrak{so}(4,\CC)$, $\Delta_1$ and $\Delta_2$ become
\begin{equation} \label{Delta23}
\begin{split}
\Delta_1&=-Z_6^2+iZ_6-(Z_5+iZ_4)(Z_5-iZ_4), \\
\Delta_2&=-Z_3^2+iZ_3-(Z_2+iZ_1)(Z_2-iZ_1) .
\end{split}
\end{equation}
We observe that $(Z_5-iZ_4)=X_{\varepsilon_1-\varepsilon_2}\in \mathfrak g_{\varepsilon_1-\varepsilon_2}$ and
 $(Z_2-iZ_1)=X_{\varepsilon_1+\varepsilon_2}\in \mathfrak g_{\varepsilon_1+\varepsilon_2}$
and $ Z_3,  Z_6 \in \mathfrak h _\CC$.

\subsection{Irreducible representations of $G$ and $K$.}\label{representations}
\

Let us first consider $\SU (2)$. It is well known that the irreducible finite dimensional representations of $\SU(2)$ are, up to equivalence, $(\pi_\ell,V_\ell)_{\ell\geq0}$, where $V_\ell$ is the
complex vector space of all polynomial functions in two complex variables $z_1$ and $z_2$ homogeneous of degree $\ell$, and $\pi_\ell$ is defined by
$$\pi_\ell\left(\begin{matrix}a &b \\c & d\end{matrix}\right)\, P\left(\begin{matrix} z_1\\z_2\end{matrix}\right)=
P\left(\left(\begin{matrix}a &b \\c & d\end{matrix}\right)^{-1}\left(\begin{matrix} z_1\\z_2\end{matrix}\right)\right),
 \qquad \text{for } \left(\begin{matrix}a &b \\c & d\end{matrix}\right) \in \SU(2).$$
Hence, since there is a Lie homomorphism of $\SU(2)$ onto $\SO(3)$ with kernel $\{\pm I\}$, the irreducible representations of $\SO(3)$ correspond to those
representations $\pi_\ell$ of $\SU(2)$ with $\ell \in 2\NN_0$. Therefore, we have $\hat \SO(3)=\{[\pi_\ell]\}_{\ell\in2\NN_0}$,
 even more, if $\pi=\pi_\ell$ is any such irreducible representation of
$\mathrm{SO}(3)$,  it is well known (see \cite{Hu}, page 32) that there exists a basis $\mathcal B=\vz{v_j}_{j=0}^\ell$ of $V_\pi$
such that the corresponding representation $\dot\pi$ of the complexification of $\so(3)$ is given by
\begin{equation*}
\begin{split}
&\dot\pi(h)v_j=(\ell-2j)v_j,\\
&\dot\pi(e)v_j=(\ell-j+1)v_{j-1}, \quad (v_{-1}=0),\\
&\dot\pi(f)v_j=(j+1)v_{j+1}, \quad (v_{\ell+1}=0).
\end{split}
\end{equation*}

{ It is known (see \cite{vilenkin}, page 362) that an irreducible representation $\tau\in \hat\SO(4)$ has highest weight of the form $\eta = m_1\varepsilon_1+m_2\varepsilon_2$,}
 where  $m_1$ and $m_2$ are integers such that $m_1\geq |m_2|$. Moreover, the  representation $\tau=\tau_{(m_1,m_2)}$, restricted to $\SO(3)$, contains the representation $\pi_\ell$  if and only if
 $$m_1\geq\tfrac\ell2\geq |m_2|.$$

\subsection{$K$-orbits in $G/K$. }
\

The group $G=\SO(4)$ acts in a natural way in the sphere $S^3$.
This action is transitive and $K$ is the isotropy subgroup of the north pole
$e_4=(0,0,0,1)\in S^3$. Therefore, $ S^3\simeq G/K.$
Moreover, the $G$-action on $S^3$
corresponds to the action induced by left multiplication on $G/K$.

In the north hemisphere of $S^3$
 $$(S^3)^{+}=\vzm{x=(x_1,x_2,x_3,x_4)\in S^3}{x_4>0},$$ we will consider the coordinate system
{ $p:(S^3)^{+}\longrightarrow \RR^3$ given by the central projection of the sphere onto its tangent plane at the north pole (see Figure \ref{fig1}):}
\begin{equation}\label{pfunction}
p(x)=\left(\frac{x_1}{x_4},\frac{x_2}{x_4},\frac{x_3}{x_4}\right)=(y_1,y_2,y_3).
 \end{equation}

\begin{figure}
 \centering
\includegraphics{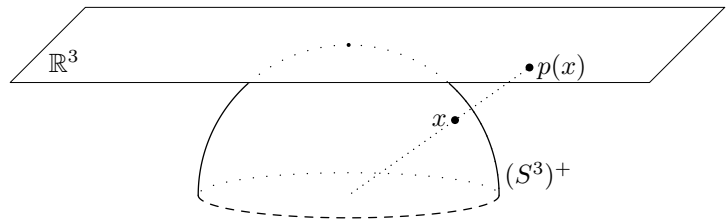}
 \caption{The central projection $p$.}
\label{fig1}
\end{figure}

{ Homogeneous coordinates were also used in the case of the complex projective plane, see \cite{GPT1}.}
The coordinate map $p$ carries the $K$-orbits in $(S^3)^+$  into the $K$-orbits in $\RR^3$, which are the spheres
 $$S_r=\vzm{(y_1,y_2,y_3)\in \RR^3}{|| y||^2=|y_1|^2+|y_2|^2+|y_3|^2=r^2}, \qquad 0\leq r <\infty.$$
Then, the interval $[0,\infty)$ parameterizes the set of $K$-orbits of $\RR^3$.

\subsection{The auxiliary function $\Phi_\pi$}\label{auxiliar}
\

As in \cite{GPT1}, to determine all irreducible spherical functions $\Phi$ of type $\pi=\pi_\ell\in \hat K$  an auxiliary function $\Phi_\pi: G\longrightarrow \End(V_\pi)$ is introduced.
In this case it is defined by
\begin{equation*}
\Phi_\pi(g)= \pi(a(g)), \qquad g\in G,
\end{equation*}
where $a$ is the Lie homomorphism from $\SO(4)$ to $\SO(3)$ given in \eqref{funcionq}.
It is clear that $\Phi_\pi$ is an irreducible representation of $\SO(4)$ and hence a spherical function of type $\pi$ (see Definition \ref{def}).

\section{The differential operators $D$ and $E$}
To determine all irreducible spherical functions on $G$ of type  $\pi\in \hat K$, it is equivalent to determine  all analytic functions
$\Phi:G\longrightarrow \End(V_\pi)$ such that
\begin{enumerate}
\item[i)] $\Phi(k_1gk_2)=\pi(k_1)\Phi(g)\pi(k_2)$, for
all $k_1,k_2\in K$, $g\in G$, and $\Phi(e)=I$.
\item[ii)] $[\Delta_1\Phi ](g)=\widetilde\lambda\Phi(g)$, $[\Delta_2\Phi ](g)=\widetilde\mu\Phi(g)$ for all
$g\in G$ and for some $\widetilde\lambda,\widetilde\mu\in \CC$.
\end{enumerate}

Instead of looking at an irreducible  spherical function
$\Phi$ of type $\pi$, we use the auxiliary function $\Phi_\pi$ to look at the
function $H:G\longrightarrow \End(V_\pi)$ defined by
\begin{equation}\label{defHg}
H(g)=\Phi(g)\Phi_\pi (g)^{-1}.
\end{equation}
We observe that $H$ is well defined on $G$ because $\Phi_\pi$ is a representation of $G$. This function $H$, associated to the spherical function $\Phi$, satisfies
\begin{enumerate}
\item [i)] $H(e)=I$.
\item [ii)] $ H(gk)=H(g)$, for all $g\in G, k\in K$.
\item [iii)] $H(kg)=\pi(k)H(g)\pi(k^{-1})$, for all
$g\in G, k\in K$.
\end{enumerate}

\smallskip
The fact that $\Phi$ is an eigenfunction of $\Delta_1 $ and $\Delta_2$ makes the function $H$ into an eigenfunction of certain differential operators $D$ and $E$ on $G$ to be determined now.
Let us define
\begin{align}
\label{Ddefuniv} D(H)&=Y_4^2(H)+Y_5^2(H)+Y_6^2(H),\\
\label{Edefuniv}
E(H)&=\big(-Y_4(H)Y_3(\Phi_\pi)+ Y_5(H)Y_2(\Phi_\pi)-Y_6(H)Y_1(\Phi_\pi) \big)\Phi_\pi^{-1}.
\end{align}

\smallskip

\begin{prop}\label{relacionautovalores}
 For any $H\in C^{\infty}(G)\otimes
\End(V_\pi)$ right invariant under $K$, the function $\Phi=H\Phi_\pi$
satisfies  $\Delta_1 \Phi=\widetilde\lambda \Phi$ and
$\Delta_2\Phi=\widetilde\mu \Phi$
 if and only if $H$ satisfies $DH=\lambda H$ and $E H=\mu H$, with
$$\lambda = -4\widetilde\lambda, \quad \mu = -\tfrac14\ell(\ell+2)+\widetilde\mu-\widetilde\lambda.$$
\end{prop}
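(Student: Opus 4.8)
The plan is to compute $\Delta_1\Phi$ and $\Delta_2\Phi$ directly from $\Phi=H\Phi_\pi$, using two structural facts. First, $\Phi_\pi=\pi\circ a$ is a homomorphism, so for every $X\in\lieg$ we have $X(\Phi_\pi)=\Phi_\pi\,\dot\pi(\dot a(X))$; iterating, any element of $D(G)$ acts on $\Phi_\pi$ by right multiplication by the image of the corresponding element of $\mathcal U(\lieg)$ under $\dot\pi\circ\dot a$. Second, the right $K$-invariance $H(gk)=H(g)$ forces $Y(H)=0$ for every $Y\in\liek$, in particular $Y_1(H)=Y_2(H)=Y_3(H)=0$. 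A short computation of $\dot a$ on the $\{Z_i\}$ (using $Y_4=Z_1-Z_4$, $Y_5=Z_5-Z_2$, $Y_6=Z_3-Z_6$) shows that $\dot a$ annihilates the summand spanned by $\{Z_4,Z_5,Z_6\}$, so that $\Delta_1\Phi_\pi=0$, $\Delta_2\Phi_\pi=\tfrac14\ell(\ell+2)\Phi_\pi$, and
\[ \dot\pi(\dot a(Y_4))=\dot\pi(Y_3),\qquad \dot\pi(\dot a(Y_5))=-\dot\pi(Y_2),\qquad \dot\pi(\dot a(Y_6))=\dot\pi(Y_1), \]
where $\tfrac14\ell(\ell+2)$ is the Casimir eigenvalue $\dot\pi(\Delta_K)$ on $\pi_\ell$.

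Next I rewrite the two Casimirs in the $\{Y_i\}$-basis. Expanding $\Delta_1=-Z_4^2-Z_5^2-Z_6^2$ and $\Delta_2=-Z_1^2-Z_2^2-Z_3^2$ and collecting gives
\[ \Delta_1+\Delta_2=\tfrac12\Delta_K-\tfrac12\big(Y_4^2+Y_5^2+Y_6^2\big) \]
and
\[ \Delta_2-\Delta_1=-\tfrac12\big((Y_3Y_4+Y_4Y_3)-(Y_2Y_5+Y_5Y_2)+(Y_1Y_6+Y_6Y_1)\big), \]
where $\Delta_K=-Y_1^2-Y_2^2-Y_3^2$. I then apply these to $\Phi=H\Phi_\pi$ by the Leibniz rule. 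In $\Delta_1+\Delta_2$ the only genuinely second-order operator acting on $H$ is $D=Y_4^2+Y_5^2+Y_6^2$, and the $\Delta_K$ part contributes only through $\Phi_\pi$ because $Y_i(H)=0$ for $i\le 3$. In $\Delta_2-\Delta_1$ the three mixed products simplify because the brackets $[Y_3,Y_4]$, $[Y_2,Y_5]$, $[Y_1,Y_6]$ all vanish (a one-line matrix check), so each $(Y_aY_b+Y_bY_a)(\Phi)$ reduces to $2Y_b(H)Y_a(\Phi_\pi)+H(Y_aY_b+Y_bY_a)(\Phi_\pi)$. The decisive point is that, by the displayed values of $\dot\pi\circ\dot a$ on $Y_4,Y_5,Y_6$, the first-order cross terms reassemble exactly into $E$: one finds $\sum_{i=4}^{6}Y_i(H)Y_i(\Phi_\pi)=-E(H)\Phi_\pi$, while the cross term coming from $\Delta_2-\Delta_1$ equals $E(H)\Phi_\pi$.

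Carrying out both computations and using $\Delta_1\Phi_\pi=0$, $\Delta_2\Phi_\pi=\tfrac14\ell(\ell+2)\Phi_\pi$ to evaluate the pure $\Phi_\pi$-terms, I obtain two identities valid for every right-$K$-invariant $H$, with no eigenvalue hypothesis:
\[ D(H)\,\Phi_\pi=-4\,\Delta_1\Phi \]
and
\[ E(H)\,\Phi_\pi=(\Delta_2-\Delta_1)\Phi-\tfrac14\ell(\ell+2)\,\Phi. \]
Since $\Phi_\pi(g)$ is invertible for all $g$, these can be solved for $D(H)$ and $E(H)$, and the proposition follows at once: $\Delta_1\Phi=\widetilde\lambda\Phi$ is equivalent to $D(H)=-4\widetilde\lambda\,H=\lambda H$, and, granting this, $\Delta_2\Phi=\widetilde\mu\Phi$ is equivalent to $E(H)=\big(\widetilde\mu-\widetilde\lambda-\tfrac14\ell(\ell+2)\big)H=\mu H$, i.e. $\lambda=-4\widetilde\lambda$ and $\mu=-\tfrac14\ell(\ell+2)+\widetilde\mu-\widetilde\lambda$; reading the same equivalences backwards gives the converse.

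The main obstacle is the bookkeeping in the middle step, namely the explicit determination of $\dot a$ on the $\{Y_i\}$ (equivalently the precise values and signs of $\dot\pi(\dot a(Y_4))$, $\dot\pi(\dot a(Y_5))$, $\dot\pi(\dot a(Y_6))$). This is exactly what makes the first-order cross terms coincide with $E$ and fixes the sign of the $\tfrac14\ell(\ell+2)$ term; the remaining manipulations are only the Leibniz rule together with the vanishing of the three brackets and $Y(H)=0$ for $Y\in\liek$.
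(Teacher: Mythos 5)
Your proof is correct and follows essentially the same route as the paper's: both arguments rest on $\Phi_\pi$ being a homomorphism with $\dot a$ annihilating $\mathrm{span}\{Z_4,Z_5,Z_6\}$ (so $Z_j(\Phi_\pi)=0$ for $j=4,5,6$), on $Y_1(H)=Y_2(H)=Y_3(H)=0$ together with $[Y_3,Y_4]=[Y_2,Y_5]=[Y_1,Y_6]=0$, and on $\dot\pi(\Delta_K)=\tfrac14\ell(\ell+2)I$; your detour through $\Delta_1+\Delta_2$ and $\Delta_2-\Delta_1$ in the $Y$-basis is only a repackaging of the paper's direct Leibniz computation of $\Delta_1(H\Phi_\pi)$ and $\Delta_2(H\Phi_\pi)$ in the $Z$-basis, and your two unconditional identities are exactly those recorded in Remark \ref{DyEconmutan}. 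One harmless notational slip: the displayed relations should read $\dot a(Y_4)=\dot a(Y_3)$, $\dot a(Y_5)=-\dot a(Y_2)$, $\dot a(Y_6)=\dot a(Y_1)$ (equivalently $Y_4(\Phi_\pi)=Y_3(\Phi_\pi)$, etc., which is all the reassembly into $E$ actually uses), rather than $\dot\pi(\dot a(Y_4))=\dot\pi(Y_3)$, since $\dot a|_{\liek}$ need not be the identity in the chosen basis of $V_1$.
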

\begin{proof}
We firstly observe that $Z_4(\Phi_\pi)=Z_5(\Phi_\pi)=Z_6(\Phi_\pi)=0$, because $\Phi_\pi$ is a representation of $G$ and $\dot a(Z_j)=0$ for $j=4,5,6$. In fact,
\begin{align*}
 Z_j(\Phi_\pi)(g)&
 =\left.\tfrac{d}{dt} \right|_{t=0} \left[\Phi_\pi(g) \Phi_\pi(\exp tZ_j) \right] = \Phi_\pi(g)  \dot\pi(\dot a (Z_j))
 =0.
\end{align*}

On the other hand, since $H$ is right invariant under $K$, we have that  $Y_1(H)=Y_2(H)=Y_3(H)=0$.
Since $[Y_3,Y_4]=0$,  $[Y_2,Y_5]=0$ and  $[Y_1,Y_6]=0$, we have that $Z_j^2(H)=\frac 14 Y_j^2(H)$, for $j=4,5,6$.
Therefore, we obtain
\begin{align*}
\Delta_1(H\Phi_\pi)&=-\sum_{j=4}^{6}Z_j^2(H)\,\Phi_\pi=-\frac{1}{4}\sum_{j=4}^{6}Y_j^2(H)\,\Phi_\pi = -\frac{1}{4}D(H) \Phi_\pi.
\end{align*}

On the other hand, we have
\begin{align*}
\Delta_2 (H\Phi_\pi)=-\sum_{j=1}^{3}\big(Z_j^2(H)\,\Phi_\pi+2Z_j(H)Z_j(\Phi_\pi)+HZ_j^2(\,\Phi_\pi)\big),
 \end{align*}
We observe that $Z_1(H)= \frac 12 Y_4(H)$. Since $Z_1=Y_3-Z_4$, we have
$Z_1(\Phi_\pi)=Y_3(\Phi_\pi)$ and $Z_1^2(\Phi_\pi)=Y_3^2(\Phi_\pi)$. Similar results hold for $Z_2$ and $Z_3$.
Therefore,
\begin{align*}
\Delta_2 (H\Phi_\pi)&=-\big(Z_1^2(H)\,\Phi_\pi+Y_4(H)Y_3(\Phi_\pi)+HY_3^2(\,\Phi_\pi)\big) \\
 &\quad -\big(Z_2^2(H)\,\Phi_\pi-Y_5(H)Y_2(\Phi_\pi)+HY_2^2(\,\Phi_\pi)\big) \\
 &\quad -\big(Z_3^2(H)\,\Phi_\pi+Y_6(H)Y_1(\Phi_\pi)+HY_1^2(\,\Phi_\pi)\big) \\
 &=-\frac14 D(H) \,\Phi_\pi +E(H)\,\Phi_\pi + H \Delta_K(\Phi_\pi)\\
 &=-\frac14 D(H)\,\Phi_\pi +E(H)\,\Phi_\pi + H \Phi_\pi\dot\pi(\Delta_K).
 \end{align*}

Since $\Delta_K\in D(G)^K$, Schur's Lemma tells us that $\dot\pi(\Delta_K)=cI$.
Now we have $\Delta_1(H\Phi_\pi)=\widetilde\lambda H\Phi_\pi$ and $\Delta_2(H\Phi_\pi)=\widetilde\mu H\Phi_\pi$ if and only if
$D(H)=\lambda H$ and $E(H)=\mu H$, where
$$\widetilde\lambda=-\frac{1}{4}\lambda  \qquad \text{ and } \qquad \widetilde\mu=c+\widetilde\lambda+\mu .  $$

In order to compute the constant $c$, we take a highest weight vector $v \in V_\pi$, and write $Y_1$, $Y_2$, $Y_3$ in terms of the basis $\{e,f,g\}$ introduced in \eqref{efh}. It follows that
\begin{align*}
\dot\pi (\Delta_K) v &=\dot\pi\left(-\big(\tfrac{-i}{2}(e+f)\big)^2-\big(\tfrac{-1}{2}(e-f)\big)^2-\big(\tfrac{i}{2}h\big)^2\right)  v \\
&= \frac{-1}{4}\dot\pi\left( - 2 e f - 2 f e -h^2 \right)v = \frac{1}{4}\dot\pi\left( 2(f e+h)+2 f e +h^2\right) v \\
&= \frac{1}{4}\left( 2\ell+\ell^2\right)v = \frac{\ell(\ell+2)}{4} \,v\,.
\end{align*}
Thus, $c={\ell(\ell+2)}/{4}$ completing the proof of the proposition.
\end{proof}

{ \begin{remark}\label{DyEconmutan}
We observe that the differential operators $D$ and $E$ commute. In fact, from the proof of Proposition \ref{relacionautovalores} we have that
  \begin{align*}
    D(H)& = -4\Delta_1(H\Phi_\pi)\Phi_\pi^{-1},\\
    E(H)& = \Delta_2(H\Phi_\pi) \Phi_\pi^{-1}+\tfrac 14 D(H)-\tfrac{\ell(\ell+2)}2 H,
  \end{align*}
and $\Delta_1$ and  $\Delta_2$ commute because they are in the center of the algebra $D(G)$.
\end{remark}
}

\subsection{Reduction to $G/K$}\label{redG/K}
\

The quotient $G/K$ is the sphere $S^3$; moreover, the canonical diffeomorphism is given by $gK\mapsto (g_{14},g_{24}, g_{34}, g_{44})\in S^3$.

The function $H$ associated to the spherical function $\Phi$ is right invariant under $K$;
then, it may be considered
 as a function on  $S^3$, which we also called $H$. The differential operators  $D$ and $E$ introduced in \eqref{Ddefuniv} and \eqref{Edefuniv},
 define differential operators on $S^3$.

\begin{lem}\label{referee} The differential operators $D$ and $E$ on $G$ define differential operators $D$
and $E$ acting on $C^\infty( S^3)\otimes\End(V_\pi)$.
\end{lem}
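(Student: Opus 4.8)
The plan is to recast the statement as a closure property: a left invariant differential operator on $G$ descends to a differential operator on $G/K\simeq S^3$ precisely when it carries right $K$-invariant functions to right $K$-invariant functions, so what must be checked is that $D(H)$ and $E(H)$ are right $K$-invariant whenever $H$ is. Rather than wrestle with the raw expressions \eqref{Ddefuniv} and \eqref{Edefuniv}, I would route everything through the identities recorded in Remark \ref{DyEconmutan},
\[
D(H)=-4\,\Delta_1(H\Phi_\pi)\Phi_\pi^{-1},\qquad
E(H)=\Delta_2(H\Phi_\pi)\Phi_\pi^{-1}+\tfrac14 D(H)-\tfrac{\ell(\ell+2)}2 H,
\]
which hold for every right $K$-invariant $H$ and transfer the whole question onto the well-understood operators $\Delta_1,\Delta_2\in D(G)^G$ and the auxiliary representation $\Phi_\pi$.

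The proof then rests on two elementary facts about right translation $R_k$ by $k\in K$. First, since $\Phi_\pi$ is a representation of $G$ we have $\Phi_\pi(gk)=\Phi_\pi(g)\Phi_\pi(k)$, and since $H$ is right $K$-invariant, $H(gk)=H(g)$; hence the function $F:=H\Phi_\pi$ satisfies $F(gk)=F(g)\Phi_\pi(k)$, i.e.\ $R_kF=F\cdot\Phi_\pi(k)$ is right multiplication by the constant matrix $\Phi_\pi(k)$. Second, each $\Delta_i$ is a scalar operator built from left invariant vector fields and lying in $D(G)^G\subseteq D(G)^K$: as such it commutes with right multiplication by a constant matrix, $\Delta_i(F\cdot\Phi_\pi(k))=\Delta_i(F)\cdot\Phi_\pi(k)$, and being $K$-invariant it commutes with $R_k$. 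Combining these yields $(\Delta_i F)(gk)=(\Delta_i F)(g)\Phi_\pi(k)$ for $i=1,2$.

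With these two facts the conclusion is a one-line cancellation, which is also the only nontrivial point of the argument. Indeed,
\[
D(H)(gk)=-4(\Delta_1F)(gk)\,\Phi_\pi(gk)^{-1}
=-4(\Delta_1F)(g)\,\Phi_\pi(k)\Phi_\pi(k)^{-1}\Phi_\pi(g)^{-1}
=D(H)(g),
\]
the factor $\Phi_\pi(k)$ produced by $\Delta_1 F$ being annihilated by the $\Phi_\pi(k)^{-1}$ coming from $\Phi_\pi(gk)^{-1}=\Phi_\pi(k)^{-1}\Phi_\pi(g)^{-1}$. The identical computation for $\Delta_2F$ shows that $\Delta_2(H\Phi_\pi)\Phi_\pi^{-1}$ is right $K$-invariant; since $D(H)$ has just been shown to be right $K$-invariant and $H$ is so by hypothesis, the formula for $E(H)$ displays it as a sum of right $K$-invariant terms. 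The main obstacle is thus precisely this cancellation of the $\Phi_\pi(k)$ factors: it is what allows the inhomogeneous-looking operator $E$, carrying the explicit factor $\Phi_\pi^{-1}$, to nevertheless respect right $K$-invariance. I note in passing that $D$ alone could be handled purely infinitesimally, since $\mathfrak p=\langle Y_4,Y_5,Y_6\rangle$ is $\operatorname{Ad}(K)$-stable and $\sum_{j=4}^6 Y_j^2$ is fixed by the orthogonal isotropy action of $K$, so that $D\in D(G)^K$; but that route does not reach $E$, which is why I would organize the entire proof around Remark \ref{DyEconmutan}. Once both operators preserve right $K$-invariance, the standard identification of right $K$-invariant functions on $G$ with functions on $G/K\simeq S^3$ produces the asserted operators on $C^\infty(S^3)\otimes\End(V_\pi)$.
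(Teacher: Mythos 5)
Your proof is correct and takes essentially the same route as the paper's (referee-supplied) proof: both reduce the lemma to showing $D$ and $E$ preserve right $K$-invariance, express the operators via $\Delta_1,\Delta_2\in D(G)^K$ applied to $H\Phi_\pi$ as in Remark \ref{DyEconmutan}, and use the equivariance $(H\Phi_\pi)(gk)=(H\Phi_\pi)(g)\Phi_\pi(k)$ together with the right-$K$-invariance of the $\Delta_i$ to cancel the $\Phi_\pi(k)$ factors. The only difference is presentational: the paper runs a single pullback computation with $r_k^*$ for a generic $\Delta\in D(G)^K$, while you spell out the cancellation operator by operator (and add a correct side remark that $D$ alone lies in $D(G)^K$ directly).
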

\begin{proof}
The only thing we need to prove is that $D$ and $E$ preserve the subspace  $C^{\infty}(G)^K\otimes\End(V_\pi)$.
\color{black}

Given an irreducible spherical function  $\Phi$ of type $\pi$ and the function $\Phi_\pi$ introduced in Subsection \ref{auxiliar}, let $H(g)=\Phi(g)\,\Phi_\pi^{-1}(g)$. Consider any $D\in D(G)^K $ and the right translation $r_k(g)=gk$. Then
\begin{multline*}
 r_k^*(DH)(g)=r_k^*(\Delta(H\Phi_\pi))(g)\pi(k)^{-1}\Phi_\pi^{-1}(g)=\\
r_k^*(\Delta)(r_k^*(H\Phi_\pi))(g)\pi(k)^{-1}\Phi_\pi^{-1}(g)=\Delta(H\Phi_\pi)(g)\Phi_\pi^{-1}(g)=DH(g),
\end{multline*}
showing that  $DH$ is right $K$-invariant.
\end{proof}
\color{black}
Now we give the expressions of the operators $D$ and $E$ in the coordinate system
{ $p:(S^3)^{+}\longrightarrow \RR^3$ introduced in \eqref{pfunction} and given by}
\begin{equation*}
p(x)=\left(\frac{x_1}{x_4},\frac{x_2}{x_4},\frac{x_3}{x_4}\right)=(y_1,y_2,y_3).
 \end{equation*}

\color{black}
 We need the following propositions which require simple but lengthy computations. The \color{black} outlines of the proofs appear in the Appendix.

\color{black}
\begin{prop}\label{D3}
For any $H\in C^\infty( \RR^3)\otimes
\End(V_\pi)$ we have
\begin{align*}
D(H)& (y) =  (1+\left\|y\right\|^2)\Big((y_1^2+1) H_{y_1y_1}+(y_2^2+1) H_{y_2y_2}+(y_3^2+1) H_{y_3y_3}\\
 &+ 2(y_1y_2 H_{y_1y_2}+y_2y_3 H_{y_2y_3}+y_1y_3 H_{y_1y_3}) + 2(y_1 H_{y_1}+y_2 H_{y_2}+y_3 H_{y_3})
\Big).
\end{align*}
\end{prop}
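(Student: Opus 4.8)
The plan is to compute the second order operator $D=Y_4^2+Y_5^2+Y_6^2$ directly in the chart $p$ by tracking how each left invariant field $Y_j$ moves a point of the sphere. Since $D$ acts entrywise, it suffices to treat a scalar right $K$-invariant $H$, which descends to $S^3\simeq G/K$ and there I write it as $F(y)=H(x(y))$ in the coordinates $y=p(x)$, where $x(y)=(1+\|y\|^2)^{-1/2}(y_1,y_2,y_3,1)$, so that $x_4=(1+\|y\|^2)^{-1/2}$ and $x_i=y_ix_4$. For a fixed $g\in G$ over $x=ge_4$, the curve $t\mapsto g\exp(tY_j)$ projects to $\gamma_j(t)=g\exp(tY_j)e_4\in S^3$, whence $Y_j^2H(g)=\tfrac{d^2}{dt^2}\big|_{0}F\big(p(\gamma_j(t))\big)$.

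First I would record the two pieces of data the chain rule needs. A direct matrix computation gives $Y_je_4=e_{j-3}$ and $Y_j^2e_4=-e_4$ for $j=4,5,6$, so $\gamma_j'(0)=c_{j-3}$ (the $(j-3)$-th column of $g$) and $\gamma_j''(0)=-x$. Writing $p_i(z)=z_i/z_4$ and differentiating twice along $\gamma_j$, using $x_i=y_ix_4$, gives for the coordinate curve $u_j=p\circ\gamma_j$ the identities
\[
(u_j)_i'(0)=\tfrac1{x_4}\big((c_{j-3})_i-y_i(c_{j-3})_4\big),\qquad (u_j)_i''(0)=-\tfrac{2(c_{j-3})_4}{x_4}\,(u_j)_i'(0).
\]
The chain rule then writes $Y_j^2H$ as a quadratic form in the $F_{y_iy_k}$ with coefficients $(u_j)_i'(0)(u_j)_k'(0)$, plus a first order term with coefficients $(u_j)_i''(0)$.

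The decisive step is to sum over $j=4,5,6$ and remove the dependence on the chosen lift $g$. Abbreviating $a_i^{(m)}=\tfrac1{x_4}(g_{im}-y_ig_{4m})$ for $m=1,2,3$, I would invoke the orthonormality of the rows of $g\in\SO(4)$ in the form $\sum_{m=1}^3 g_{im}g_{km}=\delta_{ik}-x_ix_k$, $\sum_{m=1}^3 g_{4m}g_{im}=-x_4x_i$ and $\sum_{m=1}^3 g_{4m}^2=1-x_4^2$, together with $x_i=y_ix_4$ and $x_4^{-2}=1+\|y\|^2$. A short cancellation, in which all cross terms collapse, yields
\[
\sum_{m=1}^3 a_i^{(m)}a_k^{(m)}=(1+\|y\|^2)(\delta_{ik}+y_iy_k),\qquad \sum_{m=1}^3 g_{4m}\,a_i^{(m)}=-\frac{y_i}{x_4}.
\]
Substituting the first into the quadratic part gives $(1+\|y\|^2)\sum_{i,k}(\delta_{ik}+y_iy_k)F_{y_iy_k}$, which is precisely the Hessian block of the asserted formula; substituting the second into the first order part gives $\tfrac{2}{x_4^2}\sum_i y_iF_{y_i}=(1+\|y\|^2)\cdot 2(y_1F_{y_1}+y_2F_{y_2}+y_3F_{y_3})$, the gradient term. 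Together these reproduce the stated expression.

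I expect the main obstacle to be conceptual rather than computational: the frame $c_1,c_2,c_3$ of $T_xS^3$ depends on the particular lift $g$ in the fiber over $x$, so no individual $Y_j^2$ descends to $S^3$. What makes the sum well defined, and what the two contractions above verify explicitly, is that after summing only the symmetric combinations $\sum_m g_{im}g_{km}$ survive, and these depend on $x$ alone by orthogonality; this is the concrete incarnation of Lemma \ref{referee}. The only genuine care required is bookkeeping, namely carrying the $x_4$ factors and repeatedly using $x_i=y_ix_4$ so that the cross terms cancel and exactly one power of $1+\|y\|^2$ remains.
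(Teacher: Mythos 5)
Your proposal is correct and follows essentially the same route as the paper's Appendix proof: decompose $D=Y_4^2+Y_5^2+Y_6^2$, push the one-parameter curves $g\exp(tY_{3+j})$ through the central projection $p$, apply the chain rule, and contract using the orthonormality of the rows of $g\in\SO(4)$ together with $x_i=y_ix_4$ and $g_{44}^{-2}=1+\|y\|^2$. The only difference is presentational — the paper parametrizes the curves explicitly with $\sin$ and $\cos$ and leaves the final contraction as "straightforward computations," whereas you work with $\gamma_j'(0)=c_{j-3}$, $\gamma_j''(0)=-x$ and record the two contraction identities explicitly, which is a cleaner bookkeeping of the same argument.
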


\begin{prop}\label {E3}
For any $H\in C^\infty( \RR^3)\otimes
\End(V_\pi)$
we have
\begin{align*}
&E(H)(y)=H_{y_1}\dot\pi \left(\begin{smallmatrix} 0 &-y_2-y_1y_3 &-y_3+y_1y_2 \\y_2+y_1y_3 &0 &-1-y_1^2 \\ y_3-y_1y_2&1+y_1^2 &0 \end{smallmatrix}\right)  \\
& \quad +H_{y_2}\dot\pi \left(\begin{smallmatrix} 0 &-y_2y_3+y_1 &1+y_2^2 \\y_2y_3-y_1 & 0 & -y_3-y_1 y_2 \\-1-y_2^2 & y_3+y_1y_2&0 \end{smallmatrix}\right) +H_{y_3}\dot\pi \left(\begin{smallmatrix} 0 &-1-y_3^2 &y_1+y_2y_3 \\1+y_3^2 & 0 & y_2-y_1 y_3 \\ -y_1-y_2y_3 & -y_2+y_1y_3 &0 \end{smallmatrix}\right).
\end{align*}
\end{prop}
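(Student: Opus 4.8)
The plan is to convert the two ingredients of the defining formula \eqref{Edefuniv} — the derivatives $Y_j(H)$ and the matrices $Y_{j'}(\Phi_\pi)\Phi_\pi^{-1}$ — into explicit functions of the central projection coordinate $y=p(g\cdot e_4)$ of \eqref{pfunction}, and then to collect terms.

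First I would dispose of the $\Phi_\pi$-factors. Since $\Phi_\pi=\pi\circ a$ is a representation of $G$ and each $Y_{j'}$ is left invariant, one has $Y_{j'}(\Phi_\pi)(g)=\Phi_\pi(g)\dot\pi(\dot a(Y_{j'}))$, whence $Y_{j'}(\Phi_\pi)(g)\Phi_\pi(g)^{-1}=\pi(a(g))\dot\pi(\dot a(Y_{j'}))\pi(a(g))^{-1}=\dot\pi(\operatorname{Ad}(a(g))\dot a(Y_{j'}))$. Thus only the differential $\dot a$ on the basis $Y_1,\dots,Y_6$ is needed. Using $Y_3=Z_1+Z_4$, $Y_2=Z_2+Z_5$, $Y_1=Z_3+Z_6$, $Y_4=Z_1-Z_4$, $Y_5=Z_5-Z_2$, $Y_6=Z_3-Z_6$ together with $\dot a(Z_4)=\dot a(Z_5)=\dot a(Z_6)=0$, a short computation identifies $\dot a(Y_1),\dot a(Y_2),\dot a(Y_3)$ with the $3\times3$ matrices $Y_1,Y_2,Y_3$ and $\dot a(Y_4),\dot a(Y_5),\dot a(Y_6)$ with $Y_3,-Y_2,Y_1$.

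Next I would reduce $H$ to a function of $y\in\RR^3$. Because $H$ is right $K$-invariant, differentiating it along the left invariant fields gives $Y_j(H)(g)=\sum_i H_{y_i}\,Y_j(\beta_i)(g)$ for $j=4,5,6$, where $\beta_i(g)=g_{i4}/g_{44}$ and a direct computation yields $Y_4(\beta_i)(g)=(g_{i1}g_{44}-g_{i4}g_{41})/g_{44}^2$, with the second and third columns of $g$ replacing the first for $Y_5$ and $Y_6$. Substituting into \eqref{Edefuniv} and pulling the scalar coefficients through the linear map $\dot\pi$, one obtains $E(H)(y)=\sum_{i=1}^3 H_{y_i}\,\dot\pi(M_i(y))$ with $M_i(y)=-Y_4(\beta_i)\operatorname{Ad}(a(g))\dot a(Y_3)+Y_5(\beta_i)\operatorname{Ad}(a(g))\dot a(Y_2)-Y_6(\beta_i)\operatorname{Ad}(a(g))\dot a(Y_1)$, evaluated at any $g$ with $\beta_i(g)=y_i$; the result is independent of this choice because $E$ preserves right $K$-invariance by Lemma \ref{referee}.

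Finally I would evaluate at the explicit section $g=s(y)=\exp\big(\tfrac{\theta}{r}(y_1Y_4+y_2Y_5+y_3Y_6)\big)$, where $r=\|y\|$ and $\theta=\arctan r$. For $y\ne0$ this is the rotation by $\theta$ in the plane spanned by $e_4$ and $u=\tfrac1r(y_1,y_2,y_3,0)$ and the identity on its orthogonal complement, so its entries are immediate, and $a(s(y))=\exp(\dot a(X))$ is the corresponding rotation of $\SO(3)$, computed from the $\dot a(Y_j)$ found above. Substituting these, and using that $\operatorname{Ad}(a(g))$ acts on $\so(3)$ as $a(g)$ acts on $\RR^3$ under the standard isomorphism $\so(3)\cong\RR^3$, one reads off each $M_i(y)$. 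The main obstacle is purely computational and lies here: the coefficients $Y_j(\beta_i)$ carry a factor $1/\cos\theta=\sqrt{1+\|y\|^2}$ while $\operatorname{Ad}(a(g))\dot a(Y_{j'})$ carries factors $\cos\theta$ and $\sin\theta$, and the content of the statement is that, through $\tan\theta=\|y\|$, these combine into the polynomial matrices displayed, with all square roots cancelling. Since $K=\SO(3)$ rotates $y$ while acting on $V_\pi$ through $\pi$ and $E$ intertwines these actions, one may alternatively compute only on the slice $y=(0,0,t)$, where most coefficients vanish, and recover the general $M_i(y)$ by equivariance; Proposition \ref{D3} follows from the same reduction, now with the $\Phi_\pi$-factors absent.
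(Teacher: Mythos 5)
Your proposal is correct, and its first two stages coincide exactly with the paper's proof: the same decomposition $E(H)=\sum_{j=1}^3(-1)^j\,Y_{3+j}(H)\,Y_{4-j}(\Phi_\pi)\Phi_\pi^{-1}$, the same chain-rule formulas $Y_{3+j}(H)=\sum_i H_{y_i}\,Y_{3+j}(\beta_i)$ with $\beta_i(g)=g_{i4}/g_{44}$, and the same treatment of the second factor, since your $\dot\pi\big(\operatorname{Ad}(a(g))\dot a(Y_k)\big)$ equals the paper's $\dot\pi\big(\dot a(gY_kg^t)\big)$ because $a$ is a homomorphism. Where you genuinely diverge is the final evaluation. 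The paper keeps $g$ arbitrary, expands $\dot a(gY_{4-j}g^t)$ in the entries of $g$ via $g(E_{ij}-E_{ji})g^t=\sum_{k,r}g_{ki}g_{rj}E_{kr}$, and then invokes the cofactor identity $g_{ij}=(-1)^{i+j}\det\big(g(i|j)\big)$, valid on $\SO(4)$, to collapse the matrices $B_1,B_2,B_3$ into expressions in the last column $(g_{14},g_{24},g_{34},g_{44})$ alone; substituting $y_i=g_{i4}/g_{44}$ then yields the stated polynomial matrices with no section chosen and no square roots ever appearing. You instead evaluate on the explicit geodesic section $\exp\big(\tfrac{\theta}{r}(y_1Y_4+y_2Y_5+y_3Y_6)\big)$ with $\tan\theta=\|y\|$, or on a slice plus equivariance; both variants are valid, and yours buys geometric transparency at the price of the trigonometric bookkeeping you already flagged. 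Two cautions if you write it out. First, in the slice variant the equivariance is a property of the operator $E$, not of $H$ (the proposition assumes nothing about $H$), so you must verify $E\big(\pi(k)^{-1}H(k\,\cdot)\,\pi(k)\big)=\pi(k)^{-1}(EH)(k\,\cdot)\,\pi(k)$, which follows from left invariance of the $Y_j$ together with $\dot a\circ\operatorname{Ad}(k)=\operatorname{Ad}(a(k))\circ\dot a$; this is routine but is an extra lemma the paper does not need (Lemma \ref{referee} gives right $K$-invariance, which justifies only your independence-of-the-fiber claim, not the left equivariance). Second, your identification $\dot a(Y_1),\dot a(Y_2),\dot a(Y_3)=Y_1,Y_2,Y_3$ holds only after normalizing the basis of $V_1$ so that $a|_K=\mathrm{id}$ (equivalently $\Phi_\pi(k)=\pi(k)$, which the framework presupposes); a priori $\dot a|_{\liek}$ is only an inner automorphism of $\so(3)$, so this normalization must be fixed explicitly or the signs in your $M_i(y)$ will drift, whereas the basis-independent relations $\dot a(Y_4)=\dot a(Y_3)$, $\dot a(Y_5)=-\dot a(Y_2)$, $\dot a(Y_6)=\dot a(Y_1)$, which you correctly derive from $\dot a(Z_4)=\dot a(Z_5)=\dot a(Z_6)=0$, are safe as stated.
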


\subsection{Reduction to one variable}\label{onevariable}
\

We are interested in considering the differential operators $D$ and $E$ given in Propositions \ref{D3} and \ref{E3}
applied to  functions $H\in C^\infty(\RR^3)\otimes
\End(V_\pi)$ such that $$H(ky)=\pi(k)H(y)\pi(k)^{-1}, \qquad  \quad \text {for all }k\in K , y\in\RR^3.$$
{ Hence, the function $H=H(y)$ is determined by its restriction to a section of the $K$-orbits in $\RR^3$. We recall that the $K$-orbits in $\RR^3$ are the spheres
$$S_r=\vzm{(y_1,y_2,y_3)\in \RR^3}{|| y||^2=|y_1|^2+|y_2|^2+|y_3|^2=r^2}, \qquad 0\leq r <\infty.$$
In each orbit $S_r$ we choose the point $(r,0,0) \in \RR^3$ as a representative.

This allows us to find ordinary differential operators $\widetilde
D$} and $\widetilde E$ defined on the interval
$(0,\infty)$ such that $$ (D\,H)(r,0,0)=(\widetilde D\widetilde H)(r),\qquad
(E\,H)(r,0,0)=(\widetilde E\widetilde H)(r),$$ where { $\widetilde
H(r)=H(r,0,0)$.
}

{
\begin{remark}\label{DyEconmutan2}
  We observe that the differential operators $\widetilde D$ and $\widetilde E$ commute because they are the restrictions of the commuting differential operators $D$ and $E$.
\end{remark}
}
In order to give the explicit expressions of the differential operators $\widetilde D$ and $\widetilde E$, and starting from Propositions \ref{D3} and \ref{E3}, we need to compute a number of second order partial derivatives of the function $H:\RR^3\longrightarrow \End(V_\pi)$ at the points $(r,0,0)$, with $r>0$.
Given $y=(y_1,y_2,y_3)\in \RR^3$ in a neighborhood of $(r,0,0)$, $r>0$, we need a smooth function onto $K=\SO(3)$  that carries the point $y$ to the meridian $\{(r,0,0)\, : r>0\}$. A good choice is the following function
\begin{equation}\label{matrixA}
A(y)=\frac{1}{\left\|y\right\|} \left( \begin{matrix} y_1 &-y_2 & -y_3 \\y_2 & \frac{-y_2^2}{\left\|y\right\|+y_1}+ \left\|y\right\| & \frac{-y_2 y_3}{\left\|y\right\|+y_1} \\y_3 & \frac{-y_2 y_3}{\left\|y\right\|+y_1} & \frac{-y_3^2}{\left\|y\right\|+y_1}+ \left\|y\right\| \end{matrix}\right).
\end{equation}
Then
$$y=A(y)(\left\|y\right\|,0,0)^t.$$

It is easy to verify that  $A(y)$ is a matrix in $\SO(3)$ and it is well defined in $\RR^3-\vzm{(y_1,0,0)\in\RR^3}{y_1\leq0}$.

{ The proofs of the following propositions are similar to those in the case of the complex projective plane considered in \cite{GPT1}, see Propositions 13.2 and 13.3 in that paper.}
Let us consider the following elements in $\liek$
\begin{equation}\label{As}
A_1=E_{21}-E_{12},\qquad A_2=E_{31}-E_{13},\qquad A_3=E_{32}-E_{23}.
\end{equation}

\begin{prop}\label{derx1}
For $r>0$ we have
\begin{align*}
\frac{\partial H}{\partial y_1}(r,0,0)=&\frac{d\widetilde H}{dr}(r) ,\\
\frac{\partial H}{\partial y_2}(r,0,0)=& \frac1r\left[\dot \pi \left(A_1\right) , \widetilde H(r) \right], \qquad
\frac{\partial H}{\partial y_3}(r,0,0)= \frac1r\left[\dot \pi \left(A_2\right) , \widetilde H(r) \right].
\end{align*}
\end{prop}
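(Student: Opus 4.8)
The plan is to use the $K$-equivariance of $H$ to express $H$ explicitly in terms of the one-variable function $\widetilde H$ and the matrix $A(y)$ of \eqref{matrixA}, and then to differentiate. Since $A(y)\in\SO(3)=K$ and $y=A(y)(\|y\|,0,0)^t$, the relation $H(ky)=\pi(k)H(y)\pi(k)^{-1}$ applied with $k=A(y)$ gives, on $\RR^3-\vzm{(y_1,0,0)}{y_1\le0}$,
\begin{equation*}
H(y)=\pi(A(y))\,\widetilde H(\|y\|)\,\pi(A(y))^{-1},\qquad \widetilde H(r)=H(r,0,0).
\end{equation*}
Writing $\Psi(y)=\pi(A(y))$ and using $\partial_{y_j}\Psi^{-1}=-\Psi^{-1}(\partial_{y_j}\Psi)\Psi^{-1}$, the product rule yields
\begin{equation*}
\frac{\partial H}{\partial y_j}=(\partial_{y_j}\Psi)\,\widetilde H\,\Psi^{-1}+\Psi\,\widetilde H'\,\frac{y_j}{\|y\|}\,\Psi^{-1}-\Psi\,\widetilde H\,\Psi^{-1}(\partial_{y_j}\Psi)\Psi^{-1},
\end{equation*}
since $\partial_{y_j}\|y\|=y_j/\|y\|$. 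This is valid on the domain of $A$, which contains the representative points $(r,0,0)$, $r>0$.

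The decisive simplification comes from evaluating at $(r,0,0)$. A direct check of \eqref{matrixA} shows $A(r,0,0)=I$, and in fact $A(y_1,0,0)=I$ for every $y_1>0$; hence $\Psi(r,0,0)=I$. By the chain rule $\partial_{y_j}\Psi(r,0,0)=\dot\pi\big(\partial_{y_j}A(r,0,0)\big)$, where $\partial_{y_j}A(r,0,0)\in T_I\SO(3)=\liek$ so that $\dot\pi$ applies. Substituting $\Psi=\Psi^{-1}=I$ at $(r,0,0)$, the three-term formula collapses to
\begin{equation*}
\frac{\partial H}{\partial y_j}(r,0,0)=\Big[\dot\pi\big(\partial_{y_j}A(r,0,0)\big),\,\widetilde H(r)\Big]+\left.\frac{y_j}{\|y\|}\right|_{(r,0,0)}\widetilde H'(r),
\end{equation*}
and the evaluation factor equals $1,0,0$ for $j=1,2,3$ respectively.

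It then remains only to compute the three matrices $\partial_{y_j}A(r,0,0)$. Differentiating the entries of $A(y)$ in \eqref{matrixA} and setting $y_2=y_3=0$, $y_1=r$, the diagonal blocks and all quadratic $y_2,y_3$-terms drop out, leaving
\begin{equation*}
\partial_{y_1}A(r,0,0)=0,\qquad \partial_{y_2}A(r,0,0)=\tfrac1r A_1,\qquad \partial_{y_3}A(r,0,0)=\tfrac1r A_2,
\end{equation*}
with $A_1,A_2$ as in \eqref{As}; the vanishing for $j=1$ reflects that $A$ is constantly $I$ along the meridian. Feeding these into the collapsed formula gives $\partial_{y_1}H(r,0,0)=\widetilde H'(r)$, and $\partial_{y_j}H(r,0,0)=\tfrac1r[\dot\pi(A_{j-1}),\widetilde H(r)]$ for $j=2,3$, which are the claimed identities. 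The only real labour is this last step, the entrywise differentiation of $A(y)$; it is routine, and no delicacy arises because the representatives $(r,0,0)$ lie in the smooth locus of $A$, all singular behaviour being confined to the excluded half-line $y_1\le0$. This mirrors the argument for the complex projective plane in Propositions 13.2 and 13.3 of \cite{GPT1}.
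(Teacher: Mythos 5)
Your proof is correct and follows exactly the route the paper intends: the paper introduces the matrix $A(y)$ of \eqref{matrixA} precisely so that $H(y)=\pi(A(y))\,\widetilde H(\|y\|)\,\pi(A(y))^{-1}$, and then defers the differentiation to the analogous Propositions 13.2 and 13.3 of \cite{GPT1}, which is the computation you carry out in full (including the key evaluations $A(r,0,0)=I$, $\partial_{y_1}A(r,0,0)=0$, $\partial_{y_2}A(r,0,0)=\tfrac1r A_1$, $\partial_{y_3}A(r,0,0)=\tfrac1r A_2$). In effect you have supplied the details the paper leaves to the cited reference, with no gaps.
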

\medskip
\begin{prop}\label{derx1x1}
 For $r>0$ we have
\begin{align*}
\frac{\partial² H}{\partial y_1²}(r,0,0) =&\frac{d^2\widetilde H}{dr^2}(r),\displaybreak[0]\\
\frac{\partial² H}{\partial y_2²}(r,0,0)=&\frac{1}{r²}\bigg(\ r\frac{d\widetilde H}{dr}+\dot \pi \left(A_1\right)^2 \widetilde H(r)+\widetilde H(r) \dot \pi \left(A_1\right)^2
-2  \dot \pi \left(A_1\right) \widetilde H(r)\dot \pi \left(A_1\right)\bigg),\\
\frac{\partial² H}{\partial y_3²}(r,0,0)=&\frac{1}{r²} \bigg(r\frac{d\widetilde H}{dr}+\dot \pi \left(A_2\right)^2 \widetilde H(r)+\widetilde H(r) \dot \pi \left(A_2\right)^2
-2  \dot \pi \left(A_2\right) \widetilde H(r)\dot \pi \left(A_2\right)\bigg).
\end{align*}
 \end{prop}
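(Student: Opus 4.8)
The plan is to reduce every pure second derivative of $H$ at $(r,0,0)$ to an elementary one--variable computation by exploiting the $K$--equivariance of $H$. Since $A(y)\in\SO(3)$ satisfies $y=A(y)(\|y\|,0,0)^{t}$ by \eqref{matrixA}, and since $H(ky)=\pi(k)H(y)\pi(k)^{-1}$ for $k\in K$, on the domain of $A$ one has the basic identity
\[
H(y)=\pi(A(y))\,\widetilde H(\|y\|)\,\pi(A(y))^{-1},
\]
which expresses $H$ entirely through the scalar radius $\|y\|$, the single--variable function $\widetilde H$, and the rotation $A(y)$. I would obtain all three formulas by differentiating this identity twice along the coordinate curves through $(r,0,0)$.

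The $y_1$--direction is immediate. Restricting to the meridian $y=(y_1,0,0)$ one reads off from \eqref{matrixA} that $A(y_1,0,0)=I$, so $H(y_1,0,0)=\widetilde H(y_1)$, and therefore $\partial^2 H/\partial y_1^2\,(r,0,0)=\widetilde H''(r)$.

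For the $y_2$--direction I would first evaluate \eqref{matrixA} along $y=(r,t,0)$. Writing $\rho(t)=\sqrt{r^2+t^2}$ and simplifying the $(2,2)$-- and $(3,3)$--entries by means of $\rho^2=r^2+t^2$, one finds that
\[
A(r,t,0)=\exp\big(\phi(t)\,A_1\big),\qquad \phi(t)=\arctan(t/r),
\]
an \emph{exact} one--parameter group of rotations in the $(1,2)$--plane, with $A_1$ as in \eqref{As}. Setting $P=\dot\pi(A_1)$ this turns the basic identity into $H(r,t,0)=e^{\phi(t)\operatorname{ad}P}\big[\widetilde H(\rho(t))\big]$, a function of the two scalars $\phi,\rho$ with $P$ constant. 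Differentiating twice in $t$ by the chain rule and inserting $\phi(0)=0$, $\phi'(0)=1/r$, $\phi''(0)=0$, $\rho(0)=r$, $\rho'(0)=0$, $\rho''(0)=1/r$, every term carrying a factor $\rho'(0)$ or $\phi''(0)$ drops out and one is left with
\[
\frac{\partial^2 H}{\partial y_2^2}(r,0,0)=\frac{1}{r^2}\,(\operatorname{ad}P)^2\big[\widetilde H(r)\big]+\frac1r\,\widetilde H'(r);
\]
expanding $(\operatorname{ad}P)^2[\widetilde H]=P^2\widetilde H-2P\widetilde HP+\widetilde HP^2$ yields exactly the asserted expression. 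The $y_3$--direction is handled identically: along $y=(r,0,s)$ one gets $A(r,0,s)=\exp(\arctan(s/r)A_2)$, a rotation in the $(1,3)$--plane, and repeating the argument with $A_2$ in place of $A_1$ produces the third formula.

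The only genuinely delicate point I foresee is the explicit evaluation of \eqref{matrixA} along the two coordinate curves and the recognition that each is an exact one--parameter rotation subgroup, not merely one to second order; this hinges on the collapse of the $(2,2)$-- and $(3,3)$--entries after using $\rho^2=r^2+t^2$. Once that is in hand nothing essential remains: the rest is the calculus of the two scalar functions $\phi$ and $\rho$ together with bookkeeping for the constant operator $\operatorname{ad}P$, exactly as in the $P_2(\CC)$ computation of \cite{GPT1}. As a consistency check one can note that the first $t$--derivative of the same curve recovers Proposition \ref{derx1}, since $\frac{d}{dt}H(r,t,0)\big|_{0}=\phi'(0)\,\operatorname{ad}P\big[\widetilde H(r)\big]=\tfrac1r[\dot\pi(A_1),\widetilde H(r)]$.
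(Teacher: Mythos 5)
Your proof is correct and follows essentially the same route the paper intends: it uses the section $A(y)$ of \eqref{matrixA} together with the equivariance $H(ky)=\pi(k)H(y)\pi(k)^{-1}$ to write $H(y)=\pi(A(y))\widetilde H(\|y\|)\pi(A(y))^{-1}$ and reduce everything to one-variable calculus along the coordinate curves, which is precisely the scheme of Propositions 13.2 and 13.3 of \cite{GPT1} to which the paper defers. Your observation that $A(r,t,0)=\exp\big(\arctan(t/r)\,A_1\big)$ holds exactly (and likewise with $A_2$ along $(r,0,s)$), so the two differentiations can be packaged through $e^{\phi\,\operatorname{ad}\dot\pi(A_1)}$ with $\phi'(0)=1/r$, $\phi''(0)=0$, $\rho'(0)=0$, $\rho''(0)=1/r$, is a clean reorganization of the same computation, and all the resulting formulas check out.
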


\noindent Now we can obtain the explicit expressions of the differential operators $\tilde D$ and $\tilde E$.

\begin{thm}\label{D} For  $r>0$ we have
\begin{align*}\widetilde D(\widetilde H)(r)&=(1+r^2)^2\frac{d^2 \widetilde H}{dr^2}
+2\frac{(1+r^2)^2}{r} \frac{d\widetilde H}{dr}\displaybreak[0] \\
 & \quad +\frac{(1+r^2)}{r^2} \left( \dot\pi(A_1)^2 \,\widetilde H(r)\,+\widetilde H(r)\dot\pi(A_1)^2 -2\dot\pi(A_1)\widetilde H(r)\dot\pi(A_1)\right)\\
 & \quad +\frac{(1+r^2)}{r^2} \left( \dot\pi(A_2)^2 \,\widetilde H(r)\,+\widetilde H(r)\dot\pi(A_2)^2 -2\dot\pi(A_2)\widetilde H(r)\dot\pi(A_2)\right).
\displaybreak[0]
\end{align*}
\end{thm}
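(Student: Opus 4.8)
The plan is to obtain $\widetilde D$ by simply evaluating the operator $D$ from Proposition \ref{D3} at the orbit representative $(r,0,0)$ and then rewriting every partial derivative that appears in terms of the one-variable function $\widetilde H(r)=H(r,0,0)$ by means of Propositions \ref{derx1} and \ref{derx1x1}. The crucial simplification is that at the point $(r,0,0)$ we have $y_1=r$, $y_2=y_3=0$, so $\|y\|^2=r^2$ and every term in the expression of Proposition \ref{D3} that carries an explicit factor $y_2$ or $y_3$ vanishes. In particular all three mixed second-derivative terms $y_1y_2H_{y_1y_2}$, $y_2y_3H_{y_2y_3}$, $y_1y_3H_{y_1y_3}$ drop out, as do the first-order contributions $y_2H_{y_2}$ and $y_3H_{y_3}$. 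What survives is
\begin{equation*}
(DH)(r,0,0)=(1+r^2)\Big((1+r^2)H_{y_1y_1}+H_{y_2y_2}+H_{y_3y_3}+2r\,H_{y_1}\Big)(r,0,0).
\end{equation*}

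Next I would insert the values of these derivatives at $(r,0,0)$ supplied by the two preceding propositions: from Proposition \ref{derx1}, $H_{y_1}(r,0,0)=\widetilde H'(r)$, and from Proposition \ref{derx1x1}, $H_{y_1y_1}(r,0,0)=\widetilde H''(r)$, while $H_{y_2y_2}(r,0,0)$ and $H_{y_3y_3}(r,0,0)$ each equal $\tfrac1{r^2}$ times $r\widetilde H'(r)$ plus the respective commutator-type combination built from $\dot\pi(A_1)$, respectively $\dot\pi(A_2)$. After this substitution the second-derivative coefficient is immediately $(1+r^2)(1+r^2)=(1+r^2)^2$, matching the first term of the claimed formula, and the two commutator blocks appear multiplied exactly by $\tfrac{1+r^2}{r^2}$, giving the last two lines of the statement verbatim.

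The only genuine bookkeeping step, which is where I expect the minor arithmetic to need care, is collecting the three separate contributions to the coefficient of $\widetilde H'$: one comes from $2r\,H_{y_1}$ and one each from the $r\widetilde H'$ pieces of $H_{y_2y_2}$ and $H_{y_3y_3}$. These combine as $(1+r^2)\big(2r+\tfrac1r+\tfrac1r\big)\widetilde H'=(1+r^2)\tfrac{2(1+r^2)}{r}\widetilde H'=\tfrac{2(1+r^2)^2}{r}\widetilde H'$, which is precisely the second term of Theorem \ref{D}. Since all four terms then agree with the asserted expression, and since the commutators involving $A_3$ never enter (only $H_{y_2y_2}$ and $H_{y_3y_3}$ are needed, carrying $A_1$ and $A_2$ respectively), this completes the derivation. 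No deeper obstacle arises: the result is a direct consequence of Propositions \ref{D3}, \ref{derx1} and \ref{derx1x1}, the content being entirely in the vanishing of the off-diagonal terms at $(r,0,0)$ and the clean cancellation in the first-order coefficient.
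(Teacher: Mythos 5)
Your proposal is correct and follows exactly the paper's own argument: evaluate the operator of Proposition \ref{D3} at $(r,0,0)$ (where all terms carrying a factor $y_2$ or $y_3$ vanish), substitute the derivative formulas of Propositions \ref{derx1} and \ref{derx1x1}, and collect the $\widetilde H'$ contributions into $\tfrac{2(1+r^2)^2}{r}\widetilde H'$. The only difference is cosmetic: you spell out the bookkeeping that the paper dismisses with ``now the theorem follows easily,'' and your arithmetic checks out.
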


\begin{proof}
  Since $\widetilde D( \widetilde H) (r)=D(H) (r,0,0)$, from Proposition \ref{D3} we have
 \begin{align*}
\widetilde D (\widetilde H )(r) =  (1+r^2)\Big((1+r^2)H_{y_1y_1}+H_{y_2y_2}+H_{y_3y_3}+ 2rH_{y_1} \Big).	
\end{align*}
Using Propositions \ref{derx1} and \ref{derx1x1} we get
\begin{align*}
\widetilde D (\widetilde H )(r) =&  (1+r^2)\Bigg[(1+r^2)\frac{d^2\widetilde H}{dr^2}(r)+2r\frac{d\widetilde H}{dr}(r) +\frac2r\frac{d\widetilde H}{dr}\\
&+\frac{1}{r²}\bigg(\dot \pi \left(A_1\right)^2 \widetilde H(r)+\widetilde H(r) \dot \pi \left(A_1\right)^2 -2  \dot \pi \left(A_1\right) \widetilde H(r)\dot \pi \left(A_1\right)\bigg)\\
&+\frac{1}{r²} \bigg(\dot \pi \left(A_2\right)^2 \widetilde H(r)+\widetilde H(r) \dot \pi \left(A_2\right)^2-2  \dot \pi \left(A_2\right) \widetilde H(r)\dot \pi \left(A_2\right)\bigg)
\Bigg].
\end{align*}
Now the theorem follows easily.

\end{proof}

\begin{thm}\label{E}    For  $r>0$ we have
\begin{align*}
\widetilde E(\widetilde H)(r)& =
\frac{d\widetilde H}{dr}(1+r²)\dot\pi(A_3)
-\frac1r\left[\dot \pi (A_1) , \widetilde H(r) \right ]  \dot\pi\left(rA_1+A_2\right)
\\&\quad +\frac1r\left[\dot \pi (A_2) , \widetilde H(r) \right ]   \dot\pi (A_1-rA_2).
\end{align*}
\end{thm}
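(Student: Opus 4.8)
The plan is to imitate the proof of Theorem \ref{D}, starting from the coordinate expression for $E$ given in Proposition \ref{E3} and specializing it to the meridian point $(r,0,0)$. First I would evaluate each of the three matrices $\dot\pi(\,\cdot\,)$ appearing in Proposition \ref{E3} at $y=(r,0,0)$, i.e.\ set $y_1=r$, $y_2=y_3=0$. The three $3\times3$ matrices then collapse to multiples and combinations of the basis elements $A_1,A_2,A_3$ introduced in \eqref{As}: a short check shows that at $(r,0,0)$ the coefficient matrix of $H_{y_1}$ becomes $(1+r^2)A_3$, that of $H_{y_2}$ becomes $-(1+r^2)A_2$ (equivalently a combination $rA_1+A_2$ after accounting for the derivative substitution), and that of $H_{y_3}$ becomes $(1+r^2)A_1$; the essential point is to write each resulting skew-symmetric matrix back in terms of $A_1=E_{21}-E_{12}$, $A_2=E_{31}-E_{13}$, $A_3=E_{32}-E_{23}$.

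Second, I would substitute the first-order partial derivatives at $(r,0,0)$ supplied by Proposition \ref{derx1}, namely
\[
H_{y_1}(r,0,0)=\frac{d\widetilde H}{dr}(r),\qquad
H_{y_2}(r,0,0)=\frac1r\big[\dot\pi(A_1),\widetilde H(r)\big],\qquad
H_{y_3}(r,0,0)=\frac1r\big[\dot\pi(A_2),\widetilde H(r)\big].
\]
Since $E$ is a first-order operator, only these first derivatives enter, so no analogue of Proposition \ref{derx1x1} is needed and the computation is genuinely shorter than that of Theorem \ref{D}. Plugging these in, the term from $H_{y_1}$ produces exactly $\tfrac{d\widetilde H}{dr}(1+r^2)\dot\pi(A_3)$, while the terms from $H_{y_2}$ and $H_{y_3}$ produce the two commutator terms $-\tfrac1r[\dot\pi(A_1),\widetilde H(r)]\,\dot\pi(rA_1+A_2)$ and $+\tfrac1r[\dot\pi(A_2),\widetilde H(r)]\,\dot\pi(A_1-rA_2)$ after collecting the coefficient matrices evaluated above.

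The main obstacle, such as it is, lies in the bookkeeping of the second and third terms: one must correctly identify the evaluated coefficient matrices so that they combine into the mixed expressions $rA_1+A_2$ and $A_1-rA_2$ rather than into pure multiples of a single $A_j$. The subtlety is that the product structure in Proposition \ref{E3} places $H_{y_j}$ on the left and $\dot\pi(\cdot)$ on the right, so after substituting the commutator expressions for $H_{y_2}$ and $H_{y_3}$ one obtains terms of the form $\tfrac1r[\dot\pi(A_i),\widetilde H(r)]\,\dot\pi(M_{ij})$, and the matrices $M_{ij}$ must be read off carefully from the $(r,0,0)$-evaluation. Once these are matched against $A_1,A_2,A_3$ the stated formula follows directly; the remaining manipulations are purely linear-algebraic and require no further input.
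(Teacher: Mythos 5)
Your plan is exactly the paper's proof: evaluate the coefficient matrices of Proposition \ref{E3} at $y=(r,0,0)$, substitute the first-order derivatives from Proposition \ref{derx1}, and collect terms; since $E$ is first order, no analogue of Proposition \ref{derx1x1} is needed, just as you say. One correction to your first paragraph: at $(r,0,0)$ the coefficient matrix of $H_{y_2}$ is $-(rA_1+A_2)$ and that of $H_{y_3}$ is $A_1-rA_2$, not $-(1+r^2)A_2$ and $(1+r^2)A_1$ as you initially claim — but your second and third paragraphs use the correct mixed combinations, so the argument as a whole is sound and coincides with the paper's.
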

\begin{proof}

Since $\widetilde E( \widetilde H) (r)=E(H) (r,0,0)$, from Proposition \ref{E3} we have

\begin{align*}
&\widetilde E(\widetilde H)(r)=H_{y_1}\dot\pi \left(\begin{smallmatrix} 0 &0&0 \\0 &0 &-1-r^2 \\ 0&1+r^2 &0 \end{smallmatrix}\right) +H_{y_2}\dot\pi \left(\begin{smallmatrix} 0 &r &1 \\-r & 0 & 0 \\-1 & 0&0 \end{smallmatrix}\right)
+H_{y_3}\dot\pi \left(\begin{smallmatrix} 0 &-1 &r \\1 & 0 & 0 \\ -r & 0 &0 \end{smallmatrix}\right).
\end{align*}
Now, from Proposition \ref{derx1} we get

\begin{align*}
\widetilde E(H)(r)=&\frac{d\widetilde H}{dr}(1+r^2)\dot\pi \left(A_3\right)
-\frac1r\left[\dot \pi \left(A_1\right) , \widetilde H(r) \right ]\dot\pi \left(rA_1+A_2\right)
\\&+\frac1r\left[\dot \pi \left(A_2\right) , \widetilde H(r) \right ]\dot\pi \left(A_1-rA_2\right),
\end{align*}
\noindent which is the statement of the theorem.
\end{proof}

\medskip

Theorems \ref{D} and \ref{E} are given in terms of linear transformations. Now we will give the corresponding statements in terms of matrices by choosing an appropriate basis of $V_\pi$.
 We take the $\mathfrak{sl}(2)$-triple $\{e,f,h\}$ in $\mathfrak k_\CC \simeq \mathfrak{sl}(2,\CC)$ introduced in (\ref{efh}). \\
 If $\pi=\pi_\ell$ is the only irreducible representation of
$\mathrm{SO}(3)$ with highest weight $\ell/2$, we recalled in Subsection \ref{representations} that there exists a basis $\mathcal B=\vz{v_j}_{j=0}^\ell$ of $V_\pi$
  such that
\begin{equation}\label{basedeVpi}
\begin{split}
&\dot\pi(h)v_j=(\ell-2j)v_j,\\
&\dot\pi(e)v_j=(\ell-j+1)v_{j-1}, \quad (v_{-1}=0),\\
&\dot\pi(f)v_j=(j+1)v_{j+1}, \quad (v_{\ell+1}=0).
\end{split}
\end{equation}

\begin{prop}\label{Hdiagonal}
The function $\widetilde H$ associated to an irreducible spherical function $\Phi$ of type $\pi\in\hat K$ simultaneously diagonalizes
 in the basis $\mathcal B=\vz{v_j}_{j=0}^\ell$ of $V_\pi$.
\end{prop}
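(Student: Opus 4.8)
The plan is to exploit the covariance of $H$ under $K$ together with the fact that the chosen representative $(r,0,0)$ of the orbit $S_r$ has a nontrivial isotropy subgroup in $K$. Recall that $\widetilde H(r)=H(r,0,0)$ and that, as noted at the beginning of this subsection, $H$ satisfies $H(ky)=\pi(k)H(y)\pi(k)^{-1}$ for all $k\in K$ and $y\in\RR^3$. The key observation is that the isotropy subgroup $M$ of the point $(r,0,0)$ in $K=\SO(3)$ is the one-parameter subgroup $\{\exp(tA_3):t\in\RR\}\simeq\SO(2)$ of rotations in the $(y_2,y_3)$-plane, where $A_3=E_{32}-E_{23}$ is the element of $\liek$ introduced in \eqref{As}.

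For any $k\in M$ we have $k\cdot(r,0,0)=(r,0,0)$, so the covariance property yields
\[
\widetilde H(r)=H(r,0,0)=H(k\cdot(r,0,0))=\pi(k)\,\widetilde H(r)\,\pi(k)^{-1}.
\]
Thus $\widetilde H(r)$ commutes with $\pi(k)$ for every $k\in M$, and differentiating at $t=0$ along $k=\exp(tA_3)$ shows that $\widetilde H(r)$ commutes with $\dot\pi(A_3)$.

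Next I would identify $\dot\pi(A_3)$ in the basis $\mathcal B$. Comparing the explicit matrix $A_3=E_{32}-E_{23}$ with the element $h$ of the $s$-triple defined in \eqref{efh}, one sees immediately that $h=2iA_3$, so $A_3=-\tfrac i2 h$ and therefore $\dot\pi(A_3)=-\tfrac i2\dot\pi(h)$. By the action \eqref{basedeVpi} this gives
\[
\dot\pi(A_3)\,v_j=-\tfrac i2(\ell-2j)\,v_j,\qquad j=0,1,\dots,\ell,
\]
so $\dot\pi(A_3)$ is diagonal in $\mathcal B$ with the $\ell+1$ pairwise distinct eigenvalues $-\tfrac i2(\ell-2j)$.

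Finally, since $\widetilde H(r)$ commutes with the diagonalizable operator $\dot\pi(A_3)$ whose eigenvalues are simple, it must preserve each one-dimensional eigenspace $\CC v_j$; hence $\widetilde H(r)$ is diagonal in $\mathcal B$. As the basis $\mathcal B$ is the $r$-independent eigenbasis of $\dot\pi(A_3)$, the same $\mathcal B$ diagonalizes every $\widetilde H(r)$ simultaneously, which is the assertion. There is no real obstacle here; the only points requiring care are the correct identification of the isotropy subgroup $M$ and the observation that $A_3$ is, up to the scalar $-\tfrac i2$, the semisimple element $h$, which is precisely what forces the spectrum of $\dot\pi(A_3)$ to be simple.
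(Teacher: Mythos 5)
Your proof is correct and follows essentially the same route as the paper: the paper also uses the stabilizer $M=\{m_\theta\}\simeq\SO(2)$ of $(r,0,0)$ to show $\widetilde H(r)$ commutes with $\pi(m_\theta)$, then identifies $m_\theta=\exp(\theta\tfrac i2 h)$ (equivalently your $A_3=-\tfrac i2 h$) and invokes the simplicity of the spectrum of $\dot\pi(h)$ from \eqref{basedeVpi}. The only cosmetic difference is that you differentiate at $t=0$ to pass to $\dot\pi(A_3)$, while the paper argues directly at the group level; both are valid.
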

\begin{proof}
Let us consider the subgroup $M=\vzm{m_\theta}{\theta\in \RR}$ of $K$, where
\begin{equation}\label{Msubgrupo}
  m_\theta=\left(\begin{matrix} 1& 0&0  \\0&\cos\theta & \sin\theta\\0& -\sin\theta & \cos\theta  \\
\end{matrix}\right).
\end{equation}
Then, $M$ is isomorphic to $\SO(2)$ and fixes the points $(r,0,0)$ in $\RR^3$. Also, since the function $H$ satisfies $H(kg)=\pi(k)  H(g)$ $\pi(k^{-1})$ for all $k\in K$,
 we have that
\begin{align*}
\widetilde H(r)&= H(r,0,0)=H(m_\theta(r,0,0)^t)=\pi(m_\theta)H(r,0,0)\pi(m_\theta^{-1})\\
&=\pi(m_\theta)\widetilde H(r)\pi(m_\theta^{-1}).
 \end{align*}
Hence, $\widetilde H(r)$ and $\pi(m_\theta)$ commute for every $r$ in $\RR$ and every $m_\theta$ in $M$.

On the other hand, notice that $m_\theta=\exp (\theta\frac i2h)$ and then $\pi(m_\theta)=\exp (\dot\pi(\theta\frac i2h))$, but
from (\ref{basedeVpi}) we know that $\dot\pi(h)$ diagonalizes and that its eigenvalues have multiplicity
one. Therefore, the function $\widetilde H(r)$ simultaneously diagonalizes in the basis $\mathcal B=\vz{v_j}_{j=0}^\ell$ of $V_\pi$.
\end{proof}

Now we introduce the coordinate functions $\widetilde h_j(r)$ by means of
\begin{equation}\label{hj}
 \widetilde H(r)v_j= \widetilde h_j(r)v_j,
\end{equation}
and we identify $\widetilde H$ with  the column vector
\begin{equation}\label{Ht}
\widetilde H(r)=(\widetilde h_0(r),\dots,\widetilde h_\ell(r))^t.
\end{equation}

\begin{cor}\label{sistema1}
The functions $\widetilde H(r)$, $0<r<\infty$, satisfy
$(\widetilde D\widetilde H)(r)=\lambda \widetilde H(r)$ if and only if \begin{align*}
&\textstyle(1+r^2)^2\widetilde  h_j'' +2\textstyle\frac{(1+r^2)^2}{r}\widetilde h_j'+\textstyle\frac{1+r^2}{r^2} (j+1)(\ell-j)(\widetilde h_{j+1}-\widetilde h_j)
\\ & +\textstyle\frac{1+r^2}{r^2}j(\ell-j+1)(\widetilde h_{j-1}-\widetilde h_{j}) = \lambda \widetilde h_j ,
\end{align*}
for all $j=0,\dots, \ell$.
\end{cor}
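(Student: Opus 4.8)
The plan is to specialize the matrix-valued formula for $\widetilde D$ obtained in Theorem \ref{D} to the diagonal function $\widetilde H(r)=\operatorname{diag}(\widetilde h_0(r),\dots,\widetilde h_\ell(r))$ provided by Proposition \ref{Hdiagonal}, and then simply read off the $j$-th diagonal entry. The first observation I would make is that each of the two zero-order blocks in Theorem \ref{D} is a double commutator: for any matrix $X$ one has $X^2\widetilde H+\widetilde H X^2-2X\widetilde H X=[X,[X,\widetilde H]]$. Hence
\[
\widetilde D(\widetilde H)=(1+r^2)^2\widetilde H''+\tfrac{2(1+r^2)^2}{r}\widetilde H'+\tfrac{1+r^2}{r^2}\Big([\dot\pi(A_1),[\dot\pi(A_1),\widetilde H]]+[\dot\pi(A_2),[\dot\pi(A_2),\widetilde H]]\Big),
\]
and since $\widetilde H,\widetilde H',\widetilde H''$ are all diagonal in $\mathcal B$, the only point requiring work is the diagonal part of the two double commutators.

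Next I would make $\dot\pi(A_1)$ and $\dot\pi(A_2)$ explicit on the basis $\mathcal B$. Comparing the matrices in \eqref{efh} and \eqref{As} with the basis $\{Y_1,Y_2,Y_3\}$ of $\mathfrak k$ gives $A_1=-Y_1=\tfrac{i}{2}(e+f)$ and $A_2=-Y_2=\tfrac12(e-f)$, so that by the action formulas \eqref{basedeVpi},
\[
\dot\pi(A_1)v_j=\tfrac{i}{2}\big((\ell-j+1)v_{j-1}+(j+1)v_{j+1}\big),\qquad \dot\pi(A_2)v_j=\tfrac12\big((\ell-j+1)v_{j-1}-(j+1)v_{j+1}\big).
\]
Both operators are therefore tridiagonal with zero diagonal. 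Writing $X=\dot\pi(A_i)$ with matrix entries $X_{mn}$ (so that $Xv_n=\sum_m X_{mn}v_m$), a short computation using $[X,\widetilde H]_{mn}=X_{mn}(\widetilde h_n-\widetilde h_m)$ shows that the diagonal entries of the double commutator collapse to the anti-diagonal products, namely $[X,[X,\widetilde H]]_{jj}=2\sum_k X_{jk}X_{kj}(\widetilde h_j-\widetilde h_k)$, a sum with only the two surviving terms $k=j\pm1$.

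Finally I would substitute the entries. For $X=\dot\pi(A_1)$ one finds $X_{j,j+1}X_{j+1,j}=-\tfrac14(\ell-j)(j+1)$ and $X_{j,j-1}X_{j-1,j}=-\tfrac14\,j(\ell-j+1)$, and for $X=\dot\pi(A_2)$ the very same two products appear (the factors of $i$ and the sign in $A_2$ square away identically). Adding the two double commutators, the $j$-th diagonal entry of $\tfrac{1+r^2}{r^2}\big([\dot\pi(A_1),[\dot\pi(A_1),\widetilde H]]+[\dot\pi(A_2),[\dot\pi(A_2),\widetilde H]]\big)$ becomes
\[
\tfrac{1+r^2}{r^2}\big((j+1)(\ell-j)(\widetilde h_{j+1}-\widetilde h_j)+j(\ell-j+1)(\widetilde h_{j-1}-\widetilde h_j)\big).
\]
Reading off the $j$-th entry of $\widetilde D(\widetilde H)=\lambda\widetilde H$ then yields exactly the stated scalar system, with the conventions $\widetilde h_{-1}=\widetilde h_{\ell+1}=0$ inherited from $v_{-1}=v_{\ell+1}=0$, and the reverse implication is immediate since these $\ell+1$ scalar equations are precisely the components of the matrix equation in the eigenbasis. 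No genuine obstacle arises here; the only delicate point is keeping the normalizations of $A_1,A_2$ in terms of $\{e,f\}$ and their accompanying signs straight, since a slip there would corrupt every coefficient in the final system.
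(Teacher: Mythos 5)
Your identifications $A_1=\tfrac i2(e+f)$, $A_2=\tfrac12(e-f)$, the tridiagonal entries of $\dot\pi(A_1),\dot\pi(A_2)$ in the basis $\mathcal B=\{v_j\}_{j=0}^\ell$, and the resulting diagonal coefficients are all correct, and your route is essentially the paper's: both specialize Theorem \ref{D} to the diagonalized $\widetilde H$ of Proposition \ref{Hdiagonal} and read off entries. But there is one genuine gap, in the step ``the only point requiring work is the diagonal part of the two double commutators'' and the consequent claim that the reverse implication is immediate. For a tridiagonal $X$ with zero diagonal and diagonal $\widetilde H$, the double commutator $[X,[X,\widetilde H]]$ is \emph{not} diagonal: it has nonzero entries two steps off the diagonal,
\[
[X,[X,\widetilde H]]_{j,j\pm 2}=X_{j,j\pm1}\,X_{j\pm1,j\pm2}\,\bigl(\widetilde h_j-2\widetilde h_{j\pm1}+\widetilde h_{j\pm2}\bigr),
\]
and the second difference is generically nonzero. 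Hence the matrix equation $\widetilde D\widetilde H=\lambda\widetilde H$ does not consist of just the $\ell+1$ diagonal components; equating diagonal entries gives the forward implication, but the converse (scalar system $\Rightarrow$ matrix equation), i.e.\ the ``if'' half of the corollary, requires knowing that the off-diagonal entries of $\widetilde D\widetilde H$ vanish.

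Fortunately the cancellation holds and follows from entries you already computed: $X_{j,j+1}X_{j+1,j+2}$ equals $-\tfrac14(\ell-j)(\ell-j-1)$ for $X=\dot\pi(A_1)$ but $+\tfrac14(\ell-j)(\ell-j-1)$ for $X=\dot\pi(A_2)$ (and likewise $\mp\tfrac14\,j(j-1)$ on the lower side), so the $(j,j\pm2)$ entries cancel when the two double commutators are added, and the zero-order term is diagonal only as a \emph{sum}. The paper's organization makes this issue invisible: combining the quadratic terms first via $[e,f]=h$ rewrites the zero-order part through $\dot\pi(h)+2\dot\pi(f)\dot\pi(e)$ and $\dot\pi(e)\widetilde H\dot\pi(f)+\dot\pi(f)\dot{}\,\widetilde H\dot\pi(e)$ (with $\dot\pi(f)\widetilde H\dot\pi(e)$ understood), each of which manifestly preserves every weight line $\CC v_j$, so diagonality and the equivalence come for free. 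With this one-line cancellation check added, your argument is complete and coefficient-for-coefficient identical to the paper's.
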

\begin{proof}
Using the basis $\mathcal B=\vz{v_j}_{j=0}^\ell$ of $V_\pi$ (see (\ref{basedeVpi})) and writing the matrices $A_1$ and $A_2$ in terms
 of the $\mathfrak{sl}(2)$-triple $\{e,\,f,\,h\}$, see (\ref{efh}),
$$A_1=E_{21}-E_{12}=\, \frac i2({e+f})\; ,
\qquad A_2=E_{31}-E_{13}=\frac12({e-f}),$$
we have that Theorem \ref{D} says that $(\widetilde D\widetilde H)(r)=\lambda \widetilde H(r)$ if and only if

\begin{align*}
\lambda \widetilde H(r)&\,v_j=
(1+r^2)^2 \widetilde H''(r)\,v_j +2\frac{(1+r^2)^2}{r} \widetilde H'(r)\displaybreak[0]\,v_j \\
 & -\frac{(1+r^2)}{4r^2} \left( \dot\pi({e+f})^2 \,\widetilde H(r)\,+\widetilde H(r)\dot\pi({e+f})^2 -2\dot\pi({e+f})\widetilde H(r)\dot\pi({e+f})\right)\,v_j\\
 & +\frac{(1+r^2)}{4r^2} \left( \dot\pi({e-f}{})^2 \,\widetilde H(r)\,+\widetilde H(r)\dot\pi({e-f}{})^2 -2\dot\pi({e-f}{})\widetilde H(r)\dot\pi({e-f}{})\right)\,v_j,
\displaybreak[0]
\end{align*}
\noindent for $0\leq j\leq\ell$.

As $[e,f]=h$, we have that this is equivalent to
\begin{align*}
\lambda \widetilde H(r)v_j=\; &
(1+r^2)^2 \widetilde H''(r)\,v_j +2\frac{(1+r^2)^2}{r} \widetilde H'(r)\displaybreak[0]\,v_j &\\
 & -\frac{(1+r^2)}{2r^2} \bigg[ ( \dot\pi({h})+2\dot\pi({f})\dot\pi({e}) \, )\widetilde H(r)\,v_j\,+\widetilde H(r) (\dot\pi({h})+2\dot\pi({f})\dot\pi({e}) )\,v_j\\
&-2 (\dot\pi({e})\widetilde H(r)\dot\pi({f})+\dot\pi({f})\widetilde H(r)\dot\pi({e}) )\,v_j\bigg],
\end{align*}
\noindent for $0\leq j\leq\ell$.
Now, using (\ref{basedeVpi}), we obtain that $(\widetilde D\widetilde H)(r)=\lambda \widetilde H(r)$ if and only if
\begin{align*}
\lambda \widetilde h_j(r)\,v_j= &\,
(1+r^2)^2 \widetilde h_j''(r)\,v_j +2\frac{(1+r^2)^2}{r} \widetilde h_j'(r)\displaybreak[0]\,v_j \\
  -\frac{(1+r^2)}{2r^2} &\bigg[( (\ell-2j)+2j(\ell-j+1) \, )\widetilde h_j(r)\,v_j\,+\widetilde h_j(r) ((\ell-2j)+2j(\ell-j+1) )\,v_j      \\
           &  -2 ((\ell-j)\widetilde h_{j+1}(r)(j+1)+j\widetilde h_{j-1}(r)(\ell-j+1) )\,v_j\bigg],
\end{align*}
\noindent for $0\leq j\leq\ell$.

It can be easily checked that this is the required result. \end{proof}

\begin{cor}\label{sistema2}
The functions $\widetilde H(r)$, $0<r<\infty$, satisfy
$(\widetilde E\widetilde H)(r)=\mu \widetilde H(r)$ if and only if
\begin{align*}
&\textstyle -i(\ell-2j)\frac{1+r^2}{2}\widetilde h_{j}'+\frac{i}{2r}\biggr( (j+1)(\ell-j)(\widetilde h_{j+1}-\widetilde h_j)-j(\ell-j+1)(\widetilde h_{j-1}-\widetilde h_{j})\biggr) \\
&+\textstyle\frac{1}{2}\biggr( (j+1)(\ell-j)(\widetilde h_{j+1}-\widetilde h_j)+j(\ell-j+1)(\widetilde h_{j-1}-\widetilde h_{j})\biggr)
=\mu \widetilde h_j,
\end{align*}
for all $j=0,\dots, \ell$.
\end{cor}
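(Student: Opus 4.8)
The plan is to mimic exactly the computation carried out in the proof of Corollary~\ref{sistema1}, replacing Theorem~\ref{D} by Theorem~\ref{E} as the starting point. First I would substitute into the formula for $\widetilde E(\widetilde H)(r)$ from Theorem~\ref{E} the expressions for $A_1,A_2,A_3$ in terms of the $\mathfrak{sl}(2)$-triple $\{e,f,h\}$. We already have $A_1=\tfrac{i}{2}(e+f)$ and $A_2=\tfrac12(e-f)$ from Corollary~\ref{sistema1}; the new ingredient is $A_3=E_{32}-E_{23}$, which from the explicit form of $e,f,h$ in \eqref{efh} must be expressed in the same basis. A direct computation should give $A_3=\tfrac{i}{2}h$ (consistent with the fact that $m_\theta=\exp(\theta\tfrac{i}{2}h)$ in Proposition~\ref{Hdiagonal}, and that $A_3$ generates the rotation subgroup $M$). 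This identification is the one small algebraic check needed before the rest is bookkeeping.

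Next I would evaluate each of the three terms of $\widetilde E$ on the basis vector $v_j$. Since $\widetilde H$ is diagonal in $\mathcal B$ by Proposition~\ref{Hdiagonal}, the action of $\dot\pi(A_3)=\tfrac{i}{2}\dot\pi(h)$ on $\widetilde H'v_j$ simply multiplies by the eigenvalue $\tfrac{i}{2}(\ell-2j)$, producing the first term $-i(\ell-2j)\tfrac{1+r^2}{2}\widetilde h_j'$. For the bracket terms I would expand the commutators: using $\widetilde H(r)v_j=\widetilde h_j(r)v_j$ together with the raising/lowering relations \eqref{basedeVpi}, one computes $[\dot\pi(A_1),\widetilde H(r)]v_j$ and $[\dot\pi(A_2),\widetilde H(r)]v_j$ explicitly. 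Because $A_1$ and $A_2$ are, up to the scalars $\tfrac{i}{2}$ and $\tfrac12$, the combinations $e+f$ and $e-f$, each commutator yields terms proportional to $v_{j-1}$ and $v_{j+1}$ with coefficients built from $(j+1)(\ell-j)$ and $j(\ell-j+1)$. Then multiplying on the right by $\dot\pi(rA_1+A_2)$ and $\dot\pi(A_1-rA_2)$ and projecting back onto the $v_j$-component (which is what the eigenvalue equation $\widetilde E\widetilde H=\mu\widetilde H$ requires, since the right-hand side is diagonal) selects exactly the pieces that land back on $v_j$.

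The main obstacle is purely organizational rather than conceptual: when one writes $[\dot\pi(A_1),\widetilde H]\,\dot\pi(rA_1+A_2)$ acting on $v_j$, the inner $\dot\pi(rA_1+A_2)$ first sends $v_j$ to a combination of $v_{j-1}$ and $v_{j+1}$, the diagonal $\widetilde H$ acts with the corresponding eigenvalues $\widetilde h_{j\pm1}$, and the outer commutator with $\dot\pi(A_1)$ brings these back to the $v_j$-line; keeping track of which of $\widetilde h_{j-1},\widetilde h_j,\widetilde h_{j+1}$ appears with which numerical coefficient, and correctly combining the $A_1$- and $A_2$-contributions so that the imaginary parts assemble into the $\tfrac{i}{2r}$-term and the real parts into the $\tfrac12$-term, is where all the care is required. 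I would organize the computation by grouping, for each fixed $j$, the coefficient of $\widetilde h_{j+1}-\widetilde h_j$ and the coefficient of $\widetilde h_{j-1}-\widetilde h_j$ separately, which is precisely the form in which the stated result is written; the appearance of the differences $\widetilde h_{j\pm1}-\widetilde h_j$ rather than the bare $\widetilde h_{j\pm1}$ is forced by the commutator structure, since $[\dot\pi(A),\widetilde H]v_j$ always produces such differences. Verifying that the $r$-dependent and $r$-independent pieces separate into the displayed $\tfrac{i}{2r}(\cdots)$ and $\tfrac12(\cdots)$ blocks, with the signs $+$ on the $j(\ell-j+1)$ term in the real block and $-$ in the imaginary block, completes the proof.
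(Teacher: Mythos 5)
Your overall strategy is exactly the paper's: substitute $A_1=\tfrac i2(e+f)$, $A_2=\tfrac12(e-f)$ and the expression of $A_3$ in the $\mathfrak{sl}(2)$-triple into Theorem \ref{E}, use the diagonality of $\widetilde H$ from Proposition \ref{Hdiagonal} together with the relations \eqref{basedeVpi}, and collect the coefficients of $\widetilde h_{j\pm1}-\widetilde h_j$. But the ``one small algebraic check'' you single out is done wrong: from \eqref{efh} one has $h=2iA_3$, i.e. $A_3=-\tfrac i2 h$, not $+\tfrac i2 h$; this is the identity the paper uses. (Your own consistency check, done carefully, confirms the minus sign: the generator of $m_\theta$ in \eqref{Msubgrupo} is $E_{23}-E_{32}=-A_3$, and it is this matrix that equals $\tfrac i2 h$.) With your identification, the first term of $\widetilde E\widetilde H$ acting on $v_j$ would be $+i(\ell-2j)\tfrac{1+r^2}{2}\widetilde h_j'$, the opposite of what the corollary asserts; you then write down the correct term $-i(\ell-2j)\tfrac{1+r^2}{2}\widetilde h_j'$ anyway, so your argument is internally inconsistent at precisely the step you flagged as the only nontrivial check, and carried out literally it would ``prove'' a statement with the wrong sign on the derivative term.

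A second, smaller issue concerns the claimed equivalence. You propose to expand $[\dot\pi(A_1),\widetilde H]\dot\pi(rA_1+A_2)$ and $[\dot\pi(A_2),\widetilde H]\dot\pi(A_1-rA_2)$ term by term and then ``project onto the $v_j$-component.'' Projection alone yields only that $\widetilde E\widetilde H=\mu\widetilde H$ implies the scalar system; for the converse direction of the ``if and only if'' you must also verify that the $v_{j\pm2}$-contributions, coming from the $[\dot\pi(e),\widetilde H]\dot\pi(e)$- and $[\dot\pi(f),\widetilde H]\dot\pi(f)$-type pieces of each summand, cancel between the two commutator terms. The paper sidesteps this by first rewriting both terms in the $e,f$ basis, where those cross terms cancel algebraically, leaving $\tfrac{1}{2r}(r+i)[\dot\pi(e),\widetilde H]\dot\pi(f)+\tfrac{1}{2r}(r-i)[\dot\pi(f),\widetilde H]\dot\pi(e)$; each of these operators maps $v_j$ to a multiple of $v_j$, so the diagonality of $\widetilde E\widetilde H$ is manifest, no projection is needed, and the split into your ``imaginary'' and ``real'' blocks follows at once from $\tfrac{r\pm i}{2r}=\tfrac12\pm\tfrac{i}{2r}$. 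With $A_3=-\tfrac i2 h$ corrected and this cancellation checked (or the paper's regrouping adopted), your plan does go through.
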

\begin{proof}
We proceed in a way similar to the proof of Corollary \ref{sistema1}.
Using the $\mathfrak{sl}(2)$-triple $\{e,\,f,\,h\}$ and the matrices $A_1$, $A_2$ and $A_3$ (see \eqref{As}), from Theorem \ref{E} we have that $(\widetilde E\widetilde H)(r)=\mu \widetilde H(r)(r)$ if and only if

\begin{align*}
\mu \widetilde H(r)\,v_j=&(1+r²)H'(r)\dot\pi \left(A_3\right)\,v_j
-\frac1r\left[\dot \pi \left(A_1\right) , \widetilde H(r) \right ]\dot\pi \left(r A_1 +A_2\right)\,v_j
\\&+\frac1r\left[\dot \pi \left(A_2\right) , \widetilde H(r) \right ]\dot\pi \left(A_1 -rA_2\right)\,v_j,
\end{align*}
\noindent for every $v_j$ in $\mathcal B=\vz{v_j}_{j=0}^\ell$.

As in the proof of Theorem \ref{sistema1}, we write $A_1$, $A_2$ and $A_3$ in terms of $\{e,\,f,\,h\} $ (see (\ref{efh})),
$$A_1= \frac i2({e+f})\; ,\qquad A_2=\frac12{(e-f)}, \qquad A_3=-\frac i2h.$$
Hence, $(\widetilde D\widetilde H)(r)=\lambda \widetilde H(r)$ if and only if

\begin{align*}
\mu \widetilde H(r)v_j=&\frac 1{4r}\left[\dot \pi \left(e+f\right) , \widetilde H(r) \right ]\dot\pi \left(r (e+f) -i(e-f)\right)v_j \\
+&\frac1{4r}\left[\dot \pi \left(e-f\right) , \widetilde H(r) \right ]\dot\pi \left(i(e+f) -r(e-f)\right)v_j-i\frac{1+r²}{2}H'(r)\dot\pi \left(h\right)v_j
,
\end{align*}
\noindent for $0\leq j\leq\ell$. And that is equivalent to

\begin{align*}
\mu \widetilde H(r)\,v_j=&-i\frac{1+r²}{2}H'(r)\dot\pi \left(h\right)\,v_j
+\frac 1{2r}(r+i)\left[\dot \pi \left(e\right) , \widetilde H(r) \right ]  \dot\pi(f)\,v_j
\\&+\frac 1{2r}(r-i)\left[\dot \pi \left(f\right) , \widetilde H(r) \right ]  \dot\pi(e)\,v_j ,
\end{align*}
\noindent for $0\leq j\leq\ell$.

Finally, we use (\ref{basedeVpi}) to obtain

\begin{align*}
\mu \widetilde h_j\,v_j=&-i\frac{1+r²}{2}\widetilde h'_j(2\ell-j)\,v_j
+\frac 1{2r}(r+i)(\ell-j)(\widetilde h_{j+1}-\widetilde h_{j})  (j+1)\,v_j
\\&+\frac 1{2r}(r-i)j( \widetilde h_{j-1}-\widetilde h_{j}) (\ell-j+1)\,v_j ,
\end{align*}
\noindent for $0\leq j\leq\ell$.
Therefore, the corollary is proved.

\end{proof}

\medskip
In matrix notation, the differential operators $\widetilde D$ and $\widetilde E$ are given by
$$ \widetilde D \widetilde H=(1+r^2)^2 \widetilde H''+2\frac{(1+r^2)^2}{r} \widetilde H'+\frac{(1+r^2)}{r^2} (C_1+C_0)\widetilde H,$$
$$ \widetilde E \widetilde H= -i\frac{1+r^2}{2}A_0\widetilde H'+\frac{i}{2r}(C_1-C_0)\widetilde H+\frac{1}{2}(C_1+C_0)\widetilde H.$$
where the  matrices are given by
\begin{equation}\label{matricesAC}
\begin{split}
A_0&=\textstyle\sum_{j=0}^\ell(\ell-2j)E_{j,j},\\
C_0&=\textstyle\sum_{j=1}^\ell j(\ell-j+1)(E_{j,j-1}-E_{j,j}),\\
C_1&=\textstyle\sum_{j=0}^{\ell-1}(j+1)(\ell-j)(E_{j,j+1}-E_{j,j}).
\end{split}
\end{equation}

  When $\ell=0$, we are in the scalar case and the matrices $C_0$, $C_1$ and $A_0$ are zero. It is well known  that the zonal spherical functions on the sphere $S^3$  are given, in an appropriate variable $x$, in terms of Gegenbauer polynomials $C_n^\nu(x)$ with $\nu=1$ and $n=0,1,2,\dots$ (see \cite{AAR} page 302). Therefore, in some variable $x$, the functions $\widetilde H$ should satisfy a differential equation of the form
$$ (1-x^2)y'' - 3x y+n(n+2) y=0.$$
This suggests the following change of variable
\begin{equation}\label{variableu}
u=\tfrac {1} {\sqrt{1+r^2}}, \qquad u\in(0,1].
\end{equation}

\begin{remarks}
It is worth noticing that if $g=ka(\theta)k'$, with $k,k'\in K$, $a(\theta)\in A$ and $gK=(x_1,x_2,x_3,x_4)\in(S^3)^+$, then
$$u=\cos(\theta),$$
because
$$u(g)=\tfrac {1} {\sqrt{1+r^2}}=\tfrac {1} {\sqrt{1+y_1²+y_2²+y_3²}}=x_4=\cos(\theta).$$
\end{remarks}

We put
\begin{equation}\label{Hu}
H(u)=\widetilde H\left(\tfrac{\sqrt{1-u^2}}{u}\right) \text{ and } h_j(u)=\widetilde h_j\left(\tfrac{\sqrt{1-u^2}}{u}\right).
 \end{equation}

\noindent Under this change of variables, the differential operators
$\widetilde D$ and $\widetilde E$ are converted into two new differential operators $D$
and $E$.
We get the following expressions for them,
\begin{align}
\label{opDH}  DH&=(1-u^2) \frac{d^2H}{du^2} - 3u \frac{dH}{du}+\frac 1{1-u^2}(C_0+C_1)H,\\
\label{opEH}  EH& = \frac i2 \sqrt{1-u^2}A_0\frac{dH}{du}+\frac i2 \frac u{\sqrt{1-u^2}} (C_1-C_0)H+\frac 12 (C_0+C_1)H.
\end{align}

At this point there is a slight abuse of notation, since $D$ and $E$
were used earlier to denote operators on $\RR^3$.
{ \begin{remark}\label{DyEconmutan3}
 Clearly from Remark \ref{DyEconmutan2} we have that the differential operators $D$ and $E$ commute.
\end{remark}
}

 \section{Eigenfunctions of $D$}\label{DH}
We are interested in determining the functions
$H:(0,1)\longrightarrow \CC^{\ell+1}$ that are eigenfunctions of the differential operator
$$DH=(1-u^2) \frac{d^2H}{du^2} - 3u \frac{dH}{du}+\frac 1{1-u^2}(C_0+C_1)H,$$
$u\in (0,1)$.

It is well
known that such eigenfunctions are analytic functions on the  interval
$(0,1)$ and that the dimension of the corresponding eigenspace
is $2(\ell+1)$.

\medskip
The equation $DH=\lambda H$ is a coupled system of $\ell+1$ second
order differential equations in the components $(h_0, \dots , h_\ell)$ of
$H$, because the $(\ell+1)\times (\ell+1)$ matrix $C_0+C_1$ is not a diagonal
matrix. But fortunately the matrix $C_0+C_1$ is a symmetric one, thus
 diagonalizable. Now we quote from \cite{GPT2} the Proposition 5.1.

\begin{prop}\label{Hahn} The matrix $C_0+C_1$ is diagonalizable.
Moreover,
the eigenvalues are $-j(j+1)$ for $0\le j\le \ell$
and the corresponding eigenvectors are given by
$u_j=(U_{0,j},\dots,U_{\ell,j})$ where
\begin{equation*}
U_{k,j}= \lw{3}F_2\left( \begin{smallmatrix}
-j,\;-k,\;j+1 \\ 1,\;-\ell \end{smallmatrix}; 1 \right),
\end{equation*}
an instance of the Hahn orthogonal polynomials.
\end{prop}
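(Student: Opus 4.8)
The plan is to recognize $M:=C_0+C_1$ as (the negative of) the second order difference operator of the Hahn polynomials, so that its eigenvalue equation becomes the classical Hahn difference equation, whose solutions are exactly the polynomials $U_{k,j}$.

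First I would record the explicit shape of $M$. From \eqref{matricesAC}, indexing rows and columns by $0\le k\le\ell$, the matrix $M$ is tridiagonal with subdiagonal entries $M_{k,k-1}=k(\ell-k+1)$, superdiagonal entries $M_{k,k+1}=(k+1)(\ell-k)$, and diagonal entries $M_{k,k}=-\bigl(2k(\ell-k)+\ell\bigr)$. Since $M_{k,k-1}=M_{k-1,k}=k(\ell-k+1)$, the matrix $M$ is real symmetric, hence orthogonally diagonalizable with real eigenvalues; this already settles the first assertion. Moreover the proposed eigenvalues $-j(j+1)$, $0\le j\le\ell$, are pairwise distinct, since $j\mapsto j(j+1)$ is strictly increasing on $\{0,\dots,\ell\}$, so it suffices to exhibit, for each $j$, a nonzero vector $u_j$ with $Mu_j=-j(j+1)u_j$; these will then automatically form a full eigenbasis.

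Next I would write the eigenvalue equation componentwise. For a vector $v=(v_0,\dots,v_\ell)$ the relation $Mv=\lambda v$ reads, for each $k$,
\[
k(\ell-k+1)\,v_{k-1}-\bigl(2k(\ell-k)+\ell\bigr)v_k+(k+1)(\ell-k)\,v_{k+1}=\lambda v_k,
\]
a three-term relation in the discrete variable $k$, with the boundary terms automatically absent because $k(\ell-k+1)$ vanishes at $k=0$ and $(k+1)(\ell-k)$ vanishes at $k=\ell$. I would then identify this with the Hahn difference equation. Writing $B(x)=(x+1)(x-\ell)$ and $D(x)=x(x-\ell-1)$, the Hahn polynomials $Q_j(x;0,0,\ell)={}_3F_2\bigl(\begin{smallmatrix}-j,\,j+1,\,-x\\ 1,\,-\ell\end{smallmatrix};1\bigr)$ satisfy
\[
B(x)\,y(x+1)-\bigl(B(x)+D(x)\bigr)y(x)+D(x)\,y(x-1)=j(j+1)\,y(x).
\]
A direct check gives $-M_{k,k-1}=D(k)$, $-M_{k,k+1}=B(k)$, and $-M_{k,k}=2k(\ell-k)+\ell=-(B(k)+D(k))$, so the system $-Mv=j(j+1)v$ is exactly the Hahn equation evaluated at $x=k$. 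Since $U_{k,j}={}_3F_2\bigl(\begin{smallmatrix}-j,\,-k,\,j+1\\ 1,\,-\ell\end{smallmatrix};1\bigr)=Q_j(k;0,0,\ell)$ (the numerator parameters of a ${}_3F_2$ may be permuted, and the series terminates because $-j$ is a nonpositive integer with $j\le\ell$), the vector $u_j=(U_{0,j},\dots,U_{\ell,j})$ satisfies $Mu_j=-j(j+1)u_j$.

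The routine but delicate step is the coefficient matching above, together with the two boundary rows $k=0,\ell$: one must confirm that the vanishing of $D(0)$ and of $B(\ell)$ matches the absence of the off-diagonal entries there, so that no spurious values $U_{-1,j}$ or $U_{\ell+1,j}$ enter the computation. Once this bookkeeping is done, distinctness of the $\ell+1$ eigenvalues $-j(j+1)$ guarantees that $\{u_j\}_{j=0}^\ell$ is a basis of eigenvectors, completing the proof. (Alternatively, rather than invoking the Hahn difference equation one could verify the three-term relation directly from contiguous relations for ${}_3F_2$, but the identification with Hahn's operator is cleaner.)
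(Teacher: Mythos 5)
Your proof is correct. Strictly speaking the paper offers no argument of its own here --- it quotes Proposition 5.1 of \cite{GPT2} verbatim --- but your route is exactly the one implicit in that reference and in this paper's sequel: the componentwise identity you derive, $k(\ell-k+1)v_{k-1}-(2k(\ell-k)+\ell)v_k+(k+1)(\ell-k)v_{k+1}=-j(j+1)v_k$ with $v_k=Q_j(k;0,0,\ell)$, is precisely the relation recorded later as \eqref{Hahnrec_j} and used in Lemma \ref{Hahnproperties} to verify $(C_0+C_1)U=-UV_0$, so your argument supplies the missing verification rather than a genuinely different method. The details check out: the coefficient matching $-M_{k,k-1}=k(k-\ell-1)=D(k)$, $-M_{k,k+1}=(k+1)(k-\ell)=B(k)$, $-M_{k,k}=-(B(k)+D(k))$ identifies $-M$ with the Hahn difference operator for $\alpha=\beta=0$, $N=\ell$; the boundary rows are correctly handled by $D(0)=B(\ell)=0$; the permutation of numerator parameters legitimately identifies $U_{k,j}$ with $Q_j(k)$, terminating before the denominator parameter $-\ell$ can cause trouble; and symmetry of $M$ plus distinctness of the $\ell+1$ eigenvalues gives diagonalizability twice over. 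The only cosmetic omission is the remark that $u_j\neq 0$, which is immediate since $U_{0,j}=1$.
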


Therefore, if we define $\check H(u)=U^{-1}H(u)$, we get that $DH=\lambda H$ is equivalent to
$$(1-u^2) \frac{d^2\check H}{du^2} - 3u \frac{d\check H}{du}-\frac 1{1-u^2}V_0\check H=\lambda \check H,$$
where $V_0=\sum_{j=0}^{\ell-1} j(j+1)E_{j,j}$.

In this way we obtain that $DH=\lambda H$ if and only if  the $j$-th component
$\check h_j(u)$ of $\check H(u)$, for  $0\leq j\leq \ell$, satisfies
\begin{equation}\label{eqtilde}
(1-u^2) \,\check h_j''(u)-3\,u \check
h_j'(u)-j(j+1)\frac{1}{(1-u^2)}\check h_j(u)-\lambda \check h_j(u)=0.
\end{equation}

If we write $\lambda=-n(n+2)$ with $n\in\CC$, and
$\check{h_j}(u)=(1-u^2)^{ j /2 }p_j(u)$. Then, for $0<j<\ell$, $p_j(u)$ satisfies
\begin{equation}\label{eqfj2}
(1-u^2) p_j''(u)-(2j+3)\,u p_j'(u)+ (n-j)(n+j+2) p_j(u) =0.
\end{equation}

Making a new change of variable, $s=(1-u)/2,\,s\in\left[0,\frac12 \right)$, and defining $\tilde p_j(s)=p_j(u)$ we have
\begin{equation}\label{eqfj3}
s(1-s) \tilde p_j''(s)+( j +\tfrac 32-(2j+3)\,s) \tilde p_j'(s)+ (n-j)(n+j+2) \tilde p_j
=0,
\end{equation}
 for $0<j<\ell$. This is a hypergeometric equation of parameters
$$a= -n+j\, , \qquad  b=n+j+2\, , \qquad c= j+\tfrac 32.$$

\noindent Hence, every solution $\tilde p_j(s)$ of \eqref{eqfj3} for $0<s<\tfrac12$ is
a linear combination of
$$  {}_2\!F_1\left(\begin{smallmatrix}-n+j,n+j+2\\  j+3/2 \end{smallmatrix}; s\right) \,  \qquad \text{ and } \qquad
s^{-j-1/2}\, {}_2\!F_1\left(\begin{smallmatrix} -n-1/2 ,n+ 3/2\\  -j+1/2 \end{smallmatrix}; s\right).  $$

  Therefore, for $0\leq j\leq \ell$,
any solution $\check h_j(u)$ of \eqref{eqtilde}, for $0<u<1$, is of the form
\begin{equation}\label{Hcheck}
\begin{split}
\check h_j(u) =& \,a_j (1-u^2)^{j/2} \, {}_2\!F_1\left(\begin{smallmatrix}-n+j,n+j+2\\  j+3/2 \end{smallmatrix}; \tfrac {1-u}2\right) \\& + b_j \, (1-u^2)^{-(j+1)/2}
 {}_2\!F_1\left(\begin{smallmatrix} -n-1/2 ,n+ 3/2\\  -j+1/2 \end{smallmatrix}; \tfrac{1-u}2\right),
 \end{split}
\end{equation}
for some $a_j,b_j\in \CC$.

\medskip

Therefore,  we have  proved the following theorem.

\begin{thm}\label{Dhyp0}
 Let  $H(u)$ be an eigenfunction of $D$ with eigenvalue $\lambda=-n(n+2)$, $n\in \CC$. Then, $H$ is of the form
  \begin{equation*}
    H(u)= UT(u)P(u) + US(u) Q(u)
  \end{equation*}
   where $U$ is the matrix defined in \eqref{Ucolumnas},
   $$  T(u)=\sum_{j=0}^\ell (1-u^2)^{j/2}E_{jj},\qquad  S(u)=\sum_{j=0}^\ell (1-u^2)^{-(j+1)/2}E_{jj}, $$
   $ P=(p_0, \dots, p_\ell)^t$  and $ Q=(q_0, \dots, q_\ell)^t$  are the vector valued functions given by
  \begin{align*}
  p_j(u)& = a_j \, {}_2\!F_1\left(\begin{smallmatrix}-n+j,n+j+2\\  j+3/2 \end{smallmatrix}; \tfrac{1-u}2\right),\\
  q_j(u)& = b_j \, {}_2\!F_1\left(\begin{smallmatrix}-n-1/2,n+3/2\\  -j+1/2 \end{smallmatrix};\tfrac{1-u}2\right),
\end{align*}
where  $a_j$ and $b_j$ are arbitrary complex numbers for $j=0,1,\ldots,\ell$.
\end{thm}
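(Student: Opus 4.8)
The plan is to assemble Theorem \ref{Dhyp0} directly from the diagonalization already carried out in Proposition \ref{Hahn} together with the explicit ODE analysis performed in equations \eqref{eqtilde}--\eqref{Hcheck}. First I would recall that Proposition \ref{Hahn} provides the matrix $U$ whose columns $u_j$ are the Hahn-polynomial eigenvectors of $C_0+C_1$, so that $U^{-1}(C_0+C_1)U=-V_0$ with $V_0=\sum_{j=0}^{\ell-1}j(j+1)E_{j,j}$. Setting $\check H(u)=U^{-1}H(u)$ uncouples the system $DH=\lambda H$ into the scalar equations \eqref{eqtilde}, one for each component $\check h_j$. Thus the whole content of the theorem is to package the already-derived general solution \eqref{Hcheck} of \eqref{eqtilde} back into matrix form.

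Next I would carry out the change of variables bookkeeping. Writing $\lambda=-n(n+2)$ and substituting $\check h_j(u)=(1-u^2)^{j/2}p_j(u)$ reduces \eqref{eqtilde} to the hypergeometric equation \eqref{eqfj3} in the variable $s=(1-u)/2$, whose two independent solutions are the ${}_2F_1$ functions displayed just before \eqref{Hcheck}. The first solution contributes the factor $(1-u^2)^{j/2}$, matching the diagonal matrix $T(u)=\sum_j(1-u^2)^{j/2}E_{jj}$; the second solution carries $(1-u^2)^{-(j+1)/2}$, matching $S(u)=\sum_j(1-u^2)^{-(j+1)/2}E_{jj}$. Hence the general $\check h_j$ is exactly \eqref{Hcheck} with free constants $a_j,b_j$, which in vector form reads $\check H(u)=T(u)P(u)+S(u)Q(u)$ with $P=(p_0,\dots,p_\ell)^t$ and $Q=(q_0,\dots,q_\ell)^t$ as defined in the statement. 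Applying $U$ gives $H(u)=U\check H(u)=UT(u)P(u)+US(u)Q(u)$, which is the asserted form.

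Finally I would justify that this exhausts all eigenfunctions. The differential operator $D$ has regular points on $(0,1)$, so the solution space of $DH=\lambda H$ has dimension $2(\ell+1)$, as noted at the start of Section \ref{DH}; since the conjugation by the invertible constant matrix $U$ and the substitution $\check h_j=(1-u^2)^{j/2}p_j$ are invertible changes of unknown, and since for each of the $\ell+1$ scalar equations we have produced the full two-dimensional solution space parametrized by $(a_j,b_j)$, the family $UT(u)P(u)+US(u)Q(u)$ ranges over a $2(\ell+1)$-dimensional space and therefore over every eigenfunction.

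The one point requiring slight care is that the formula for $p_j$ via the substitution $\check h_j=(1-u^2)^{j/2}p_j$ was derived in \eqref{eqfj2}--\eqref{eqfj3} under the restriction $0<j<\ell$, whereas the theorem asserts the form for all $0\le j\le\ell$. The main (though routine) obstacle is thus to check the boundary indices $j=0$ and $j=\ell$: one verifies directly that the same reduction and the same pair of ${}_2F_1$ solutions remain valid there, so that the uniform formula in the statement is legitimate across the full range of $j$.
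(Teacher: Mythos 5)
Your proposal is correct and takes essentially the same route as the paper: conjugating by the Hahn matrix $U$ of Proposition \ref{Hahn} to uncouple $DH=\lambda H$ into the scalar equations \eqref{eqtilde}, reducing each to the hypergeometric equation \eqref{eqfj3} via $\check h_j=(1-u^2)^{j/2}p_j$ and $s=(1-u)/2$, and repackaging the resulting two-dimensional solution spaces as $H=UT(u)P(u)+US(u)Q(u)$. Your two closing points---the $2(\ell+1)$-dimensional count ensuring exhaustiveness, and the check that the reduction stated in \eqref{eqfj2} for $0<j<\ell$ in fact holds at $j=0$ and $j=\ell$ (unproblematic since $c=j+\tfrac32$ is never an integer)---simply make explicit what the paper leaves implicit.
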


Going back to our problem of determining all irreducible spherical functions $\Phi$, we recall that  $\Phi(e)=I$; then, the associated function $H\in C^\infty(\RR^3)\otimes\End(V_\pi)$ satisfies $H(0,0,0)=I$.
In the variable $r\in \RR$, we have that $\lim_{r\to 0^+}\widetilde H(r)=I$.
Therefore, we are interested in those eigenfunctions of $D$ such that
$$\lim_{u\to 1^-} H(u)= (1,1,\dots, 1) \in \CC^{\ell+1}. $$

From Theorem \ref{Dhyp0} we observe that
$$\lim_{u\to 1^-}  P(u)=(a_0, a_1,\dots , a_\ell) \qquad \text{ and } \qquad \lim_{u\to 1^-}  Q(u)=(b_0, b_1,\dots , b_\ell).$$
Moreover, the matrix $T(u)$ has limit when $u\to 1^-$, while $S(u)$ does not. Therefore an eigenfunction $H$ of $D$ has limit when $u\to 1^-$ if and only if
the limit of $Q(u)$ when $u\to 1^-$ is $(0,\dots, 0)$. In such a case we have that
\begin{equation}\label{limiteH}
  \lim_{u\to 1^-} H(u)= \lim_{u\to 1^-} UT(u)P(u) =U \,(a_0,0, \dots, 0)^t= a_0 (1,\dots, 1)^t.
\end{equation}

In this way we have proved the following result.

\begin{cor}\label{Dhyp}
Let $H(u)$ be an eigenfunction of $D$ with eigenvalue $\lambda=-n(n+2)$, $n\in \CC$, such that $\lim_{u \to 1^- }{H(u)}$ exists.
 Then, $H$ is of the form
  \begin{equation*}
    H(u)= UT(u)P(u)
  \end{equation*}
   with $U$ the matrix defined in \eqref{Ucolumnas}, $  T(u)=\displaystyle \sum_{j=0}^\ell (1-u^2)^{j/2}E_{jj}, $ and
$ P=(p_0, \dots, p_\ell)^t$  is the vector valued function given by
  \begin{align*}
  p_j(u)& = a_j \, {}_2\!F_1\left(\begin{smallmatrix}-n+j,n+j+2\\  j+3/2 \end{smallmatrix}; \tfrac{1-u}{2}\right), \qquad  0\le j \le \ell,
\end{align*}
where  $a_j$ are arbitrary complex numbers for $j=1,2,\ldots,\ell$. We also have that $\lim_{u \to 1^- }{H(u)}=a_0(1,1,\ldots,1)^t$. Particularly, if $H(u)$ is associated to an irreducible spherical function, then $a_0=1$.
 \end{cor}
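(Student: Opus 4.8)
The plan is to start from the general eigenfunction description in Theorem \ref{Dhyp0}, which already gives every eigenfunction of $D$ with eigenvalue $\lambda=-n(n+2)$ in the form $H(u)=UT(u)P(u)+US(u)Q(u)$, and then impose the single extra hypothesis that $\lim_{u\to1^-}H(u)$ exists in order to kill the $US(u)Q(u)$ summand. First I would record the behaviour of the three factors $T(u)$, $S(u)$, $P(u)$, $Q(u)$ as $u\to1^-$. The matrix $T(u)=\sum_{j=0}^{\ell}(1-u^2)^{j/2}E_{jj}$ tends to $E_{00}$ (all higher diagonal entries vanish), while $S(u)=\sum_{j=0}^{\ell}(1-u^2)^{-(j+1)/2}E_{jj}$ has every diagonal entry blowing up, the $j=0$ entry like $(1-u^2)^{-1/2}$ being the mildest. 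Each hypergeometric factor $p_j$ and $q_j$ is evaluated at $\tfrac{1-u}{2}$, which tends to $0$, so $p_j(u)\to a_j$ and $q_j(u)\to b_j$; hence $P(u)\to(a_0,\dots,a_\ell)^t$ and $Q(u)\to(b_0,\dots,b_\ell)^t$ as already noted in the excerpt.

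The key step is then the separation-of-divergence argument. Since $U$ is a fixed invertible matrix and $UT(u)P(u)$ converges (to $a_0\,U E_{00}(a_0,\dots,a_\ell)^t$-type limit, i.e. $a_0(1,\dots,1)^t$ after using that the $0$-th column of $U$ is the all-ones vector coming from $U_{k,0}={}_3F_2(\cdots;1)=1$), the existence of $\lim_{u\to1^-}H(u)$ forces $\lim_{u\to1^-}US(u)Q(u)$ to exist as well. Applying $U^{-1}$, we need $\lim_{u\to1^-}S(u)Q(u)$ to exist; componentwise this is $\lim_{u\to1^-}(1-u^2)^{-(j+1)/2}q_j(u)=\lim_{u\to1^-}(1-u^2)^{-(j+1)/2}b_j\,{}_2F_1(\cdots;\tfrac{1-u}{2})$. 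Because the hypergeometric factor tends to the finite nonzero value $b_j$ (for generic parameters) while $(1-u^2)^{-(j+1)/2}\to+\infty$ for every $j\ge0$, the only way the limit can be finite is $b_j=0$ for all $j$. Thus $Q\equiv0$, $H(u)=UT(u)P(u)$, and the displayed limit computation \eqref{limiteH} gives $\lim_{u\to1^-}H(u)=a_0(1,\dots,1)^t$.

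I expect the main obstacle to be the edge case in the second hypergeometric factor when its lower parameter $-j+1/2$ is problematic, and more importantly making the divergence argument fully rigorous rather than entry-by-entry heuristic. Concretely, one must confirm that the two summands $UT(u)P(u)$ and $US(u)Q(u)$ cannot conspire to cancel their divergences: this is clear once one observes they live, after applying $U^{-1}$, in the factors $T(u)P(u)$ and $S(u)Q(u)$ whose $j$-th components have genuinely different orders of vanishing/blow-up, namely $(1-u^2)^{j/2}$ versus $(1-u^2)^{-(j+1)/2}$, so no cancellation is possible componentwise and the conclusion $b_j=0$ is forced for each $j$ independently. Finally, the normalization statement follows directly: an $H$ attached to an irreducible spherical function satisfies $\lim_{u\to1^-}H(u)=(1,\dots,1)^t$ (translating $H(0,0,0)=I$ through $\widetilde H$ and the change of variable $u=1/\sqrt{1+r^2}$), and comparing with $a_0(1,\dots,1)^t$ yields $a_0=1$.
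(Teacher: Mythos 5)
Your proposal is correct and takes essentially the same approach as the paper: there the corollary is likewise deduced from Theorem \ref{Dhyp0} by noting that $T(u)$ has a limit as $u\to1^-$ while $S(u)$ does not, so existence of $\lim_{u\to1^-}H(u)$ forces $Q\equiv0$ (i.e.\ $b_j=0$ for all $j$), and then the limit computation \eqref{limiteH} yields $a_0(1,\dots,1)^t$ and $a_0=1$ in the spherical case. Your added checks (no cancellation between the two summands, and that the lower parameter $-j+1/2$ is never a nonpositive integer) merely make explicit what the paper leaves implicit.
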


\section{Eigenfunctions of $D$ and $E$}\label{DE}
In this section  we shall study the simultaneous solutions of $DH(u)=\lambda H(u)$ and $EH(u)=\mu H(u)$, $0<u<1$.

We introduce a matrix function $P(u)$, defined from $H(u)$  by
\begin{equation}\label{HP}
  H(u)=U\, T(u)\,P(u),
\end{equation}
where  $U$ is the matrix defined in \eqref{Ucolumnas}
and $ T(u)=\sum_{j=0}^\ell (1-u^2)^{j/2}E_{jj} $.

The fact that $H$ is an eigenfunction of the differential operators $D$ and $E$
makes $P$ an eigenfunction of the differential operators
\begin{equation}\label{barras}
\bar D =\left(UT(u)\right)^{-1} D \left(UT(u)\right) \quad \text{ and } \quad \bar E =\left(UT(u)\right)^{-1} E \left(UT(u)\right),
\end{equation}
 with, respectively, the same eigenvalues $\lambda$ and $\mu$.

\smallskip
 The explicit expressions of $\bar D$ and $\bar E$ shall be given in Theorem \ref{puntos}, but first we recall some properties of the Hahn polynomials.

For real numbers $\alpha, \beta >-1$,  and for a positive integer $N$ the Hahn
polynomials $Q_n(x)=Q_n(x;\alpha, \beta, N)$ are defined by \color{black}
 $$Q_n(x)=\lw{3}F_2\left( \begin{smallmatrix}
-n,\;-x,\;n+\alpha+\beta+1 \\ \alpha+1,\;-N \end{smallmatrix}; 1 \right),
\qquad \text{ for } n=0,1,\dots , N.$$
Taking $\alpha=\beta=0$, $N=\ell$, $x=j$,  $n=k$,  we obtain
$$ U_{jk}= Q_k(j)= \lw{3}F_2\left( \begin{smallmatrix}
-k,\;-j,\;k+1 \\ 1,\;-\ell \end{smallmatrix}; 1 \right).$$
These Hahn polynomials are examples of orthogonal polynomials, see \cite{KS} equation (1.5.2):
\begin{equation}  \label{Hahnortogon}
    \sum_{r=0}^\ell Q_j(r)Q_k(r) = \delta_{jk} \frac{(-1)^j (j+1)_{\ell+1} j!}{(2j+1) (-\ell)_j}\ell!=\delta_{jk} \frac{(\ell+j+1)! (\ell-j)!}{(2j+1)\,\ell! \, \ell!}.
\end{equation}

Also these Hahn polynomials satisfy a three-term recursion relation in the variable $j$, see  \cite{AAR} equation (d) on page 346,
\color{black}
 \begin{equation}\label{Hahnrec_j}
\begin{split}
   \big ( j(\ell-j+1)+ &(j+1)(\ell-j)-k(k+1)\big)  U_{jk}\\ &= j(\ell-j+1) U_{j-1,k}+ (j+1)(\ell-j)U_{j+1,k}.
   \end{split}
 \end{equation}
Also, they satisfy a three-term recursion relation in the variable $k$, see  \cite{AAR} equation (c) on page 346,
\begin{equation}\label{Hahnrec_k}
(\ell-2j) U_{jk}
=  \tfrac{k(\ell+k+1)}{2k+1} U_{j,k-1} +\tfrac{(k+1)(\ell-k)}{2k+1}U_{j,k+1}.
\end{equation}

Karlin and McGregor in \cite{KMcG} also proved that the Hahn polynomials satisfy a first-order recurrence relation that combines the variables $j$ and $k$ (see also \cite{RS}, equation (36)):
\begin{equation}\label{Hahnrec_j_k}
\begin{split}
  \big( k(\ell-j)- & k(k+j+1)+ 2(j+1)(\ell-j) \big) U_{jk}\\&= 2(j+1)(\ell-j) U_{j+1,k} - k(k+\ell+1) U_{j,k-1}.
\end{split}
\end{equation}

\smallskip
We will need the following technical lemma.

\begin{lem}\label{Hahnproperties}
\noindent Let $U=\left(U_{jk}\right) $ be the matrix defined by
 \begin{equation}\label{Ucolumnas}
U_{jk}= \lw{3}F_2\left( \begin{smallmatrix} -k,\;-j,\;k+1 \\
1,\;-\ell \end{smallmatrix}; 1 \right),
\end{equation}
and let $A_0$, $C_0$ and $C_1$ be the matrices introduced in \eqref{matricesAC}.
Then,
   \begin{align}
\nonumber    U^{-1} A_0 U & =     Q_0+Q_1,\\
 \label{UU-1}  U^{-1} (C_1+C_0) U & = -V_0,\\
  \nonumber  U^{-1} (C_1-C_0) U & = Q_1 J-Q_0(J+1),
   \end{align}
where
\begin{align*}
V_0& = \sum_{j=0}^{\ell-1} j(j+1)E_{j,j} \,,
\qquad \qquad \quad
J=\sum_{j=0}^\ell j E_{jj} \,,\\
Q_0 & = \sum_{j=0}^{\ell-1}  \tfrac{(j+1)(\ell+j+2)}{2j+3} E_{j,j+1}\,,\quad Q_1= \sum_{j=1}^{\ell} \tfrac{j(\ell-j+1)}{2j-1}E_{j,j-1} \,.
\end{align*}
\end{lem}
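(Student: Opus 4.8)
The plan is to prove each of the three displayed identities in its equivalent form $MU=UN$, i.e. $A_0U=U(Q_0+Q_1)$, $(C_1+C_0)U=-UV_0$, and $(C_1-C_0)U=U(Q_1J-Q_0(J+1))$, by comparing the two sides entry by entry. Since $A_0$, $V_0$, $J$ are diagonal and $C_0$, $C_1$ (from \eqref{matricesAC}) as well as $Q_0$, $Q_1$ are bidiagonal, each such comparison collapses to a three-term relation among the entries $U_{jk}$, $U_{j\pm 1,k}$, $U_{j,k\pm1}$ of $U$, and these relations are exactly the Hahn recursions already recorded.

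First I would dispatch the two easy identities. For $U^{-1}A_0U=Q_0+Q_1$, the diagonal factor gives $(A_0U)_{jk}=(\ell-2j)U_{jk}$, while a short computation with the super-diagonal $Q_0$ and sub-diagonal $Q_1$ gives $(U(Q_0+Q_1))_{jk}=\tfrac{k(\ell+k+1)}{2k+1}U_{j,k-1}+\tfrac{(k+1)(\ell-k)}{2k+1}U_{j,k+1}$; the equality of the two is precisely the recursion \eqref{Hahnrec_k} in $k$. For $U^{-1}(C_1+C_0)U=-V_0$, writing out the tridiagonal matrix $C_0+C_1$ and invoking \eqref{Hahnrec_j} in $j$ gives $((C_0+C_1)U)_{jk}=-k(k+1)U_{jk}=(-UV_0)_{jk}$, so this identity is just a restatement of the diagonalization in Proposition \ref{Hahn}.

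The substantive step is $U^{-1}(C_1-C_0)U=Q_1J-Q_0(J+1)$, and this is where I expect the main difficulty. The left-hand side $((C_1-C_0)U)_{jk}$ involves only shifts of the row index $j$, whereas the right-hand side $(U(Q_1J-Q_0(J+1)))_{jk}$ involves only shifts of the column index $k$; bridging these requires the Karlin--McGregor relation \eqref{Hahnrec_j_k}, the one recorded identity that mixes a $j$-shift with a $k$-shift. Concretely, I would first use \eqref{Hahnrec_j} to eliminate the $U_{j-1,k}$ term, reducing the left-hand side to $2(j+1)(\ell-j)U_{j+1,k}+\big(k(k+1)-2(j+1)(\ell-j)\big)U_{jk}$; then apply \eqref{Hahnrec_j_k} to the $U_{j+1,k}$ term, after which the coefficient of $U_{jk}$ collapses to $k(\ell-2j)$ and the surviving off-diagonal term is a multiple of $U_{j,k-1}$; a final application of \eqref{Hahnrec_k} to $k(\ell-2j)U_{jk}$ then produces exactly the two-term combination in $U_{j,k+1}$ and $U_{j,k-1}$ that equals the right-hand side.

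The hard part will be the bookkeeping in this last identity: the three recursions must be applied in the correct order ($j$-recursion, then the mixed relation, then $k$-recursion), and the polynomial coefficients must be tracked so that the spurious $U_{jk}$ and $U_{j+1,k}$ contributions cancel and only the desired $U_{j,k\pm1}$ terms remain. These cancellations are self-checking, since any error in the coefficients (in particular any sign slip in the mixed relation \eqref{Hahnrec_j_k}) would leave a visible residue in the coefficient of $U_{j,k-1}$.
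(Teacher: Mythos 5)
Your proposal is correct and essentially reproduces the paper's own proof: the first two identities are handled identically (entrywise comparison of $A_0U=U(Q_0+Q_1)$ giving \eqref{Hahnrec_k}, and Proposition \ref{Hahn} together with \eqref{Hahnrec_j} for the second), and for the third identity both arguments reduce the entrywise comparison to the same three Hahn recursions, the only difference being bookkeeping order --- the paper first trades $C_1-C_0$ for $-2C_0$ via the already-proved second identity, then eliminates $U_{j,k+1}$ with \eqref{Hahnrec_k} and $U_{j-1,k}$ with \eqref{Hahnrec_j} so as to terminate at the Karlin--McGregor relation \eqref{Hahnrec_j_k}, whereas you run the same chain in the opposite direction ($j$-recursion, then mixed relation, then $k$-recursion). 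One point your ``self-checking'' remark would indeed flag: \eqref{Hahnrec_j_k} as printed carries a sign typo --- the correct relation is $\big(k(\ell-j)-k(k+j+1)+2(j+1)(\ell-j)\big)U_{jk}=2(j+1)(\ell-j)U_{j+1,k}+k(k+\ell+1)U_{j,k-1}$, as one checks for instance at $\ell=1$, $j=0$, $k=1$ where $U_{11}=-1$ --- so in your second step the surviving off-diagonal term is $-k(k+\ell+1)U_{j,k-1}$, after which the final application of \eqref{Hahnrec_k} matches the right-hand side $\tfrac{k(k+1)(\ell-k)}{2k+1}U_{j,k+1}-\tfrac{k(k+1)(\ell+k+1)}{2k+1}U_{j,k-1}$ exactly.
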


\begin{proof}
 To prove that $U^{-1}A_0 U=Q_0+Q_1$ is equivalent to verifying that
\begin{equation*}
A_0 U=U(Q_0+Q_1).
\end{equation*}
By taking a look at the $jk$-entry for $j,k=0, \dots, \ell$, we obtain that
\begin{equation*}
(\ell-2j) U_{jk}
= U_{j,k-1} \tfrac{k(\ell+k+1)}{2k+1} +U_{j,k+1}
\tfrac{(k+1)(\ell-k)}{2k+1}.
\end{equation*}
This is the three-term recursion relation in the variable $k$ given in \eqref{Hahnrec_k}.

Observe that  $U^{-1} (C_1+C_0) U  = -V_0$ is a direct consequence of Proposition \ref{Hahn}. Also, it follows directly by considering every $jk$-entry of $(C_1+C_0)U=-UV_0$ and by using the recursion relation \eqref{Hahnrec_j}.

Now we have to prove that
\begin{equation*}\label{black}
U^{-1}(C_1-C_0)U=-Q_0 (J+1)+Q_1J.
\end{equation*}
By using  $ (C_0+C_1)U=-UV_0$, it is equivalent to prove that
\begin{equation*}
-2C_0U=U(-Q_0 (J+1)+Q_1J+V_0);
\end{equation*}
therefore, if we look at the $jk$-entry, what we need to verify is
\begin{align*}
-2{(C_0)}_{j,j}U_{jk}-2{(C_0)}_{j,j-1}U_{j-1,k}&\\=-U_{j,k-1}
{(Q_0)}_{k-1,k} &(J+1)_{k,k}+U_{j,k+1} {(Q_1)}_{k+1,k} J_{k,k}+U_{jk}
{(V_0)}_{k,k},
\end{align*}
or, equivalently, we have to prove that
\begin{equation}\label{T}
\begin{split}
2j& (\ell-j+1)U_{jk}-2j(\ell-j+1)U_{j-1,k}\\
&=-\tfrac{k(\ell+k+1)(k+1)}{2k+1} U_{j,k-1}  +
\tfrac{k(k+1)(\ell-k)}{2k+1} U_{j,k+1}+ k(k+1) U_{jk}.
\end{split}
\end{equation}

By using the recurrence relation \eqref{Hahnrec_k}, we can write $ U_{j,k+1}$ in terms of $U_{jk}$ and $U_{j,k-1}$. Therefore, the identity (\ref{T}) becomes
\begin{align*}
\big(2j(\ell-j+1)-k(\ell-2j)& -k(k+1)\big) U_{jk}\\ & =2j (\ell-j+1)U_{j-1,k}-k(\ell+k+1)U_{j,k-1},
\end{align*}
Finally, we use \eqref{Hahnrec_j} to write $ U_{j-1,k}$ in terms of $U_{j+1,k}$ and $U_{jk}$ and obtain
\begin{align*}\big( k(\ell-j)- & k(k+j+1)+ 2(j+1)(\ell-j) \big) U_{jk}\\&= 2(j+1)(\ell-j) U_{j+1,k} - k(k+\ell+1) U_{j,k-1},
\end{align*}
which is exactly the identity in \eqref{Hahnrec_j_k}, and this concludes the proof of the Lemma \ref{Hahnproperties}.
\end{proof}

 \begin{thm}\label{puntos}
 The operators $\bar D$ and $\bar E$ defined in \eqref{barras} are given by
 \begin{align*}
  \bar D P &= (1-u^2) P'' -u CP'-V P, \mbox{} \\
  \bar EP & = \tfrac i 2\left( (1-u^2)Q_0+Q_1 \right) P'-\tfrac i2 u MP-\tfrac 12 V_0 P,
\end{align*}
\noindent where
\begin{align*}
  \begin{alignedat}{2}
&C = \sum_{j=0}^\ell (2j+3)E_{jj},
& V &=  \sum_{j=0}^\ell j(j+2)E_{jj}, \\
& Q_0 = \sum_{j=0}^{\ell-1}  \tfrac{(j+1)(\ell+j+2)}{2j+3} E_{j,j+1},
& \qquad Q_1&= \sum_{j=1}^{\ell} \tfrac{j(\ell-j+1)}{2j-1}E_{j,j-1},\\
&M = \sum_{j=0}^{\ell-1} (j+1)(\ell+j+2) E_{j,j+1},
& V_0 &= \sum_{j=0}^{\ell-1} j(j+1)E_{jj}.
  \end{alignedat}
  \end{align*}
\end{thm}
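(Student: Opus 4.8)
The plan is to factor the similarity transformation in \eqref{barras} into two stages, exploiting that $U$ is a constant matrix while $T(u)=\sum_{j=0}^\ell (1-u^2)^{j/2}E_{jj}$ is diagonal. Writing $\bar D = T(u)^{-1}\,(U^{-1}DU)\,T(u)$ and likewise $\bar E = T(u)^{-1}\,(U^{-1}EU)\,T(u)$, I would first conjugate by $U$, which commutes with $\tfrac{d}{du}$ and hence acts only on the matrix coefficients of \eqref{opDH} and \eqref{opEH}. By Lemma \ref{Hahnproperties} this replaces $C_0+C_1$ by $-V_0$, the matrix $A_0$ by $Q_0+Q_1$, and $C_1-C_0$ by $Q_1 J - Q_0(J+1)$. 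Thus after the first stage $U^{-1}DU = (1-u^2)\tfrac{d^2}{du^2} - 3u\tfrac{d}{du} - (1-u^2)^{-1}V_0$ and $U^{-1}EU = \tfrac i2\sqrt{1-u^2}\,(Q_0+Q_1)\tfrac{d}{du} + \tfrac i2 u(1-u^2)^{-1/2}(Q_1 J - Q_0(J+1)) - \tfrac12 V_0$.

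The second stage is the conjugation by the $u$-dependent diagonal matrix $T(u)$, which interacts with the derivative and generates the new lower-order terms. The three computations I need are the logarithmic derivative $T^{-1}T' = -u(1-u^2)^{-1}J$ (together with the analogous expression for $T^{-1}T''$), and the shift identities $T^{-1}Q_0 T = (1-u^2)^{1/2}Q_0$ and $T^{-1}Q_1 T = (1-u^2)^{-1/2}Q_1$, which hold because $Q_0$ is supported on the super-diagonal and $Q_1$ on the sub-diagonal. Feeding $H=UT(u)P$ through $D$ and $E$, expanding the derivatives of the products $T(u)P$, and left-multiplying by $T(u)^{-1}$ then reduces everything to diagonal and single-off-diagonal matrix identities, since $V_0$ and $J$ commute with $T(u)$.

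For $\bar D$ the coefficient of $P'$ collapses at once to $-uC$, because the $j$-th diagonal entry of $2(1-u^2)T^{-1}T' - 3u$ equals $-2ju-3u=-(2j+3)u$. The coefficient of $P$ is the delicate point: it is $(1-u^2)T^{-1}T'' - 3u\,T^{-1}T' - (1-u^2)^{-1}V_0$, and each summand is singular at $u=\pm1$, so I must verify that the singularities cancel. The key algebraic fact is that the $j$-th diagonal entry of the numerator collapses via $j(j-2)u^2 + 3ju^2 - j(j+1) = -j(j+1)(1-u^2)$, which absorbs the factor $(1-u^2)^{-1}$ and leaves the polynomial entry $-j-j(j+1)=-j(j+2)$, i.e.\ the matrix $-V$. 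This gives $\bar D P = (1-u^2)P'' - uCP' - VP$.

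For $\bar E$ the coefficient of $P'$ is $\tfrac i2\sqrt{1-u^2}\,T^{-1}(Q_0+Q_1)T = \tfrac i2((1-u^2)Q_0+Q_1)$ by the shift identities. The zeroth-order coefficient gathers three contributions: the cross term $\tfrac i2\sqrt{1-u^2}\,T^{-1}(Q_0+Q_1)T'$, the conjugated middle term $\tfrac i2 u(1-u^2)^{-1/2}\,T^{-1}(Q_1 J - Q_0(J+1))T$, and $-\tfrac12 V_0$. Here I expect the main obstacle to be the bookkeeping of this cancellation: after inserting $T^{-1}T'=-u(1-u^2)^{-1}J$ and the shift identities, the two terms proportional to $u(1-u^2)^{-1}Q_1 J$ must cancel exactly, and the surviving super-diagonal part must regroup as $-\tfrac i2 u\,Q_0(2J+1)$. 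It then remains only to observe that $Q_0(2J+1)=M$, since the $(j,j+1)$-entry of $Q_0$ is $(j+1)(\ell+j+2)/(2j+3)$ while $(2J+1)$ contributes the factor $2j+3$ on the right. This produces $\bar E P = \tfrac i2((1-u^2)Q_0+Q_1)P' - \tfrac i2 u\,MP - \tfrac12 V_0 P$, completing the proof.
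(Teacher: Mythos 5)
Your proposal is correct and follows essentially the same route as the paper: the paper's proof also conjugates $D$ and $E$ by $UT(u)$ using Lemma \ref{Hahnproperties} (replacing $C_0+C_1$, $A_0$, $C_1-C_0$ by $-V_0$, $Q_0+Q_1$, $Q_1J-Q_0(J+1)$), the same formulas for $T^{-1}T'$ and $T^{-1}T''$, the shift identities for conjugating $Q_0$, $Q_1$ by the diagonal $T$, and the same regrouping $Q_0(2J+1)=M$; your explicit two-stage factorization through $U$ and then $T(u)$ is only an organizational variant of the paper's single computation. All your algebraic collapses (the $-uC$ and $-V$ coefficients, the cancellation of the $Q_1J$ terms) check out.
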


\begin{proof}
Let $H=H(u)=U T(u) P(u)$. We start by  computing $D(H)$ for the differential operator $D$ introduced in \eqref{opDH}.
\begin{align*}
D H & = (1-u^2)UTP''+\big (2(1-u^2)UT'-3uUT\big)P'\\  &\quad +\big((1-u^2)UT''-3uUT'+\tfrac {1}{1-u^2}(C_0+C_1)UT\big)P\\
&=UT\Big( (1-u^2)P''+\big(2(1-u^2)T^{-1}T'-3u\big)P'\\
 &\quad \qquad +\Big( (1-u^2)T^{-1}T''-3uT^{-1}T' +\frac{1}{1-u^2}T^{-1} U^{-1}(C_0+C_1)UT \Big)P \Big).
\end{align*}
Since $T$ is a diagonal matrix, we easily compute
\begin{align*}
T^{-1}(u)T'(u) & = -\tfrac u {(1-u^2)} \sum_{j=0}^\ell j \, E_{jj},  \quad
  T^{-1}T''(u) = \tfrac 1{(1-u^2)^2} \sum_{j=0}^\ell j ((j-1)u^2-1)\, E_{jj}.
\end{align*}
 Also, from \eqref{UU-1} we have that
$U^{-1}(C_0+C_1)U = -V_0$. Since $V_0$ is a diagonal matrix, it commutes with $T$ and we get
\begin{align*}
(1-u^2)T^{-1} & T''-3uT^{-1}T'  +\frac{1}{1-u^2}T^{-1}U^{-1}(C_0+C_1)UT  \\
&= \frac 1{(1-u^2)} \sum_{j=0}^\ell \big ( j(j-1)u^2-j +3ju^2-j(j+1)\big ) E_{jj} =-V.
 \end{align*}

\smallskip
Now, for the differential operator $E$ introduced in \eqref{opEH}, we compute $E(H)$ with $H(u)=U T(u) P(u)$.
\begin{align*}
 EH &= \frac i2
\sqrt{1-u^2}A_0 UTP'\\
&\quad + \bigg(\frac i2 \sqrt{1-u^2}A_0 UT'+\frac i2 \frac u{\sqrt{1-u^2}}
(C_1-C_0)UT+\frac 12 (C_0+C_1)UT\bigg) P \displaybreak[0]\\
&=UT\bigg
( \frac i2 \sqrt{1-u^2}T^{-1}U^{-1}A_0 UTP'+ \bigg(\frac i2 \sqrt{1-u^2}T^{-1}U^{-1}A_0 UT'\\
&\quad +\frac i2 \frac u{\sqrt{1-u^2}} T^{-1}U^{-1}(C_1-C_0)UT+\frac 12
T^{-1}U^{-1}(C_0+C_1)UT\bigg)P\bigg).
\end{align*}

From Lemma \ref{Hahnproperties} above we have that
$U^{-1}A_0 U=Q_0+Q_1$.
 By using $T=\sum_{j=0}^\ell (1-u^2)^{j/2} E_{jj}$,
 we get
\begin{align*}
\sqrt{1-u^2} \,T^{-1}U^{-1}A_0 UT&=\sqrt{1-u^2}\, T^{-1}(Q_0+Q_1)T=(1-u^2)Q_0+Q_1.
\end{align*}

\noindent From  \eqref{UU-1} and the fact that $T$ is diagonal,  we have that
$T^{-1}U^{-1}(C_0+C_1)UT = -V_0$.
Then, it only remains to prove that
\begin{equation}\label{ecuac}
\sqrt{1-u^2}T^{-1}U^{-1}A_0 UT'\\
+ \frac u{\sqrt{1-u^2}} T^{-1}U^{-1}(C_1-C_0)UT=-uM.
\end{equation}

\noindent Since $T'(u)=\tfrac{-u}{1-u^2}J T(u) $, where  $J=\sum_{j=0}^\ell j E_{jj}$, we have to prove that
\begin{equation}\label{aux}
T^{-1}\big (U^{-1}A_0 U J   - U^{-1}(C_1-C_0)U \big )T =\sqrt {1-u^2} M.
\end{equation}

\noindent From Lemma \ref{Hahnproperties} we have
\begin{align*}
U^{-1}A_0 U J  - U^{-1}(C_1-C_0)U & = (Q_1+Q_0)J - Q_1J+ Q_0(J+1) = Q_0(2J+1) \\
& = \sum_{j=0}^{\ell-1} (j+1)(\ell+j+2) E_{j,j+1}= M.
\end{align*}

\noindent Since $T= \sum_{j=0}^\ell (1-u^2)^{j/2} E_{jj} $, \eqref{aux} is satisfied and this completes the proof of the theorem.
\end{proof}

The function $P$ is an eigenfunction of the differential operator $\bar D$ if and only if the function $ H=U T(u) P(u)$
is an eigenfunction of the differential operator $D$.
From Theorem \ref{Dhyp0} we have the explicit expression of the function $P(u)=(p_0(u), \dots, p_\ell(u))^t$,
\begin{align*}
 p_j(u) =a_j\, {}_2\!F_1\left(\begin{smallmatrix}-n+j,n+j+2\\  j+3/2 \end{smallmatrix}; \tfrac {1-u}2\right)  + b_j \, (1-u^2)^{-(j+1/2)}
 {}_2\!F_1\left(\begin{smallmatrix} -n-1/2 ,n+ 3/2\\  -j+1/2 \end{smallmatrix}; \tfrac{1-u}2\right),
 \end{align*}
where $a_j$ and $b_j$ are in $\CC$, for $0\le j\leq \ell$.

Since we are interested in determining the irreducible spherical functions of the pair $(G,K)$,
we need to study the simultaneous eigenfunctions of $D$ and $E$ such that there exists a finite limit of the function $H$ when $u\to1^{-}$.

From Theorem \ref{Dhyp0} we have that
$\lim_{u\to 1^-} H(u)$ is finite if and only if
  $$\lim_{u\to 1^-} b_j \, (1-u^2)^{-(j+1)/2}
 {}_2\!F_1\left(\begin{smallmatrix} -n-1/2 ,n+ 3/2\\  -j+1/2 \end{smallmatrix}; \tfrac{1-u}2\right)$$
exists and is finite for all $0\leq j \leq \ell$. This is true if and only if $b_j=0$ for all $0\leq j\leq \ell$. Therefore, $\lim_{u\to 1^-} H(u)$ is finite if and only if
$\lim_{u\to 1^-} P(u)$ is finite.

From Corollary \ref{Dhyp} we know that an eigenfunction $P=P(u)$ of $\bar D$ in the interval $(0,1)$ has a finite limit as $u\to1^{-}$ if and only if $P$ is analytic at $u=1$.
Let us now consider the following vector space of functions into $\CC^{\ell+1}$,
$$W_\lambda=\vzm{P=P(u) \text{ analytic in }(0,1]}{\bar D P=\lambda P }.$$

A function $P\in W_\lambda$ is characterized by $P(1)= (a_0, \cdots , a_\ell)$. Thus, the dimension of $W_\lambda$ is $\ell+1$ and the isomorphism  $W_\lambda\simeq\CC^{\ell+1}$  is given by
$$\nu:W_\lambda\longrightarrow\CC^{\ell+1},\qquad P\mapsto P(1). $$

{  The differential operators $ \bar D$ and $\bar E$ commute
 because the differential operators $D$ and $E$ commute (see Remark \ref{DyEconmutan3}).}

\begin{prop}\label{Llambda} The linear space $W_\lambda$
is stable under the differential operator  $\bar E$ and it  restricts to a linear map on $W_\lambda$. Moreover,  the following is a commutative diagram
\begin{equation}\label{diagrama1}
\begin{CD}
W_\lambda @ >\bar E >>W_\lambda \\ @ V \nu VV @ VV \nu V \\
\CC^{\ell+1} @> L(\lambda) >> \CC^{\ell+1}
\end{CD}
\end{equation}
where $L(\lambda)$ is the $(\ell+1)\times (\ell+1)$ matrix
\begin{equation*}
  \begin{split}
    L(\lambda) & = -\tfrac i2 Q_1C^{-1}(V+\lambda)-\tfrac i2 M-\tfrac 12 V_0 \\
    &=- i \sum_{j=1}^\ell \tfrac{j(\ell-j+1)\big((j-1)(j+1)+\lambda \big)}{2(2j-1)(2j+1)} E_{j,j-1} -i \sum_{j=0}^{\ell-1}  \tfrac{(j+1)(\ell+j+2)}2 E_{j,j+1}\\
    & \quad - \sum_{j=0}^\ell   \tfrac{j(j+1) }2E_{jj}.
  \end{split}
\end{equation*}
\end{prop}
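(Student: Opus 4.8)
The plan is to deduce stability of $W_\lambda$ from the commutativity of $\bar D$ and $\bar E$, and then to read off $L(\lambda)$ by evaluating $\bar E P$ at the endpoint $u=1$, where the second order operator $\bar D$ degenerates and the eigenvalue equation $\bar D P=\lambda P$ collapses into a purely algebraic relation between $P(1)$ and $P'(1)$.

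First, for the stability assertion: if $P\in W_\lambda$ then $\bar E P$ is again analytic on $(0,1]$, since the coefficients of $\bar E$ in Theorem \ref{puntos} are polynomials in $u$ and $P$ (hence $P'$) is analytic there. As $\bar D$ and $\bar E$ commute, we have $\bar D(\bar E P)=\bar E(\bar D P)=\lambda\,\bar E P$, so $\bar E P$ is an eigenfunction of $\bar D$ with eigenvalue $\lambda$ that is analytic at $u=1$; therefore $\bar E P\in W_\lambda$, and $\bar E$ restricts to a linear endomorphism of $W_\lambda$.

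Next I would identify the matrix $L(\lambda)$. Evaluating the formula for $\bar E$ from Theorem \ref{puntos} at $u=1$, the term $(1-u^2)Q_0$ drops out and leaves
$$(\bar E P)(1)=\tfrac i2 Q_1 P'(1)-\tfrac i2 M P(1)-\tfrac12 V_0 P(1).$$
To eliminate $P'(1)$ I would evaluate $\bar D P=\lambda P$, i.e. $(1-u^2)P''-uCP'-VP=\lambda P$, at $u=1$: the factor $1-u^2$ kills the $P''$ term (which is finite, as $P$ is analytic), giving $-CP'(1)-VP(1)=\lambda P(1)$. Since $C=\sum_j(2j+3)E_{jj}$ is invertible, this yields $P'(1)=-C^{-1}(V+\lambda)P(1)$. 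Substituting back,
$$(\bar E P)(1)=\Bigl(-\tfrac i2 Q_1 C^{-1}(V+\lambda)-\tfrac i2 M-\tfrac12 V_0\Bigr)P(1).$$
Because $\nu(P)=P(1)$, this is exactly $\nu(\bar E P)=L(\lambda)\,\nu(P)$ with $L(\lambda)$ in the first displayed form, so diagram \eqref{diagrama1} commutes.

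Finally, the explicit tridiagonal expression follows by substituting the diagonal matrices $C^{-1}=\sum_j(2j+3)^{-1}E_{jj}$ and $V+\lambda=\sum_j(j(j+2)+\lambda)E_{jj}$ into $Q_1C^{-1}(V+\lambda)$; since $Q_1$ is subdiagonal, the index $j-1$ is fed into $C^{-1}(V+\lambda)$ and the $(j,j-1)$ entry becomes $\tfrac{j(\ell-j+1)}{2j-1}\cdot\tfrac{(j-1)(j+1)+\lambda}{2j+1}$, which after the factor $-\tfrac i2$ reproduces the stated subdiagonal; the term $-\tfrac i2 M$ produces the superdiagonal and $-\tfrac12 V_0$ the diagonal. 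I do not expect a genuine obstacle here: the only conceptual point is recognizing that $u=1$ is a regular singular point of $\bar D$, so that the second order ODE there collapses to the first order algebraic constraint $P'(1)=-C^{-1}(V+\lambda)P(1)$; everything else is bookkeeping with the explicit matrices from Theorem \ref{puntos} and Lemma \ref{Hahnproperties}.
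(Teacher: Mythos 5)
Your proposal is correct and follows essentially the same route as the paper: stability of $W_\lambda$ via analyticity of the polynomial coefficients of $\bar E$ together with the commutativity $\bar D\bar E=\bar E\bar D$, and the identification of $L(\lambda)$ by evaluating $\bar E P$ at $u=1$ and eliminating $P'(1)$ through the relation $P'(1)=-C^{-1}(V+\lambda)P(1)$ obtained from $\bar D P=\lambda P$ at $u=1$. The entrywise verification of the tridiagonal form, which the paper leaves implicit, is also carried out correctly.
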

\begin{proof}
  The differential operator $\bar E$ takes analytic functions into analytic functions, because its coefficients are polynomials, see Theorem \ref{puntos}.  A function $P\in W_\lambda$ is  analytic, then $\lim_{u\to 1^-} \bar EP(u)$ is finite.
 On the other hand, since $\bar D$ and $\bar E$ commute, the differential operator $\bar E$ preserves the eigenspaces of $\bar D$. This proves that  $W_\lambda$ is stable under $\bar E$. In particular, $\bar E$ restricts to a linear map $L(\lambda)$ on $W_\lambda$, to be determined now.

  From
  Theorem \ref{puntos} we have
  $$\nu(\bar E(P))=(\bar E P )(1)=\tfrac i2 Q_1 P'(1)-\tfrac i2 MP(1)-\tfrac 12 V_0P(1).$$
   But we can obtain  $P'(1)$ in terms of $P(1)$. In fact, if we evaluate $\bar DP=\lambda P$ at $u=1$ we get $$P'(1)= -C^{-1} (V+\lambda) P(1).$$
   Notice that  $C$ is an invertible matrix.
Hence,
  \begin{align*}
  \nu(\bar E(P))&=-\tfrac i2 Q_1 C^{-1} (V+\lambda) P(1)-\tfrac i2 MP(1)-\tfrac 12 V_0P(1)\\
                &=L(\lambda) P(1)= L(\lambda) \,\nu(P).
  \end{align*}
This  completes the proof of the proposition.
\end{proof}

 \remark
If $\lambda=-n(n+2)$, with $n\in\CC$,  then we have
\begin{equation}\label{matrixL}
  \begin{split}
    L(\lambda)& = i \sum_{j=1}^\ell \tfrac{j(\ell-j+1)(n-j+1)(n+j+1)}{2(2j-1)(2j+1)} E_{j,j-1} -i \sum_{j=0}^{\ell-1}  \tfrac{(j+1)(\ell+j+2)}2 E_{j,j+1}
\\ & \quad
   -  \sum_{j=0}^\ell  \tfrac {j(j+1)}2 E_{jj}.
  \end{split}
\end{equation}

\begin{cor}\label{corttr}
  All eigenvalues $\mu$ of $L(\lambda)$  have geometric multiplicity one, that is, all eigenspaces are one dimensional.
\end{cor}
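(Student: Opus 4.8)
The plan is to exploit the tridiagonal structure of $L(\lambda)$. From the explicit expression \eqref{matrixL} one reads off that $L(\lambda)$ has possibly nonzero entries only on the main diagonal (the term $-\sum_j \tfrac{j(j+1)}2 E_{jj}$), on the subdiagonal (the term with $E_{j,j-1}$) and on the superdiagonal (the term with $E_{j,j+1}$). The crucial observation is that the superdiagonal entries
$$\big(L(\lambda)\big)_{j,j+1} = -i\,\tfrac{(j+1)(\ell+j+2)}{2}, \qquad 0\le j\le \ell-1,$$
are all nonzero, regardless of the value of $\lambda$, since $j+1\ge 1$ and $\ell+j+2\ge \ell+2>0$. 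By contrast, the subdiagonal entries carry the factor $(n-j+1)(n+j+1)$ and may well vanish for special values of $n$; this is precisely why the argument must be run on the superdiagonal rather than on the subdiagonal.

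Fix an eigenvalue $\mu$ of $L(\lambda)$. The matrix $L(\lambda)-\mu I$ is again tridiagonal, and subtracting $\mu I$ alters only the main diagonal, so its superdiagonal entries coincide with those of $L(\lambda)$ and are therefore nonzero. I would then delete the first column and the last row of $L(\lambda)-\mu I$ to obtain an $\ell\times\ell$ matrix $N$, with remaining rows indexed by $0,\dots,\ell-1$ and columns by $1,\dots,\ell$. Because $L(\lambda)-\mu I$ is tridiagonal, the entry $\big(L(\lambda)-\mu I\big)_{p,q}$ vanishes whenever $q>p+1$; hence, after relabelling the columns, $N$ is lower triangular and its diagonal entries are exactly the superdiagonal entries $\big(L(\lambda)\big)_{j,j+1}$ for $0\le j\le \ell-1$. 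Consequently $\det N = \prod_{j=0}^{\ell-1} \big(L(\lambda)\big)_{j,j+1}\ne 0$, so $N$ is invertible and $\operatorname{rank}\big(L(\lambda)-\mu I\big)\ge \ell$.

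Since $L(\lambda)-\mu I$ is a square matrix of size $\ell+1$, it follows that $\dim\ker\big(L(\lambda)-\mu I\big)\le 1$; as $\mu$ is an eigenvalue the kernel is at least one dimensional, whence the geometric multiplicity of $\mu$ equals one, proving the corollary. I do not anticipate a serious obstacle here: the only point requiring care is to triangularize using the always-nonzero superdiagonal rather than the subdiagonal, and to set up the deleted submatrix so that its diagonal is populated by those superdiagonal entries.
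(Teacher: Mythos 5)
Your proposal is correct and rests on exactly the same key observation as the paper's own proof: the superdiagonal entries $-i\,\tfrac{(j+1)(\ell+j+2)}{2}$ of $L(\lambda)$ never vanish, so the kernel of $L(\lambda)-\mu I$ is at most one dimensional. The paper phrases this as the three-term recursion \eqref{threetermI} determining $a_1,\dots,a_\ell$ successively from $a_0$ (forward substitution made possible by the nonzero coefficient of $a_{j+1}$), which is precisely your invertible-minor rank argument in recursive form.
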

\begin{proof}
A vector $a=\left(a_0,a_1,\dots,a_\ell\right)^t$ is an eigenvector of
$L(\lambda)$ of eigenvalue $\mu$, if and only if
$\left\{a_j\right\}_{j=0}^\ell$ satisfies the following three-term
 recursion relation
\begin{equation}\label{threetermI}
i \, \tfrac{j(\ell-j+1)(n-j+1)(n+j+1)}{2(2j-1)(2j+1)}  \,\, a_{j-1} - \tfrac {j(j+1)}2 \,\,a_j
 - i   \tfrac{(j+1)(\ell+j+2)}2 \,\,a_{j+1}  = \mu \, a_j,
\end{equation}
for $j=0,\dots,\ell-1$ (where we interpret $a_{-1}=0$), and
\begin{equation}\label{threetermclosing}
  i \,\tfrac{\ell(n-\ell+1)(n+\ell+1)}{2(2\ell-1)(2\ell+1)} \,\, a_{\ell-1} - \tfrac {\ell(\ell+1)}2 \,\,a_\ell=\mu a_\ell.
\end{equation}

From these equations we see that the vector $a$ is determined by  $a_0$,
which proves that the geometric multiplicity of the eigenvalue $\mu$ of $L(\lambda)$ is one.
\end{proof}

\begin{remark}
The values of $\mu$ for which the equations \eqref{threetermI} and \eqref{threetermclosing} have a solution $\{a_j\}_{j=0}^\ell$ are exactly the eigenvalues of the matrix $L(\lambda)$.

The equations \eqref{threetermI}, for $j=0, \dots , \ell-1,$ are used to define $a_1, \dots , a_\ell$ starting with any $a_0\in \CC$. The equation \eqref{threetermclosing} is an extra condition (a ``closing equation") that the coefficients $a_j$ should satisfy  in order for $a=(a_0, \dots , a_\ell)$ to be an eigenvector of $L(\lambda)$ of eigenvalue $\mu$.
\end{remark}

Finally, we get the main result of this section which is the characterization of the simultaneous eigenfunctions $H$ of the differential operators $D$ and $E$ in $(0,1)$, which are continuous in $(0,1]$. Recall that the irreducible spherical functions of the pair $(G,K)$ give raise to such functions $H$.

\begin{cor}\label{DEhyp}
Let $H(u)$ be a simultaneous eigenfunction of $D$ and $E$ in $(0,1)$, continuous in $(0,1]$, with respective eigenvalues $\lambda=-n(n+2)$, $n\in\CC$, and $\mu$.  Thus, $H$ is of the form
  \begin{equation*}
    H(u)= UT(u)P(u)
  \end{equation*}
   with $U$ the matrix given in \eqref{Ucolumnas}, $  T(u)=\displaystyle \sum_{j=0}^\ell (1-u^2)^{j/2}E_{jj}, $ and
   $ P=(p_0, \dots, p_\ell)^t$  is the vector valued function given by
  \begin{align*}
  p_j(u)& = a_j \, {}_2\!F_1\left(\begin{smallmatrix}-n+j,n+j+2\\  j+3/2 \end{smallmatrix}; \tfrac{1-u}2\right)
\end{align*}
 where $\{a_j\}_{j=0}^\ell$ satisfies the recursion relations \eqref{threetermI} and  \eqref{threetermclosing}.
We also have that $H(1)=a_0(1,1,\ldots,1)^t$. In particular,  if $H(u)$ is associated to an irreducible spherical function we have that $a_0=1$.
\end{cor}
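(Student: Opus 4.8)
The plan is to assemble the statement from the three preceding results without any fresh computation: Corollary~\ref{Dhyp} fixes the shape of a $D$-eigenfunction that stays finite at $u=1$, Proposition~\ref{Llambda} converts the action of $E$ into the finite matrix $L(\lambda)$, and Corollary~\ref{corttr} recognizes the eigenvectors of $L(\lambda)$ as precisely the solutions of the two recursion relations.

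First I would use the $D$-eigenvalue equation alone. By hypothesis $H$ is a $D$-eigenfunction with eigenvalue $\lambda=-n(n+2)$ and is continuous on $(0,1]$, so $\lim_{u\to 1^-}H(u)$ exists. Corollary~\ref{Dhyp} then gives at once that $H(u)=UT(u)P(u)$ with $P=(p_0,\dots,p_\ell)^t$ and $p_j(u)=a_j\,{}_2\!F_1\left(\begin{smallmatrix}-n+j,n+j+2\\ j+3/2\end{smallmatrix};\tfrac{1-u}{2}\right)$, together with the boundary value $H(1)=a_0(1,\dots,1)^t$ and, in the spherical case, $a_0=1$. This already yields the asserted form of $H$ and the last two sentences of the statement; only the constraints on $\{a_j\}$ remain.

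Next I would bring in the operator $E$. By the definition of $\bar D$ and $\bar E$ in \eqref{barras}, the function $P=(UT(u))^{-1}H$ is a simultaneous eigenfunction of $\bar D$ and $\bar E$ with eigenvalues $\lambda$ and $\mu$. Each component $p_j$ is a hypergeometric series in $\tfrac{1-u}{2}$ and hence analytic at $u=1$, so $P$ is analytic on $(0,1]$ and therefore $P\in W_\lambda$, with $\nu(P)=P(1)=(a_0,\dots,a_\ell)^t$. The commutative diagram of Proposition~\ref{Llambda} now gives $L(\lambda)\,\nu(P)=\nu(\bar E P)=\mu\,\nu(P)$, so $(a_0,\dots,a_\ell)^t$ is an eigenvector of $L(\lambda)$ with eigenvalue $\mu$. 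Finally, Corollary~\ref{corttr} shows that the components of such an eigenvector are exactly the sequences satisfying the three-term relations \eqref{threetermI} and \eqref{threetermclosing}, which is the remaining claim.

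There is no genuine obstacle here; the argument is pure bookkeeping over earlier results. The one point deserving care is the logical direction in passing from $H$ to $P\in W_\lambda$: one must be certain that continuity of $H$ at $u=1$ really excludes the second, singular branch $US(u)Q(u)$ of Theorem~\ref{Dhyp0} and leaves only the analytic branch, which is exactly the remark recorded just before the definition of $W_\lambda$.
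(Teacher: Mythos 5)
Your proposal is correct and matches the paper's own route: the paper states this corollary without a separate proof precisely because it follows, as you assemble it, from Corollary \ref{Dhyp} (the continuity at $u=1$ killing the singular branch $US(u)Q(u)$), the commutative diagram of Proposition \ref{Llambda} giving $L(\lambda)P(1)=\mu P(1)$, and the proof of Corollary \ref{corttr} identifying eigenvectors of $L(\lambda)$ with solutions of \eqref{threetermI} and \eqref{threetermclosing}. Your closing remark on the logical direction (that continuity genuinely excludes the singular branch, as recorded in the discussion preceding the definition of $W_\lambda$) is exactly the one point the paper also makes explicit, so nothing is missing.
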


\begin{remark}
The condition $H(1)=  (1,\dots , 1)^t$  implies that $P(1)$ is a vector whose first entry is equal to 1.
\end{remark}

\medskip

In $S^3$, the set
$$\vzm{x_\theta=(\sqrt{1-\theta^2},0,0,\theta)}{\theta\in[-1,1]}$$ parameterizes all the $K$-orbits. Notice that for $\theta>0$ we have that $x_\theta\in (S³)^+$, and $p(x_\theta)=(\frac{\sqrt{1-\theta^2}}{\theta},0,0)$. Therefore, in terms of the variable $r\in[0,\infty)$ we have that $r=\frac{\sqrt{1-\theta^2}}{\theta}$, and then, in terms of the variable $u\in(0,1]$ we get $u=\frac1{\sqrt{1+r²}}= \theta$.
Hence, given an irreducible spherical function $\Phi$ of type $\pi\in\hat K$, if we consider the associated function $H:S^3\longrightarrow\End(V_\pi)$ defined by
$$H(g\,(0,0,0,1)^t)=\Phi(g)\Phi^{-1}_\pi(g), \qquad g\in G,$$
we have that
\begin{equation}\label {u}
 H(\sqrt{1-u^2},0,0,u)=\text{diag}\{H(u)\}=\text{diag}\{UT(u)P(u)\},
\end{equation}
where $H(u)$, $u\in(0,1]$, is the vector valued function given in Corollary \ref{DEhyp} and $\text{diag}\{H(u)\}$ means the diagonal matrix valued function whose $kk$-entry is equal to the $k$-th  entry of the vector valued function $H(u)$.

\section{Eigenvalues of the spherical functions}
\label{sec:RepresentationTheory}

The aim of this section is to use the representation theory of $G$ to compute the eigenvalues of an irreducible spherical function $\Phi$ corresponding to the
differential operators $\Delta_1$ and $\Delta_2$. From these eigenvalues we shall obtain the eigenvalues of the function $H$ as eigenfunctions of $D$ and $E$.

 As we described in Section \ref{sec:prelim},  there
exists a one to one correspondence between irreducible
spherical functions of $(G,K)$ of type $\delta \in \hat K $ and finite dimensional irreducible
representations of $G$ that contain the $K$-type $\delta$.
 In fact,   every irreducible spherical function $\Phi$ of type $\delta \in \hat K$ is of the form
\begin{equation}\label{sphasprojec}
  \Phi(g)v= P(\delta)\tau(g)v, \qquad \quad  g\in G, \qquad v\in P(\delta)V_\tau,
\end{equation}
 where $(\tau , V_\tau)$ is a finite
dimensional irreducible representation of $G$, which contains the $K$-type $\delta$, and $P(\delta)$ is the
projection of $V_\tau$ onto the $K$-isotypic component of type $\delta$.

The irreducible finite dimensional representations $\tau$ of $G=\SO(4)$ are parameterized by a pair of integers
$(m_1, m_2)$ such that $$m_1\geq |m_2|,$$ while the irreducible finite dimensional representations $\pi_\ell$ of $K=\SO(3)$ are parameterized by $\ell\in 2\NN_0$.

The  representations $\tau_{(m_1,m_2)}$ restricted to $\SO(3)$ contain the representation $\pi_\ell$  if and only if
{ $m_1\geq\ell/2\geq |m_2|.$}
Therefore, the equivalence classes of irreducible spherical functions of $(G,K)$ of
type $\pi_\ell$ are parameterized by  the
set of all pairs $(m_1,m_2)\in \ZZ^2$
{ such that  $$m_1\geq\tfrac\ell2\geq |m_2|.$$
We denote by
\begin{equation*}
\Phi_\ell^{(m_1,m_2)}, \qquad  \text{ with }  \quad  m_1\geq\tfrac\ell2\geq |m_2|,
\end{equation*}  the spherical function of type $\pi_\ell$
associated to the representation $\tau_{(m_1,m_2)}$ of $G$.}

\begin{thm}\label{param2}
The spherical functon $\Phi_\ell^{(m_1,m_2)}$ satisfies
\begin{align*}
\Delta_1\Phi_\ell^{(m_1,m_2)}&= \tfrac14(m_1-m_2)(m_1-m_2+2) \Phi_\ell^{(m_1,m_2)},\\
\Delta_2\Phi_\ell^{(m_1,m_2)}&= \tfrac14(m_1+m_2)(m_1+m_2+2) \Phi_\ell^{(m_1,m_2)}.
\end{align*}
\end{thm}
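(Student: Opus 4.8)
The plan is to exploit the realization of $\Phi_\ell^{(m_1,m_2)}$ as a projection of the irreducible representation $\tau=\tau_{(m_1,m_2)}$ given in \eqref{sphasprojec}, together with the fact that $\Delta_1$ and $\Delta_2$ lie in the center of $D(G)$. First I would record the elementary fact that for a left invariant operator $D\in D(G)$ and a representation $\tau$ of $G$ one has $(D\tau)(g)=\tau(g)\,\dot\tau(D)$, where $\dot\tau$ is the induced action of the universal enveloping algebra; this is the same convention already used in the proof of Proposition \ref{relacionautovalores}. Applying it to $\Phi(g)v=P(\delta)\tau(g)v$ and using that $P(\delta)$ does not depend on $g$ gives $[\Delta_i\Phi](g)v=P(\delta)\tau(g)\dot\tau(\Delta_i)v$. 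Since $\Delta_1,\Delta_2\in D(G)^G$ are central and $\tau$ is irreducible, Schur's Lemma forces $\dot\tau(\Delta_i)=c_iI$ for some $c_i\in\CC$; hence $\Delta_i\Phi_\ell^{(m_1,m_2)}=c_i\,\Phi_\ell^{(m_1,m_2)}$, and the problem reduces to computing the scalars $c_i$.

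To compute $c_i$ I would evaluate the Casimirs on a highest weight vector $v_\eta\in V_\tau$ of weight $\eta=m_1\varepsilon_1+m_2\varepsilon_2$; this is legitimate precisely because $\dot\tau(\Delta_i)$ is already a scalar, so any single vector determines it. Here the root form expressions \eqref{Delta23} are exactly what is needed. By the observation following \eqref{Delta23}, the rightmost factors $(Z_5-iZ_4)=X_{\varepsilon_1-\varepsilon_2}$ and $(Z_2-iZ_1)=X_{\varepsilon_1+\varepsilon_2}$ are positive root vectors, hence annihilate $v_\eta$; since \eqref{Delta23} places them on the right, the whole products $(Z_5+iZ_4)(Z_5-iZ_4)$ and $(Z_2+iZ_1)(Z_2-iZ_1)$ kill $v_\eta$, leaving
\begin{align*}
\dot\tau(\Delta_1)v_\eta&=\big(-\dot\tau(Z_6)^2+i\,\dot\tau(Z_6)\big)v_\eta,\\
\dot\tau(\Delta_2)v_\eta&=\big(-\dot\tau(Z_3)^2+i\,\dot\tau(Z_3)\big)v_\eta,
\end{align*}
with $Z_3,Z_6\in\mathfrak h_\CC$ acting through the weight $\eta$.

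It then remains to evaluate $\eta$ on $Z_3$ and $Z_6$. Writing $Z_3=\tfrac12(Y_1+Y_6)$ and $Z_6=\tfrac12(Y_1-Y_6)$ and using $\varepsilon_j(H)=-ix_j$ (so that $\varepsilon_1(Y_1)=-i$, $\varepsilon_2(Y_6)=-i$ and the remaining pairings vanish), I get $\eta(Z_6)=-\tfrac i2(m_1-m_2)$ and $\eta(Z_3)=-\tfrac i2(m_1+m_2)$. Substituting into the displayed formulas and using the identity $-(-\tfrac i2a)^2+i(-\tfrac i2a)=\tfrac14a^2+\tfrac12a=\tfrac14a(a+2)$, with $a=m_1-m_2$ and $a=m_1+m_2$ respectively, yields the asserted eigenvalues $\tfrac14(m_1-m_2)(m_1-m_2+2)$ and $\tfrac14(m_1+m_2)(m_1+m_2+2)$.

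I expect the only genuinely delicate point to be the first paragraph: the identification of the eigenvalue of $\Delta_i$ on the spherical function with the scalar $\dot\tau(\Delta_i)$, which rests on the left invariance of $\Delta_i$ and the precise form \eqref{sphasprojec}. Everything after that is bookkeeping, namely recognizing the positive root vectors in \eqref{Delta23} so they annihilate $v_\eta$ and reading off the weights $\eta(Z_3),\eta(Z_6)$ from the definitions of the $Z_j$ and of $\varepsilon_1,\varepsilon_2$. As a consistency check, the normalization $\tfrac14 k(k+2)$ for top weight $k$ agrees with the value $\ell(\ell+2)/4$ found for the $K$-Casimir $\Delta_K$ in Proposition \ref{relacionautovalores}.
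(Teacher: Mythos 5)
Your proposal is correct and follows essentially the same route as the paper's proof: identify the eigenvalue with the scalar $\dot\tau_{(m_1,m_2)}(\Delta_i)$ (the paper phrases this as $[\Delta_i\Phi](e)$ being a scalar for $\Delta_i\in D(G)^G$, which is your Schur's Lemma step), then evaluate on a highest weight vector using the root-space form \eqref{Delta23}, where the positive root vectors $Z_5-iZ_4$ and $Z_2-iZ_1$ annihilate $v_\eta$ and $Z_3,Z_6\in\mathfrak h_\CC$ act through $\eta$. Your explicit computation $\eta(Z_6)=-\tfrac i2(m_1-m_2)$, $\eta(Z_3)=-\tfrac i2(m_1+m_2)$ just spells out the weight evaluation the paper leaves implicit, and it is accurate.
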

\begin{proof}
We start by observing that the eigenvalue of any irreducible spherical function $\Phi$ corresponding to a differential operator $\Delta \in D(G)^G$, given by
$[\Delta\Phi](e)$, is a scalar multiple of the identity.
Since $\Delta_1$ and $\Delta_2$ are in $D(G)^G$, we have that
$$[\Delta_1\Phi_\ell^{(m_1,m_2)}](e)= \dot \tau_{(m_1,m_2)}(\Delta_1) \quad \text{ and } \quad
  [\Delta_2\Phi_\ell^{(m_1,m_2)}](e)= \dot \tau_{(m_1,m_2)}(\Delta_2).$$

These scalars  can be computed by looking at the action of $\Delta_1$ and $\Delta_2$ on a highest weight vector $v$ of the representation $\tau_{(m_1,m_2)}$,
 whose highest weight is of the form $m_1\varepsilon_1+ m_2\varepsilon_2$.

Recall that \begin{align*}
\Delta_1= (iZ_6)^2+iZ_6-(Z_5+iZ_4)(Z_5-iZ_4),\\
\Delta_2= (iZ_3)^2+iZ_3-(Z_2+iZ_1)(Z_2-iZ_1).
\end{align*}
Since $(Z_5-iZ_4)$ and $(Z_2-iZ_1)$ are positive root vectors and $Z_6, Z_3\in \lieh_\CC$, we get
\begin{align*}
\dot\tau_{(m_1,m_2)}(\Delta_1)&v = \dot\tau_{(m_1,m_2)}(iZ_6)^2v+\,\dot\tau_{(m_1,m_2)} (iZ_6)v
 = \tfrac14(m_1-m_2)(m_1-m_2+2)v ,\\
\dot\tau_{(m_1,m_2)}(\Delta_2)&v =\dot\tau_{(m_1,m_2)}(iZ_3)^2v+\,\dot\tau_{(m_1,m_2)} (iZ_3)v
= \tfrac14(m_1+m_2)(m_1+m_2+2)v .
\end{align*}
This completes the proof of the theorem.
\end{proof}

Now we give the eigenvalues of the function $H$ associated to an irreducible spherical function,
 corresponding to the differential operators $D$ and $E$.

\begin{cor}\label{autovs} The function $H$ associated to the spherical function
 $\Phi_\ell^{(m_1,m_2)}$  satisfies $D H=\lambda H$ and $E H=\mu H$ with
\begin{equation*}
  \lambda= -(m_1-m_2)(m_1-m_2+2)  , \qquad
  \mu=-\tfrac {\ell(\ell+2)} 4 + (m_1+1)m_2.
\end{equation*}
\end{cor}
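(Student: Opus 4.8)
The plan is to combine Proposition~\ref{relacionautovalores}, which relates the eigenvalues of $\Phi$ under $\Delta_1,\Delta_2$ to the eigenvalues of $H$ under $D,E$, with Theorem~\ref{param2}, which computes the $\Delta_1,\Delta_2$ eigenvalues of the spherical function $\Phi_\ell^{(m_1,m_2)}$ in terms of the parameters $(m_1,m_2)$. Thus the corollary is essentially a bookkeeping step: substitute one set of formulas into the other.

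First I would recall from Theorem~\ref{param2} that
\begin{align*}
\widetilde\lambda&=\tfrac14(m_1-m_2)(m_1-m_2+2),\\
\widetilde\mu&=\tfrac14(m_1+m_2)(m_1+m_2+2),
\end{align*}
are the eigenvalues of $\Phi_\ell^{(m_1,m_2)}$ under $\Delta_1$ and $\Delta_2$ respectively. Then I would invoke the two linear relations established in Proposition~\ref{relacionautovalores}, namely $\lambda=-4\widetilde\lambda$ and $\mu=-\tfrac14\ell(\ell+2)+\widetilde\mu-\widetilde\lambda$. The first substitution is immediate and yields $\lambda=-(m_1-m_2)(m_1-m_2+2)$.

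For the second eigenvalue I would compute $\widetilde\mu-\widetilde\lambda=\tfrac14\big[(m_1+m_2)(m_1+m_2+2)-(m_1-m_2)(m_1-m_2+2)\big]$. Expanding both products, the terms $m_1^2$, $2m_1$ cancel in the difference while the cross terms combine, leaving $\tfrac14\big[4m_1m_2+4m_2\big]=(m_1+1)m_2$. Plugging this into $\mu=-\tfrac14\ell(\ell+2)+\widetilde\mu-\widetilde\lambda$ gives exactly $\mu=-\tfrac{\ell(\ell+2)}{4}+(m_1+1)m_2$, as claimed.

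I anticipate no genuine obstacle here, since the only nontrivial input, the algebraic simplification of the difference $\widetilde\mu-\widetilde\lambda$, is a short polynomial identity. The conceptual work has already been done in the two cited results; the corollary merely packages them together. The one point requiring a word of care is that Proposition~\ref{relacionautovalores} is stated for the function $H$ attached to \emph{any} eigenfunction $\Phi$ of $\Delta_1,\Delta_2$, so it applies verbatim to $\Phi=\Phi_\ell^{(m_1,m_2)}$ and to its associated $H$; hence the eigenvalues transfer directly.
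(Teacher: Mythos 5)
Your proposal is correct and follows exactly the paper's own proof: it applies Proposition~\ref{relacionautovalores} to transfer the eigenvalues of $\Phi_\ell^{(m_1,m_2)}$ under $\Delta_1,\Delta_2$ (computed in Theorem~\ref{param2}) to those of $H$ under $D,E$. The only difference is that you write out the short polynomial simplification $\widetilde\mu-\widetilde\lambda=(m_1+1)m_2$, which the paper leaves to the reader with ``the statement follows easily.''
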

\begin{proof}
Let $\Phi=\Phi_\ell^{(m_1,m_2)}$.
From Proposition \ref{relacionautovalores} we have that $\Delta_1 \Phi=\widetilde \lambda \Phi$ and $\Delta_2 \Phi=\widetilde \mu \Phi$
if and only if $DH=\lambda H$ and $EH=\mu H$, where
the  relation between the eigenvalues of $H$ and  $\Phi$ is
  $$\lambda=-4\widetilde \lambda, \qquad \qquad \mu=-\tfrac 14 \ell(\ell+2)+\widetilde \mu-\widetilde \lambda.$$
Now the statement follows easily from Theorem \ref{param2}.
\end{proof}

{
\begin{cor}\label{autovsP}
  The function $P$ associated to the spherical function $\Phi^{(m_1,m_2)}_\ell$, defined by $H(u)= UT(u) P(u)$ (see \eqref{HP}), satisfies $\bar D P=\lambda P$ and $\bar E P=\mu P$ with
  \begin{equation*}
  \lambda= -(m_1-m_2)(m_1-m_2+2)  , \qquad
  \mu=-\tfrac {\ell(\ell+2)} 4 + (m_1+1)m_2.
\end{equation*}
\end{cor}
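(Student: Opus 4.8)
The plan is to derive this directly from Corollary \ref{autovs}, which already establishes that the function $H$ associated to $\Phi^{(m_1,m_2)}_\ell$ satisfies $DH = \lambda H$ and $EH = \mu H$ with precisely the eigenvalues $\lambda = -(m_1-m_2)(m_1-m_2+2)$ and $\mu = -\tfrac{\ell(\ell+2)}4 + (m_1+1)m_2$ claimed here. The only remaining task is to transfer these eigenvalue equations from $H$ to $P$ through the conjugation relationship between the operators, so no new analytic input is needed.

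First I would recall the defining relation \eqref{HP}, namely $H(u) = U\,T(u)\,P(u)$, so that $P(u) = (U\,T(u))^{-1} H(u)$ on $(0,1)$, where $U\,T(u)$ is invertible. Then I would invoke the definition \eqref{barras} of the conjugated operators, $\bar D = (U\,T(u))^{-1} D\,(U\,T(u))$ and $\bar E = (U\,T(u))^{-1} E\,(U\,T(u))$. These two facts together reduce the statement to a one-line substitution.

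The key computation is then immediate. Applying $\bar D$ to $P$ and using $H = U\,T(u)\,P$, one obtains
\begin{align*}
\bar D P &= (U\,T(u))^{-1} D\,(U\,T(u))\,P = (U\,T(u))^{-1} D\,H \\
&= (U\,T(u))^{-1}(\lambda H) = \lambda\,(U\,T(u))^{-1} H = \lambda\,P,
\end{align*}
and the identical argument with $E$ and $\mu$ in place of $D$ and $\lambda$ yields $\bar E P = \mu P$. This is exactly the observation recorded right after \eqref{barras}, that $H$ being an eigenfunction of $D$ and $E$ forces $P$ to be an eigenfunction of $\bar D$ and $\bar E$ with the same eigenvalues, now made explicit for the spherical function $\Phi^{(m_1,m_2)}_\ell$.

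I do not expect any real obstacle here, since the entire content is carried by Corollary \ref{autovs} and the conjugation in \eqref{barras}. The only point meriting a word of care is the legitimacy of the conjugation, i.e. the invertibility of $U\,T(u)$ on the relevant interval, so that $P$ is genuinely $(U\,T(u))^{-1}H$; this holds because $U$ is a constant invertible matrix (its inverse is used throughout Lemma \ref{Hahnproperties}) and $T(u) = \sum_{j=0}^\ell (1-u^2)^{j/2} E_{jj}$ is invertible for $u \in (0,1)$.
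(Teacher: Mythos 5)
Your proposal is correct and matches the paper's route exactly: the paper states this corollary without a separate proof precisely because it follows immediately from Corollary \ref{autovs} together with the observation after \eqref{barras} that conjugating $D$ and $E$ by $U\,T(u)$ turns eigenfunctions $H$ into eigenfunctions $P$ with the same eigenvalues. Your added remark on the invertibility of $U\,T(u)$ for $u\in(0,1)$ (with $U$ invertible by the Hahn orthogonality \eqref{Hahnortogon}) is a harmless and correct piece of due diligence.
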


\begin{remark} \label{ninteger}
Notice that we have just proved that the eigenvalue $\lambda$ can be written in the form
\begin{equation*}
\lambda=-n(n+2), \qquad \quad \text{with }  \quad n\in\NN_{0}.
\end{equation*}
\end{remark}
}
\smallskip
\begin{prop}\label{parimpar}
 If $\Phi$ is an irreducible spherical function of
$\big(\SO(4),\SO(3)\big)$, then $\Phi(-e)=\pm I$. Moreover, if
$\Phi=\Phi_\ell^{(m_1,m_2)}$ and $g\in \SO(4)$, then
$$\Phi(-g)= \begin{cases}
      \Phi(g)&    \text{ if }\; m_1+m_2\equiv 0 \mod(2) \\
      -\Phi(g)&   \text{ if } \; m_1+m_2\equiv 1 \mod(2).
\end{cases}
$$
\end{prop}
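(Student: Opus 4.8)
The plan is to exploit that $-e=-I$ is a \emph{central} element of $G=\SO(4)$, together with the realization of every irreducible spherical function as a projection of an irreducible representation of $G$ recorded in \eqref{sphasprojec}. Write $\Phi=\Phi_\ell^{(m_1,m_2)}$, so that $\Phi(g)v=P(\delta)\tau(g)v$ for $v\in P(\delta)V_\tau$, where $\tau=\tau_{(m_1,m_2)}$ is the irreducible representation of $G$ with highest weight $\eta=m_1\varepsilon_1+m_2\varepsilon_2$. First I would note that $-I\in\SO(4)$ (indeed $\det(-I)=1$) and that it commutes with every element of $G$; hence $\tau(-I)$ commutes with $\tau(g)$ for all $g$, and since $\tau$ is irreducible, Schur's Lemma gives $\tau(-I)=cI$ for some scalar $c$. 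From $(-I)^2=I$ we get $c^2=1$, so $c=\pm1$.

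Being a scalar, $\tau(-I)$ commutes with the projection $P(\delta)$, so for every $g\in G$ and $v\in P(\delta)V_\tau$ one has $\tau(-g)=\tau(-I)\tau(g)=c\,\tau(g)$ and therefore
$$\Phi(-g)v=P(\delta)\tau(-g)v=c\,P(\delta)\tau(g)v=c\,\Phi(g)v.$$
In particular $\Phi(-e)=cI=\pm I$, which proves the first assertion and reduces the second to the explicit determination of the sign $c=\pm1$ in terms of $(m_1,m_2)$.

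The main step is this computation of $c$, which I would carry out by realizing $-I$ as the exponential of an element of the Cartan subalgebra. Taking $H_0$ in the maximal abelian subalgebra of $\so(4)$ used to define $\lieh_\CC$, with both parameters equal to $\pi$, an elementary block computation (a rotation by $\pi$ in each $2\times2$ block) gives $\exp H_0=-I$. Since $\varepsilon_j(H_0)=-i\pi$ for $j=1,2$, evaluating $\tau(-I)=\tau(\exp H_0)=\exp(\dot\tau(H_0))$ on a highest weight vector $v$ of weight $\eta=m_1\varepsilon_1+m_2\varepsilon_2$ yields
$$\tau(-I)v=e^{\eta(H_0)}v=e^{-i\pi(m_1+m_2)}v=(-1)^{m_1+m_2}v.$$
As $\tau(-I)=cI$, this forces $c=(-1)^{m_1+m_2}$, which is precisely the claimed dichotomy according to the parity of $m_1+m_2$.

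The only delicate points are verifying $\exp H_0=-I$ and making sure no covering ambiguity interferes with reading off the eigenvalue from $\eta$. The latter is clean here because we work directly with $\SO(4)$ and its genuine representations $\tau_{(m_1,m_2)}$, and $-I$ lies in the image of $\exp$ restricted to the Cartan subalgebra, so its scalar value on the highest weight vector is determined unambiguously by the highest weight. Thus I expect the argument to be short, with the parity computation being the substantive content.
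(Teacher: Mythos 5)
Your proposal is correct and follows essentially the same route as the paper's proof: the paper likewise realizes $-e$ as $\exp$ of the Cartan element with both rotation angles equal to $\pi$, reads off the scalar $e^{-i\pi(m_1+m_2)}$ on a highest weight vector, and uses Schur's Lemma on the central element $\tau(-e)$ before transporting the sign through the projection $P_\pi$. The only difference is trivial ordering (you invoke Schur first and then compute the scalar; the paper computes the eigenvalue first), so there is nothing substantive to distinguish the two arguments.
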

\begin{proof}
As we mentioned in Section \ref{sec:prelim}, every irreducible spherical function of the pair $\big(\SO(4),\SO(3)\big)$ of type $\pi$, is of the form
$\Phi(g)= P_\pi \tau(g)$, where
$\tau \in \hat\SO(4)$  contains the $K$-type $\pi$  and  $P_\pi$ is the projection onto the $\pi$-isotypic component of $V_\tau$.

{ Let $\eta$ be the highest weight of $\tau=(m_1,m_2) \in \hat\SO(4)$, i.e. $\eta=m_1\varepsilon_1+m_2\varepsilon_2$ (see Subsection \ref{representations}).}
We have that  $$-e=\exp \left(\begin{smallmatrix} 0&\pi &0&0 \\-\pi& 0&0&0 \\0&0&0&\pi \\0&0&-\pi&0\end{smallmatrix}\right);$$
therefore, if  $v$ is a highest weight vector in $V_\tau$ of weight $\eta$, we have
$$\tau (-e)v
=e^{-\pi(m_1+m_2)i} v=\pm v.$$
Since  $\tau(-e)$ commutes with $\tau(g)$ for all $g\in \SO(4)$, by Schur's Lemma $\tau(-e)$ is a multiple of the identity . Thus,
$$\tau(-e)= \left\{
    \begin{array}{ll}
       I ,&   \hbox{if $m_1+m_2\equiv 0 \mod(2)$} \\
      -I ,&   \hbox{if $m_1+m_2\equiv 1 \mod(2)$}
    \end{array}.
  \right.
$$
Therefore,
\begin{align*}
\Phi(-g)&= P_\pi \tau(-g) =P_\pi \tau(-e)\tau(g) =\tau(-e)\Phi(g).
\end{align*}
Hence, the proposition is proved.
\end{proof}

\section{The function $P$ associated to a spherical function.}\label{ThefunctionP}
{
 In the previous sections we were interested in  studying the irreducible spherical functions $\Phi$ of a  $K$-type $\pi=\pi_\ell$.  This is accomplished by associating each function $\Phi$  to a $\CC^{\ell+1}$-valued function $H$, which is
  a simultaneous eigenfunction  of the differential operators $D$ and $E$ in $(0,1)$, given in \eqref{opDH} and \eqref{opEH}, continuous in $(0,1]$ and such
  that $H(1)= (1,\dots , 1)$.
  This function $H$
  is of the form
  \begin{equation*}
    H(u)= UT(u)P(u),
  \end{equation*}
where $U$ is the constant matrix given in \eqref{Ucolumnas}  and $  T(u)=\sum_{j=0}^\ell (1-u^2)^{j/2}E_{jj} $.

In this way  we have associated each irreducible spherical function $\Phi$ to a function $P(u)$, analytic in $(0,1]$, which is a simultaneous  eigenfunction
of the differential operators $\bar D$ and $ \bar E$, explicitly  given in Theorem \ref{puntos} by
\begin{align*}
  \bar D P &= (1-u^2) P'' -u CP'-V P, \mbox{} \\
  \bar EP & = \tfrac i 2\left( (1-u^2)Q_0+Q_1 \right) P'-\tfrac i2 u MP-\tfrac 12 V_0 P.
\end{align*}

From Corollary \ref{Dhyp} we have that
a vector valued eigenfunction $P=(p_0, \dots , p_\ell)^t$ of $\bar D$ with eigenvalue $\lambda=-n(n+2)$ and such that $\lim_{u\to 1^{-}} P(u)$ exists is given by
  \begin{align*}
  p_j(u)& = a_j \, {}_2\!F_1\left(\begin{smallmatrix}-n+j,n+j+2\\  j+3/2 \end{smallmatrix}; \tfrac{1-u}{2}\right)\, ,  \qquad  0\le j \le \ell,
\end{align*}
where  $a_j$ are arbitrary  complex numbers for $j=1,2,\ldots,\ell$.

\smallskip

Let us introduce the following vector space of functions into $\CC^{\ell+1}$, defined for $n\in \NN_0$ and $\mu\in \CC$,
\begin{equation}
{\mathcal  V}_{n, \mu} =\{P=P(u) \text{ analytic in }(0,1] \, : \, \bar D P=-n(n+2) P, \,\bar E P=\mu P \}.
\end{equation}

We observe that $\mathcal V_{n,\mu}\neq 0$
if and only if $\mu $ is an eigenvalue of the matrix $L(\lambda)$ given in \eqref{matrixL}, with $\lambda=-n(n+2)$.
We are interested in considering only the cases $\mathcal V_{n,\mu}\neq 0$.

\smallskip
From Corollary \ref{DEhyp} we have that a function $P\in \mathcal V_{n,\mu}$
is of the form $P=(p_0, \dots , p_\ell)^t$, where
  \begin{align}\label{formadeP}
  p_j(u)& = a_j \, {}_2\!F_1\left(\begin{smallmatrix}-n+j,n+j+2\\  j+3/2 \end{smallmatrix}; \tfrac{1-u}{2}\right)\, ,  \qquad  0\le j \le \ell,
\end{align}
and the coefficients
 $\{a_j\}_{j=0}^\ell$ satisfy the recursion relations \eqref{threetermI}.

We observe that the equation \eqref{threetermclosing} is  automatically  satisfied because    $\mu$ is an eigenvalue of  the matrix $L(\lambda)$ given in \eqref{matrixL}.

If the function $P$ is associated to an irreducible spherical function, then  we have $a_0=1$, because
the condition $H(1)= (1,\dots , 1)$ implies that $P(1)$  is a vector whose first entry is 1,  see \eqref{limiteH}.

\begin{prop}\label{losaj}
 If $P\in \mathcal V_{n,\mu}$ then $P$ is a polynomial function.
\end{prop}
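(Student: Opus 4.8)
The plan is to establish polynomiality of $P=(p_0,\dots,p_\ell)^t$ one component at a time, starting from the explicit form $p_j(u)=a_j\,{}_2F_1\!\bigl(\begin{smallmatrix}-n+j,\,n+j+2\\ j+3/2\end{smallmatrix};\tfrac{1-u}{2}\bigr)$ recalled just above, and using that $n\in\NN_0$ by Remark \ref{ninteger}. The decisive elementary fact is that the upper parameter $-n+j$ is a non-positive integer exactly when $j\le n$: for those indices the Gauss series terminates, so $p_j$ is a polynomial in $u$ regardless of the value of $a_j$. In particular, if $n\ge\ell$ every component already terminates and $P$ is a polynomial; thus the whole difficulty is confined to the range $n<j\le\ell$, where I must prove that $a_j=0$.

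For such $j$ the series does not terminate, and I would control it at the antipodal orbit $u=-1$. Here $c-a-b=(j+\tfrac32)-(-n+j)-(n+j+2)=-j-\tfrac12<0$, and the two numerator parameters $-n+j$ and $n+j+2$ are positive, so the classical connection formula for ${}_2F_1$ at argument $1$ gives $p_j(u)\sim \text{const}\cdot(1+u)^{-j-1/2}$ as $u\to-1^+$, with a nonzero constant whenever $a_j\ne0$ (the relevant Gamma factors $\Gamma(-n+j)$, $\Gamma(n+j+2)$, $\Gamma(j+\tfrac12)$ are all finite and nonzero). Writing $u=\cos\theta$, this makes the corresponding entry of $H=U\,T(u)\,P(u)$ blow up as $\theta\to\pi$: the factor $(1-u^2)^{j/2}=(\sin\theta)^j$ does not compensate, and terms of different $j$ blow up at different rates, so no cancellation is possible and the invertibility of $U$ propagates the singularity to $H$. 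This contradicts the fact that $H$ is the restriction to a meridian of the analytic function on the compact sphere $S^3$ attached to $\Phi$, which is bounded near the south pole. Hence $a_j=0$ for every $j>n$, and $P$ is a polynomial.

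It is useful to record the internal mechanism that makes this a one-parameter issue: the subdiagonal coefficient of the recursion \eqref{threetermI} carries the factor $(n-j+1)$, which vanishes at $j=n+1$. Consequently the relations \eqref{threetermI} for $j=n+1,\dots,\ell-1$ together with the closing relation \eqref{threetermclosing} involve only the tail $a_{n+1},\dots,a_\ell$ and express each of them as a multiple of the single seed $a_{n+1}$. Thus polynomiality of $P$ is equivalent to $a_{n+1}=0$, in agreement with the one-dimensionality of the eigenspaces in Corollary \ref{corttr}.

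The step I expect to be the real obstacle is the passage from analyticity of $P$ on the chart domain $(0,1]$ to regularity at $u=-1$, since the south pole lies outside the central-projection chart. This is precisely where the global, compact nature of $S^3$ must enter: one reaches $u=-1$ either from the analyticity of $\Phi$ on all of $G$ or through the antipodal symmetry $\Phi(-g)=\pm\Phi(g)$ of Proposition \ref{parimpar}. The purely local recursion does not settle the matter by itself, as it only reduces the problem to the vanishing of the seed $a_{n+1}$; the boundary behavior at $u=-1$ is what actually forces it.
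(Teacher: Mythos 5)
There is a genuine gap, and it is exactly the one you flagged yourself at the end: your contradiction at $u=-1$ needs $H=U\,T(u)\,P(u)$ to stay bounded near the south pole, and for a general $P\in\mathcal V_{n,\mu}$ that hypothesis is simply not available. The space $\mathcal V_{n,\mu}$ is defined intrinsically — analytic functions on $(0,1]$ satisfying $\bar DP=-n(n+2)P$ and $\bar EP=\mu P$ — and it is nonzero for \emph{every} eigenvalue $\mu$ of $L(\lambda)$, with no a priori guarantee that such a $P$ arises from a spherical function. The only route in the paper from a $P$ back to a global object on $S^3$ is the reconstruction of Subsection \ref{reconstruccion}, which presupposes that $P$ is already known to be a polynomial; and the correspondence theorem of Section \ref{depafi} uses Proposition \ref{losaj} for \emph{all} eigenvalues $\mu$ of $L(\lambda)$ precisely to confine the eigenvectors to an $(n+1)$-dimensional subspace and bound their number by $\min\{\ell+1,n+1\}$, which is then matched against the count of spherical functions. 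So your argument proves only the weaker Corollary \ref{Ppolynomial} (the case of $P$ attached to a spherical function), and invoking the correspondence, or Proposition \ref{parimpar}, to obtain regularity at $u=-1$ for arbitrary $P\in\mathcal V_{n,\mu}$ would be circular. Your conditional analysis is otherwise sound — the connection formula does give $p_j(u)\sim c\,(1+u)^{-j-1/2}$ with $c\neq0$ when $a_j\neq0$, the rates $(1+u)^{-(j+1)/2}$ after multiplication by $T(u)$ are distinct, and invertibility of $U$ prevents cancellation — but it hangs on an unavailable hypothesis.

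The paper's proof stays entirely on $(0,1]$ and extracts the missing information from the piece of data your argument never exploits: the first-order equation $\bar EP=\mu P$ read componentwise, equation \eqref{Epj}. By induction on $j$ (base $p_{-1}=0$, $p_0$ terminating since $n\in\NN_0$), once $p_{j-1}$ and $p_j$ are polynomials, \eqref{Epj} forces $\tfrac1{2j+3}(1-u^2)p'_{j+1}-u\,p_{j+1}$ to be a polynomial; writing $p_{j+1}=\sum_{k\ge0}b_ku^k$ (legitimate, since by \eqref{formadeP} each component is analytic on $(-1,3)$), the Taylor coefficients beyond that degree satisfy a two-step recursion with ratio $\ge1$, so a nonzero tail has coefficients bounded away from zero, contradicting the existence of $\lim_{u\to1^-}p_{j+1}(u)$; hence the tail vanishes and $p_{j+1}$ is a polynomial. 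Your observation that the factor $(n-j+1)$ in \eqref{threetermI} vanishes at $j=n+1$, so that the tail $a_{n+1},\dots,a_\ell$ is driven by the single seed $a_{n+1}$ and polynomiality is equivalent to $a_{n+1}=0$, is correct and consistent with Corollary \ref{corttr} — but, as you yourself note, it does not close the argument; the paper closes it with the $\bar E$-induction at $u=1$, not with boundary behavior at $u=-1$.
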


\begin{proof}
Let $P(u)=(p_0(u),\dots,p_\ell(u))\in \CC^{\ell+1}$. From Corollary \ref{DEhyp} we have that the entries of the function
$P$  are given by
 \begin{equation}\label{geg}
   p_j(u)= a_j\,    {}_2\!F_1\left(\begin{smallmatrix}-n+j,n+j+2\\  j+3/2 \end{smallmatrix}; \tfrac{1-u}{2}\right),
\end{equation}
 where the coefficients
 $\{a_j\}_{j=0}^\ell$ satisfy the recursion relation \eqref{threetermI}, for some  eigenvalue $\mu$  of $L(\lambda)$.

For $0\leq j\leq n$,  the function $p_j(u)$  is a polynomial function, while for $n<j\leq \ell$ the series defining the hypergeometric function is not finite.
Hence, in this case we have that $p_j$ is a polynomial if and only if the coefficient $a_j$ is zero.

From the expression of $\bar E$ in Theorem \ref{puntos}, for $0\leq j\leq \ell$ we have
 \begin{equation}\label{Epj}
 \begin{split}
 \mu p_j   = \tfrac i 2  & \left( (1-u^2)\tfrac{(j+1)(\ell+j+2)}{2j+3} p'_{j+1}+\tfrac{j(\ell-j+1)}{2j-1}p'_{j-1} \right)\\
&-\tfrac i2 u (j+1)(\ell+j+2) p_{j+1}-\tfrac 12 j(j+1)p_j,
\end{split}
\end{equation}
where we interpret  $p_{-1}=p_{\ell+1}=0$.

By induction on $j$, suppose that  $p_j$ and $p_{j-1}$ are polynomial functions and let $p_{j+1}(u) = \sum _{k\geq 0} b_k u^k$.
Then,
\begin{align*}
  p(u)& =\tfrac{1}{2j+3} (1-u^2) p'_{j+1}(u)- u p_{j+1}(u)\\
  & = \tfrac 1{2j+3}\sum_{k\geq 0} \big( (k+1) b_{k+1} -  (k+2j+2)b_{k-1}\big) u^{k}
\end{align*}
is also a polynomial function in $u$, (where as usual we denote $b_{-1}=0$). Let $m=\deg(p)$.
Thus, for $k> m$
we have
$$b_{k+1}= \tfrac{(k+2j+1)}{k+2}\, b_{k-1},$$
then $|b_{k+1}|\ge|b_{k-1}|$ for $k> m$. Since $\lim_{u \to 1^- }{p_{j+1}(u)}$ exists, we have that $b_k=0$ for $k> m$.
Thus, $p_{j+1}$ is a polynomial.
Also, we conclude that if $n< \ell$ the $j$-th entry of $P$ is $p_j=0$ for $n<j\leq \ell$.

\end{proof}

\begin{cor}\label{Ppolynomial}
  The  function $P(u)$ associated to an irreducible spherical function $\Phi$ of type $\pi_\ell$ is a $\CC^{\ell+1}$-valued polynomial function.
\end{cor}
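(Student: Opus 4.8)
The plan is to show that the function $P(u)$ attached to an irreducible spherical function $\Phi=\Phi_\ell^{(m_1,m_2)}$ belongs to the space $\mathcal V_{n,\mu}$ for a suitable \emph{nonnegative integer} $n$ and a suitable $\mu\in\CC$, and then to invoke Proposition \ref{losaj}, which states exactly that every element of $\mathcal V_{n,\mu}$ is a polynomial. Thus the corollary is essentially an assembly of the three properties that define membership in $\mathcal V_{n,\mu}$.

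To verify these properties, I would argue as follows. By Corollary \ref{autovsP}, the function $P$ (defined through $H(u)=UT(u)P(u)$ as in \eqref{HP}) is a simultaneous eigenfunction of the operators $\bar D$ and $\bar E$ of Theorem \ref{puntos}, namely $\bar DP=\lambda P$ and $\bar EP=\mu P$ with $\lambda=-(m_1-m_2)(m_1-m_2+2)$ and $\mu=-\tfrac{\ell(\ell+2)}4+(m_1+1)m_2$. Next, Remark \ref{ninteger} records that this eigenvalue can be written as $\lambda=-n(n+2)$ with $n=m_1-m_2\in\NN_0$, which is nonnegative precisely because $m_1\ge\tfrac\ell2\ge|m_2|\ge m_2$. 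Finally, $P$ is analytic on $(0,1]$: since $\Phi$ is a spherical function, the associated $H$ is continuous on $(0,1]$ (Corollary \ref{DEhyp}), and by Corollary \ref{Dhyp} an eigenfunction of $\bar D$ with a finite limit at $u=1$ is analytic there, while eigenfunctions of $\bar D$ are automatically analytic on the open interval $(0,1)$. These three facts together say precisely that $P\in\mathcal V_{n,\mu}$ with $n\in\NN_0$, so Proposition \ref{losaj} applies and $P$ is a $\CC^{\ell+1}$-valued polynomial.

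The only conceptual point worth stressing is where the integrality of $n$ enters, since this is what makes the whole argument go through rather than any delicate computation. The polynomiality proof in Proposition \ref{losaj} rests on the fact that, when $n\in\NN_0$, the hypergeometric series in \eqref{geg} terminates for $0\le j\le n$, and the inductive argument using the recursion \eqref{Epj} then forces $a_j=0$, hence $p_j=0$, for $n<j\le\ell$. A priori the eigenvalue equation only tells us $n\in\CC$, for which the series need not terminate; the crucial upgrade $n\in\NN_0$ is supplied not by the differential equations but by the representation theory of $\SO(4)$ carried out in Theorem \ref{param2} and distilled in Remark \ref{ninteger}. Consequently there is no genuine obstacle here: the corollary follows immediately once the eigenvalue computation and the analyticity at the endpoint $u=1$ have been put in place, and the proof reduces to citing Corollary \ref{autovsP}, Remark \ref{ninteger}, and Proposition \ref{losaj}.
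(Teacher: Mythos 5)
Your proposal is correct and follows essentially the same route as the paper: the paper's proof likewise consists of observing that $P\in\mathcal V_{n,\mu}$ with $n=m_1-m_2\in\NN_0$ (via Corollary \ref{autovsP}) and then invoking Proposition \ref{losaj}. Your additional remarks on where analyticity at $u=1$ and the integrality of $n$ enter are accurate elaborations of steps the paper leaves implicit, not a different argument.
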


\begin{proof} We only have to recall that the function $P$ associated to the irreducible spherical function $\Phi_\ell^{(m_1,m_2)}$ of type $\pi_\ell$ belongs to $\mathcal V_{n,\mu}$, with
 $n=m_1-m_2\in \NN_0$ and $\mu=-\tfrac {\ell(\ell+2)} 4 + (m_1+1)m_2$ (see  Corollary \ref{autovsP}).
\end{proof}

\section{From $P$ to $\Phi$}\label{depafi}

{
\subsection{Correspondence between polynomials and spherical functions.}
\

In this subsection we will prove that, given $\pi_\ell\in\hat K$, there is a one to one}
correspondence between the vector valued polynomial eigenfunctions $P(u)$ of $\bar D$ and $ \bar E$ such that the first entry of the vector $P(1)$ is equal to $1$,
and the irreducible spherical functions $\Phi$ of type $\pi_\ell$.

\begin{thm}
There is a one to one correspondence between  the irreducible spherical  functions $\Phi$ of type $\pi_\ell\in\hat K$, $\ell\in2\NN_0,$
 and the functions $P$ in $\mathcal V_{n,\mu}\neq 0$ such that $P(1)=(1, a_1,\dots , a_\ell)$.
\end{thm}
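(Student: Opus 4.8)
The plan is to show that the assignment $\Phi\mapsto P_\Phi$ constructed in the previous sections is the desired bijection. First I would record that it is well defined into the stated target. To an irreducible spherical function $\Phi=\Phi_\ell^{(m_1,m_2)}$ one attaches $H=\Phi\,\Phi_\pi^{-1}$, which after Proposition~\ref{Hdiagonal} and the reduction of Section~\ref{onevariable} produces the vector valued function $P$ determined by $H(u)=U\,T(u)\,P(u)$ as in~\eqref{HP}. By Corollary~\ref{autovsP} this $P$ lies in $\mathcal V_{n,\mu}$ with $n=m_1-m_2\in\NN_0$ and $\mu=-\tfrac{\ell(\ell+2)}4+(m_1+1)m_2$, by Corollary~\ref{Ppolynomial} it is polynomial, and by Corollary~\ref{DEhyp} the first entry of $P(1)$ equals $1$; so $\Phi\mapsto P_\Phi$ indeed maps into $\{P\in\mathcal V_{n,\mu}\neq0 : P(1)=(1,a_1,\dots,a_\ell)\}$.

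For injectivity I would run the reconstruction backwards. A function $P$ in the target determines the diagonal matrix valued function $\widetilde H=U\,T\,P$ on the section of $K$-orbits, which extends by $K$-covariance to a function $H$ on $S^3$ via $H(k\cdot x)=\pi(k)\,\widetilde H\,\pi(k)^{-1}$; this is well defined because $\widetilde H$ is diagonal in the basis $\mathcal B$ and hence commutes with $\pi(M)$ (Proposition~\ref{Hdiagonal}). Setting $\Phi=H\Phi_\pi$ and using $G=KAK$ together with $\Phi(k_1gk_2)=\pi(k_1)\Phi(g)\pi(k_2)$, one sees that $P$ recovers $\Phi$ on all of $G$. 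Hence $\Phi\mapsto P_\Phi$ is injective, and this reconstruction is the candidate inverse.

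The heart of the matter is surjectivity, which I would obtain by a spectral count. Let $P$ be in the target with eigenvalues $(n,\mu)$; then $\mathcal V_{n,\mu}\neq0$ forces $\mu$ to be an eigenvalue of the matrix $L(\lambda)$ of~\eqref{matrixL}, $\lambda=-n(n+2)$, with eigenvector $P(1)$. By Proposition~\ref{losaj} this eigenvector lies in $V_n=\{v\in\CC^{\ell+1} : v_j=0\text{ for }j>n\}$, and the vanishing of the factor $(n-j+1)$ at $j=n+1$ in~\eqref{threetermI} makes $V_n$ invariant under $L(\lambda)$; thus the admissible $\mu$ are eigenvalues of the block $L(\lambda)|_{V_n}$, hence at most $\dim V_n=\min(n+1,\ell+1)$ in number. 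On the other hand, Section~\ref{sec:RepresentationTheory} furnishes exactly $\min(n+1,\ell+1)$ spherical functions $\Phi_\ell^{(m_1,m_2)}$ with $m_1-m_2=n$; writing $m_1=m_2+n$, their eigenvalues $\mu(m_2)=-\tfrac{\ell(\ell+2)}4+m_2^2+(n+1)m_2$ are pairwise distinct, since two admissible values of $m_2$ can satisfy $m_2+m_2'=-(n+1)$ in none of the ranges allowed by $m_1\ge\ell/2\ge|m_2|$. These give $\min(n+1,\ell+1)$ distinct eigenvalues of $L(\lambda)|_{V_n}$, so by the count they exhaust its spectrum. Consequently our $(n,\mu)$ equals $(m_1-m_2,-\tfrac{\ell(\ell+2)}4+(m_1+1)m_2)$ for an admissible $(m_1,m_2)$; since $\dim\mathcal V_{n,\mu}=1$ (Corollary~\ref{corttr}) and both $P$ and $P_{\Phi_\ell^{(m_1,m_2)}}$ are the generator normalized to have first entry $1$, they coincide, and $P$ lies in the image.

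The main obstacle is precisely this spectral matching, and it hinges on two points: the invariance of $V_n$ under $L(\lambda)$, which comes from the truncation in Proposition~\ref{losaj} together with the arithmetic zero $(n-j+1)|_{j=n+1}=0$, and the distinctness of the realized eigenvalues $\mu(m_2)$ for fixed $n$. I would remark that one could instead argue surjectivity constructively, building $\Phi=H\Phi_\pi$ directly from $P$ and checking Proposition~\ref{defeq}: conditions (ii) and $\Phi(e)=I$ are immediate from the construction and the normalization $a_0=1$, condition (iii) follows by reading Proposition~\ref{relacionautovalores} backwards together with $\Delta_K\Phi=c\,\Phi$, and irreducibility is automatic because $\End_K(V_\pi)=\CC$. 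In that route, however, the genuine difficulty migrates to condition (i), the global analyticity of $\Phi$ on $G$ across the degenerate $K$-orbits at the poles, where the half-integer powers in $T(u)$ and the parity relation of Proposition~\ref{parimpar} must be controlled; the spectral argument avoids this entirely.
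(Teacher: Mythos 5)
Your proposal is correct and follows essentially the same route as the paper: well-definedness via Corollaries \ref{autovsP}, \ref{Ppolynomial} and \ref{DEhyp}, and surjectivity by the same pigeonhole count matching the $\min\{\ell+1,n+1\}$ pairs $(m_1,m_2)$ with $m_1-m_2=n$ against the at most $\min\{\ell+1,n+1\}$ admissible eigenfunctions of $L(\lambda)$ forced by Proposition \ref{losaj} and Corollary \ref{corttr}. The only cosmetic differences are that you argue injectivity by the reconstruction of Subsection \ref{reconstruccion} rather than by the paper's observation that distinct pairs $(m_1,m_2)$ yield distinct eigenvalue pairs $(\lambda,\mu)$, and you make explicit two points the paper leaves implicit (the $L(\lambda)$-invariance of $V_n$ via the vanishing factor $(n-j+1)$, and the verification behind ``easily one can see'' that the $\mu(m_2)$ are pairwise distinct).
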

\begin{proof} Given an irreducible spherical function of type $\pi_\ell$, we have already proved  that the function $P$ associated to it belongs to the space $\mathcal V_{n,\mu}$, and $P(1)$ has its first entry equal to one.

The equivalence classes of irreducible spherical functions of $(G,K)$ of
type $\pi_\ell$ are parameterized by  the
set of all pairs $(m_1,m_2)\in \ZZ^2$
such that
 $$m_1\geq\tfrac\ell2\geq |m_2|.$$

Every irreducible spherical function $\Phi^{(m_1,m_2)}_{\ell}$ corresponds to a vector valued eigenfunction $P_\ell^{(m_1,m_2)}$ of the operators $\bar D$ and $\bar E$ whose eigenvalues,
 according to Corollary \ref{autovs}, are  respectively
\begin{equation*}
  \begin{split}
  \lambda_\ell^{(m_1,m_2)}&= -(m_1-m_2)(m_1-m_2+2)  ,\\
  \mu_\ell^{(m_1,m_2)}&=-\tfrac {\ell(\ell+2)} 4 + (m_1+1)m_2.
  \end{split}
\end{equation*}

Easily one can see that for different pairs  $(m_1,m_2)$
the pairs of eigenvalues $(\lambda_\ell^{(m_1,m_2)}, \mu_\ell^{(m_1,m_2)})$ are different.
Thus, each eigenfunction $P^{(m_1,m_2)}_{\ell}$ is associated to a unique irreducible spherical function $\Phi^{(m_1,m_2)}_{\ell}$.

On the other hand, from  \eqref{formadeP} we know that $P\in \mathcal V_{n,\mu}$
if and only if $P(u)=(p_0(u),\dots,p_\ell(u))^t$ is of the form
\begin{align*}
  p_j(u)& = a_j \, {}_2\!F_1\left(\begin{smallmatrix}-n+j,n+j+2\\  j+3/2 \end{smallmatrix}; \tfrac{1-u}2\right),\quad \text{ for all $0\leq j\leq \ell$},
\end{align*}
 where $\{a_j\}_{j=0}^\ell$ satisfies  \eqref{threetermI}.
Thus,  $a=(a_0,\dots,a_\ell)^t$ is an eigenvector of the matrix $L(\lambda)$ with eigenvalue $\mu$.
 In particular there are no more than $\ell+1$ linear independent eigenvectors.
If $P\in \mathcal V_{n,\mu}$  then  $P$ is a polynomial function; hence, when $n< \ell$ we have that
 $$a_j=0, \qquad \text{  for $n<j\leq \ell$} .$$
Thus, the eigenvectors of $L(\lambda)$  live in a subspace of dimension $n+1$ and, hence, there are at most $n+1$ linear independent eigenvectors.
From  Corollary \ref{corttr} we get that  every eigenspace of $L(\lambda)$ is one dimensional.
Therefore we conclude that, up to scalars, there are no more than $\min \{\ell+1, n+1\}$
 eigenvectors of $L(\lambda)$.

{ Hence, it is enough to prove that for each $\lambda=-n(n+2)$, with $n\in \NN_0$, there are exactly $\min\{\ell+1, n+1\}$ irreducible spherical functions of type $\pi_\ell\in K$}.

It is easy to verify (see Figure \ref{fig2}) that    there are exactly
 $\min\{\ell+1,n+1\}$ pairs $(m_1,m_2)\in \ZZ \times \ZZ$, satisfying
\begin{equation}\label{cond}
m_1\geq\tfrac\ell2\geq |m_2|\quad \text{  and }  \quad m_1-m_2=n.
\end{equation}

\begin{figure}
 \centering
\includegraphics{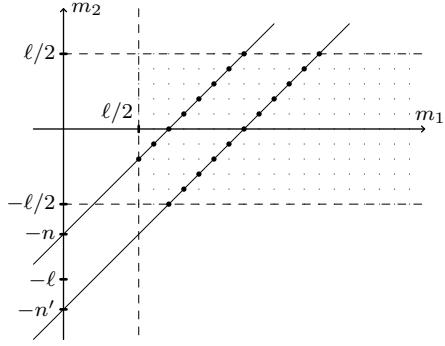}
 \caption[Fig]{Pairs $(m_1,m_2)$ that contain the $K$-type $\ell$.}
\label{fig2}
\end{figure}

\noindent This concludes  the proof of the theorem.
\end{proof}

{  If we take $n=m_1-m_2$ and $k=\ell/2-m_2$ in Corollary \ref{autovs} we have that for an eigenfunction $P(u)$ of $D$ and $E$, associated }
to an irreducible spherical function of type $\pi_\ell\in\hat K$,
 the respective eigenvalues are of the form
\begin{equation*}
  \begin{split}
    \lambda&= -n(n+2)  ,\qquad  {  \mu=-\tfrac\ell2\left(\tfrac\ell2+1\right)+\left(n-k+\tfrac\ell2+1\right)\left(\tfrac\ell2-k\right)},
  \end{split}
\end{equation*}
\noindent with $0\le n$ and $0\le k\le\min(n,\ell)$.

\smallskip Now we can state the main theorem of this paper.

\begin{thm}\label{main}
There exists a one to one correspondence between the irreducible spherical functions of type $\pi_\ell\in \hat K$ and the
vector valued polynomial functions $P(u)=(p_0(u),\dots,p_\ell(u))^t$ with
  \begin{align*}
  p_j(u)& = a_j \, {}_2\!F_1\left(\begin{smallmatrix}-n+j,n+j+2\\  j+3/2 \end{smallmatrix}; \tfrac{1-u}2\right),
\end{align*}
 where $n\in{\NN_0}$,  $a_0=1$ and $\{a_j\}_{j=0}^\ell$ satisfies the recursion relation
\begin{equation*}
i \,\tfrac{j(\ell-j+1)(n-j+1)(n+j+1)}{2(2j-1)(2j+1)} \,\, a_{j-1} - \tfrac {j(j+1)}2 \,\,a_j
 - i   \tfrac{(j+1)(\ell+j+2)}2 \,\,a_{j+1} = \mu \, a_j,
\end{equation*}
for $0\leq j\leq \ell-1$, and  $\mu$ of the form
 $$\mu=-\tfrac\ell2\left(\tfrac\ell2+1\right)+\left(n+k-\tfrac\ell2+1\right)\left(k-\tfrac\ell2\right),$$
for $k\in\ZZ$, $0\le k\le \min\{n,\ell\}$.
\end{thm}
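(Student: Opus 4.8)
The plan is to recognize Theorem \ref{main} as the fully explicit, parametrized version of the correspondence theorem proved immediately before it, so the work is assembling pieces already in hand rather than producing fresh analysis. First I would invoke that preceding correspondence: irreducible spherical functions of type $\pi_\ell$ are in bijection with the nonzero elements $P$ of $\mathcal V_{n,\mu}$ normalized so that the first coordinate of $P(1)$ equals $1$. By Corollary \ref{DEhyp}, together with Corollary \ref{Ppolynomial} and Proposition \ref{losaj}, every such $P=(p_0,\dots,p_\ell)^t$ is a vector valued \emph{polynomial} whose entries are exactly the ${}_2F_1$ expressions displayed in the statement, with coefficients $\{a_j\}_{j=0}^\ell$ forced to satisfy the three-term recursion \eqref{threetermI} and normalized by $a_0=1$. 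Thus the only remaining task is to pin down which eigenvalue pairs $(\lambda,\mu)=(-n(n+2),\mu)$ actually occur and to present $\mu$ in closed form.

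Next I would feed in the representation-theoretic computation. By Corollary \ref{autovs} (equivalently Corollary \ref{autovsP}), the spherical function $\Phi_\ell^{(m_1,m_2)}$ has eigenvalues $\lambda=-(m_1-m_2)(m_1-m_2+2)$ and $\mu=-\tfrac{\ell(\ell+2)}{4}+(m_1+1)m_2$, where $(m_1,m_2)\in\ZZ^2$ runs over the admissible pairs $m_1\ge \tfrac\ell2\ge|m_2|$. Setting $n=m_1-m_2$ gives $\lambda=-n(n+2)$ with $n\in\NN_0$, consistent with Remark \ref{ninteger}. Then the appropriate linear substitution recording $m_2$ through an integer index $k$ makes the two factors $m_1+1$ and $m_2$ appear as the linear factors in the displayed quadratic, and the admissibility inequalities $m_1\ge\tfrac\ell2\ge|m_2|$ translate into the range $0\le k\le\min\{n,\ell\}$. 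Substituting back yields $\mu=-\tfrac\ell2(\tfrac\ell2+1)+(n+k-\tfrac\ell2+1)(k-\tfrac\ell2)$, exactly as stated.

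Finally I would match counts to certify that the correspondence is exact and produces no spurious or missing polynomials. For fixed $n$ the admissible lattice points with $m_1-m_2=n$ number exactly $\min\{\ell+1,n+1\}$ (the lattice-point count behind Figure \ref{fig2} already used in the preceding theorem), and distinct pairs $(m_1,m_2)$ give distinct pairs $(\lambda,\mu)$; hence each admissible $k$ yields a distinct $\mu$, which by Corollary \ref{corttr} is a simple eigenvalue of the matrix $L(\lambda)$ of \eqref{matrixL}. Simplicity forces the solution vector $a=(a_0,\dots,a_\ell)^t$ of \eqref{threetermI} to be unique up to scale, so the normalization $a_0=1$ singles out a unique polynomial $P$, and the bijection with the explicitly described polynomials follows.

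The step I expect to be the main obstacle is the combinatorial bookkeeping in this change of variables: one must verify that as $(m_1,m_2)$ runs over the admissible set with fixed difference $n$ the index $k$ runs over precisely $\{0,1,\dots,\min(n,\ell)\}$, with neither omission nor repetition, and that the two linear factors align correctly with $(n+k-\tfrac\ell2+1)$ and $(k-\tfrac\ell2)$. Everything else is a direct appeal to the previously established correspondence, the polynomial form of $P$, and the eigenvalue formula, so the proof reduces to carefully carrying out and recording this reparametrization.
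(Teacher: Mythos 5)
Your strategy is exactly the paper's: Theorem \ref{main} is obtained there with no fresh analysis, by combining the correspondence theorem of Section \ref{depafi} (bijection with normalized elements of $\mathcal V_{n,\mu}$) with the polynomiality results (Proposition \ref{losaj}, Corollary \ref{Ppolynomial}), the eigenvalue computation of Corollary \ref{autovsP}, and a reparametrization of the admissible pairs $(m_1,m_2)$; your counting step via $\min\{\ell+1,n+1\}$ and the simplicity of eigenvalues of $L(\lambda)$ (Corollary \ref{corttr}) is the same argument the paper ran for the preceding theorem.

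However, the one step you defer and assert to work --- ``the appropriate linear substitution \dots yields $\mu=-\tfrac\ell2(\tfrac\ell2+1)+(n+k-\tfrac\ell2+1)(k-\tfrac\ell2)$, exactly as stated,'' with range $0\le k\le\min\{n,\ell\}$ --- does not close as claimed, and this is precisely the bookkeeping you flag as the main obstacle. To align the factors as $m_1+1=n+k-\tfrac\ell2+1$ and $m_2=k-\tfrac\ell2$ you must take $k=m_2+\tfrac\ell2$, and then $m_1\ge\tfrac\ell2\ge|m_2|$ translates into $\max\{0,\ell-n\}\le k\le\ell$, \emph{not} $0\le k\le\min\{n,\ell\}$. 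The paper's own substitution $k=\tfrac\ell2-m_2$ does give the range $0\le k\le\min\{n,\ell\}$, but then the factors come out as $m_1+1=n-k+\tfrac\ell2+1$ and $m_2=\tfrac\ell2-k$, i.e.\ $\mu=-\tfrac\ell2(\tfrac\ell2+1)+(n-k+\tfrac\ell2+1)(\tfrac\ell2-k)$ --- the display in the paragraph immediately preceding Theorem \ref{main}, which is the reflection $k\mapsto\ell-k$ of the theorem's display. Over $k\in\{0,\dots,\min\{n,\ell\}\}$ the two displays give the same set of values of $\mu$ only when $n\ge\ell$; for $n<\ell$ they differ. Concretely, for $\ell=2$, $n=1$ the admissible values (by Corollary \ref{autovs}, or by $\mu_w(k)=w(\tfrac\ell2-k)-k(\tfrac\ell2+1)$ in Proposition \ref{Pweigenfunction} with $w=n-k$) are $\{1,-2\}$, while the theorem's formula over $k\in\{0,1\}$ gives $\{-3,-2\}$. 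So carrying out your final verification would actually expose a mismatch: the discrepancy traces to an inconsistency in the printed statement of Theorem \ref{main} relative to the text above it (and to Proposition \ref{Pweigenfunction}), not to a defect in your strategy --- with the substitution $k=\tfrac\ell2-m_2$ and the display $(n-k+\tfrac\ell2+1)(\tfrac\ell2-k)$ your argument goes through verbatim and coincides with the paper's. As written, though, the assertion that the factors align as $(n+k-\tfrac\ell2+1)(k-\tfrac\ell2)$ \emph{and} the range is $0\le k\le\min\{n,\ell\}$ cannot both be verified, so the proposal as it stands contains a step that fails.
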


\subsection{Reconstruction of an irreducible spherical function.}\label{reconstruccion}
\

Fixed $\ell\in2{\NN_0}$ we know that a function $P=P(u)$ as in Theorem \ref{main} is associated to a unique irreducible spherical
function $\Phi$ of type $\pi_\ell\in\hat K$. Now we show how to explicitly construct the function $\Phi$ from such a $P$.
Recall that $P$ is a polynomial function.

Let us define the vector function $H(u)=UT(u)P(u)=(h_0(u),\dots,h_\ell(u))$, $u\in[-1,1]$, with $U$ and $T(u)$ as in Corollary \ref{DEhyp}
and let $\text{diag}\{H(u)\}$ denote the diagonal matrix valued function whose $kk$-entry is equal to the $k$-th  entry of the vector valued function $H(u)$.

  On the other hand, if we consider the function  $H:S^3\longrightarrow\End(V_\pi)$ associated to the irreducible spherical function
$\Phi$, from Corollary \ref{DEhyp} and (\ref{u}) we know that for $u\in(0,1)$
$$H(\sqrt{1-u²},0,0,u)=H(u).$$
Therefore, since both functions in the equality above are analytic in $(-1,1)$ and  continuous  in $[-1,1]$, we have that
$$H(\sqrt{1-u²},0,0,u)=H(u),$$
for all $u\in[-1,1]$.

Since $H(kx)=\pi_\ell(k)H(x)\pi_\ell^{-1}(k)$ for every $x\in S^3$ and $k\in K$, we have found the explicit values of the function $H$ on
the sphere $S^3$. Then, we can define the function $H:G\longrightarrow\End(V_{\pi_\ell})$ by $$H(g)=H(gK),\qquad g\in G.$$

\noindent Finally, we have that the irreducible spherical function $\Phi$ is of the form
$$\Phi(g)=H(g)\Phi_{\pi_\ell}(g),\qquad g\in G,$$
where $\Phi_{\pi_\ell}$ is the auxiliary spherical function introduced in Subsection \ref{auxiliar}.

\section{Hypergeometrization}\label{hyper}
{
In this section, for a fixed $\ell\in2{\NN_0}$ we shall construct a sequence of matrix valued polynomials $P_w$
closely related to irreducible spherical functions of type $\pi_\ell\in\hat K$.

 Given a nonnegative integer $w$ and $k =0,1,2,\dots ,\ell$,  the integers $m_1=w+ \ell/ 2$ and $m_2=-k+ \ell/ 2$ satisfy
 $$ w+\tfrac \ell 2 \geq \tfrac  \ell 2 \geq  \left| -k+\tfrac \ell 2\right| . $$
 Then, we can consider
 $$\Phi^{({w}+\ell/2,-{k}+\ell/2)}_{\ell},$$
 the spherical function of type $\pi_\ell\in\hat K$ associated to the $G$-representation $\tau_{(m_1, m_2)}$.

Also let us consider the matrix valued function $P_w= P_w(u)$,
whose $k$-th column  ($k =0,1,2,\dots ,\ell$) is given by the $\CC^{\ell+1}$-valued polynomial $P$ associated to
$\Phi^{({w}+\ell/2,-{k}+\ell/2)}_{\ell}$.

From Corollary \ref{autovsP}, we have that the $k$-th column of $P_w$ is an
eigenfunction of the operators $\bar D$ and $\bar E$ with eigenvalues
$\lambda_w(k)=-(w+k)(w+k+2)$ and $\mu_w(k)=w(\tfrac\ell2-k)-k(\tfrac\ell2+1)$
 respectively.

 Explicitly, we have  that the $jk$-entry of the matrix $P_w$ is given by
\begin{equation}\label{pes}
[ P_w(u)] _{jk}=a_j^{w,k} \,\, {}_2\!F_1\left(\begin{smallmatrix}-w-k+j,w+k+j+2\\  j+3/2 \end{smallmatrix}; (1-u)/2\right),
\end{equation}}
\noindent where $ {a}^{w,k}_{0}=1$ for all $k$ and $\{a_j^{w,k}\}_{j=0}^\ell$ satisfies
\begin{equation}
\begin{split}\label{ttr}
& i \tfrac{j(\ell-j+1)(w+k-j+1)(w+k+j+1)}{2(2j-1)(2j+1)} \,\, {a}^{w,k}_{{j-1}} - \tfrac {j(j+1)}2 \,\,{a}^{w,k}_{{{j}}}
 - i   \tfrac{(j+1)(\ell+j+2)}2 \,\,{a}^{w,k}_{{{j+1}}} \\
&= \left(w(\tfrac\ell2-k) -k(\tfrac\ell2+1)\right){a}^{w,k}_{{j}}.
\end{split}
\end{equation}

From Proposition \ref{Ppolynomial}, with $n=w+k$, we have that $[ P_w(u)] _{jk}$ is  polynomial on $u$. Therefore, we have the following results.

\begin{prop}\label{Pweigenfunction}
  The matrix valued polynomials $P_w$ defined above satisfy
  $$\bar D  P_w=P_w \Lambda_w \qquad \text{ and } \qquad \bar E P_w=P_w M_w,$$
  where $\Lambda_w=\sum_{k=0}^\ell \lambda_w(k) E_{kk}$, $M_w=\sum_{k=0}^\ell \mu_w(k) E_{kk}$, and
$$\lambda_w(k)=-(w+k)(w+k+2) \qquad \text{and}\qquad \mu_w(k)=w(\tfrac\ell2-k)-k(\tfrac\ell2+1).$$
\end{prop}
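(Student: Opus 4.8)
The plan is to reduce the matrix identity to the column-wise eigenvalue statements already established in Corollary \ref{autovsP}, together with the elementary fact that right multiplication by a diagonal matrix rescales columns. First I would fix $w\ge 0$ and, for each $k=0,1,\dots,\ell$, denote by $P_w^{(k)}$ the $k$-th column of $P_w$, which by construction is the $\CC^{\ell+1}$-valued polynomial $P$ associated to the irreducible spherical function $\Phi^{(w+\ell/2,\,-k+\ell/2)}_\ell$. Since the operators $\bar D$ and $\bar E$ act on a matrix argument by left multiplication of their matrix coefficients (see Theorem \ref{puntos}), applying either of them to $P_w$ is the same as applying it separately to each column $P_w^{(k)}$; that is, the $k$-th column of $\bar D P_w$ is $\bar D P_w^{(k)}$, and similarly for $\bar E$.

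Next I would invoke Corollary \ref{autovsP} with $m_1=w+\tfrac\ell2$ and $m_2=-k+\tfrac\ell2$. These integers satisfy $m_1\ge \tfrac\ell2\ge|m_2|$, so $\Phi^{(m_1,m_2)}_\ell$ is defined, and the corollary gives $\bar D P_w^{(k)}=\lambda P_w^{(k)}$ and $\bar E P_w^{(k)}=\mu P_w^{(k)}$ with $\lambda=-(m_1-m_2)(m_1-m_2+2)$ and $\mu=-\tfrac{\ell(\ell+2)}4+(m_1+1)m_2$. Substituting $m_1-m_2=w+k$ gives $\lambda=-(w+k)(w+k+2)=\lambda_w(k)$, and a short expansion of $(w+\tfrac\ell2+1)(\tfrac\ell2-k)-\tfrac{\ell(\ell+2)}4$ yields $\mu=w(\tfrac\ell2-k)-k(\tfrac\ell2+1)=\mu_w(k)$, matching the eigenvalues in the statement.

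Finally I would assemble these column identities into matrix form. Writing $\Lambda_w=\sum_{k=0}^\ell\lambda_w(k)E_{kk}$ and $M_w=\sum_{k=0}^\ell\mu_w(k)E_{kk}$, the $k$-th column of $P_w\Lambda_w$ equals $\lambda_w(k)P_w^{(k)}$ because $\Lambda_w$ is diagonal, and likewise the $k$-th column of $P_w M_w$ equals $\mu_w(k)P_w^{(k)}$. Comparing with the column computations above, the $k$-th columns of $\bar D P_w$ and $P_w\Lambda_w$ coincide for every $k$, and similarly for $\bar E P_w$ and $P_w M_w$; hence $\bar D P_w=P_w\Lambda_w$ and $\bar E P_w=P_w M_w$. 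There is no genuine obstacle here: the statement is essentially a repackaging of Corollary \ref{autovsP}. The only point demanding care is the placement of the diagonal eigenvalue matrices on the \emph{right} of $P_w$, reflecting that $\bar D$ and $\bar E$ act within each column while the index $k$ labels columns, so that the eigenvalues appear as right factors; the eigenvalue formulas themselves follow from the routine substitution indicated above.
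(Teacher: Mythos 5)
Your proposal is correct and follows exactly the paper's route: the paper also obtains this proposition directly from Corollary \ref{autovsP} applied to each column with $m_1=w+\tfrac\ell2$, $m_2=-k+\tfrac\ell2$ (checking $m_1\ge\tfrac\ell2\ge|m_2|$), and then packages the column eigenvalue identities into the matrix equations with the diagonal matrices $\Lambda_w$ and $M_w$ acting on the right. Your eigenvalue substitutions $\lambda=-(w+k)(w+k+2)$ and $\mu=w(\tfrac\ell2-k)-k(\tfrac\ell2+1)$ match the paper's computation, so nothing is missing.
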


\smallskip
For the particular case  $w=0$, we have the following explicit formulas for $a_j^{0,k}$.

\begin{prop} We have
\begin{equation}\label{as}
\begin{split}
a_j^{0,k} &= \,\frac{(-2i)^{j}k!\,j!}{(k-j)!(2j)!}\quad\text{for } 0\leq j\leq k\leq \ell, \\
a_j^{0,k}&=0 \quad \text{for} \; 0\leq k< j\leq \ell.
\end{split}
\end{equation}
\end{prop}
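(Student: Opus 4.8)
The plan is to verify that the closed-form expressions in \eqref{as} satisfy the two defining conditions of the sequence $\{a_j^{0,k}\}_{j=0}^\ell$, namely $a_0^{0,k}=1$ together with the three-term recursion \eqref{ttr} specialized to $w=0$. Since the coefficient of $a_{j+1}^{0,k}$ in \eqref{ttr}, namely $-i(j+1)(\ell+j+2)/2$, is nonzero for every $0\le j\le\ell-1$, that recursion is a genuine forward recursion: it determines $a_{j+1}^{0,k}$ from $a_j^{0,k}$ and $a_{j-1}^{0,k}$, so the whole sequence is fixed by $a_0^{0,k}=1$ (this is precisely the one-dimensionality established in Corollary \ref{corttr}). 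Hence it suffices to check that the proposed formula reproduces $a_0^{0,k}=1$ and solves \eqref{ttr} for every $j$. The value $a_0^{0,k}=(-2i)^0\,k!\,0!/(k!\,0!)=1$ is immediate, and the vanishing $a_j^{0,k}=0$ for $k<j\le\ell$ is already guaranteed by Corollary \ref{Ppolynomial} (see Proposition \ref{losaj}) applied with $n=w+k=k$.

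The core of the verification is the range $0\le j\le k$, where $a_j^{0,k}\neq 0$. Directly from \eqref{as} one reads off the elementary ratio
$$\frac{a_{j+1}^{0,k}}{a_j^{0,k}}=\frac{-i(k-j)}{2j+1},\qquad 0\le j\le k,$$
which in particular gives $a_{k+1}^{0,k}=0$, consistent with the vanishing above. For $1\le j\le k$ I would substitute $a_{j-1}^{0,k}=\tfrac{i(2j-1)}{k-j+1}\,a_j^{0,k}$ (legitimate since $k-j+1\ge 1$) and $a_{j+1}^{0,k}=\tfrac{-i(k-j)}{2j+1}\,a_j^{0,k}$ into \eqref{ttr}; after cancelling the common factor $a_j^{0,k}$ and using $i^2=-1$, the identity to be checked collapses to the purely polynomial statement
$$j(\ell-j+1)(k+j+1)+(j+1)(\ell+j+2)(k-j)=(2j+1)(k(\ell+2)-j(j+1)),$$
which I would confirm by expanding both sides. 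The case $j=0$ is handled separately: the coefficient of $a_{-1}^{0,k}$ carries the factor $j=0$, so \eqref{ttr} reduces to $-i\tfrac{\ell+2}{2}a_1^{0,k}=-k\tfrac{\ell+2}{2}$, that is $a_1^{0,k}=-ik$, in agreement with \eqref{as}.

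Finally I would dispose of the range $k<j\le\ell-1$, where $a_j^{0,k}=0$. At $j=k+1$ the coefficient of $a_k^{0,k}$ in \eqref{ttr} contains the factor $(k-j+1)=0$, so that term vanishes even though $a_k^{0,k}\neq 0$; together with $a_{k+1}^{0,k}=a_{k+2}^{0,k}=0$ this makes \eqref{ttr} read $0=0$. For $j\ge k+2$ all three of $a_{j-1}^{0,k},a_j^{0,k},a_{j+1}^{0,k}$ vanish and \eqref{ttr} holds trivially. The main obstacle here is not conceptual but one of bookkeeping: correctly tracking the boundary index $j=k$, where the factor $k-j+1$ in \eqref{ttr} degenerates and the formula switches regime, and carrying out the polynomial identity above without sign errors in the powers of $i$.
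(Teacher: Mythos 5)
Your proposal is correct and follows essentially the same route as the paper's proof: checking $a_0^{0,k}=1$ and verifying the recursion \eqref{ttr} with $w=0$ by substituting the ratios $a_{j\pm1}^{0,k}/a_j^{0,k}$, with the same case split at $j\le k$, $j=k+1$, and $j\ge k+2$. Your additions (the explicit uniqueness remark via Corollary \ref{corttr} and the expanded polynomial identity, which does hold) merely make explicit what the paper leaves implicit.
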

\begin{proof}
 Clearly $a_0^{0,k}=1$; then, we only need  to check that these $a_j^{0,k}$ satisfy the following three-term recursive relation:
\begin{align}\label{qq}
 i \tfrac{j(\ell-j+1)(k-j+1)(k+j+1)}{2(2j-1)(2j+1)}  {a}^{0,k}_{{j-1}} - \tfrac {j(j+1)}2 {a}^{0,k}_{{{j}}}
 - i\tfrac{(j+1)(\ell+j+2)}2 {a}^{0,k}_{{{j+1}}} = -\tfrac{k(\ell+2)}{2}{a}^{0,k}_{{{j}}}.
 \end{align}
Notice that if the coefficients $a_j^{0,k}$ are given by (\ref{as}), for $ 0\leq j\leq k\leq \ell$ we have
\begin{align*}
ia_{j-1}^{0,k}=-\tfrac{2j-1}{k-j+1}a_j^{0,k}\qquad  \text{and}  \qquad ia_{j+1}^{0,k}=\tfrac{k-j}{2j+1}a_j^{0,k} .
 \end{align*}
Hence, for $ 0\leq j\leq k\leq \ell$ (\ref{qq}) is equivalent to
\begin{align*}
-  \tfrac{j(\ell-j+1)(k+j+1)}{2(2j+1)}  {a}^{0,k}_{{j}} - \tfrac {j(j+1)}2 {a}^{0,k}_{{{j}}}
 - \tfrac{(j+1)(\ell+j+2)(k-j)}{2(2j+1)} {a}^{0,k}_{{{j}}} = -\tfrac{k(\ell+2)}{2}{a}^{0,k}_{{{j}}},
 \end{align*}
which can be easily checked.

\noindent If $j=k+1$ we have
 \begin{align*}
 i \tfrac{(k+1)(\ell-(k+1)+1)(k-(k+1)+1)(k+(k+1)+1)}{2(2j-1)(2j+1)} \,\,  {a}^{0,k}_{k}  = 0,
 \end{align*}which is true. And if $j\geq k+2$ we just have $0=0$.
Therefore, the coefficients given by (\ref{as}) satisfy (\ref{ttr}) and the proof is finished.
\end{proof}

\subsection{The hypergeometric operators}
Now we introduce the matrix valued function $\Psi$ defined by the first ``package" of spherical functions $P_w$ with $w\ge0$, i.e.,
 \begin{equation*}
 \Psi(u) =  P_0(u).
\end{equation*}
{  From \eqref{pes} and \eqref{as} we observe that $\Psi(u)$ is an upper triangular matrix. Moreover, $\Psi(u)=(\Psi_{jk})_{jk}$ is the polynomial function given by \begin{equation}\label{psi}
\Psi _{jk}=\frac{(2j+1)(-2i)^j\, k!j!}{(k+j+1)!}C^{j+1}_{k-j}(u), \quad \text{ for $0\le j\leq k\le\ell$,}
\end{equation}}
{  where $C^{j+1}_{k-j}(u)$ is the Gegenbauer polynomial }
\begin{equation*}
C^{j+1}_{k-j}(u)=\binom{k+j+1}{k-j}\,{}_2\!F_1\left(\begin{smallmatrix}-k+j\,,\, k+j+2\\  j+3/2 \end{smallmatrix}; (1-u)/2\right).
\end{equation*}

Since the $k$-th column of $\Psi$ is an eigenfunction of $\bar D$ and $\bar E$ with eigenvalues $\lambda_0(k)=-k(k+2)$ and $\mu_0(k)=-k(\tfrac\ell2+1)$ respectively, the function $\Psi$ satisfies
\begin{equation}\label{Psiautofuncion}
  \bar D \Psi=\Psi \Lambda_0 \quad \text{ and } \quad   \bar E\Psi= \Psi M_0,
\end{equation}
where $\Lambda_0=\sum_{k=0}^\ell \lambda_0(k) E_{kk}$ and $M_0=\sum_{k=0}^\ell \mu_0(k) E_{kk}$.
{
\remark\label{psi-1} The entries of the diagonal of $\Psi(u)$ are nonzero constant polynomials, thus we have that $\Psi(u)$ is invertible. \\ Moreover, the inverse $\Psi(u)^{-1}$
is also an upper triangular matrix polynomial. This can be easily checked, for instance, using  Cramer's rule, because}
the determinant of $\Psi(u)$ is a nonzero constant.

\begin{thm}\label{hyp}
Let $\bar D$ and $\bar E$ be the differential operators defined in
Theorem \ref{puntos}, and let $\Psi$ the matrix valued function whose entries are given by  \eqref{psi}. Let $\widetilde
D = \Psi^{-1} \bar D \Psi$ and $\widetilde E = \Psi^{-1} \bar E
\Psi$, then
\begin{align*}
\widetilde D F=&(1-u^2) F'' + (-u C + S_1)  F'+  \Lambda_0  F,\\\widetilde E F=& (uR_2 + R_1)  F'+ M_0 F,
\end{align*}
for any $C^\infty$-function $F$ on $(0,1)$ with values in $\mathbb C^{\ell+1}$,
where\begin{align*}
C &= \sum_{j=0}^\ell (2j+3)E_{jj}, & S_1 &=  {  \sum_{j=0}^{\ell-1} 2(j+1)E_{j,j+1},}\\
R_1 &= \sum_{j=0}^{\ell-1} \tfrac{(j+1)}{2} E_{j,j+1}-\sum_{j=0}^{\ell-1} \tfrac{(\ell-j)}{2} E_{j+1,j},
& R_2 &= \sum_{j=0}^{\ell} (\tfrac \ell2-j)E_{j,j},\\
 \Lambda_0 &= \sum_{j=0}^{\ell}  -j(j+2)E_{j,j}, & \qquad M_0&= {  \sum_{j=0}^{\ell}-j(\tfrac\ell2+1)E_{j,j}}.
\end{align*}
\end{thm}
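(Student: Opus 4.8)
The plan is to obtain $\widetilde D$ and $\widetilde E$ by a single conjugation computation, letting the two structural facts do the real work for free: the eigenfunction relations $\bar D\Psi=\Psi\Lambda_0$ and $\bar E\Psi=\Psi M_0$ from \eqref{Psiautofuncion}, and the explicit shapes of $\bar D$ and $\bar E$ from Theorem \ref{puntos}. Writing $\bar D G=(1-u^2)G''-uCG'-VG$ and applying it to $G=\Psi F$ by the product rule gives
\begin{align*}
\bar D(\Psi F)=(1-u^2)\Psi F''+\big(2(1-u^2)\Psi'-uC\Psi\big)F'+\big((1-u^2)\Psi''-uC\Psi'-V\Psi\big)F .
\end{align*}
The coefficient of $F$ is exactly $\bar D\Psi=\Psi\Lambda_0$, so after multiplying on the left by $\Psi^{-1}$ the zeroth order term of $\widetilde D$ is $\Lambda_0$, while the leading term stays $(1-u^2)F''$ since its coefficient is scalar. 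The same computation for $\bar E G=\mathcal A(u)G'+\mathcal B(u)G$, with $\mathcal A=\tfrac i2\big((1-u^2)Q_0+Q_1\big)$ and $\mathcal B=-\tfrac i2 uM-\tfrac12 V_0$, produces zeroth order term $\Psi^{-1}\bar E\Psi=M_0$. Because $\Psi^{-1}$ is a polynomial matrix by Remark \ref{psi-1}, both conjugates are genuine differential operators with polynomial coefficients.

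What then remains is to identify the two first order coefficients, that is, to prove the matrix identities $\Psi^{-1}\big(2(1-u^2)\Psi'-uC\Psi\big)=-uC+S_1$ and $\Psi^{-1}\mathcal A\,\Psi=uR_2+R_1$. Clearing $\Psi^{-1}$, these are equivalent to
\begin{align*}
2(1-u^2)\Psi'=u[C,\Psi]+\Psi S_1, \qquad \tfrac i2\big((1-u^2)Q_0+Q_1\big)\Psi=\Psi(uR_2+R_1).
\end{align*}
I would prove both entrywise from \eqref{psi}, writing $\Psi_{jk}=c_{jk}\,C^{j+1}_{k-j}(u)$ with $c_{jk}=\tfrac{(2j+1)(-2i)^j k!\,j!}{(k+j+1)!}$, using the elementary commutator $[C,\Psi]_{jk}=-2(k-j)\Psi_{jk}$ (since $C$ is diagonal with entries $2j+3$) and the ultraspherical derivative rule $\tfrac{d}{du}C^{\nu}_{n}(u)=2\nu\,C^{\nu+1}_{n-1}(u)$.

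For the first identity, the only constant ratio needed is $c_{j,k-1}/c_{jk}=(k+j+1)/k$, and after cancelling $c_{jk}$ the $jk$ entry collapses, with $\nu=j+1$ and $n=k-j$, to the classical relation
\begin{align*}
(1-u^2)\,\tfrac{d}{du}C^{\nu}_{n}(u)=-nu\,C^{\nu}_{n}(u)+(n+2\nu-1)\,C^{\nu}_{n-1}(u),
\end{align*}
which holds for all ultraspherical polynomials. This settles the $\widetilde D$ part cleanly.

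The second identity is the laborious step and the main obstacle. Since $\mathcal A$ has entries off the diagonal that shift the row index by $\pm1$, it moves the Gegenbauer order $\nu=j+1$, so the $jk$ entry now mixes the three orders $\nu-1,\nu,\nu+1$ and the indices $n-1,n,n+1$, unlike the purely column-shifting right hand side. I would reduce it to the derivative relation above together with the order-lowering contiguity relation $(n+\nu)\,C^{\nu}_{n}=\nu\big(C^{\nu+1}_{n}-C^{\nu+1}_{n-2}\big)$ and the three term recurrence for $C^{\nu}_{n}$; combining these rewrites both sides in a single family of ultraspherical polynomials, after which the remaining coefficient identities (again involving only ratios of the constants $c_{jk}$) can be verified directly. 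Once the two matrix identities are established, substituting them back into the conjugation computation yields precisely the stated expressions for $\widetilde D$ and $\widetilde E$.
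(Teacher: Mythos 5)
Your proposal is correct and follows essentially the same route as the paper's proof: the product-rule conjugation with the relations $\bar D\Psi=\Psi\Lambda_0$ and $\bar E\Psi=\Psi M_0$ from \eqref{Psiautofuncion} identifying the zeroth-order coefficients, and then the two first-order matrix identities --- the paper's \eqref{paraD} and \eqref{paraE} --- verified entrywise via classical Gegenbauer relations. The only difference is cosmetic: you invoke the combined derivative relation $(1-u^2)\tfrac{d}{du}C_n^{\nu}=-nu\,C_n^{\nu}+(n+2\nu-1)C_{n-1}^{\nu}$ and the contiguity relation $(n+\nu)C_n^{\nu}=\nu\bigl(C_n^{\nu+1}-C_{n-2}^{\nu+1}\bigr)$, whereas the paper works with its four listed identities \eqref{a}--\eqref{d}; these are equivalent packagings of the same contiguity facts, and your reduction of the $\widetilde E$ identity to the single family $C_{n\pm1}^{\nu}$, $C_n^{\nu}$ via the three-term recurrence does go through.
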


\begin{proof}
By definition we have
{  \begin{align*}
\widetilde D\widetilde F&=(1-u^2)\widetilde F'' + \Psi^{-1}[2(1-u^2)\Psi ' - u C \Psi ] \widetilde F'\\
&\quad + \Psi^{-1} \left[(1-u^2) \Psi'' -u C\Psi'-V \Psi\right] \widetilde F,\\
\widetilde E\widetilde F&=\tfrac i2 \Psi^{-1}[(1-u^2)Q_0+Q_1]\Psi
\widetilde F'\\
& \quad + \Psi^{-1}\left[\tfrac i 2\left( (1-u^2)Q_0+Q_1 \right) \Psi'-\tfrac i2
u M\Psi-\tfrac 12 V_0 \Psi\right] \widetilde F.
\end{align*}
By using \eqref{Psiautofuncion} we observe that
\begin{align*}
 (1-u^2) \Psi'' -u C\Psi'-V \Psi & =\bar D \Psi=\Psi\Lambda_0,\\
\tfrac i 2 \big( (1-u^2)Q_0+Q_1 \big) \Psi'-\tfrac i2 u M\Psi-\tfrac 12 V_0 \Psi & =\bar E \Psi=\Psi M_0.
\end{align*}

To complete the proof of this theorem, we use the following properties of the Gegenbauer polynomials (for the first three see \cite{KS} page 40, and for the last one see \cite{S}, page 83, equation (4.7.27))}
\begin{align}
\label{a}&\frac{d C_n^\lambda}{du}(u)=2\lambda C_{n-1}^{\lambda+1}(u),\\
\label{b}&2(n+\lambda)u C_n^\lambda (u)= (n+1)C_{n+1}^{\lambda}(u)+(n+2\lambda-1)C_{n-1}^{\lambda}(u),\\
\label{c}& (1-u^2)\frac{d C_n^\lambda}{du}(u) +(1-2\lambda)u C_{n}^{\lambda}(u) =-\frac{(n+1)(2\lambda+n-1)}{2(\lambda-1)}C_{n+1}^{\lambda-1}(u),\\
\label{d}&\frac{(n+2\lambda-1)}{2(\lambda-1)} C_{n+1}^{\lambda-1}(u)=  C_{n+1}^{\lambda}(u) -  u C_{n}^{\lambda}(u).
\end{align}

We need to establish the following identities
\begin{align}
 \label{paraD}[2(1-u^2)\Psi ' - u C \Psi ] &=\Psi (-u C + S_1),\\
 \label{paraE}\tfrac i2[(1-u^2)Q_0+Q_1]\Psi  &= \Psi (uR_2 + R_1).
 \end{align}
Since (\ref{paraD}) is a matrix identity, by looking at the  $jk$-place
we have
\begin{align*}
2(1-u^2)\Psi'_{jk} - u C_{jj} \Psi_{jk}  =-\Psi_{jk} u C_{kk} &+ \Psi_{j,k-1} (S_1)_{k-1,k}.
\end{align*}
Multiplying both sides by $\frac{(k+j+1)!}{(2j+1)(-2i)^j\, k!j!}$ and using \eqref{psi} we have
\begin{align*}
2(1-u^2)\frac{d }{du}C_{k-j}^{j+1} - u (2j+3)C_{k-j}^{j+1} = -u(2k+3)C_{k-j}^{j+1} +2(k+j+1)C_{k-j-1}^{j+1}.
\end{align*}
and by setting $\lambda=j+1$ and $n=k-j$ we get
\begin{align*}
2(1-u^2)\frac{d C_n^\lambda}{du} - u (2\lambda+1)C_n^\lambda = -u(2(n+\lambda)+1)C_n^\lambda+2(n+2\lambda-1)C_{n-1}^\lambda.
\end{align*}
{ To see that this identity holds, we use \eqref{c} to write $\frac{d C_n^\lambda}{du}$ in terms of $C_{n}^{\lambda} $ and $C_{n+1}^{\lambda-1}$,
\eqref{b} to express $C_{n-1}^{\lambda}$ in terms of $C_{n}^{\lambda}$ and  $C_{n+1}^{\lambda}$, and then
 we recognize the identity  \eqref{d}. Thus, we have proved \eqref{paraD}.}

\smallskip
Now we need to verify the matrix identity  \eqref{paraE}. The $jk$-entry is given by (see Theorem \ref{puntos}
for the definition of the matrices  $Q_0$ and $Q_1$)
\begin{align*}
\tfrac i2(1-u^2)(Q_0)_{j,j+1}\Psi_{j+1,k}  +\tfrac i2(Q_1)_{j,j-1} \Psi_{j-1,k} &=\\
 u \Psi_{jk} (R_2)_{kk}+\Psi_{j,k+1} (R_1)_{k+1,k}  &+ \Psi_{j,k-1} (R_1)_{k-1,k}.
 \end{align*}
Again, multiplying both sides by $\frac{(k+j+1)!}{(-2i)^j\, k!j!}$ and setting $\lambda=j+1$ and $n=k-j$, we obtain
\begin{align*}
(1&-u^2)\frac{\lambda^2(\ell+\lambda+1)}{n+2\lambda} C_{n-1}^{\lambda+1}- \frac{(\ell-\lambda+2)(n+2\lambda-1)}{4}C_{n+1}^{\lambda-1}=\\
&u\left(\tfrac\ell2-(n+\lambda-1)\right)(2\lambda-1)C_{n}^{\lambda}
-\frac{(\ell-n-\lambda+1)(2\lambda-1)(n+\lambda)}{2(n+2\lambda)}C_{n+1}^{\lambda}\\
&+\frac{(2\lambda-1)(n+2\lambda-1)}{2}C_{n-1}^{\lambda}.
\end{align*}

\noindent Now we  firstly  use (\ref {c}) combined with \eqref{a} to write $C_{n-1}^{\lambda+1}$ in terms of $C_{n}^{\lambda}$ and $C_{n+1}^{\lambda-1}$, and then we use \eqref{b} to express $C_{n-1}^{\lambda}$ in terms of $C_{n}^{\lambda}$ and  $C_{n+1}^{\lambda}$. Then we get
\begin{align*}
&\tfrac{(\lambda^2-\ell\lambda-3\lambda-\ell n-2n)(2\lambda-1)}{2(n+2\lambda)}\left(u C_n^\lambda(u) + \frac{(n+2\lambda-1)}{2(\lambda-1)} C_{n+1}^{\lambda-1}(u) -C_{n+1}^\lambda(u)\right)=0,
\end{align*}
which is true by (\ref{d}).
\end{proof}

{ For each $w\in\NN_0$  let us introduced the matrix valued function
\begin{equation}\label{Pwtilde}
  \widetilde P_w= \Psi^{-1} P_w,
\end{equation}
 where  $P_w$ is the matrix valued polynomials introduced in \eqref{pes} and
$\Psi$ the upper triangular matrix function given in \eqref{psi}. We recall
that the function $\Psi^{-1}$ is a polynomial function, as we observed in Remark \ref{psi-1}. Therefore,  $\widetilde P_w$ is also a polynomial function.
The following result is a direct consequence of Proposition \ref{Pweigenfunction} and Theorem \ref{hyp}.
\begin{cor}  \label{tildePweigenfunction}
  The matrix valued polynomials $\widetilde P_w= \Psi^{-1} P_w$  satisfy
  $$\widetilde D  \widetilde P_w=\widetilde P_w \Lambda_w \qquad \text{ and } \qquad \widetilde  E \widetilde P_w=\widetilde P_w M_w,$$
  where $\Lambda_w=\sum_{k=0}^\ell \lambda_w(k) E_{kk}$, $M_w=\sum_{k=0}^\ell \mu_w(k) E_{kk}$, and
  $$\lambda_w(k)=-(w+k)(w+k+2) \qquad \text{and}\qquad \mu_w(k)=w(\tfrac\ell2-k)-k(\tfrac\ell2+1).$$
\end{cor}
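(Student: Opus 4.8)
The plan is to derive the corollary directly from the conjugation relations established in Theorem \ref{hyp} together with the eigenvalue equations of Proposition \ref{Pweigenfunction}, so that essentially no new computation is required. The essential point is that $\Psi$ is invertible with polynomial inverse, as recorded in Remark \ref{psi-1}, so that $\widetilde P_w = \Psi^{-1}P_w$ is a genuine matrix valued polynomial and the conjugated operators $\widetilde D = \Psi^{-1}\bar D\,\Psi$ and $\widetilde E = \Psi^{-1}\bar E\,\Psi$ act on it in the obvious way.

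First I would spell out what the conjugation notation means: for any smooth $\CC^{\ell+1}$-valued function $F$ on $(0,1)$ one has $\widetilde D F = \Psi^{-1}\bigl(\bar D(\Psi F)\bigr)$ and likewise $\widetilde E F = \Psi^{-1}\bigl(\bar E(\Psi F)\bigr)$. Applying this with $F = \widetilde P_w = \Psi^{-1}P_w$ immediately cancels the inner factors:
\begin{equation*}
\widetilde D \widetilde P_w = \Psi^{-1}\bar D\bigl(\Psi\,\Psi^{-1}P_w\bigr) = \Psi^{-1}\,\bar D P_w, \qquad \widetilde E \widetilde P_w = \Psi^{-1}\,\bar E P_w.
\end{equation*}

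Next I would invoke Proposition \ref{Pweigenfunction}, which gives $\bar D P_w = P_w\Lambda_w$ and $\bar E P_w = P_w M_w$. Substituting, and using that the constant diagonal matrices $\Lambda_w$ and $M_w$ multiply on the right (so they pass through the left factor $\Psi^{-1}$ by associativity of matrix multiplication), yields
\begin{equation*}
\widetilde D \widetilde P_w = \Psi^{-1}P_w\,\Lambda_w = \widetilde P_w\,\Lambda_w, \qquad \widetilde E \widetilde P_w = \Psi^{-1}P_w\,M_w = \widetilde P_w\,M_w,
\end{equation*}
which is exactly the assertion, with $\Lambda_w$ and $M_w$ the diagonal matrices whose entries are $\lambda_w(k) = -(w+k)(w+k+2)$ and $\mu_w(k) = w(\tfrac\ell2 - k) - k(\tfrac\ell2+1)$ as in Proposition \ref{Pweigenfunction}.

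Since everything reduces to cancellation and associativity, there is no genuine obstacle here; the only point that must not be overlooked is the invertibility of $\Psi$ and the polynomiality of $\Psi^{-1}$, both guaranteed by Remark \ref{psi-1} because $\det\Psi$ is a nonzero constant. This is precisely what makes the conjugated operators $\widetilde D,\widetilde E$ well defined with polynomial coefficients and keeps $\widetilde P_w$ inside the class of matrix valued polynomials, rather than merely analytic functions.
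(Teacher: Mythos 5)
Your proof is correct and follows exactly the paper's route: the paper states this corollary as a direct consequence of Proposition \ref{Pweigenfunction} and Theorem \ref{hyp}, which is precisely the cancellation-and-associativity argument you spell out. Your explicit appeal to Remark \ref{psi-1} for the invertibility and polynomiality of $\Psi^{-1}$ is a good touch, making precise what the paper leaves implicit.
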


\subsection{The explicit expression of the coefficients $a_j^{w,k}$}\

\color{black}

In this subsection we give the expression of the coefficients $a_j^{w,k}$, defined by the relation \eqref{ttr}, in terms of the Racah polynomials.
We recall that the Racah polynomials are defined (see for example (1.2.1) in \cite{KS}) by
\begin{equation}\label{Racah}
  R_k(\lambda(x);\alpha,\beta,\gamma, \delta)={}_4F_{3}\left( \begin{matrix}
    -k,k+\alpha+\beta+1, -x,x+\gamma+\delta+1\\ \alpha+1,\beta+\delta+1,\gamma+1
  \end{matrix}\, ; 1\right)
\end{equation}
for $n=0,1, \dots N$, where $\lambda(x)=x(x+\gamma+\delta+1)$ and one of the numbers $\alpha+1, \beta+\delta+1$ or $\gamma+1$ are equal $-N$, with $N$ a nonnegative integer.

If we take
$$\alpha= -(\ell+1), \quad \beta=-(w+k+1), \quad \gamma=0,\quad  \delta= 0, \quad N=\ell, \quad  x=j,  $$
then the Racah poynomials
\begin{align*} R_k(\lambda(j))& =R_k(\lambda(j);-\ell-1,-w-k-1,0,0)\\
& ={}_4F_{3}\left( \begin{matrix}
    -k,-\ell-w-1, -j,j+1\\ -\ell,-k-w,1
  \end{matrix}\, ; 1\right)
  \end{align*}
satisfy the difference equation (see \cite{KS}, equation (1.2.5))
\begin{equation}\label{diffequ}
  \begin{split}
     \tfrac{(j-\ell)(j-k-w)(j+1)}{(2j+1)}  \, R_k(\lambda(j+1)) + \tfrac{j (j+\ell+1)(j+k+w+1)}{(2j+1)} \, R_k(\lambda(j-1))&
   \\
   + \big (2k(w+\ell+1)-j(j+1)-\ell(w+k)\big ) R_k(\lambda(j))&= 0.
  \end{split}
\end{equation}

\begin{prop} \label{ajwkRacah}
For $w\in \NN_0$ and $0\leq j,k\leq \ell$ let 
$$a_j^{w,k}=(-2i)^j (-w-k)_j \frac{j!}{(2j)!} \textstyle\binom{\ell}{j}\binom{\ell+j+1}{j}^{-1} \, R_k(\lambda(j); -\ell-1,-w-k-1,0,0).$$
Then $a_0^{w,k}=1$ and the sequence $\{a_j^{k,w}\}$ satisfies the recursion  relation \eqref{ttr}.
  \end{prop}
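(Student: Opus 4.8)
The plan is to write each $a_j^{w,k}$ as a product of an explicit scalar prefactor and the Racah polynomial $R_k(\lambda(j))$, and then to verify that the three-term relation \eqref{ttr}, after dividing by that prefactor, collapses to exactly one half of the second order difference equation \eqref{diffequ} satisfied by these polynomials.

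First I would check the normalization. Setting $j=0$, the Pochhammer $(-w-k)_0$, the factor $j!/(2j)!$ and both binomial coefficients all equal $1$; moreover $\lambda(0)=0$ forces every term with $m\ge 1$ in the ${}_4F_3$ of \eqref{Racah} to vanish, since $(-x)_m=(0)_m=0$ there, so $R_k(\lambda(0))=1$. Hence $a_0^{w,k}=1$, as required.

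Next, write $a_j^{w,k}=c_j\,r_j$ with $r_j=R_k(\lambda(j))$ and $c_j=(-2i)^j (-w-k)_j \tfrac{j!}{(2j)!}\binom{\ell}{j}\binom{\ell+j+1}{j}^{-1}$. The heart of the argument is to compute the two ratios of consecutive prefactors. Using $(-w-k)_{j+1}/(-w-k)_j=j-w-k$ together with the elementary identities for $j!/(2j)!$ and for the quotients of binomials, one finds
\[
\frac{c_{j+1}}{c_j}=\frac{-i\,(j-w-k)(\ell-j)}{(2j+1)(\ell+j+2)},\qquad
\frac{c_{j-1}}{c_j}=\frac{i\,(2j-1)(\ell+j+1)}{(j-1-w-k)(\ell-j+1)}.
\]
I would then substitute $a_{j\pm1}^{w,k}=c_{j\pm1}r_{j\pm1}$ into \eqref{ttr} and divide through by $c_j$. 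In the coefficient of $r_{j-1}$ the two imaginary units combine to $-1$, the factors $(2j-1)$ and $(\ell-j+1)$ cancel, and the key simplification $j-1-w-k=-(w+k-j+1)$ turns the remaining rational expression into the polynomial $\tfrac12\,\tfrac{j(j+\ell+1)(j+k+w+1)}{2j+1}$; in the coefficient of $r_{j+1}$ the factor $(\ell+j+2)$ cancels and one obtains $\tfrac12\,\tfrac{(j-\ell)(j-k-w)(j+1)}{2j+1}$. These are precisely one half of the corresponding coefficients in \eqref{diffequ}.

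Finally, moving the right-hand side of \eqref{ttr} to the left shows that the coefficient of $r_j$ equals $-\tfrac{j(j+1)}{2}-w(\tfrac\ell2-k)+k(\tfrac\ell2+1)$, which a direct expansion identifies with $\tfrac12\big(2k(w+\ell+1)-j(j+1)-\ell(w+k)\big)$, again one half of the $r_j$-coefficient in \eqref{diffequ}. Thus \eqref{ttr}, divided by $c_j$, is exactly $\tfrac12$ times the difference equation \eqref{diffequ}, which the Racah polynomials satisfy; the boundary cases $j=0$ and $j=\ell$ are automatic because the offending terms carry the factors $j$ and $(j-\ell)$ respectively, matching the conventions $a_{-1}^{w,k}=a_{\ell+1}^{w,k}=0$. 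I expect the only genuine obstacle to be the careful bookkeeping of the Pochhammer and binomial ratios together with the tracking of the factors of $i$; once the two ratios above are established the remaining verification is purely mechanical.
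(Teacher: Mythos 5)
Your proposal is correct and follows essentially the same route as the paper: verify $a_0^{w,k}=1$, compute the ratios of consecutive prefactors (your $c_{j\pm1}/c_j$ agree with the paper's formulas for $i\,a_{j\pm1}^{w,k}$), substitute into \eqref{ttr}, and reduce the resulting identity to the difference equation \eqref{diffequ} for the Racah polynomials --- the only cosmetic difference being that the paper multiplies \eqref{ttr} by $2$ and carries the common prefactor along, while you divide by $c_j$ and identify the result with $\tfrac12$ of \eqref{diffequ}. All of your ratio computations and the three coefficient identifications check out, so the verification is complete.
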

\begin{proof}
It is easy to check  that $a_0^{w,k}=1$. Thus we start by observing that
\begin{align*}
  i\,a_{j+1}^{w,k} &= (-2i)^j (-w-k)_j \tfrac{j!}{(2j)!} \textstyle\binom{\ell}{j}\binom{\ell+j+1}{j}^{-1} \frac{(-w-k+j)(\ell-j)}{(2j+1)(\ell+j+2)} \, R_k(\lambda(j+1)) ,\\
   i\,a_{j-1}^{w,k} &= (-2i)^j (-w-k)_j \tfrac{j!}{(2j)!} \textstyle\binom{\ell}{j}\binom{\ell+j+1}{j}^{-1} \frac{(2j-1)(\ell+j+1)}{(w+k-j+1)(\ell-j+1)} \, R_k(\lambda(j-1)).
\end{align*}

\smallskip

\noindent   The left-hand side of \eqref{ttr} becomes
  \begin{align*}
 & i \tfrac{j(\ell-j+1)(w+k-j+1)(w+k+j+1)}{(2j-1)(2j+1)} \,\, {a}^{w,k}_{{j-1}} - {j(j+1)} \,\,{a}^{w,k}_{{{j}}}
 - i   (j+1)(\ell+j+2) \,\,{a}^{w,k}_{{{j+1}}} \\
 & = (-2i)^j (-w-k)_j \tfrac{j!}{(2j)!} \textstyle\binom{\ell}{j}\binom{\ell+j+1}{j}^{-1} \Big( \tfrac{j(w+k+j+1)(\ell+j+1)}{2j+1}\, R_k(\lambda(j-1)) \\
 & \qquad \qquad  -j(j+1) R_k(\lambda(j))+ \tfrac{(j+1)(\ell-j)(w+k-j)}{2j+1}R_k(\lambda(j+1) \Big).
 \intertext{By using the difference equation \eqref{diffequ} we have that the expression above is equal to}
 &   (-2i)^j (-w-k)_j \tfrac{j!}{(2j)!} \textstyle\binom{\ell}{j}\binom{\ell+j+1}{j}^{-1} \big( \ell(w+k)-2k(w+\ell+1) \big) \,R_k(\lambda(j+1) \\
&= \big( w(\ell-2k) -k(\ell+2)\big) {a}^{w,k}_{{j}}.
\end{align*}
This turns out to be the right-hand side of \eqref{ttr} and the proof is complete.
\end{proof}

\begin{cor} For  $w=0$ we obtain
  $$a_{j}^{0,k}=(2i)^j (-k)_j \tfrac{j!}{(2j)!}.$$
\end{cor}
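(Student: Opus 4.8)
The plan is to specialize the Racah polynomial expression of Proposition \ref{ajwkRacah} to $w=0$ and then to collapse the resulting hypergeometric sum by a classical summation theorem. First I would dispose of the range $j>k$: both the claimed value $(2i)^j(-k)_j\frac{j!}{(2j)!}$ and the formula of Proposition \ref{ajwkRacah} at $w=0$ carry the Pochhammer factor $(-k)_j$, which vanishes for $j>k$, so there both sides are $0$ and nothing is to prove. Hence I may assume $0\le j\le k\le\ell$, and it remains to evaluate the Racah polynomial $R_k(\lambda(j);-\ell-1,-k-1,0,0)$ occurring in Proposition \ref{ajwkRacah}.

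Writing this Racah polynomial as the terminating series
$${}_4F_3\left(\begin{smallmatrix}-k,\,-\ell-1,\,-j,\,j+1\\ -\ell,\,-k,\,1\end{smallmatrix};1\right),$$
the key observation is that the numerator parameter $-k$ coincides with the denominator parameter $\beta+\delta+1=-k$; since for $0\le j\le k$ the sum runs only over indices $m\le j\le k$, where $(-k)_m\neq0$, these factors cancel term by term and the ${}_4F_3$ reduces to
$${}_3F_2\left(\begin{smallmatrix}-\ell-1,\,-j,\,j+1\\ -\ell,\,1\end{smallmatrix};1\right).$$
This ${}_3F_2$ is Saalsch\"utzian: its numerator parameters sum to $-\ell$ and its denominator parameters to $-\ell+1$, and it terminates because of the factor $-j$ (recall $j\le\ell$). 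Applying the Pfaff--Saalsch\"utz summation with $n=j$, $a=-\ell-1$, $b=j+1$, $c=1$ gives
$${}_3F_2\left(\begin{smallmatrix}-\ell-1,\,-j,\,j+1\\ -\ell,\,1\end{smallmatrix};1\right)=\frac{(\ell+2)_j\,(-j)_j}{(1)_j\,(\ell+1-j)_j}=(-1)^j\binom{\ell+j+1}{j}\binom{\ell}{j}^{-1}.$$

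Substituting this back into Proposition \ref{ajwkRacah} at $w=0$, the two binomial coefficients cancel exactly against the factor $\binom{\ell}{j}\binom{\ell+j+1}{j}^{-1}$ carried by that formula, while $(-2i)^j(-1)^j=(2i)^j$; what survives is precisely $a_j^{0,k}=(2i)^j(-k)_j\frac{j!}{(2j)!}$. The one genuinely delicate point is the hypergeometric bookkeeping in the middle step: recognizing the ${}_4F_3\to{}_3F_2$ collapse, checking that it is legitimate exactly on the relevant range $j\le k$, and verifying the balance condition so that Pfaff--Saalsch\"utz applies; the surrounding Pochhammer and binomial manipulations are routine. As a shortcut one can bypass the Racah polynomial altogether: the explicit values in \eqref{as} already give $a_j^{0,k}=\frac{(-2i)^j k!\,j!}{(k-j)!(2j)!}$ for $0\le j\le k$ (and $0$ otherwise), and the identity $(-k)_j=(-1)^j k!/(k-j)!$ turns this directly into $(2i)^j(-k)_j\frac{j!}{(2j)!}$.
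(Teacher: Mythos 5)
Your proof is correct and follows essentially the same route as the paper's: cancel the common parameter $-k$ to collapse the ${}_4F_3$ to the ${}_3F_2\left(\begin{smallmatrix}-j,\,j+1,\,-\ell-1\\ 1,\,-\ell\end{smallmatrix};1\right)$, evaluate it by Pfaff--Saalsch\"utz as $(-1)^j\binom{\ell+j+1}{j}\binom{\ell}{j}^{-1}$, and substitute into Proposition \ref{ajwkRacah}; your added care (justifying the term-by-term cancellation on $j\le k$, the vanishing of both sides for $j>k$, and the balance condition) only tightens steps the paper leaves implicit. Your closing shortcut via \eqref{as} is also legitimate (the recursion \eqref{ttr} with $a_0^{0,k}=1$ has a unique solution) and corresponds to the remark the paper makes immediately after the corollary.
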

\begin{proof} We start by observing that for $w=0$ the Racah polynomial involved in the expression of $a_j^{0,k}$ can be written as a ${}_3\!F_2$ function
\begin{align*}
   R_k(\lambda(j);-\ell-1,-k-1,0,0) & ={}_4\!F_{3}\left( \begin{smallmatrix}
    -k,-\ell-1, -j,j+1\\ -\ell,-k,1   \end{smallmatrix}\, ; 1\right)  =
    {}_3\!F_{2}\left( \begin{smallmatrix}
    -j,j+1,-\ell-1\\ 1, -\ell   \end{smallmatrix}\, ; 1\right) .
\end{align*}

By Pfaff-Saalsch\"utz identity (see for example \cite{AAR}, Theorem 2.2.6) we get
$${}_3\!F_{2}\left( \begin{smallmatrix}
    -j,j+1,-\ell-1\\ 1, -\ell   \end{smallmatrix}\, ; 1\right) = \frac{(-j)_j \, (\ell+2)_j}{(1)_j\,(\ell-j+1)_j }= (-1)^j \textstyle\binom{\ell+j+1}{j} \binom{\ell}{j}^{-1}.$$
Now the corollary follows directly from Proposition \ref{ajwkRacah}.
\end{proof}
\remark The expression of $a_{j}^{0,k}$ in Corollary \ref{ajwkRacah} coincides with the  result obtained  in \eqref{as}.

\begin{cor}For $0\leq j,k\leq \ell$ the $jk$-entry of the polynomial $P_w$ is given by
\begin{align*}
 [P_w(u)]_{jk}& = (-2i)^j (-w-k)_j \frac{j!}{(2j)!}  \textstyle\binom{\ell}{j}\binom{\ell+j+1}{j}^{-1} \, {}_4\!F_{3}\left( \begin{smallmatrix}
    -k,-\ell-w-1, -j,j+1\\ -\ell,-k-w,1
  \end{smallmatrix}\, ; 1\right)\\
 &\quad\times  {}_2\!F_1\left(\begin{smallmatrix}-w-k+j,w+k+j+2\\  j+3/2 \end{smallmatrix}; (1-u)/2\right).
 \end{align*}
\end{cor}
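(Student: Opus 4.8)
The plan is to obtain this formula by a direct substitution, combining the two ingredients already at our disposal: the factorization of the entries of $P_w$ recorded in \eqref{pes}, and the closed expression for the coefficients $a_j^{w,k}$ furnished by Proposition \ref{ajwkRacah}. First I would recall from \eqref{pes} that the $jk$-entry of $P_w$ splits as the product of the scalar $a_j^{w,k}$ and the Gauss hypergeometric factor ${}_2\!F_1\left(\begin{smallmatrix}-w-k+j,\,w+k+j+2\\ j+3/2\end{smallmatrix}; (1-u)/2\right)$, the latter being exactly the second factor appearing in the statement. Thus the whole task reduces to inserting the explicit value of $a_j^{w,k}$.

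By Proposition \ref{ajwkRacah}, the coefficient $a_j^{w,k}$ equals $(-2i)^j (-w-k)_j \tfrac{j!}{(2j)!}\binom{\ell}{j}\binom{\ell+j+1}{j}^{-1}$ times the Racah polynomial $R_k(\lambda(j);-\ell-1,-w-k-1,0,0)$. The final step is then to unfold this Racah polynomial through its hypergeometric definition \eqref{Racah}: taking $\alpha=-\ell-1$, $\beta=-w-k-1$, $\gamma=\delta=0$, $N=\ell$ and $x=j$, the upper parameters become $-k$, $k+\alpha+\beta+1=-\ell-w-1$, $-x=-j$ and $x+\gamma+\delta+1=j+1$, while the lower parameters become $\alpha+1=-\ell$, $\beta+\delta+1=-w-k$ and $\gamma+1=1$. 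Substituting this ${}_4\!F_3$ for the Racah symbol, and carrying the prefactor along, produces precisely the asserted expression.

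Since each ingredient is already established, there is no genuine analytic obstacle here; the corollary is purely a matter of bookkeeping, essentially a transcription of Proposition \ref{ajwkRacah} into the entries of $P_w$ via \eqref{pes}. The only point that warrants care is verifying that the parameter specialization sends the Racah symbol to the ${}_4\!F_3$ displayed in the statement; in particular one should check that the terminating condition $\alpha+1=-N$ with $N=\ell$ is met, which both guarantees that the series terminates and legitimizes the identification of $R_k(\lambda(j);-\ell-1,-w-k-1,0,0)$ with the displayed hypergeometric function.
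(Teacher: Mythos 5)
Your proposal is correct and takes essentially the same route as the paper, which states this corollary as an immediate consequence of the factorization \eqref{pes} and Proposition \ref{ajwkRacah}, with the Racah symbol unfolded via its defining ${}_4\!F_3$ representation \eqref{Racah}. Your parameter check ($\alpha=-\ell-1$, $\beta=-w-k-1$, $\gamma=\delta=0$, $N=\ell$, $x=j$, with the terminating condition $\alpha+1=-\ell=-N$ satisfied) is exactly the bookkeeping the paper relies on.
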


\color{black}

\section{Orthogonal Polynomials}\label{Matrix Orthogonal Polynomials}

The aim of this section is to build classical sequences of matrix valued orthogonal polynomials from our previous work. This means to exhibit
a weight matrix $W$ supported on the real line, a sequence $(\widetilde P_w)_{w\ge0}$ of matrix polynomials such that $\deg(\widetilde P_w)=w$ with the leading coefficient of
$\widetilde P_w$ nonsingular, orthogonal with respect to $W$, and a second order (symmetric) differential operator $\widetilde D$ such that $\widetilde D\widetilde P_w=\widetilde P_w\Lambda_w$ where $\Lambda_w$ is a real diagonal matrix. Moreover, we point out that we also have a first order (symmetric) differential operator $\widetilde  E$ such that $\widetilde E\widetilde P_w=\widetilde P_w M_w$, where $M_w$ is a real diagonal matrix.

From $\widetilde D$ (see Theorem \ref{hyp}) we obtain a new differential operator $D$ by making the change of variables $s=(1-u)/2$. Thus,
\begin{align*}
 D F=&s(1-s) F'' - \left(\frac{S_1-C}{2}+sC\right)  F'+  \Lambda_0 F.
\end{align*}

\subsection{Polynomial solutions of $DF=\lambda F$}
\

We are interested in studying the vector valued polynomial solutions of the equation $DF=\lambda F$; in particular we want to know when polynomial solutions exist.
We start with
\begin{align}\label{ocho}
s(1-s) F'' + \left(B-sC\right)  F'+  \left(\Lambda_0 -\lambda\right)F=0,
\end{align}
where
\begin{align*}
B& =\frac{C-S_1}{2}=  \sum_{j=0}^\ell (j+\tfrac 32 )E_{jj} - \sum_{j=0}^{\ell-1} (j+1)E_{j,j+1}, \\
C&= \sum_{j=0}^\ell (2j+ 3 )E_{jj},   \qquad S_1=\sum_{j=0}^{\ell-1} (j+1)E_{j,j+1}, \qquad \Lambda_0  =- \sum_{j=0}^\ell j(j+2 )E_{jj}.
\end{align*}

 This equation is an instance of a matrix hypergeometric differential equation studied in \cite{T4}. Since the eigenvalues of $B$ are not in $-\NN_0$, the function $F$ is determined by $F_0=F(0)$. For $|s|<1$ it is given by
 \begin{align*}
F(s)={}_2\!H_1\left(\begin{smallmatrix}C,-\Lambda_0+\lambda\\  B \end{smallmatrix}; s\right)F_0=\sum_{j=0}^{\infty}\frac{s^j}{j!} [B;C;-\Lambda_0+\lambda]_j F_0, \qquad F_0\in \CC^{\ell+1},
\end{align*}
where the symbol $[B;C;-\Lambda_0+\lambda]_j$ is inductively defined by
\begin{align*}
[B;C;-\Lambda_0+\lambda]_0   &=1,\\
[B;C;-\Lambda_0+\lambda]_{j+1} &=
\left(B+j\right)^{-1}(j(C+j-1)-\Lambda_0+\lambda)[B;C;-\Lambda_0+\lambda]_j ,
\end{align*}
for all $j\geq 0$.

Therefore, there exists a polynomial solution of \eqref{ocho} if and only if the coefficient
$[B;C;-\Lambda_0+\lambda]_j$ is a  singular matrix
for some $j\in {\NN_0}$.
Moreover, we have that there is a polynomial solution of degree
$w$ of (\ref{ocho}) if and only if  there exists $F_0\in\CC^{\ell+1}$ such that
$[B;C;-\Lambda_0+\lambda]_{w}F_0\neq 0$  and
$$(w(C+w-1)-\Lambda_0+\lambda) F_w=0,\quad \text { where } \quad F_w= [B;C;-\Lambda_0+\lambda]_{w}F_0.$$
The matrix
\begin{equation}\label{Mw}
  M_w=w(C+w-1)-\Lambda_0+\lambda=\sum_{j=0}^{\ell}((j+w)(j+w+2)+\lambda)E_{jj}
\end{equation}
is diagonal. Then, it is a singular matrix if and only if $\lambda$ is of the form
$$\lambda_w(k)=-(k+w)(k+w+2),$$
for $0\leq k\leq\ell.$ We get the following result.
\begin{prop}\label{polynomialsol}
Given $\lambda\in \CC$, the equation $DF=\lambda F$ has a polynomial solution if and only if $\lambda$ is of the form $-n(n+2)$ for
$n\in\NN_0$.
\end{prop}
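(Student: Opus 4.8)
The plan is to combine the two facts already established above with the diagonal structure of $M_w$. The first is the termination criterion: a nonzero polynomial solution of \eqref{ocho} of degree $w$ exists if and only if there is a vector $F_0\in\CC^{\ell+1}$ with $F_w:=[B;C;-\Lambda_0+\lambda]_w\,F_0\neq0$ and $M_wF_w=0$. The second is that $M_w=\sum_{j=0}^\ell\big((j+w)(j+w+2)+\lambda\big)E_{jj}$ is diagonal, hence singular precisely when $\lambda=-(k+w)(k+w+2)$ for some $0\le k\le\ell$. Throughout I would abbreviate $N_j:=j(C+j-1)-\Lambda_0+\lambda=M_j$, so that the governing recurrence reads $F_{j+1}=(B+j)^{-1}N_jF_j$ and therefore $[B;C;-\Lambda_0+\lambda]_w=(B+w-1)^{-1}N_{w-1}\cdots(B+0)^{-1}N_0$.

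For the necessity I would argue directly. If a nonzero polynomial solution exists, let $w\ge0$ be its degree; then by the criterion $F_w\neq0$ and $M_wF_w=0$, so $M_w$ is singular. By the diagonal form this forces $\lambda=-(k+w)(k+w+2)$ for some $0\le k\le\ell$, and putting $n:=k+w\in\NN_0$ gives $\lambda=-n(n+2)$, as claimed.

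The sufficiency is the substantive part, and the hard point will be the reachability of a kernel vector of $M_w$ through the matrix $[B;C;-\Lambda_0+\lambda]_w$: knowing $M_w$ is singular is not enough, since one must realize a nonzero element of $\ker M_w$ as an image $F_w$. Given $\lambda=-n(n+2)$ with $n\in\NN_0$, I would choose $w:=\max\{0,n-\ell\}$ and $k:=\min\{n,\ell\}=n-w$, so that $0\le k\le\ell$ and $(M_w)_{kk}=(k+w)(k+w+2)-n(n+2)=0$; thus $e_k\in\ker M_w$. The crux is to show that $[B;C;-\Lambda_0+\lambda]_w$ is invertible for this $w$. Each factor $B+j$ is upper triangular with diagonal entries $j'+\tfrac32+j\neq0$, hence invertible; and each diagonal factor $N_j$ with $0\le j\le w-1$ is invertible, because its $i$-th diagonal entry $(i+j)(i+j+2)-n(n+2)$ would vanish only if $i+j=n$, which is impossible since the product is empty (equal to the identity) when $w=0$, and otherwise $i+j\le\ell+(w-1)=n-1<n$. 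Consequently $[B;C;-\Lambda_0+\lambda]_w$ is a product of invertible matrices, so setting $F_0:=[B;C;-\Lambda_0+\lambda]_w^{-1}e_k$ yields $F_w=e_k\neq0$ with $M_wF_w=0$. By the termination criterion this produces a polynomial solution of degree $w$, completing the argument.
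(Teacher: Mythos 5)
Your proof is correct, and it departs from the paper precisely at the step you flagged as substantive. The necessity direction is the same as the paper's: both rest on the termination criterion for the matrix hypergeometric series together with the diagonal form \eqref{Mw} of $M_w$. For sufficiency, the paper (in the discussion that follows the statement of the proposition, en route to Propositions \ref{grado} and \ref{corgrado}) proves reachability of a kernel vector for \emph{every} admissible degree $w=n-k$, $0\le k\le\ell$: it introduces the subspace $W_k$ spanned by $e_0,\dots,e_k$, observes that $W_k$ is invariant under each upper-triangular factor $(B+j)^{-1}$ and that the restriction of the diagonal factor $M_j$ to $W_k$ is invertible for $j<w$, and concludes that $e_k$ lies in the image of $[B;C;-\Lambda_0+\lambda]_w$. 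You instead take only the extremal degree $w=\max\{0,n-\ell\}$ with $k=\min\{n,\ell\}$, for which every factor $M_j$, $0\le j\le w-1$, is invertible outright (since $i+j\le\ell+w-1=n-1<n$ when $w>0$, and the product is empty when $w=0$), so the full transfer matrix $[B;C;-\Lambda_0+\lambda]_w$ is invertible and $F_0=[B;C;-\Lambda_0+\lambda]_w^{-1}e_k$ works with no subspace analysis at all. Your route is more elementary and entirely sufficient for the proposition as stated --- and it correctly makes explicit the reachability point, which the singularity of $M_w$ alone does not settle and which the paper's text only resolves after the proposition --- whereas the paper's invariant-subspace argument buys more: polynomial solutions of every degree $w$ in the range $n-\ell\le w\le n$, with leading coefficient along $e_k$, which is exactly the content of Propositions \ref{grado} and \ref{corgrado} needed later for the sequence $\widetilde P_w$.
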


\begin{remark}\label{igualdadautoval}
  Let $w\in \NN_0$, $0\leq k \leq \ell$. The eigenvalue $\lambda_w(k)$ satisfies
$\lambda_w(k)  =-n(n+2)$ with $n\in \NN_0$ if and only if  $n=w+k$.  In particular,
$$\lambda_w(k)=\lambda_{w'}(k') \quad \text{if and only if } \quad w+k=w'+k'.$$
\end{remark}

Now we want to study  in more detail the polynomial solutions of $DF=\lambda F$. Let us assume that $\lambda=-n(n+2)$ with $n\in \NN_0$.
Let  $$F(s)=\sum_{i=0}^w F_i s^i$$ be a polynomial solution of degree $w$ of the equation $DF=\lambda F$.
We have that the coefficients $F_i$ are recursively defined by
$$F_{i+1}= (B+i)^{-1} M_{i}F_i= [B;C;-\Lambda_0+\lambda]_{i}F_0,$$ where
$M_i$ is the matrix defined in \eqref{Mw}.

The function $F$ is a polynomial of degree $w$ if and only if  there exists $F_0\in\CC^{\ell+1}$ such that
\begin{equation}\label{condition}
  F_w = [B;C;-\Lambda_0+\lambda]_{w}F_0 \neq 0 \quad \text{ and } \quad
M_w F_w =0.
\end{equation}
 As we said, the matrix $M_w$ is singular if and only if $\lambda=\lambda_w(k)$ for some $k$ such that $0\leq k\leq \ell$, and therefore, we have
\begin{equation}\label{w=n-k}
  w=n-k.
\end{equation}
Also, we observe that $M_w F_w =0$ if and only if $F_w$ is in the
subspace  generated by $e_{k}$ (the $k$-th vector of the canonical basis of $\CC^{\ell+1}$).

Now we want to prove that it is always possible to choose a vector $F_0\in \CC^{\ell+1}$ such that $[B;C;-\Lambda_0+\lambda]_w F_0=e_k$.
Recall that
$$[B;C;-\Lambda_0+\lambda]_w F_0=(B+w-1)^{-1} M_{w-1}\dots M_1B^{-1}M_0 F_0,$$
 and that, for $0\le i\le w$, the matrices $M_{w-i}$ are defined by
$$M_r= \sum_{j=0}^{\ell}(\lambda_w(k)-\lambda_{w-i}(j))E_{jj}.$$
In particular the kernel of the matrix $M_{w-i}$ is  $\CC e_{k+i}$ for $0\leq i\leq \min\{w,\ell-k \}$, because  $\lambda_w(k)-\lambda_{w-i}(j)=0$
if and only if $j-i=k$ (see Remark \ref{igualdadautoval}).

Let $W_k$ be the subspace in $\CC^{\ell+1}$ generated by $\{e_0,e_1,\dots,e_{k}\}$.
We observe that for every $j\in\NN_0$ we have that $W_k$ is invariant by
$\left(B+j\right)^{-1}$ because it is an upper triangular matrix.
For $j<w$, $M_j$ is a diagonal matrix whose first $k+1$ entries are not zero, thus the restriction of $M_j$ to $W_k$ is invertible.
Therefore, there exists  $F_0$   such that $[B;C;-\Lambda_0+\lambda]_w F_0=e_k$.
Then
$$F(u)={}_2\!H_1\left(\begin{smallmatrix}C,-\Lambda_0+\lambda\\B \end{smallmatrix}; u\right)F_0$$ is a vector polynomial of
degree $w$.
We observe that $F_0$ is unique in $ W_k $, but not in $\CC^{\ell+1}$. Anyhow,
the $k$-th entry of $F$ is a polynomial of degree $w$, and all the other entries are of
 lower degrees because the leading coefficient $F_w$ is always a multiple of $e_k$ .

\smallskip
 In this way, we have obtained the following results. In the first one we fix the eigenvalue $\lambda=-n(n+2)$ with $n\in N_0$ while, in the second one we fix the degree $w$ of the polynomial $F$.

\begin{prop}\label{grado}
Let $n\in \NN_0$ and $\lambda=-n(n+2)$.
If $P$ is a polynomial solution of $DF= \lambda F$ of degree $w$, then $n-\ell \leq w \leq n$.\\
Conversely, for every $w\in \NN_0$ such that $n-\ell \leq w \leq n$, the equation $DF=\lambda F$ has a polynomial
solution of degree $w$.  Moreover, if $w=n-k$, $0\leq k \leq \ell$,
the leading coefficient of any polynomial solution of $DF=\lambda F$ is a multiple of $e_k$.
\end{prop}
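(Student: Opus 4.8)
The plan is to extract all three claims directly from the termination analysis of the hypergeometric recursion set up just above the statement. I would write a prospective solution analytic at $s=0$ as $F(s)=\sum_{i\ge0}F_is^i$ with $F_0=F(0)$ and $F_{i+1}=(B+i)^{-1}M_iF_i$, where $M_i=i(C+i-1)-\Lambda_0+\lambda$; recall that each $B+i$ is invertible since its eigenvalues $r+\tfrac32+i$ are positive, so every polynomial solution, being analytic at $0$, is of this form and its coefficients obey the recursion. Because $F_i=0$ forces all later coefficients to vanish, $F$ is a polynomial of degree exactly $w$ if and only if $F_w\neq0$ and $F_{w+1}=0$, and the latter is equivalent to $M_wF_w=0$. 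Hence a polynomial solution of degree $w$ produces a nonzero vector $F_w\in\ker M_w$, so $M_w$ must be singular.

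Now $M_w=\sum_{j=0}^\ell\big((j+w)(j+w+2)+\lambda\big)E_{jj}$ is diagonal, so it is singular exactly when $(j+w)(j+w+2)=n(n+2)$ for some $0\le j\le\ell$. Since $x\mapsto x(x+2)$ is strictly increasing on $[0,\infty)$ and $j+w\ge0$, this happens if and only if $j+w=n$, which forces $0\le n-w\le\ell$, i.e. $n-\ell\le w\le n$; this is the first assertion. The same computation shows that $\ker M_w=\CC\,e_{n-w}$ is one dimensional, so writing $w=n-k$ we get $\ker M_w=\CC\,e_k$, and since $F_w$ lies in this kernel the leading coefficient of any degree-$w$ polynomial solution is a multiple of $e_k$, which is the third assertion.

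For the converse I would fix $w$ with $n-\ell\le w\le n$, set $k=n-w$ so that $0\le k\le\ell$ and $\lambda=\lambda_w(k)$, and produce a vector $F_0$ with $F_w=[B;C;-\Lambda_0+\lambda]_wF_0=e_k$; then $M_wF_w=M_we_k=0$ gives $F_{w+1}=0$ and a genuine polynomial solution of degree $w$ with nonzero leading coefficient $e_k$. The construction I have in mind lives on the subspace $W_k$ spanned by $e_0,\dots,e_k$: since $B+j$ is upper triangular it preserves $W_k$, and for $0\le j<w$ the diagonal matrix $M_j$ restricts to an invertible operator on $W_k$, because its $r$-th diagonal entry vanishes only when $r+j=n$, i.e. $r=k+(w-j)>k$, which cannot occur for $r\le k$. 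Consequently the iterated operator $[B;C;-\Lambda_0+\lambda]_w=(B+w-1)^{-1}M_{w-1}\cdots B^{-1}M_0$ is an invertible endomorphism of $W_k$, so the required $F_0\in W_k$ exists and is unique in $W_k$, though not in all of $\CC^{\ell+1}$.

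The routine ingredients are the diagonalization of $M_w$ and the monotonicity of $x(x+2)$; the step I expect to be the main obstacle is the converse, where it is not enough to know that $M_w$ is singular—one must guarantee that the iterated operator actually reaches the one dimensional kernel $\CC\,e_k$. This is exactly what the cooperation between the upper-triangularity of the factors $B+j$ and the precise degeneration index $r=k+(w-j)$ of each $M_j$ secures, by singling out $W_k$ as an invariant subspace on which every factor is invertible.
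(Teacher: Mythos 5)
Your proof is correct and follows essentially the same route as the paper: the coefficient recursion $F_{i+1}=(B+i)^{-1}M_iF_i$, the singularity analysis of the diagonal matrix $M_w$ forcing $w+k=n$ and $\ker M_w=\CC e_k$, and the converse via the $(B+j)^{-1}$-invariant subspace $W_k=\operatorname{span}\{e_0,\dots,e_k\}$ on which each $M_j$, $j<w$, is invertible. Your extra touches (monotonicity of $x\mapsto x(x+2)$, positivity of the eigenvalues of $B+i$, and the equivalence of $F_{w+1}=0$ with $M_wF_w=0$) only make explicit steps the paper leaves implicit.
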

\begin{proof}
From \eqref{w=n-k} we have that there exists a polynomial solution of degree $w$ if and only if $w=n-k$, with $0\leq k\leq \ell$.
In such a case, we have proved that there exists  $F_0\in \CC^{\ell+1}$ such that  \eqref{condition} holds and we have that $F_w$ is a multiple of $e_k$.
\end{proof}

\begin{prop}\label{corgrado}
Given $w \in \NN_0$ there exist exactly $\ell +1$ values of $\lambda$
such that $DF=\lambda F$ has a polynomial solution of degree $w$, more precisely
$$\lambda= \lambda_w(k)=-(k+w)(k+w+2), \quad 0\leq k\leq \ell.$$
For each $k$ the leading coefficient of any polynomial solution of $DF=\lambda_w(k) F$ is a multiple of $e_k$, the $k$-th vector in the canonical basis of $\CC^{\ell+1}$.

\end{prop}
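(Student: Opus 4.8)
The plan is to read off Proposition \ref{corgrado} from the analysis already carried out, simply interchanging the roles of the eigenvalue $\lambda$ and the degree $w$. Recall that by Proposition \ref{polynomialsol} the equation $DF=\lambda F$ admits a polynomial solution only when $\lambda=-n(n+2)$ for some $n\in\NN_0$, and that by the criterion \eqref{condition} a solution of degree exactly $w$ exists precisely when the diagonal matrix $M_w=\sum_{j=0}^{\ell}\big((j+w)(j+w+2)+\lambda\big)E_{jj}$ is singular and some nonzero vector of its kernel is attained as $F_w=[B;C;-\Lambda_0+\lambda]_wF_0$.

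First I would fix $w\in\NN_0$ and determine for which $\lambda$ the matrix $M_w$ is singular. Its $j$-th diagonal entry vanishes exactly when $\lambda=-(j+w)(j+w+2)$, so $M_w$ is singular if and only if $\lambda=\lambda_w(k)=-(k+w)(k+w+2)$ for some $0\le k\le\ell$. Since $k\mapsto(k+w)(k+w+2)$ is strictly increasing on $\NN_0$, these $\ell+1$ numbers are pairwise distinct, which already yields the count of exactly $\ell+1$ admissible values of $\lambda$. Equivalently, writing $\lambda=-n(n+2)$ with $n\in\NN_0$, the condition $\lambda=\lambda_w(k)$ is just $n=w+k$, i.e. $n-\ell\le w\le n$, so this is precisely the converse direction of Proposition \ref{grado} restated with $w$ held fixed.

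For the statement about the leading coefficient, I would note that when $\lambda=\lambda_w(k)$ the only vanishing diagonal entry of $M_w$ is the $k$-th one, so $\ker M_w=\CC e_k$; hence any polynomial solution of degree $w$ has its leading coefficient $F_w$ a nonzero multiple of $e_k$. This is exactly the ``moreover'' clause of Proposition \ref{grado} with $n=w+k$, so it can simply be invoked. Existence of an actual degree-$w$ solution for each such $\lambda_w(k)$ is guaranteed by the construction preceding Proposition \ref{grado}, where $F_0$ is chosen inside $W_k=\operatorname{span}\{e_0,\dots,e_k\}$ so that $[B;C;-\Lambda_0+\lambda]_wF_0=e_k$, using that $W_k$ is invariant under each upper triangular $(B+j)^{-1}$ and that $M_j$ restricts to an invertible map on $W_k$ for $j<w$.

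There is no genuine obstacle here: the entire content is a reindexing of Propositions \ref{polynomialsol} and \ref{grado}, now organized around a fixed degree rather than a fixed eigenvalue. The only point demanding a line of care is confirming that the $\ell+1$ values $\lambda_w(0),\dots,\lambda_w(\ell)$ are distinct, which produces the exact count; this follows immediately from the strict monotonicity of $k\mapsto(k+w)(k+w+2)$ on the nonnegative integers.
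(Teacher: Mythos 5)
Your proposal is correct and takes essentially the same approach as the paper: Proposition \ref{corgrado} is obtained there from exactly the analysis you cite (singularity of the diagonal matrix $M_w$, the kernel $\ker M_w=\CC e_k$, and the choice of $F_0\in W_k$ with $[B;C;-\Lambda_0+\lambda]_wF_0=e_k$), reorganized around a fixed degree $w$ instead of a fixed eigenvalue as in Proposition \ref{grado}. The one point you spell out explicitly, the distinctness of the $\ell+1$ values $\lambda_w(k)$, is already contained in Remark \ref{igualdadautoval}.
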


\smallskip

\subsection{Our sequence of matrix orthogonal polynomials.}\label{MVOP}
\

The matrix polynomials  $$\widetilde P_w(u)=\Psi(u)^{-1} P_w(u)$$ were introduced in \eqref{Pwtilde}.

\begin{prop}\label{columns}
The columns $\{\widetilde P_w^k\}_{k=0,\dots,\ell}$ of $\widetilde
P_w$ are  polynomials of degree $w$. Moreover,
\begin{align*}
  \deg \left(\widetilde P_w^k\right)_k = w\quad \text{ and } \quad
  \deg \left(\widetilde P_w^k\right)_j< w, \text{ for }  j\neq k.
\end{align*}
\end{prop}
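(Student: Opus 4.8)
The claim is about the columns of $\widetilde P_w = \Psi^{-1} P_w$, where $P_w$ collects the vector valued polynomials attached to the spherical functions $\Phi^{(w+\ell/2,-k+\ell/2)}_\ell$ for $k=0,\dots,\ell$. Let me recall the structure: $P_w$ has $jk$-entry $a_j^{w,k}\,{}_2F_1(\dots)$, and $\Psi = P_0$ is the upper triangular matrix with Gegenbauer polynomial entries. I need to show each column $\widetilde P_w^k$ is a polynomial of degree exactly $w$, with the $k$-th entry achieving degree $w$ and all other entries of strictly smaller degree.

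The plan is to identify the columns of $\widetilde P_w$ with the polynomial solutions $F$ of $DF=\lambda F$ studied in Proposition \ref{grado} and Proposition \ref{corgrado}. The key observation is that $\widetilde P_w$ satisfies $\widetilde D \widetilde P_w = \widetilde P_w \Lambda_w$ by Corollary \ref{tildePweigenfunction}, so its $k$-th column $\widetilde P_w^k$ is an eigenfunction of $\widetilde D$ with eigenvalue $\lambda_w(k)=-(w+k)(w+k+2)$. After the change of variables $s=(1-u)/2$ this becomes a polynomial solution of $DF=\lambda_w(k)F$ of the type analyzed in the previous subsection. Now $\lambda_w(k)=-n(n+2)$ with $n=w+k$, so by Proposition \ref{grado} any polynomial solution of $DF=\lambda_w(k)F$ has degree between $n-\ell=w+k-\ell$ and $n=w+k$, and moreover \emph{the degree is forced} once we know which eigenvalue appears. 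The crucial point is the last assertion of Proposition \ref{grado}/\ref{corgrado}: a polynomial solution of $DF=\lambda_w(k)F$ of degree $d$ has its leading coefficient a multiple of $e_{n-d}$; equivalently, the degree $d$ solution has the form where the top nonzero entry sits in position $n-d$.

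First I would verify that $\widetilde P_w^k$ is genuinely a polynomial (this follows from Remark \ref{psi-1}, since $\Psi^{-1}$ is a polynomial matrix and $P_w$ is polynomial) and that it solves $\widetilde D \widetilde P_w^k = \lambda_w(k)\widetilde P_w^k$ (Corollary \ref{tildePweigenfunction}, reading off the $k$-th column since $\Lambda_w$ is diagonal). Then I would pin down the degree. The candidate degree $w$ corresponds to $d=w$, i.e. $k' := n-d = (w+k)-w = k$. So among the family of polynomial eigenfunctions for eigenvalue $\lambda_w(k)$, I am claiming $\widetilde P_w^k$ is precisely the one of degree $w$, whose leading coefficient is a multiple of $e_k$. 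The final sentence of Proposition \ref{grado} gives exactly that the leading term lives in $\CC e_k$, which immediately yields $\deg(\widetilde P_w^k)_k = w$ and $\deg(\widetilde P_w^k)_j < w$ for $j\neq k$ — provided I can first establish $\deg \widetilde P_w^k = w$.

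The main obstacle is showing the degree is exactly $w$ rather than something smaller. Upper-triangularity of $\Psi^{-1}$ and the explicit leading behavior must be combined. I would argue as follows: since $\Psi$ is upper triangular with nonzero constant diagonal (Remark \ref{psi-1}), so is $\Psi^{-1}$; the $k$-th column of $P_w$ has its highest-degree entry in row $k$ coming from the $\,{}_2F_1$ factor $[P_w]_{kk}$ with parameter $n=w+k$, whose degree in $u$ as a polynomial is governed by how the hypergeometric series terminates. The degree of ${}_2F_1\left(\begin{smallmatrix}-w-k+j,\,w+k+j+2\\ j+3/2\end{smallmatrix};\tfrac{1-u}{2}\right)$ in $u$ equals $w+k-j$ for $0\le j\le w+k$, so in column $k$ the $j$-th entry of $P_w$ has degree $w+k-j$, maximized at $j=0$ giving degree $w+k$. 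Multiplying on the left by the upper triangular $\Psi^{-1}$ mixes rows upward, and one must check the cancellations produce a column whose entries have the stated degrees; the cleanest route is to invoke the eigenvalue/leading-coefficient dichotomy of Proposition \ref{grado} to conclude the degree equals $w$ without tracking cancellations by hand, since that proposition already shows any degree-$w$ solution has leading coefficient a multiple of $e_k$ and any polynomial solution has degree in the allowed range. I would then confirm $\widetilde P_w^k\neq 0$ has degree $w$ (not smaller) by checking its value/leading coefficient is nonzero, for instance because $P_w$ and $\Psi$ are invertible matrix polynomials (so $\widetilde P_w$ is invertible as a matrix polynomial and its columns are independent), forcing the degree-$w$ member of each eigenspace to actually appear. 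This invertibility-plus-eigenvalue-counting argument is what I expect to require the most care.
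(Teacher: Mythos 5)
Your opening moves match the paper's: polynomiality of $\widetilde P_w^k=\Psi^{-1}P_w^k$ via Remark \ref{psi-1}, the eigenequation $\widetilde D\widetilde P_w^k=\lambda_w(k)\widetilde P_w^k$ from Corollary \ref{tildePweigenfunction}, and the fact that a degree-$d$ polynomial solution has leading coefficient along $e_{n-d}$. But there is a genuine gap exactly where you sense trouble, and your two attempted fixes do not close it. Your claim that ``the degree is forced once we know which eigenvalue appears'' is false, and in fact contradicts Proposition \ref{grado} itself: for $\lambda=\lambda_w(k)=-n(n+2)$ with $n=w+k$, that proposition asserts the existence of a polynomial solution of $DF=\lambda F$ of \emph{every} degree $w'$ with $n-\ell\le w'\le n$, the degree-$w'$ one having leading coefficient along $e_{n-w'}$. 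So the $\widetilde D$-eigenequation only bounds $\deg\widetilde P_w^k$ to the interval $[w+k-\ell,\,w+k]$; nothing in the $\widetilde D$-theory selects $w'=w$. Your fallback via invertibility fails twice over: first, $\det P_w\not\equiv 0$ for $w>0$ is not available at this stage (the columns are linearly independent as functions, since the eigenvalue pairs $(\lambda_w(k),\mu_w(k))$ are distinct, but linear independence of columns does not imply the determinant is a nonzero polynomial — compare $(1,u)^t$ and $(u,u^2)^t$); second, even granting invertibility, each eigenvalue $\lambda_w(k)$ occurs in exactly one column, so there is no multiplicity or counting constraint that forces that particular column to be the degree-$w$ member of its eigenspace rather than, say, the degree-$(w+k)$ member.

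The missing idea is the second, first-order operator $\widetilde E$, which your proposal never brings to bear on the degree. The paper's proof sets $w'=\deg\widetilde P_w^k$, so that by Proposition \ref{corgrado} the leading coefficient $A_{w'}$ is a multiple of $e_{k'}$ with $\lambda_{w'}(k')=\lambda_w(k)$, i.e.\ $w-w'=k'-k$ by Remark \ref{igualdadautoval}. It then compares top-degree coefficients in $\widetilde E\widetilde P_w^k=\mu_w(k)\widetilde P_w^k$, where $\widetilde EF=(uR_2+R_1)F'+M_0F$ (Theorem \ref{hyp}); this yields
\begin{equation*}
\bigl(w'R_2+M_0-\mu_w(k)\bigr)A_{w'}=0,
\end{equation*}
so the $k'$-th diagonal entry must vanish:
\begin{equation*}
0=w'\bigl(\tfrac\ell2-k'\bigr)-w\bigl(\tfrac\ell2-k\bigr)+(k-k')\bigl(\tfrac\ell2+1\bigr).
\end{equation*}
Substituting $k'-k=w-w'$ turns this into $(w-w')$ times a strictly positive quantity equal to zero, forcing $w'=w$ and $k'=k$, whence the leading coefficient is a multiple of $e_k$ and both degree assertions follow. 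Without invoking the $\widetilde E$-eigenvalue (or some equivalent second constraint distinguishing the $\min\{\ell+1,w+k+1\}$ admissible degrees within a single $\widetilde D$-eigenspace), your argument cannot be completed.
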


\begin{proof}
The $k$-th column  of the matrix $\widetilde P_w=\Psi^{-1}P_w$ is the vector $\widetilde P_w^k=\Psi^{-1}P_w^k$, where $P_w^k$ is the $k$-th column of $P_w$.
From Corollary \ref{tildePweigenfunction} we have that $\widetilde P_w^k$  is a polynomial function that satisfies
$$\widetilde D \widetilde P_w^k =\lambda_w(k) \widetilde P_w^k, \qquad \widetilde E \widetilde P_w^k =\mu_w(k) \widetilde P_w^k,$$ for
$\lambda_w(k)=-(w+k)(w+k+2)$ and $\mu_w(k)=w(\tfrac\ell2-k)-k(\tfrac\ell2+1).$

If $w'$ denotes the degree of $\widetilde P_w^k $,  then
we have that $w+k-\ell\leq w'\leq w+k$ (see  Proposition \ref{grado}).
Hence we  write $$\widetilde P_{w}^{k}=\sum_{j=0}^{w'}  A_j u^j \qquad \text{ with } A_j\in\CC^{\ell+1}.$$

Moreover, from Proposition \ref{corgrado} we have that the corresponding eigenvalue of $D$ should be equal to $\lambda_{w'}(k') = -(w'+k')(w'+k'+2) $, with $0\leq k' \leq \ell$, and the  leading coefficient $A_{w'}$ has all its entries equal to zero, except for the $k'$-th one.
From Remark \ref{igualdadautoval} we obtain that
$$w-w'= k'-k.$$

On the other hand, $\widetilde P_w^k$ satisfies $\widetilde E \widetilde P_w^k =\mu_w(k) \widetilde P_w^k$, where
  $$\widetilde E F= (uR_2 + R_1)  F'+ M_0 F $$
  is the differential operator given in Theorem \ref{hyp}.
Then, the coefficients of the polynomials $\widetilde P_w^k$ satisfy
$$\big (j R_2 + M_0-\mu_w(k) \big) A_j  +(j+1) R_1 A_{j+1} =0,\qquad
\text{ for } 0\leq j \leq w',
$$
denoting   $A_{w'+1}=0$. In particular, for $j=w'$ we have
\begin{equation}\label{aux2}
\big({w'} R_2  +M_0- \mu_w (k)\big ) A_{w'}=0.
\end{equation}
From Theorem \ref{hyp} we have
\begin{align*}
{w'} R_2  +M_0- \mu_w (k)I &=\sum_{j=0}^\ell \big( w'(\tfrac \ell 2 -j)-j(\tfrac \ell 2 +1)-\mu_w(k) \big) E_{jj}\\
& = \sum_{j=0}^\ell \big( w'(\tfrac \ell 2 -j)- w(\tfrac \ell 2 -k)+(k-j)(\tfrac \ell 2 +1) \big) E_{jj}.
\end{align*}
 From equation \eqref{aux2} we have that the $k'$-th entry of the matrix ${w'} R_2  +M_0- \mu_w (k) I$ must be zero, then
$$0= w'(\tfrac \ell 2 -k')- w(\tfrac \ell 2 -k)+(k-k')(\tfrac \ell 2 +1).$$
Since $w-w'= k'-k$, we have
$ 0=(w-w')(1+k+w)$,
which implies that $w'=w$ and $k'=k$.

Therefore, $\widetilde P_w^k$ is a polynomial of degree $w$ and the only non zero entry of the leading coefficient of $\widetilde P_w^k$ is the $k$-th one.
\end{proof}

\subsection{The Inner Product} \label{the inner product}
\

Given a finite dimensional irreducible representation
$\pi=\pi_{\ell}$ of $K$ in the vector space $V_\pi$, let
$(C(G)\otimes \End (V_\pi))^{K\times K}$ be the space of all
continuous functions $\Phi:G\longrightarrow \End(V_\pi)$  such that
$\Phi(k_1gk_2)=\pi(k_1)\Phi(g)\pi(k_2)$ for all $g\in G$,
$k_1,k_2\in K$. Let us equip $V_\pi$ with an inner product such that
$\pi(k)$ becomes unitary for all $k\in K$. Then, we introduce an
inner product in the vector space $(C(G)\otimes \End
(V_\pi))^{K\times K}$ by defining
\begin{equation}\label{pi}
\langle \Phi_1,\Phi_2 \rangle =\int_G \tr ( \Phi_1(g)\Phi_2(g)^*)\, dg\, ,
\end{equation}
where $dg$ denotes the Haar measure of $G$ normalized by $\int_G
dg=1$, and $\Phi_2(g)^*$ denotes the adjoint of $\Phi_2(g)$ with
respect to the inner product in $V_\pi$.

By using Schur's orthogonality relations for the unitary irreducible representations
of $G$, it follows that if $\Phi_1$ and $\Phi_2$ are non equivalent irreducible spherical functions, then
they are orthogonal with respect to the inner product $\langle\cdot ,\cdot\rangle$, i.e. $$\langle \Phi_1,\Phi_2 \rangle =0.$$
In particular, if $\Phi_1$ and $\Phi_2$ are two irreducible spherical
functions of type $\pi=\pi_\ell$, we write as above (see \eqref{defHg})  $\Phi_1=H_1\Phi_\pi$ and
$\Phi_2=H_2\Phi_\pi$
 and put $$H_1(u)=(h_0(u),\cdots, h_\ell(u))^t, \qquad H_2(u)=(f_0(u),\cdots, f_\ell(u))^t,$$
 as we did in Subsection \ref{reconstruccion}.

\begin{prop}\label{prodint}  If $\Phi_1, \Phi_2\in \left(C(G)\otimes \End (V_\pi)\right)^{K\times K}$ then
\begin{equation*}
\langle \Phi_1,\Phi_2 \rangle = \frac{2}{\pi}\int_{-1}^1
\sqrt{1-u^2}\sum_{j=0}^\ell {h_j(u)}\overline{f_j(u)}\, du=  \frac{2}{\pi}\int_{-1}^1
\sqrt{1-u^2} H_2^*(u) H_1(u) \, du.
\end{equation*}
\end{prop}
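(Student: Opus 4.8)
The plan is to reduce the matrix integral defining $\langle\Phi_1,\Phi_2\rangle$ in \eqref{pi} to a scalar one-variable integral in three steps: first strip off the auxiliary factor $\Phi_\pi$, then integrate out the $K$-orbit directions, and finally evaluate the integrand on an orbit representative using the simultaneous diagonalization of Proposition \ref{Hdiagonal}.

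First I would write $\Phi_i=H_i\Phi_\pi$ as in \eqref{defHg}, with $H_i(g)=\Phi_i(g)\Phi_\pi(g)^{-1}$. Since $\Phi_\pi(g)=\pi(a(g))$ with $a(g)\in\SO(3)$ and $\pi$ chosen unitary, $\Phi_\pi(g)$ is unitary for every $g$, so $\Phi_\pi(g)^*=\Phi_\pi(g)^{-1}$. Hence
\[
\Phi_1(g)\Phi_2(g)^*=H_1(g)\Phi_\pi(g)\Phi_\pi(g)^{-1}H_2(g)^*=H_1(g)H_2(g)^*,
\]
and the integrand becomes $\tr(\Phi_1(g)\Phi_2(g)^*)=\tr(H_1(g)H_2(g)^*)$. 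Here $\Phi_\pi$ is a spherical function of type $\pi$ with $\Phi_\pi(e)=I$, so $\Phi_\pi(k)=\pi(k)$ for $k\in K$, and the same computation as after \eqref{defHg} gives the covariance properties $H_i(gk)=H_i(g)$ and $H_i(kg)=\pi(k)H_i(g)\pi(k)^{-1}$ for general $\Phi_i$ in the bicovariant space. The first shows $\tr(H_1H_2^*)$ descends to $G/K=S^3$; the second, with $\pi(k)$ unitary, gives $\tr(H_1(kg)H_2(kg)^*)=\tr(\pi(k)H_1H_2^*\pi(k)^{-1})=\tr(H_1H_2^*)$, so the integrand is in fact bi-$K$-invariant, i.e. constant on each $K$-orbit in $S^3$.

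Next I would integrate. Using $G=KAK$ (Subsection \ref{GyK}), the $K$-orbits in $S^3$ are parameterized by $u=\cos\theta=x_4\in[-1,1]$ with representative $x_u=(\sqrt{1-u^2},0,0,u)$. For a bi-$K$-invariant function the Haar integral collapses to an integral over $A$ against the rank-one orbital density, which for $S^3=\SO(4)/\SO(3)$ is proportional to $\sin^2\theta\,d\theta$; equivalently, disintegrating the normalized $\SO(4)$-invariant measure over the orbit space and substituting $u=\cos\theta$ yields
\[
\int_G F\,dg=\frac{2}{\pi}\int_{-1}^1\sqrt{1-u^2}\,\bar F(u)\,du
\]
for every bi-$K$-invariant $F$, where $\bar F(u)$ is the common value of $F$ on the orbit $u$ and the constant $2/\pi$ comes from $\operatorname{vol}(S^3)=2\pi^2$, $\operatorname{area}(S^2)=4\pi$ together with $\sin^2\theta\,d\theta=\sqrt{1-u^2}\,du$. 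Pinning down this weight and normalizing constant is the only substantive point of the proof; I would settle it either from the standard $KAK$ integration formula or from this direct geometric computation.

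Finally I would evaluate $\bar F(u)=\tr(\widetilde H_1(u)\widetilde H_2(u)^*)$ with $\widetilde H_i(u)=H_i(x_u)$. By Proposition \ref{Hdiagonal} each $\widetilde H_i(u)$ is diagonal in the weight basis $\mathcal B=\{v_j\}$, with diagonal entries $h_j(u)$ and $f_j(u)$ respectively; since the $v_j$ are eigenvectors of the unitary operators $\pi(m_\theta)$ for distinct characters, they are mutually orthogonal, so after normalizing $\mathcal B$ to an orthonormal basis the adjoint $\widetilde H_2(u)^*$ is diagonal with entries $\overline{f_j(u)}$. Hence $\tr(\widetilde H_1\widetilde H_2^*)=\sum_{j=0}^\ell h_j(u)\overline{f_j(u)}=H_2^*(u)H_1(u)$, and substituting into the integration formula gives exactly the two claimed expressions. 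The main obstacle is purely the determination of the orbital density $\sqrt{1-u^2}$ and the constant $2/\pi$; the rest is formal.
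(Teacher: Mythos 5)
Your proposal is correct and follows essentially the same route as the paper: both arguments strip off $\Phi_\pi$ using its unitarity, reduce the Haar integral of the bi-$K$-invariant function $g\mapsto\tr\left(\Phi_1(g)\Phi_2(g)^*\right)$ to a one-variable integral against the density $\sin^2 t\,dt$, substitute $u=\cos t$, and evaluate the integrand via the simultaneous diagonalization of Proposition \ref{Hdiagonal} (whose proof uses only the covariance $H(kg)=\pi(k)H(g)\pi(k)^{-1}$, so it applies to the whole bicovariant space, as both you and the paper implicitly assume). The only difference is how the weight and constant are pinned down: the paper invokes Helgason's $KAK$ integration formula with $\delta_*(a(t))=\sin^2 t$ and determines $c_*=\pi^{-1}$ by evaluating on $\Phi_1=\Phi_2=I$, whereas you obtain $\sqrt{1-u^2}$ and $2/\pi$ directly by disintegrating the normalized invariant measure over the orbit $2$-spheres of area $4\pi\sin^2\theta$, using $\operatorname{vol}(S^3)=2\pi^2$ --- both computations are valid and give the same result.
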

\begin{proof}
Let us consider the element $E_1=E_{14}-E_{41}\in \lieg$. Then, as
$\so(4)_\CC\simeq \mathfrak{sl}(2,\CC)\oplus\mathfrak{sl}(2,\CC)$,
$\text{ad }E_1$ has $0$ and $\pm i$ as eigenvalues with multiplicity 2.

Let $A=\exp \RR E_1$ be the Lie subgroup of $G$ of all elements of
the form	
$$a(t)= \exp tE_1=\left(\begin{matrix} \cos t&0& 0&
\sin t\\ 0&1&0&0	\\ 0&0&1&0\\ -\sin t&0& 0&\cos
t\end{matrix}\right)\, ,	\qquad t\in \RR.$$

Now Theorem 5.10, page 190 in \cite{He} establishes that for every
$f\in C(G/K)$ and a suitable $c_*$
$$\int_{G/K} f(gK)\,dg_K=c_*\int_{K/M}\Big(\int_{-\pi}^{\pi}
\delta_*(a(t))f(ka(t)K)\,dt\Big)\,dk_M\,,$$ where the function $\delta_*:A\longrightarrow \RR $ is defined by
$$\delta_*(a(t))=\prod_{\nu\in\Sigma^+} |\sin it \nu(E_1)|,$$ and  $dg_K$ and $dk_M$
are respectively the left invariant measures on $G/K$ and
 $K/M$ normalized by $\int_{G/K} dg_K=\int_{K/M} dk_M=1$.
 Recall that $M$ was introduced in \eqref{Msubgrupo} and coincides with  the centralizer of $A$ in $K$.
  In our case we have $\delta_*(a(t))=\sin^2t $.

Since the function $g\mapsto \tr(\Phi_1(g)\Phi_2(g)^*)$ is invariant
under left and right multiplication by elements in $K$, we have
\begin{align}\label{pi2}
\langle \Phi_1,\Phi_2\rangle = c_* \int_{-\pi}^{\pi} \sin ^2t
\,\tr\left( \Phi_1(a(t))\Phi_2(a(t))^*\right)\,dt.
\end{align}
Also, for each
$t\in [-\pi,0]$, we have that $(I-2(E_{11}+E_{22}))a(t)(I-2(E_{11}+E_{22}))=a(-t)$, with
$I-2(E_{11}+E_{22})$ in $K$. Then we have
\begin{align*}
\langle \Phi_1,\Phi_2\rangle = 2c_* \int_{0}^{\pi} \sin ^2t
\,\tr\left( \Phi_1(a(t))\Phi_2(a(t))^*\right)\,dt.
\end{align*}

By the definition of the auxiliary function $\Phi_\pi(g)$ (see Subsection \ref{auxiliar}), we have that
$\Phi_1(a(t))\Phi_2(a(t))^*=H_1(a(t))H_2(a(t))^*$. Therefore, making the change of variables $\cos(t)=u$, we have
$$\langle \Phi_1,\Phi_2\rangle = 2c_* \int_{-1}^{1} \sqrt{1-u²}\sum_{j=0}^\ell
 {h_j}(u)\overline{{f_j(}u)}du.$$

\noindent To find the value of $c_*$ we consider the trivial case $\Phi_1=\Phi_2=I$ in (\ref{pi}) and (\ref{pi2}). Therefore, we obtain
$$\ell+1= c_* \int_{-\pi}^{\pi} \sin ^2t
\,(\ell+1)\,dt.$$
Then, we get $c_*=\pi^{-1}$ and the proposition follows.
\end{proof}

In Theorems \ref{puntos} and \ref{hyp} we conjugate the differential operators $D$ and $E$ to  hypergeometric operators $\widetilde D$ and $\widetilde E$ given by
$$ \widetilde D=(UT(u)\Psi(u))^{-1} D  (UT(u)\Psi(u))\quad \text{and} \quad \widetilde E=(UT(u)\Psi(u))^{-1} E  (UT(u)\Psi(u)).$$

\noindent Therefore, in terms of the functions
$$\widetilde P_1=(U T(u) \Psi(u))^{-1}H_1 \text{ and } \widetilde P_2=(U T(u) \Psi(u))^{-1}H_2,$$
we have
\begin{equation*}
\langle \widetilde P_1,\widetilde P_2\rangle_W=\int_{-1}^1 {\widetilde P}_2(u)^*\,W(u)\widetilde P_1(u)\,du,
\end{equation*}
where the weight matrix $W(u)$ is given by
\begin{equation}\label{LDU}
       W(u)=\frac2\pi\sqrt{1-u^2}\Psi^*(u)T^*(u)U^*U T(u) \Psi(u).
      \end{equation}
\color{black} From \eqref{Hahnortogon} we notice that $U^*U$ is a diagonal matrix by the orthogonality of the Hahn polynomials, 
\color{black} precisely
$$U^*U=\sum_{j=0}^\ell  \frac{(j+\ell+1)!(\ell-j)!}{(2j+1)\,\ell!\, \color{black}\ell!}\, E_{jj}.$$
\color {blue} Since $T(u)=\sum_{j=0}^\ell (1-u^2)^{j/2}$ we have
 $$T^*(u)U^*U T(u)=\sum_{j=0}^\ell  \frac{(j+\ell+1)!(\ell-j)!}{(2j+1)\,\ell!\, \ell!} (1-u^2)^j\, E_{jj}.$$

\color{black}
\remark  Since $\Psi(u)$ is an upper triangular matrix, see \eqref{psi}, we observe that the decomposition
 \eqref{LDU} easily leads to the LDU-decomposition of the weight matrix $W(u)$.



\smallskip
 We recall that $\Psi(u)$ is polynomial in $u$, that $U$ is a constant matrix and that $T(u)=\sum_{j=0}^\ell (1-u^2)^{j/2} E_{jj}$. Then  it follows that $W(u)$ is a continuous function on the closed interval $[-1,1]$. Thus, $W$ is a weight matrix on $[-1,1]$ with finite moments of all orders.

\color{black}
One may be interested in the reducibility of the weight:
\begin{definition}
A $n\times n$  matrix weight function $W$ supported on the interval $(a,b)\subset \RR$, reduces to a smaller size if there exists a $n\times n$ matrix $R$ such that
$$W(u)=R\left(\begin{matrix}W_1&0\\0&W_2
\end{matrix} \right)R^*,\qquad \text{for all } u\in (a,b),$$
with $W_1$ and $W_2$ weight matrices of lower size.
\end{definition}
\begin{prop}
A $n\times n$ matrix weight function $W$ supported on the interval $(a,b)\subset \RR$ reduces to a smaller size if the commutant
$$\left\{A\in M_{n\times n}: AW(u)=W(u)A, \quad \text{\rm for all } u\in(a,b) \right\}$$
is not trivial.
\end{prop}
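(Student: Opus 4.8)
The plan is to use the observation that the commutant of $W$ is a $*$-algebra and then to invoke the spectral theorem. First I would record that since each $W(u)$ is a weight matrix it is Hermitian, $W(u)^*=W(u)$. Taking adjoints in the defining relation $AW(u)=W(u)A$ therefore yields $W(u)A^*=A^*W(u)$, so the commutant
$$\mathcal A=\{A\in M_{n\times n}: AW(u)=W(u)A \ \text{ for all } u\in(a,b)\}$$
is closed under $A\mapsto A^*$. Since it is also a unital subalgebra (closed under sums, scalar multiples and products, and containing $I$), it is a unital $*$-subalgebra of $M_{n\times n}$.

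The hypothesis that the commutant is not trivial means $\mathcal A\neq\CC I$. Decomposing any $A\in\mathcal A$ into its Hermitian and anti-Hermitian parts, $A=H_1+iH_2$ with $H_1=\tfrac12(A+A^*)$ and $H_2=\tfrac1{2i}(A-A^*)$, both $H_1,H_2$ lie in $\mathcal A$ by $*$-closedness. If every Hermitian element of $\mathcal A$ were a scalar, then every element would be, contradicting $\mathcal A\neq\CC I$; hence $\mathcal A$ contains a Hermitian matrix $B$ that is not a multiple of the identity. Then $B$ has at least two distinct real eigenvalues, and the spectral theorem gives $B=\sum_i\lambda_i P_i$ with the $P_i$ the orthogonal projections onto its eigenspaces. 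Each $P_i$ is a polynomial in $B$ by Lagrange interpolation, so $P_i\in\mathcal A$, and each is a self-adjoint projection with $0<\operatorname{rank}P_i<n$.

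Next I would fix one such projection $P=P_i$ and convert the commutation relation into a block decomposition. Because $P$ is an orthogonal projection commuting with every $W(u)$, the orthogonal subspaces $V_1=\operatorname{Im}P$ and $V_2=\ker P$ are each invariant under all $W(u)$. Choosing a unitary $R$ whose first $k=\operatorname{rank}P$ columns form an orthonormal basis of $V_1$ and whose remaining columns form one of $V_2$, we obtain $R^*PR=\operatorname{diag}(I_k,0)$, and $R^*W(u)R$ commutes with $\operatorname{diag}(I_k,0)$. This forces
$$R^*W(u)R=\begin{pmatrix}W_1(u)&0\\0&W_2(u)\end{pmatrix},\qquad\text{i.e.}\qquad W(u)=R\begin{pmatrix}W_1(u)&0\\0&W_2(u)\end{pmatrix}R^*.$$
Since $W(u)$ is positive definite and $R$ is unitary, both diagonal blocks are positive definite, and their entries, being linear combinations of the entries of $W$, inherit finite moments of all orders; hence $W_1$ and $W_2$ are weight matrices of sizes $k$ and $n-k$, so $W$ reduces to a smaller size.

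The spectral-theorem bookkeeping is routine; the one step that genuinely needs the $*$-structure is the passage from a nontrivial commutant to a \emph{Hermitian} nonscalar element, since only a self-adjoint commuting matrix yields an orthogonal reducing projection. Without $*$-closedness a nonscalar element of $\mathcal A$ need not provide the orthogonal splitting required by the definition of reducibility, so establishing that $\mathcal A=\mathcal A^*$ is the crux of the argument.
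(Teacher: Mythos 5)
Your proof is correct and follows essentially the same route as the paper: closure of the commutant under $A\mapsto A^*$, extraction of a nonscalar Hermitian element, the spectral theorem producing a nontrivial orthogonal projection $P$ in the commutant (as a polynomial in $B$), and conjugation by a unitary $R$ adapted to $\operatorname{Im}P$ and $\ker P$ to obtain the block-diagonal form $W=R\,\mathrm{diag}(W_1,W_2)\,R^*$. Your Hermitian/anti-Hermitian decomposition is just a cleaner packaging of the paper's case split (the paper takes $B=A+A^*$ unless that is scalar, in which case it takes a suitable Hermitian multiple of $iA$), and your remarks on positivity and finite moments of the blocks only make explicit what the paper leaves implicit.
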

\begin{proof}
If $A$ is in the commutant also is $A^*$, assume that $A$ is not a scalar multiple of the identity $I$. Then if $A+A^*=cI$, for any $c\in \RR$, we take $B=iA+ic/2\, I$, otherwise we take $B=A+A^*$, having then that $B$ is not a scalar multiple of the identity in the commutant with $B=B^*$. Hence, by the spectral theorem we have a projection $P\neq I$ which is a polynomial of $B$ and then $P$ and $Q=I-P$ are in the commutant. Therefore $W(u)=(P+ Q)W(u)(P+ Q)=PW(u)P+ QW(u)Q$, for all $u\in(a,b)$.

Let us define the matrix $R$ as the matrix whose first columns are the vectors of an orthogonal basis of $P(\CC^n)$ and the last  columns are the vectors of an orthogonal basis of $Q(\CC^n)$. Therefore, for all $u\in (a,b)$ we have
$$R^*W(u)R=R^*PW(u)PR+ R^*QW(u)QR=\left(\begin{smallmatrix}I&0\\0&0
\end{smallmatrix} \right)
W(u)
\left(\begin{smallmatrix}I&0\\0&0
\end{smallmatrix} \right)
+
\left(\begin{smallmatrix}0&0\\0&I
\end{smallmatrix} \right)
W(u)\left(\begin{smallmatrix}0&0\\0&I
\end{smallmatrix} \right).$$
Hence $W$ reduces to a smaller size.
\end{proof}

In \cite{KPR12a} it is proved that the commutant of our weight is a $2$-dimensional vector space (see Proposition 5.5), therefore $W$ reduces to a smaller size. See Theorem 6.5 in \cite{KPR12a}.

\color{black}
\smallskip
Consider now the sequence of matrix polynomials $(\widetilde P_w(u))_{w\geq0}$ introduced in \eqref{Pwtilde}.
The $k$-th column of  $\widetilde P_w(u)$  is given by a vector
$\widetilde P_w^k(u)$ associated to the irreducible spherical function of type $\pi_\ell$
$$\Phi^{({w}+\ell/2,-{k}+\ell/2)}_{\ell}.$$

\noindent Therefore
$\widetilde P_w^k$ and $\widetilde P_{w'}^{k'}$ are orthogonal with respect to $W$, i.e.
\begin{equation}\label{columnortog}
  \langle \widetilde P_w^k, \widetilde P_{w'}^{k'} \rangle_W=0 \qquad  \text{if  $(w,k)\neq(w',k')$}.
\end{equation}
In other words, this sequence of matrix-valued polynomials squarely fits within Krein's theory, and we obtain the following theorem.

\begin{thm}
 The matrix polynomial functions  $\widetilde P_w$, $ w\geq0$, form a sequence of orthogonal
polynomials with respect to $ W$, which are eigenfunctions of the symmetric differential operators $\widetilde D$ and $\widetilde E$ appearing in Theorem \ref{hyp}.
Moreover,
 $$ \widetilde D \widetilde P_w =\widetilde P_w \Lambda_w \qquad \text{and }\qquad
\widetilde E \widetilde P_w =\widetilde P_w M_w,$$
where $\Lambda_w= \sum_{k=0}^\ell\lambda_w(k)E_{kk}$, and  $M_w= \sum_{k=0}^\ell \mu_w(k)E_{kk}$, with
\begin{align*}
\lambda_w(k)=-(w+k)(w+k+2) \qquad \text{and}\qquad \mu_w(k)=w(\tfrac\ell2-k)-k(\tfrac\ell2+1).
\end{align*}
\end{thm}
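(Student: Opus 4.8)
The plan is to assemble the theorem from the results already in hand, treating the orthogonality, the eigenfunction relations, and the symmetry in turn; nearly all of the analytic work has been carried out in Proposition \ref{columns}, Corollary \ref{tildePweigenfunction} and equation \eqref{columnortog}, so what remains is to organize it.

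First I would record that $(\widetilde P_w)_{w\ge 0}$ is a genuine sequence of matrix orthogonal polynomials. By Proposition \ref{columns} each $\widetilde P_w$ is a matrix polynomial of degree $w$ whose coefficient of $u^w$ is diagonal with nonzero diagonal entries, hence nonsingular; consequently the $\widetilde P_w$ span the module of all matrix polynomials under right multiplication by $\End(V_\pi)$. For the orthogonality I would observe that the $(k',k)$-entry of the matrix $\langle \widetilde P_w,\widetilde P_{w'}\rangle_W=\int_{-1}^1 \widetilde P_{w'}(u)^*W(u)\widetilde P_w(u)\,du$ is exactly $\langle \widetilde P_w^{k},\widetilde P_{w'}^{k'}\rangle_W$, which by \eqref{columnortog} vanishes unless $(w,k)=(w',k')$. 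Hence $\langle \widetilde P_w,\widetilde P_{w'}\rangle_W=0$ for $w\ne w'$, while $\langle \widetilde P_w,\widetilde P_w\rangle_W$ is a diagonal matrix with positive diagonal entries (the squared norms of the columns). The eigenfunction relations $\widetilde D\widetilde P_w=\widetilde P_w\Lambda_w$ and $\widetilde E\widetilde P_w=\widetilde P_w M_w$, with the asserted real diagonal eigenvalue matrices, are precisely the content of Corollary \ref{tildePweigenfunction}.

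The remaining point is the symmetry of $\widetilde D$ and $\widetilde E$ with respect to $W$, and here the key observation is that the diagonality of $\langle\widetilde P_w,\widetilde P_w\rangle_W$ makes it commute with the diagonal eigenvalue matrices. Since $\widetilde D$ has matrix coefficients acting on the left (Theorem \ref{hyp}), it commutes with right multiplication by constant matrices, so $\widetilde D(\widetilde P_w A)=(\widetilde D\widetilde P_w)A$. Using this together with $\Lambda_{w'}^*=\Lambda_{w'}$, I would compute on basis elements
$$\langle \widetilde D\widetilde P_w,\widetilde P_{w'}\rangle_W=\langle \widetilde P_w,\widetilde P_{w'}\rangle_W\,\Lambda_w,\qquad \langle \widetilde P_w,\widetilde D\widetilde P_{w'}\rangle_W=\Lambda_{w'}\,\langle \widetilde P_w,\widetilde P_{w'}\rangle_W.$$
For $w\ne w'$ both sides vanish by orthogonality, and for $w=w'$ both equal $\langle\widetilde P_w,\widetilde P_w\rangle_W\Lambda_w$ because two diagonal matrices commute; thus the two expressions agree for all $w,w'$. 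Writing arbitrary matrix polynomials as $\sum_w \widetilde P_w A_w$ and extending by sesquilinearity gives $\langle\widetilde D F_1,F_2\rangle_W=\langle F_1,\widetilde D F_2\rangle_W$ for all polynomials, i.e. $\widetilde D$ is symmetric; the identical argument with $M_w$ in place of $\Lambda_w$ shows $\widetilde E$ is symmetric.

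The main obstacle is conceptual rather than computational: one must notice that column orthogonality forces $\langle\widetilde P_w,\widetilde P_w\rangle_W$ to be diagonal, which is exactly what is needed for it to commute with $\Lambda_w$ and $M_w$ and thereby yield symmetry from the eigenfunction relations alone. Alternatively, symmetry could be deduced from the fact that $D$ and $E$ descend from the $G$-invariant Casimir operators $\Delta_1,\Delta_2$, which are symmetric for the $L^2(G)$ inner product that, by Proposition \ref{prodint}, is transported to $\langle\cdot,\cdot\rangle_W$ via \eqref{LDU}. Everything else is bookkeeping built on the earlier propositions.
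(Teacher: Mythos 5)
Your proposal is correct and takes essentially the same route as the paper's proof: matrix orthogonality is deduced entrywise from the column orthogonality \eqref{columnortog} together with the nonsingular diagonal leading coefficients from Proposition \ref{columns}, the eigenfunction relations are exactly Corollary \ref{tildePweigenfunction}, and symmetry follows from the same computation $\langle\widetilde D\widetilde P_w,\widetilde P_{w'}\rangle_W=\delta_{w,w'}\langle\widetilde P_w,\widetilde P_w\rangle_W\,\Lambda_w=\langle\widetilde P_w,\widetilde D\widetilde P_{w'}\rangle_W$, using that $\Lambda_w$ and $M_w$ are real diagonal and hence commute with the diagonal Gram matrix. Your explicit extension by sesquilinearity to arbitrary matrix polynomials merely makes precise a step the paper leaves implicit.
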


\begin{proof}
From Proposition \ref{columns} we obtain that each  column of $\widetilde P_w$ is a polynomial function of degree $w$.
Moreover, $\widetilde P_w$ is a polynomial  whose  leading coefficient is
a  nonsingular diagonal matrix.

Given $w$ and $w'$, non negative integers, by using \eqref{columnortog} we have
\begin{align*}
\langle \widetilde P_w,\widetilde P_{w'} \rangle _W&=
\int_{-1}^1 \widetilde P_w(u)^*W(u) \widetilde P_{w'}(u) \, du\\
& =\sum_{k,k'=0}^\ell \int_{-1}^1 \Big( \widetilde P_w^k(u)^*W(u)
\widetilde P_{w'}^{k'}(u) \, du \Big)\, E_{k,k'}\\
& =\sum_{k,k'=0}^\ell   \delta_{w,w'}\delta_{k,k'}   \Big(\int_{-1}^1 \widetilde P_w^k(u)^*W(u) \widetilde P_{w'}^{k'}(u)
\, du \Big)\, E_{k,k'}\\
& =\delta_{w,w'}\sum_{k=0}^\ell   \int_{-1}^1 \Big(\widetilde P_w^k(u)^*W(u) \widetilde P_{w'}^{k}(u) \, du,\Big) \, E_{k,k},
\end{align*}
which proves the orthogonality. Even more, it also shows us that $\langle \widetilde P_w,\widetilde P_{w} \rangle _W$ is a diagonal matrix. Now, thanks to Corollary \ref{tildePweigenfunction} it only remains to prove that the operators $\widetilde D$ and $\widetilde E$ are symmetric with respect to $W$.

Making a few simple computations we have that
$$\langle\widetilde D \widetilde P_w,\widetilde P_{w'}\rangle=\delta_{w,w'}\langle \widetilde P_w,\widetilde P_{w'}\rangle \Lambda_w
  =\delta_{w,w'} \Lambda_w^* \langle \widetilde P_w,\widetilde P_{w'}\rangle =\langle \widetilde P_w,\widetilde D\widetilde P_{w'}\rangle,$$
for every $w,w'\in\NN_0$, because $\Lambda_w$ is real and diagonal.
This concludes the proof of the theorem.
\end{proof}



\color{black}
\begin{remark}\label{comparacion}
Following the  suggestion of the referee, we include here some comparisons between the results in the last sections of this paper  and those in \cite{KPR12b}.

 Our differential operators $\widetilde D$ and $\widetilde  E$ introduced in Theorem \ref{hyp} are related to the operators $\tilde D$ and $\tilde E$ introduced in Theorem 3.1 of \cite{KPR12b}.
 Unfortunately the expression of  $\tilde E$ in Theorem 3.1 of \cite{KPR12b} is wrong. By starting from the differential operators $D$ and $E$ in Theorem 4.1 of \cite{KPR12b} and changing the variables we obtain the following relations
$$\widetilde D-2\widetilde E= \big ( \tilde D\big) ^t, \qquad -2\widetilde E=  \ell \big (\tilde E\big )^t + \ell( \ell+2)I. $$

On the other hand the operators $\mathcal D$ and $\mathcal E$, defined respectively  in Proposition 6.1 and (6.13) of \cite{KPR12b},  are related to our operators $\bar D$ and $\bar E$ defined in \eqref{HP} by
$$\bar D= \big(\mathcal D\big)^t-\ell(\ell+2)I, \qquad -2\bar E= C\big (\ell \mathcal E + \ell(\ell+2)I\big )^t C^{-1},$$
where $C= \sum_{j=0}^\ell \tfrac{(-2i)^j j! j!}{(2j)!} E_{jj}.$ 

Our weight $W(u)$ introduced in \eqref{LDU} is equal to the weight $W(x)$ in  Theorem 2.1 of  \cite{KPR12b}.
The function $L(x)$ given there is related to our function $\Psi(u)$ (see \eqref{psi}) by
$$\Psi(u)= C L(u)^t.$$
Finally the matrix polynomial function $P_w(u)$ given in \eqref{pes} are related to the polynomial $\mathcal R_n$ in \cite{KPR12b} by
$$P_w(u)=C\mathcal R_w^t(\tfrac{1-u}{2}).$$
\end{remark}

\color{black}

\color{black}

\section{Appendix}
The purpose of this section is to give the proofs of Propositions \ref{D3} and \ref{E3}.
\color{black}

\begin{propD3}
For any $H\in C^\infty( \RR^3)\otimes
\End(V_\pi)$ we have
\begin{align*}
D(H)& (y) =  (1+\left\|y\right\|^2)\Big((y_1^2+1) H_{y_1y_1}+(y_2^2+1) H_{y_2y_2}+(y_3^2+1) H_{y_3y_3}\\
 &+ 2(y_1y_2 H_{y_1y_2}+y_2y_3 H_{y_2y_3}+y_1y_3 H_{y_1y_3}) + 2(y_1 H_{y_1}+y_2 H_{y_2}+y_3 H_{y_3})
\Big).
\end{align*}
\end{propD3}

\begin{proof}
From \eqref{Ddefuniv} we have $D(H)=Y_4^2(H)+Y_5^2(H)+Y_6^2(H).$
We need to give the expressions of this operators in the coordinate system
 $p:(S^3)^{+}\longrightarrow \RR^3$  
\begin{equation*}
p(x)=\left(\frac{x_1}{x_4},\frac{x_2}{x_4},\frac{x_3}{x_4}\right)=(y_1,y_2,y_3).
 \end{equation*}

\noindent We have
$$Y^2(H)(g)= \frac{d}{ds}\,\frac{d}{dt} H\Big(\tilde p\big(g(\exp (s+t)Y)\big)\Big) _{\mid_{s=t=0}},$$
where $\tilde p:G\longrightarrow \RR^3$, $\tilde p(g)=p(gK)$.

From Subsection 2.3 we have  $Y_4=E_{1,4}-E_{4,1}$, $Y_5=E_{2,4}-E_{4,2}$ and $Y_6=E_{3,4}-E_{4,3}$  then
\begin{align*}
\tilde p(g(\exp (s+t)Y_4))&=(u_1(s+t)\, , \, v_1(s+t)\, , \, w_1(s+t)),\\
\tilde p(g(\exp (s+t)Y_5))&=(u_2(s+t)\, , \, v_2(s+t)\, , \, w_2(s+t)),\\
\tilde p(g(\exp (s+t)Y_6))&=(u_3(s+t)\, , \, v_3(s+t)\, , \, w_3(s+t)).
\end{align*}
where
$$u_j(s+t)=\frac{g_{1j}\sin(s+t)+g_{14}\cos(s+t)}{g_{4j}\sin(s+t)+g_{44}\cos(s+t)},\,
v_j(s+t)=\frac{g_{2j}\sin(s+t)+g_{24}\cos(s+t)}{g_{4j}\sin(s+t)+g_{44}\cos(s+t)},$$
$$w_j(s+t)=\frac{g_{3j}\sin(s+t)+g_{34}\cos(s+t)}{g_{4j}\sin(s+t)+g_{44}\cos(s+t)},$$
for $j=1,2,3$.

\noindent
By using the chain rule we have 
\begin{align*}
 D(H)(g)&= Y_4^2(H)(g)+ Y_5^2(H)(g)+ Y_6^2(H)(g) \\
 &=\sum_{j=1}^3 \left[ H_{y_1y_1}\,\frac{\partial u_j}{\partial s}\,\frac{\partial u_j}{\partial t}
 +H_{y_2y_2}\,\frac{\partial v_j}{\partial s}\,\frac{\partial v_j}{\partial t}
 +H_{y_3y_3}\,\frac{\partial w_j}{\partial s}\,\frac{\partial w_j}{\partial t}\right.\\
 &\quad +H_{y_1y_2}\Big(\frac{\partial u_j}{\partial s}\,\frac{\partial v_j}{\partial t}+ \frac{\partial u_j}{\partial t}\,\frac{\partial v_j}{\partial s} \Big)
 +H_{y_1y_3}\Big( \frac{\partial u_j}{\partial s}\,\frac{\partial w_j}{\partial t}+ \frac{\partial u_j}{\partial t}\,\frac{\partial w_j}{\partial s}\Big) \\
 &
\quad  +H_{y_2y_3}\Big(\frac{\partial v_j}{\partial s}\,\frac{\partial w_j}{\partial t} + \frac{\partial v_j}{\partial t}\,\frac{\partial w_j}{\partial s}\Big) 
 \left.+ H_{y_1}\,\frac{\partial^2 u_j}{\partial s\partial t}+H_{y_2}\,\frac{\partial^2 v_j}{\partial s\partial t}+H_{y_3}\,\frac{\partial^2 w_j}{\partial s\partial t}\right].
\end{align*}

We observe that
\begin{align*}
  \frac{\partial u_j}{\partial s}& =\frac{\partial u_j}{\partial t}= \frac{g_{1j}g_{44}-g_{14}g_{4j}}{g_{44}^2} \, , \qquad \quad
  \frac{\partial^2 u_j}{\partial s\partial t}= \frac{2g_{4j}(g_{14}g_{4j}-g_{1j}g_{44})}{g_{44}^3},
  \\
  \frac{\partial v_j}{\partial s}& =\frac{\partial v_j}{\partial t}= \frac{g_{2j}g_{44}-g_{24}g_{4j}}{g_{44}^2} \, , \qquad \quad
  \frac{\partial^2 v_j}{\partial s\partial t}= \frac{2g_{4j}(g_{24}g_{4j}-g_{2j}g_{44})}{g_{44}^3},
  \\
  \frac{\partial w_j}{\partial s}& =\frac{\partial w_j}{\partial t}= \frac{g_{3j}g_{44}-g_{34}g_{4j}}{g_{44}^2}\, , \qquad \quad
  \frac{\partial^2 w_j}{\partial s\partial t}= \frac{2g_{4j}(g_{34}g_{4j}-g_{3j}g_{44})}{g_{44}^3}.
\end{align*}
Now we observe that $y=(y_1,y_2,y_3)=\big(\frac{g_{14}}{g_{44}},\frac{g_{24}}{g_{44}},\frac{g_{34}}{g_{44}}\big)$ and recall that
$g=(g_{jk})$ is a matrix in $\SO(4)$, therefore its rows are orthonormal vectors.
 In particular we   have
 $$\frac 1{g_{44}^2}=1+y_1^2+y_2^2+y_3^2=1+\| y\|^2.$$
  Now the proposition follows after some straightforward computations.
\end{proof}

\begin{propE3}
For any $H\in C^\infty( \RR^3)\otimes
\End(V_\pi)$
we have
\begin{align*}
&E(H)(y)=H_{y_1}\dot\pi \left(\begin{smallmatrix} 0 &-y_2-y_1y_3 &-y_3+y_1y_2 \\y_2+y_1y_3 &0 &-1-y_1^2 \\ y_3-y_1y_2&1+y_1^2 &0 \end{smallmatrix}\right)  \\
& \quad +H_{y_2}\dot\pi \left(\begin{smallmatrix} 0 &-y_2y_3+y_1 &1+y_2^2 \\y_2y_3-y_1 & 0 & -y_3-y_1 y_2 \\-1-y_2^2 & y_3+y_1y_2&0 \end{smallmatrix}\right) +H_{y_3}\dot\pi \left(\begin{smallmatrix} 0 &-1-y_3^2 &y_1+y_2y_3 \\1+y_3^2 & 0 & y_2-y_1 y_3 \\ -y_1-y_2y_3 & -y_2+y_1y_3 &0 \end{smallmatrix}\right).
\end{align*}
\end{propE3}
\begin{proof}
From \eqref{Edefuniv} we obtain
\begin{align*}
  E(H)&=\Big(-Y_4(H)Y_3(\Phi_\pi)+ Y_5(H)Y_2(\Phi_\pi)-Y_6(H)Y_1(\Phi_\pi) \Big)\Phi_\pi^{-1}\\
  & =\sum_{j=1}^3 (-1)^j\, Y_{3+j}(H) \, Y_{4-j}(\Phi_\pi) \Phi_\pi^{-1}.
\end{align*}
For $j=1,2,3$ we have 
\begin{align*}
Y_{3+j}(H)&=\frac{d}{ds}_{\mid {s=0}} H\Big(\tilde p(g(\exp s\,Y_{3+j}))\Big) 
=H_{y_1} \frac{\partial u_j}{\partial s}+H_{y_2}\frac{\partial v_j}{\partial s}+H_{y_3} \frac{\partial w_j}{\partial s},
\end{align*}
where $u_j$, $v_j$ and $w_j$ are the functions introduced in the proof of Proposition \ref{D3}.

In the other hand we have that
\begin{align*}
(Y_k\Phi_\pi)(g)\Phi_\pi^{-1}(g)&= \frac{d}{dt}_{\mid t=0} \Phi_\pi(g(\exp tY_k))\Phi_\pi^{-1}(g)=\frac{d}{dt}_{\mid t=0}
\Phi_\pi(g(\exp tY_k)g^{-1})\\
& = \frac{d}{dt}_{\mid t=0} \pi\big ( a(g(\exp tY_k)g^{-1})\big )= \dot \pi \big( \dot a (gY_kg^t)\big) ,
\end{align*}
where $\dot a$ is the function introduced in \eqref{funcionq}. It is easy to check that 
$$\dot a (X)=\left(\begin{smallmatrix} 0 & -X_{12}-X_{34}&-X_{13}+X_{24} \\X_{12}+X_{34} &0 &-X_{23}-X_{14} \\ X_{13}-X_{24}&X_{23}+X_{14}&0 \end{smallmatrix}\right),\qquad \text{ for all } X\in \so(4).$$
Therefore, at $s=t=0$ we get
\begin{align*}
E&(H)(g) =\sum_{j=1}^3 \Big( H_{y_1}  \frac{\partial u_j}{\partial s} +H_{y_2}  \frac{\partial v_j}{\partial s} +H_{y_3} \frac{\partial w_j}{\partial s} \Big)
\dot \pi\big( \dot a (gY_3g^t)\big) \displaybreak[0]\\
&= H_{y_1} \dot \pi \Big (\dot a \big( \textstyle\sum_{j=1}^3(-1)^j \frac{\partial u_j}{\partial s}g \,Y_{4-j} \, g^t\big)\Big) 
+ H_{y_2} \dot \pi \Big (\dot a \big( \textstyle\sum_{j=1}^3(-1)^j \frac{\partial v_j}{\partial s}g\, Y_{4-j} \, g^t\big)\Big)\\
&\quad + H_{y_3} \dot \pi \Big (\dot a \big( \textstyle\sum_{j=1}^3(-1)^j \frac{\partial w_j}{\partial s}g\, Y_{4-j} \, g^t\big)\Big) \\
& = H_{y_1} \dot\pi (\dot a(B_1))+H_{y_2} \dot\pi (\dot a(B_2))+H_{y_3} \dot\pi (\dot a(B_3)).
\end{align*}

Now we recall that $Y_1=E_{12}-E_{21}$, $Y_2=E_{13}-E_{31}$, $Y_3=E_{23}-E_{32} $. It is easy to verify that
$$g(E_{ij}-E_{ji})g^t=\sum_{1\le k,r\le 4}g_{ki}g_{rj}\, E_{kr}.$$
By using the expressions of $\frac{\partial u_j}{\partial s}$,
$\frac{\partial v_j}{\partial s}$ and $\frac{\partial w_j}{\partial s}$ given in the proof of Proposition \ref{D3}, we obtain that 
\begin{align*}
&B_1=\sum_{j=1}^3  (-1)^j \frac{\partial u_j}{\partial s}g \,Y_{4-j} \, g^t   =
\tfrac 1{g_{44}^2}  \sum_{1\le k,r\le4} \,\Big (  (-g_{11}g_{44}+g_{14}g_{41})(g_{k2}g_{r3}-g_{k3}g_{r2})\\
&\quad  + (g_{12}g_{44}-g_{14}g_{42})(g_{k1}g_{r3}-g_{k3}g_{r1})
+ (-g_{13}g_{44}+g_{14}g_{43})(g_{k1}g_{r2}-g_{k2}g_{r1})  \Big ) E_{kr}.
\end{align*}
It is not difficult to verify that the $kr$-th entry of $B_1$ is equal to
\begin{align*}
(B_1)_{kr}= \frac 1{g_{44}^2}\Big (g_{44}\det \left( \begin{smallmatrix}
  g_{11}& g_{12}& g_{13}\\ g_{r1}& g_{r2}& g_{r3} \\g_{k1}& g_{k2}& g_{k3}
\end{smallmatrix}\right) -g_{14} \det \left( \begin{smallmatrix}
 g_{r1}& g_{r2}& g_{r3} \\g_{k1}& g_{k2}& g_{k3}\\  g_{41}& g_{42}& g_{43}
\end{smallmatrix}\right) \Big).
\end{align*}
Now we use the following fact: If $g$ is a matrix in $\SO(n)$  and   $g(i|j)$ denotes the matrix obtained from $g$ deleting the $i$-th row and the $j$-th column, then
\begin{equation}
  g_{ij}=(-1)^{i+j}\det \big(g(i|j)\big).
\end{equation}
Therefore
$$B_1= \frac 1{g_{44}^2}\left(\begin{smallmatrix}
  0& g_{14}g_{34} & -g_{14}g_{24} & 0 \\
  g_{14}g_{34} & 0 & g_{44}^2+g_{14}^2 & -g_{44}g_34\\
  -g_{14}g_{24} & g_{44}^2+g_{14}^2  &  0 & g_{44}g_{24}\\
  0 & -g_{44}g_{34} & -g_{44}g_{24}& 0
\end{smallmatrix}\right).
$$

\smallskip
\noindent We proceed in a similar way with $B_2=\sum_{j=1}^3  (-1)^j \frac{\partial v_j}{\partial s}g \,Y_{4-j} \, g^t $  and obtain
$$B_2= \frac 1{g_{44}^2}\left(\begin{smallmatrix}
  0& g_{24}g_{34} & -g_{44}^2-g_{24}^2 & g_{44}g_{34} \\
  -g_{24}g_{34} & 0 & g_{24}g_{14} & 0\\
  g_{44}^2+g_{24}^2 & -g_{24}g_{14}  &  0 & -g_{44}g_{14}\\
   -g_{44}g_{34} & 0 & g_{44}g_{14}& 0
\end{smallmatrix} \right).
$$

\noindent For
$B_3=\sum_{j=1}^3  (-1)^j \frac{\partial w_j}{\partial s}g \,Y_{4-j} \, g^t $ we get
$$B_3= \frac 1{g_{44}^2}\left(\begin{smallmatrix}
  0& g_{34}^2+g_{44}^2 & -g_{34}g_{24} & -g_{44}g_{24} \\
 -g_{34}^2-g_{44}^2 & 0 & g_{34}g_{14} & g_{44}g_{14}\\
  g_{34}g_{24} &- g_{34}g_{14}  &  0 & 0\\
  g_{44}g_{24} & -g_{44}g_{14} & 0& 0
\end{smallmatrix}\right).
$$

\noindent Therefore
\begin{align*}
E(H)
=&H_{y_1}\dot\pi \left(\begin{smallmatrix} 0 &-g_{24}g_{44}-g_{14}g_{34} &-g_{34}g_{44}+g_{14}g_{24} \\g_{24}g_{44}+g_{14}g_{34} &0 &-g_{44}^2-g_{14}^2 \\ g_{34}g_{44}-g_{24}g_{14}&g_{44}^2+g_{14}^2 &0 \end{smallmatrix}\right)\frac{1}{g_{44}^2}\\
&+H_{y_2}\dot\pi \left(\begin{smallmatrix} 0 &-g_{24}g_{34}+g_{14}g_{44} &g_{44}^2+g_{24}^2 \\g_{24}g_{34}-g_{14}g_{44} & 0 & -g_{34}g_{44}-g_{14} g_{24} \\-g_{44}^2-g_{24}^2 & g_{34}g_{44}+g_{14}g_{24}&0 \end{smallmatrix}\right)\frac{1}{g_{44}^2}\\ &+H_{y_3}\dot\pi \left(\begin{smallmatrix} 0 &-g_{44}^2-g_{34}^2 &g_{14}g_{44}+g_{24}g_{34} \\g_{44}^2+g_{34}^2 & 0 & g_{24}g_{44}-g_{14} g_{34} \\ -g_{14}g_{44}-g_{24}g_{34} & -g_{24}g_{44}+g_{14}g_{34} &0 \end{smallmatrix}\right)\frac{1}{g_{44}^2} .
\end{align*}

\smallskip
\noindent The proposition follows by observing that  $y_j=\frac {g_{j4}}{g_{44}}$, for $j=1,2,3$.
\end{proof}

\color{black}

\end{document}